\newcommand\reallywidehat[1]{%
\savestack{\tmpbox}{\stretchto{%
  \scaleto{%
    \scalerel*[\widthof{\ensuremath{#1}}]{\kern-.6pt\bigwedge\kern-.6pt}%
    {\rule[-\textheight/2]{1ex}{\textheight}}
  }{\textheight}%
}{0.5ex}}%
\stackon[1pt]{#1}{\tmpbox}%
}
\newtheorem{thm}{Theorem}[section]
\newtheorem{lem}[thm]{Lemma}
\newtheorem{cor}[thm]{Corollary}
\newtheorem{prop}[thm]{Proposition}
\newtheorem*{conjecture*}{Conjecture}
\newtheorem*{thm*}{Theorem}
\theoremstyle{definition}
\newtheorem{define}[thm]{Definition}
\newtheorem{remark}{Remark}
\newtheorem{example}[thm]{Example}
\newcommand{\1}{\mathbf{1}}
\newcommand{\N}{\mathbb{N}}
\newcommand{\Z}{\mathbb{Z}}
\newcommand{\Q}{\mathbb{Q}}
\newcommand{\R}{\mathbb{R}}
\newcommand{\C}{\mathbb{C}}
\newcommand{\K}{\Bbbk}
\newcommand{\BasedBim}{\operatorname{BasedBim}}
\newcommand{\BasedAlg}{\operatorname{BasedAlg}}
\newcommand{\grBasedBim}{\operatorname{grBasedBim}}
\newcommand{\filBasedBim}{\operatorname{filBasedBim}}
\newcommand{\Bim}{\operatorname{Bim}}
\newcommand{\PSU}{\operatorname{PSU}}
\newcommand{\id}{\operatorname{id}}
\newcommand{\Hom}{\operatorname{Hom}}
\newcommand{\Coker}{\operatorname{coker}}
\newcommand{\ev}{\operatorname{ev}}
\newcommand{\coev}{\operatorname{coev}}
\newcommand{\Irr}{\operatorname{Irr}}
\newcommand{\Mat}{\operatorname{Mat}}
\newcommand{\Stab}{\operatorname{Stab}}
\newcommand{\Mod}{\operatorname{Mod}}
\newcommand{\modd}{\operatorname{mod}}
\newcommand{\End}{\operatorname{End}}
\newcommand{\KQ}{\operatorname{\K Q}}
\newcommand{\Id}{\operatorname{Id}}
\newcommand{\VecC}{\operatorname{Vec}}
\newcommand{\Aut}{\operatorname{Aut}}
\newcommand{\Ind}{\operatorname{Ind}}
\newcommand{\Fib}{\operatorname{Fib}}
\newcommand{\Rep}{\operatorname{Rep}}
\newcommand{\Fun}{\operatorname{Fun}}
\newcommand{\coRep}{\operatorname{coRep}}
\newcommand{\coEnd}{\operatorname{coEnd}}
\newcommand{\Set}{\operatorname{Set}}
\title{Actions of Fusion Categories on Path Algebras}
\author{Alexander Betz}
\date{}
\begin{document}

\maketitle
\begin{abstract}
    In this article, we introduce the notion of a based action of a fusion category on an algebra. We will build some general theory to motivate our interest in based actions, and then apply this theory to understand based actions of fusion categories on path algebras $\KQ$. Our results demonstrate that a separable idempotent split based action of a fusion category $C$ on a path algebra $\KQ$ can be characterized in terms of $C$ module categories and their associated module endofunctors. As a specific application, we fully classify separable idempotent split based actions of the family of fusion categories $\PSU(2)_{p-2}$ on path algebras up to conjugacy. 
    \end{abstract}
\tableofcontents

\section{Introduction}

 Fusion categories are rich mathematical objects having applications to mathematics and physics \protect\cite{ENO,Tur10,Jon97}, generalizing finite groups and their representation categories. Classically, finite symmetries of associative algebras are characterized by group actions. A natural extension of this concept, Hopf algebra actions, are one characterization of finite quantum symmetries \protect\cite{BD,EW14,KW16,EKW21}. However, Hopf algebra actions produce categorical symmetries, which in turn give rise to another generalization of quantum symmetries: fusion category actions on algebras, defined as monoidal functors from $C$ into $\Bim(A)$. This motivates us to study the quantum symmetries of categorical actions.

  Much of the work on fusion categories actions has been carried in the context of $C^*$ algebras \protect\cite{HP17,HHP20,EJ24,CJHP24}. Considering these concepts from an algebraic point of view leads us to examine the case of an associative algebra $A$. One natural program is to look for fusion category action analogs of results for Hopf algebra actions. However, when comparing fusion category actions and Hopf algebra actions, there are some differences. Noticeably, Hopf algebra actions appear to have more structure than fusion category actions. For example, Hopf algebra actions have a notion of fixed points of an action. If $A$ is a graded/filtered algebra we can extend this concept of fixed points to a graded/filtration preserving action of $H$ on $A$. Given a group $G$ acting on $A$ we associate a bimodule $A_g$ to each group element. Then the fixed points of this bimodule are points such that the left and right action of $x$ on $1_A$ are equivalent. We can define a grading preserving action by requiring our the action of $g(A_n)\subset A_n$ where $A_n$ is the $n^{th}$ graded component of $A$. A filtration preserving action is defined similarly. This definition seemlessly translates when we change our persepctive to bimodules. The notion also extends naturally to Hopf algebras as well. An action of $C$ on $A$ doesn't  capture this notion of fixed points, and consequently we don't have a notion of filtration preserving or grading preserving actions. To produce a categorical analog for these concepts, we introduce the definition of a based action of $C$ on $A$.

We begin by formulating a 2-category of based actions of fusion categories on algebras and establishing different notions of equivalence. Subsequently, we establish for a semisimple Hopf algebra $H$, based actions of $\coRep(H)$ on $A$ with some additional properties are equivalent to actions of $H$ on $A$.  Furthermore, we apply our general theory to the problem of classifying based actions of $C$ on the path algebra $\KQ$ where $\K$ is an algebraically closed field of characteristic zero. It's a well-known result that all finite dimensional algebras are Morita equivalent to a quotient of a path algebra, motivating our desire to better understand symmetries in this context.

There has already been progress towards classifying quantum symmetries of path algebras. In \protect\cite{EKW21} it was shown that actions of Hopf algebra on path algebras that preserve the filtration are classified by tensor algebras internal to $\Rep(H)$. Tensor algebras also appear in the literature to classify bimodules of Hopf algebra actions on path algebras \protect\cite{KO}. In this paper, we translate the language of tensor algebras into our framework of based actions, generalizing some semisimple Hopf algebra results in \protect\cite{EKW21} to fusion categories. Specifically, separable idempotent split based actions of fusion categories on $\KQ$ that preserve the filtration are classified by a $C$ module category and a $C$ module endofunctors in $\End_C(M)$ up to an equivalence we define later. Notably, our framework of based actions are more general than filtered actions of a fusion category on a path algebra, as there is no restriction placed on the half braiding.

We show that separable idempotent split based actions of fusion categories on path algebras are classified by a semisimple module categories plus a conjugacy class of module endofunctors in $\Ind(\End_C(M))$. 
Semisimple module categories of a fusion category $C$ can be understood in terms of a Morita class of algebras internal to $C$ \protect\cite{Ost03}. In particular, in Theorem \protect\ref{thm:TE BimEnd} we produce the following result, 
\begin{thm}
There is an equivalence of monoidal categories between $\BasedBim(\KQ)_{I_v}$ and  $\End_{(\widetilde{Q},m,i)}(\VecC(M))$, where $M$ is the semisimple category equivalent to $\Mod(\KQ_0)$.
\end{thm}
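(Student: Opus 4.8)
The plan is to realize the equivalence as a decorated form of the Eilenberg--Watts theorem \cite{Eil60,Wat}, which identifies the monoidal category of $\KQ$-bimodules with the category of cocontinuous $\K$-linear endofunctors of $\Mod(\KQ)$ under composition. Write $M \simeq \Mod(\KQ_0)$; since $\K$ is algebraically closed, $\KQ_0 \cong \prod_{v \in Q_0}\K$, so $M$ is semisimple with simple objects indexed by the vertices, and $\VecC(M)$ is the cocompletion occurring on the right-hand side. The first step is to recognize $\KQ$ as the tensor algebra $T_{\KQ_0}(\KQ_1)$ of the arrow bimodule $\KQ_1$ over $\KQ_0$: under the standard identification $\Bim(\KQ_0) \simeq \End(M)$ this bimodule becomes the object $\widetilde{Q}$, and the grading of the tensor algebra equips it with the algebra data $(\widetilde{Q}, m, i)$. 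Restriction of scalars along $\KQ_0 \hookrightarrow \KQ$ then exhibits $\Mod(\KQ)$ as the category of $\widetilde{Q}$-modules internal to $\VecC(M)$, and I would check that, under this identification, a cocontinuous endofunctor of $\Mod(\KQ)$ is exactly the datum of a $\widetilde{Q}$-module endofunctor of $\VecC(M)$, the module structure arising from the $\End(M)$-action on endofunctors together with $m$ and $i$.

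Next I would write down the comparison functor explicitly. To a based bimodule $(N, \xi)$ in $\BasedBim(\KQ)_{I_v}$ assign the endofunctor $F_N := - \otimes_{\KQ} N$ together with the $\widetilde{Q}$-module-functor structure induced by the base point $\xi$; conversely, to a $\widetilde{Q}$-module endofunctor $G$ assign $G$ evaluated on the regular module $\KQ$, with its residual right $\KQ$-action and with the base point read off from the module-functor structure applied to the unit. Monoidality then amounts to the statement that composition of endofunctors corresponds to the relative tensor product $N \otimes_{\KQ} N'$ carrying the base point $\xi \otimes \xi'$, with the associativity and unit constraints inherited from those of $\otimes_{\KQ}$ and from the canonically based unit object $\KQ$. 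That the two assignments are mutually quasi-inverse is the usual Eilenberg--Watts reconstruction: cocontinuity and $\K$-linearity pin a functor down by its value on $\KQ$, and the base point is the unique extra datum compatible with the module-functor structure.

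The remaining point, which I expect to be the main obstacle, is matching the decoration $I_v$ with its counterpart on the target side: one must show that the base point produced from an abstract $\widetilde{Q}$-module endofunctor automatically lies in the prescribed class $I_v$ rather than being an unconstrained element, and conversely that a based bimodule with base point in $I_v$ determines the module-functor structure with no residual choices. This reduces to the block decomposition $1 = \sum_{v \in Q_0} e_v$ of $\KQ_0$ together with the semisimplicity of $M$: the module-functor constraint is assembled from morphisms among the simple objects $e_v$ of $M$, so its components are precisely indexed and constrained by the idempotents $e_v$, which is exactly the content of the subscript $I_v$. Once this bookkeeping is carried out, the equivalence of categories and the compatibility of its monoidal structures follow formally from the tensor-algebra adjunction (the free $\KQ$-module functor being left adjoint to restriction to $\KQ_0$) and the Eilenberg--Watts theorem invoked above.
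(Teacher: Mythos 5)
Your route is genuinely different from the paper's, and its central step has a gap. The paper does not apply Eilenberg--Watts over $\KQ$: the object of $\End_{(\widetilde{Q},m,i)}(\VecC(M))$ attached to a based bimodule $(X,V_X)$ is not $-\otimes_{\KQ}X$ but the finite-rank endofunctor of $M\simeq\Mod(\KQ_0)$ whose matrix entries are the finite-dimensional spaces $p_wV_Xp_v$ carved out of the \emph{base space}, with the half-braiding $\phi_F:\widetilde{Q}\circ F\xrightarrow{\sim}F\circ\widetilde{Q}$ recording how elements of $\KQ$ move past the projective basis spanning $V_X$; in other words it is an Eilenberg--Watts-type correspondence over $\KQ_0$, not over $\KQ$. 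Your claim that ``a cocontinuous endofunctor of $\Mod(\KQ)$ is exactly the datum of a $\widetilde{Q}$-module endofunctor of $\VecC(M)$'' is where the argument breaks: by Eilenberg--Watts the left-hand side is all of $\Bim(\KQ)$, whereas an object of $\End_{(\widetilde{Q},m,i)}(\VecC(M))$ is a functor $F$ equipped with an \emph{invertible} distributive law over the monad, and such data corresponds only to bimodules of the special form $\KQ\otimes_{\KQ_0}V\cong V\otimes_{\KQ_0}\KQ$ \emph{together with} the choice of the generating $\KQ_0$-bimodule $V$. Already for the one-loop quiver, $\KQ=\K[x]$ and the bimodule $\K=\K[x]/(x)$ gives a cocontinuous endofunctor of $\Mod(\K[x])$ corresponding to no object of the target. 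The based structure is precisely what cuts down to the correct class --- the paper's earlier lemma shows $V_X$ spans a two-sided projective basis, whence $X\cong\KQ\otimes_{\KQ_0}V_X\cong V_X\otimes_{\KQ_0}\KQ$ --- so it cannot be deferred to end-of-proof bookkeeping; it is the reason the stated equivalence holds at all, and different choices of base space on the same underlying bimodule give genuinely different objects of the target.

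Two smaller points. First, you identify the arrow bimodule $\KQ_1$ with $\widetilde{Q}$; in the paper $\KQ_1$ corresponds to $Q$, while $\widetilde{Q}=\bigoplus_{i\geq 0}Q^i$ is the whole tensor algebra $\KQ$ viewed as a $\KQ_0$-bimodule, with $(m,i)$ its multiplication and unit. Second, your ``main obstacle'' paragraph worries about whether the base point lands in the class $I_v$, but that is settled before this theorem by the conjugacy lemma reducing any complete set of primitive orthogonal idempotents to the vertex projections; the real obstacle is the one above. Your outline could be repaired by replacing the middle identification with the chain: based bimodules $\leftrightarrow$ pairs consisting of a $\KQ_0$-bimodule $V$ and a compatible isomorphism $\KQ\otimes_{\KQ_0}V\cong V\otimes_{\KQ_0}\KQ$ $\leftrightarrow$ objects of $\End_{(\widetilde{Q},m,i)}(\VecC(M))$ --- but at that point you would be reproducing the paper's proof rather than a genuinely Eilenberg--Watts-over-$\KQ$ argument.
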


Here $\BasedBim(\KQ)_{I_v}$ represents the corner of themonoidal category based bimodules of the path algebra where the vertex projections act as the unit and $\KQ$  and $\End_{(\widetilde{Q},m,i)}(\VecC(M))$ is the collection of endofunctors that commute with a monad $\widetilde{Q}$ and are compatible with the algebra structure. This category is a subcategory of $\End_{\widetilde{Q}}(\VecC(M))$ which we have a nice classification of monoidal functors from $C$ into $\End_{\widetilde{Q}}(\VecC(M))$ when the connected components of $Q$ are strongly connected.

\begin{thm}
Let $Q$ be a quiver with strongly connected components. Let $C$ be a fusion category and fix a semisimple $C$ module category structure on $M$. If there exists a monoidal functor $F:C\rightarrow \End_{\widetilde{Q}}(\VecC(M))$ then there are up to monoidal natural isomorphism there are either $1$ or $\infty$ such functors.
\end{thm}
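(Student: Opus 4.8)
The plan is to fix one monoidal functor $F_{0}\colon C\to\End_{\widetilde{Q}}(\VecC(M))$ — which exists by hypothesis — and to parametrize every other one relative to it. By Theorem~\ref{thm:TE BimEnd} together with the classification of separable idempotent split based actions by module categories and conjugacy classes of module endofunctors in $\Ind(\End_C(M))$, once the $C$-module structure on $M$ has been fixed the datum of such an $F$ is the same as a compatible $C$-equivariant structure on the monad $\widetilde{Q}$ — a half-braiding of $\widetilde{Q}$ against the $C$-action on $\VecC(M)$ subject only to the monad and algebra compatibilities, with no further constraint — considered up to the conjugacy equivalence. Because $\widetilde{Q}=\bigoplus_{n\ge 0}\widetilde{Q}^{(n)}$ is the free monad on the degree-one ``arrow'' endofunctor $\widetilde{Q}^{(1)}$ and the higher components are generated in degree one, I would first reduce the claim to the following: the set of $C$-equivariant structures on $\widetilde{Q}^{(1)}$ extending the fixed data on $M$, taken up to conjugacy, has either one element or infinitely many.

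Next I would geometrize this set. With the underlying object of $\widetilde{Q}^{(1)}$ fixed, the equivariant structures (half-braidings) on it satisfying the required coherences form the $\K$-points of an affine variety $V$, and the conjugacy equivalence is the action of an algebraic group $G$ — built from the module-functor automorphisms of the arrow object (a product of general linear groups indexed by the arrows of $Q$, compatible with the fixed module structure on $M$) together with the available monoidal natural isomorphisms. Here the hypothesis that $C$ is a fusion category is used to guarantee that $V$ and $G$ are genuinely finite-dimensional and algebraic, and that $\K$ is algebraically closed of characteristic $0$ is used both to keep $G$ reductive with well-behaved orbits and to know that the one-dimensional groups arising below have infinitely many points. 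The quantity in the theorem is then exactly $|V/G|$, and the hypothesis that a functor exists says $V\neq\varnothing$.

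To force the dichotomy, the heart of the argument is to show that $V$, when nonempty, is a single $G$-orbit rather than a union of several orbits — equivalently a $G$-torsor up to stabilizer. This is precisely where strong connectedness of the components of $Q$ is needed. Fixing a spanning tree of the underlying graph of each component, one can conjugate an arbitrary structure so that it agrees with $F_{0}$ along the tree arrows; strong connectedness is what makes such a spanning connection exist and reach every vertex, and after this gauging the only remaining freedom is a scalar on each of the finitely many non-tree arrows, so the residual ambiguity is a torus $(\K^{\times})^{b}$ with $b=\#Q_{1}-\#Q_{0}+\#(\text{components})$ the first Betti number of $Q$. Consequently $V/G\cong(\K^{\times})^{b}/(\text{residual gauge})$ is a quotient of a connected algebraic group by a closed subgroup; any such quotient is either a single point — in particular when $b=0$, i.e.\ $Q$ has no arrows — or positive-dimensional, in which case it has infinitely many $\K$-points because $\K$ is infinite. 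Either way $|V/G|\in\{1,\infty\}$. For a quiver with several components, $V$ is a product over the components, so the count is $1$ exactly when every component contributes a point and is infinite as soon as one component contributes a positive-dimensional factor, preserving the dichotomy.

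The main obstacle I anticipate is the transitivity/single-orbit step: verifying that after gauging along a spanning tree the defining equations of $V$ genuinely decouple across the non-tree arrows, so that an arbitrary assignment of scalars extends to a bona fide compatible structure and there are no residual discrete coherence obstructions or extra irreducible components of $V$. This will require a careful analysis of the monad structure maps of $\widetilde{Q}$ along concatenated paths and of exactly which conjugations are realized by $G$; the strong-connectedness hypothesis is precisely what prevents the quiver from splitting this bookkeeping into independent pieces that could carry their own discrete invariants. A secondary point to nail down is that the reduction in the first paragraph is faithful — that two based actions inducing the same module category on $M$ are equivalent if and only if the corresponding structures on $\widetilde{Q}^{(1)}$ are conjugate — which should follow from freeness of $\widetilde{Q}$ on $\widetilde{Q}^{(1)}$.
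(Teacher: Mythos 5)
Your approach does not engage with the mechanism that actually produces the dichotomy in this theorem, and the central step is not correct. By Theorem \ref{thm:based action bijection} and Theorem \ref{thm:dual cat and filtered colim}, a monoidal functor $F\colon C\to\End_{\widetilde{Q}}(\VecC(M))$ with the module structure on $M$ fixed is the datum of a $C$-module endofunctor structure on $\widetilde{Q}$, i.e.\ a decomposition $\widetilde{Q}\cong\bigoplus_k n_kG_k$ into simples of the multifusion category $\End_C(M)$ with multiplicities $n_k\in\Z_{\geq 0}\cup\{\infty\}$. Two such decompositions with different multiplicity vectors are genuinely non-isomorphic objects of $\VecC(\End_C(M))$, so the ``moduli'' being counted is a \emph{discrete} set of multiplicity vectors, not the $\K$-points of a connected variety modulo a gauge group. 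The paper's argument (Theorem \ref{thm: counting TFEnd_Q}) is: strong connectedness forces every nonzero entry of the matrix of $\widetilde{Q}$ to be $\infty$ (paths of arbitrarily many lengths between any two vertices of a component); hence if some simple $G$ is ``redundant'' --- wherever it contributes, another simple also contributes --- its coefficient $n_G$ can be set to any value in $\Z_{\geq0}\cup\{\infty\}$ without changing the underlying functor, giving countably many non-isomorphic structures; if no simple is redundant, every coefficient is forced to $\infty$ and the decomposition is unique. That is the entire source of the ``$1$ or $\infty$.''

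Your proposal misses this because of two incorrect reductions. First, in $\End_{\widetilde{Q}}(\VecC(M))$ there is \emph{no} monad-compatibility imposed on the half-braiding (that is the smaller category $\End_{(\widetilde{Q},m,i)}(\VecC(M))$), so the reduction to equivariant structures on the degree-one arrow functor $\widetilde{Q}^{(1)}$ via ``freeness of the monad'' is not available; the half-braiding lives on all of $\widetilde{Q}$ at once, and the relevant invariant is the decomposition into simples, not a connection on the arrows. Second, the claim that the set of structures is a single orbit of a connected algebraic group after gauging along a spanning tree --- so that the quotient is a point or positive-dimensional --- is false here: the functors corresponding to different multiplicities $n_G$ of a redundant simple are not conjugate by any monoidal natural isomorphism, so $V$ (in your notation) is not one orbit and is not even a connected space. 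Strong connectedness is used in the paper to make every nonzero multiplicity entry infinite, not to provide a spanning-tree gauge fixing. The fact that your predicted answer (``one point or infinitely many'') coincides with the theorem's conclusion is an accident of the statement; the intermediate claims your argument rests on do not hold.
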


\begin{cor} 
Let $Q$ be a quiver with strongly connected components. Then, for a fixed module category structure on $M$ if there exists a monoidal functor $F:C\rightarrow \End_{(\widetilde{Q},m,i)}(\VecC(M))$ there are up to monoidal natural isomorphism there are either at most $1$ or at most $\infty$ such functors.
\end{cor}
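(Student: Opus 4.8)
The plan is to deduce the corollary from the preceding theorem by transporting its dichotomy along the inclusion $\iota \colon \End_{(\widetilde{Q},m,i)}(\VecC(M)) \hookrightarrow \End_{\widetilde{Q}}(\VecC(M))$. First I would observe that if a monoidal functor $F \colon C \to \End_{(\widetilde{Q},m,i)}(\VecC(M))$ exists at all, then $\iota \circ F \colon C \to \End_{\widetilde{Q}}(\VecC(M))$ is a monoidal functor into the larger category, so the hypothesis of the theorem is satisfied and the set of monoidal functors $C \to \End_{\widetilde{Q}}(\VecC(M))$, taken up to monoidal natural isomorphism, has either exactly one element or infinitely many.

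Next I would show that post-composition with $\iota$ induces a well-defined injective map $\Phi$ from the set of monoidal functors $C \to \End_{(\widetilde{Q},m,i)}(\VecC(M))$ modulo monoidal natural isomorphism to the corresponding set for $\End_{\widetilde{Q}}(\VecC(M))$. Well-definedness is immediate, since a monoidal natural isomorphism $F \Rightarrow G$ is in particular a monoidal natural isomorphism $\iota F \Rightarrow \iota G$. For injectivity, suppose $F, G \colon C \to \End_{(\widetilde{Q},m,i)}(\VecC(M))$ and $\eta \colon \iota F \Rightarrow \iota G$ is a monoidal natural isomorphism; using that $\End_{(\widetilde{Q},m,i)}(\VecC(M))$ sits inside $\End_{\widetilde{Q}}(\VecC(M))$ as a full subcategory (its objects are exactly the $\widetilde{Q}$-commuting endofunctors that are moreover compatible with $(m,i)$, with the same morphisms), each component $\eta_c \colon F(c) \to G(c)$ is already a morphism of $\End_{(\widetilde{Q},m,i)}(\VecC(M))$. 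Naturality and the two monoidal-coherence squares are literally the same equations in both categories, so $\eta$ is a monoidal natural isomorphism $F \Rightarrow G$, and $\Phi$ is injective.

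Finally I would combine the two steps: by injectivity of $\Phi$ the number of monoidal functors $C \to \End_{(\widetilde{Q},m,i)}(\VecC(M))$ up to monoidal natural isomorphism is at most the number of monoidal functors $C \to \End_{\widetilde{Q}}(\VecC(M))$ up to monoidal natural isomorphism, and by the theorem the latter is either $1$ or infinite; hence the former is either at most $1$ or at most infinite, which is the claim.

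The step I expect to be the main obstacle is the fullness claim underlying injectivity of $\Phi$: if the morphisms of $\End_{(\widetilde{Q},m,i)}(\VecC(M))$ are required to carry extra compatibility data beyond being morphisms of $\End_{\widetilde{Q}}(\VecC(M))$ — i.e.\ if the inclusion fails to be full — then one cannot merely restrict $\eta$, and instead must reconstruct the isomorphism $F \cong G$ inside the subcategory. In that situation the fallback is to invoke the explicit description of the distinguished functor from the proof of the theorem and check that the canonical monoidal natural isomorphism it produces between any two functors automatically respects the $(m,i)$-module structure, since that structure is itself assembled from the $\widetilde{Q}$-action that $\eta$ already intertwines.
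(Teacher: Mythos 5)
Your proposal is correct and follows essentially the same route as the paper: the paper's own lemma that $\End_{(\widetilde{Q},m,i)}(\VecC(M))$ is a \emph{full} subcategory of $\End_{\widetilde{Q}}(\VecC(M))$ is exactly what makes post-composition with the inclusion injective on monoidal-natural-isomorphism classes, so the $1$-or-$\infty$ dichotomy of the preceding theorem transfers as an upper bound. The obstacle you flag (possible failure of fullness) does not arise, since the paper defines the subcategory to have the same morphisms and only constrains the half-braidings of the objects.
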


 We use this theory to build families of filtered/graded actions of a fusion category $C$ on a path algebra $\KQ$. Then, for the fusion category $\PSU(2)_{p-2}$, we produce the following results about separable based actions of $\PSU(2)_{p-2}$ on path algebras.

 \begin{thm}
   Every separable based action of $\PSU(2)_{p-2}$ on $\KQ$ up to conjugacy is a graded separable action of $\PSU(2)_{p-2}$ on $\KQ$. 
\end{thm}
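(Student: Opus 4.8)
The plan is to push the statement through the classification of separable idempotent split based actions set up above, and then to supply one rigidity input coming from the very constrained module theory of $\PSU(2)_{p-2}$. First I would apply Theorem~\ref{thm:TE BimEnd} together with the ensuing classification: a separable idempotent split based action of $\PSU(2)_{p-2}$ on $\KQ$ is the data of a semisimple $\PSU(2)_{p-2}$-module category $M$, which on each strongly connected component of $Q$ may be taken equivalent to $\Mod(\KQ_0)$, together with a conjugacy class of module endofunctors $\Phi\in\Ind(\End_{\PSU(2)_{p-2}}(M))$. Splitting $Q$ into connected components and $M$ into indecomposables reduces to $Q$ strongly connected and $M$ indecomposable, in which case $\End_{\PSU(2)_{p-2}}(M)$ is the dual fusion category of $\PSU(2)_{p-2}$ with respect to $M$, and the indecomposable $\PSU(2)_{p-2}$-module categories are governed by the $ADE$ classification restricted to Coxeter number $p$ (cf.\ \cite{Ost03}), which for the relevant $p$ is a short explicit list; I would record $\End_{\PSU(2)_{p-2}}(M)$ and its ind-completion in each case.

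Next I would isolate the graded actions. Just as filtration-preserving Hopf actions on path algebras are exactly the internal tensor algebras in $\Rep(H)$ by \cite{EKW21}, a separable based action here is graded precisely when its classifying module endofunctor is the internal tensor algebra $T(\Phi_{(1)})$ on its degree-one part $\Phi_{(1)}\in\End_{\PSU(2)_{p-2}}(M)$, compatibly with the internal multiplication $m$ and unit $i$. For an arbitrary $\Phi$, the path-length grading of $\KQ$ equips $\Phi$ with a filtration whose associated graded is $T(\mathrm{gr}_1\Phi)$, hence a graded separable action; so the theorem reduces to showing that every such $\Phi$ is conjugate to its associated graded $\mathrm{gr}\,\Phi$.

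The hard part will be this last reduction. Since $(\widetilde{Q},m,i)$ is separable its internal algebra structure admits no deformation, so the only freedom is in the module-endofunctor (equivalently, half-braiding) data, and I would view the filtered refinements of a fixed graded datum as the monoidal functors $\PSU(2)_{p-2}\to\End_{\widetilde{Q}}(\VecC(M))$ lifting it; by the rigidity statement above, for $Q$ strongly connected there are either $1$ or $\infty$ of these up to monoidal natural isomorphism, and the content is that each ``$\infty$'' family is a single conjugacy class, i.e.\ any two refinements differ by a gauge transformation realized as a filtration-preserving isomorphism of based bimodules. I expect the obstruction to removing such a deformation to lie in a gauge-equivalence group built from $\widetilde{Q}$ and the module-endofunctor data which, for $\PSU(2)_{p-2}$, reduces via the short module-category list to arithmetic data in $\Q(\zeta_p)^{+}$, and to vanish once one invokes the known structure of the unit group of $\Q(\zeta_p)^{+}$ and its signature ranks \cite{DDK19,Was12,Mil14}. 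Granting the vanishing, $\Phi$ is conjugate to $\mathrm{gr}\,\Phi$, which is a graded separable action, and reassembling the components of $Q$ gives the statement; the point I anticipate to be genuinely delicate is verifying this vanishing uniformly over all admissible $(Q,M)$ rather than only generically.
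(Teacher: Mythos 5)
Your framework is right up to the point where the real work begins, but the core of your argument has a genuine gap. You assert that the path-length grading of $\KQ$ automatically equips the classifying endofunctor $\Phi$ with a filtration whose associated graded is a graded action, so that the theorem reduces to showing $\Phi$ is conjugate to $\mathrm{gr}\,\Phi$. That is exactly what cannot be assumed: a based action only supplies an isomorphism $\phi_X\colon\widetilde{Q}\circ F_X\xrightarrow{\sim}F_X\circ\widetilde{Q}$ of the \emph{infinite} monad $\widetilde{Q}$, and nothing forces this half-braiding to respect path length --- the paper exhibits a $\VecC(\Z/2\Z)$ based action on a path algebra that preserves neither the grading nor the filtration. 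The actual content of the theorem is ruling such behaviour out for $\PSU(2)_{p-2}$, and the mechanism is not a deformation/gauge-vanishing argument but a spectral positivity argument: restricting the half-braiding to the degree-one subfunctor $Q$ shows that every iterate $F_{X_2}^{-k}QF_{X_2}^{k}$ must remain a non-negative integer matrix; one then proves (using transitivity of the Galois action on the $S$-matrix, the multiplicative independence of the real cyclotomic units $d_{X_{2j}}$, and sign patterns of the $S$-matrix columns) that the conjugation operator $T_{X_2}$ has no eigenvalue $-1$, that its eigenvalues of modulus $\neq 1$ have pairwise distinct magnitudes, and that the corresponding eigenvectors have entries of both signs. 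Letting $k\to\infty$ kills every eigencomponent except those of eigenvalue $1$, whose span is exactly the fusion matrices, forcing $|Q_{ij}|=N_{Z,X_{2(i-1)}}^{X_{2(j-1)}}$ and hence gradedness. Your ``obstruction lying in arithmetic data in $\Q(\zeta_p)^{+}$ that vanishes by unit-group structure'' gestures at the right number theory but in the wrong place, and with no actual argument supplied.

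Two smaller points. The appeal to an $ADE$ list of indecomposable module categories is off target: $\PSU(2)_{p-2}$ is algebra complete (torsion-free), so its only indecomposable semisimple module category is $\PSU(2)_{p-2}$ itself, and this is what makes the classification work --- for a fixed module category structure there is at most one monoidal functor into $\End_{\widetilde{Q}}(\VecC(M))$, with no case analysis and no need to restrict to strongly connected $Q$. Likewise your closing claim that each ``$\infty$'' family of lifts is a single conjugacy class is neither proved nor needed; for algebra complete categories the count is at most one outright, and the unique action is the graded one already constructed.
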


\begin{thm}
    Fix a quiver $Q$. If there exists a separable based action of $\PSU(2)_{p-2}$ on $\KQ$ up to conjugacy then there exists a partition of the vertices into $n$ sets $S_1,...,S_n$ of size $\frac{p-1}{2}$ such that for each $S_k$ we pick a bijection $\psi_k:S_k\rightarrow \Irr(\PSU(2)_{p-2}$ that maps each vertex $v_{l_k}$ to an isomorphism class of simple object $X \in \Irr(C)$ and the subquiver $Q_{ij}$ of all paths from $S_i$ to $S_j$ can be expressed as $|(Q_{ij})_{l_i,m_{j}}|=N_{Z_{ij},\psi_i(v_{l_i})}^{\psi_j(v_{m_j})}$ for some isomorphism class of objects $Z_{ij}\in C$.
\end{thm}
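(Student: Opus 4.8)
\emph{Proof plan.} The strategy is to feed the given action into the structural classification of the previous sections and then exploit the arithmetic rigidity of $\PSU(2)_{p-2}$: primality of $p$ will force every $\PSU(2)_{p-2}$-module category to be a direct sum of copies of the regular one, and the quiver will be read off from the associated module endofunctor.

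First I would apply the preceding theorem to assume, after conjugation, that the action is a graded separable based action of $C=\PSU(2)_{p-2}$ on $\KQ$. Then, by Theorem \ref{thm:TE BimEnd} together with the classification of separable idempotent split based actions on a path algebra, the action amounts to a semisimple $C$-module category $M$ equipped with a $C$-module endofunctor, where $M\simeq\Mod(\KQ_0)$. In particular $\Irr(M)$ is in bijection with $Q_0$, and, in the tensor-algebra presentation $\KQ=T_{\KQ_0}(\KQ_1)$, the degree-one bimodule $\KQ_1$ corresponds to the module endofunctor, so that the arrows of $Q$ are recovered from it.

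Next I would prove the key lemma: for $p$ prime, the regular module category $C$ is the only indecomposable $\PSU(2)_{p-2}$-module category. Here $C$ is the even part of $SU(2)_{p-2}$, of rank $\tfrac{p-1}{2}$, generated by the self-dual object $X_2$ with $\FPdim(X_2)=1+2\cos(2\pi/p)$. Since $p$ is prime, $\Q(2\cos(2\pi/p))$ has degree exactly $\tfrac{p-1}{2}$ over $\Q$, so $\FR(C)\otimes_{\Z}\Q$ is a field of that degree; hence every based $\FR(C)$-module is $\Z$-free of rank a multiple of $\tfrac{p-1}{2}$, and, since $X_2$ generates $C$ and acts on such a module by a symmetric nonnegative integer matrix with Perron eigenvalue $1+2\cos(2\pi/p)$, a connected one must be the regular module. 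This based module lifts uniquely to a module category, giving the lemma. Consequently $M\simeq\bigoplus_{k=1}^{n}C$ and $|Q_0|=n\cdot\tfrac{p-1}{2}$; taking $S_k$ to be the set of vertices labelling the simple objects of the $k$-th summand, and $\psi_k\colon S_k\to\Irr(C)$ the induced identification, yields the asserted partition and bijections.

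Finally I would extract the quiver. Writing the module endofunctor as $F=\bigoplus_{i,j}F_{ij}$ with each $F_{ij}\colon M_i\to M_j$ a $C$-module functor between copies of the regular module category, the equivalence $\End_C(C)\simeq C$ gives $F_{ij}\cong Z_{ij}\otimes(-)$ with $Z_{ij}:=F_{ij}(\mathbf 1)\in C$ (the side on which $Z_{ij}$ is tensored being immaterial because $\FR(C)$ is commutative). Since $\KQ_1$ corresponds to $F$, the number of arrows from $v_{l_i}\in S_i$ to $v_{m_j}\in S_j$ equals $\dim\Hom_C\!\big(\psi_j(v_{m_j}),Z_{ij}\otimes\psi_i(v_{l_i})\big)=N_{Z_{ij},\psi_i(v_{l_i})}^{\psi_j(v_{m_j})}$, which is exactly the claimed formula. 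The hard part is the key lemma: it needs the classification of module categories over $\PSU(2)_{p-2}$, and primality of $p$ is precisely what makes $\FR(C)\otimes\Q$ a field while ruling out the $D$-, $E$-type and conformal-embedding module categories occurring at composite levels. Once the lemma is in hand, everything else is bookkeeping with the tensor-algebra presentation.
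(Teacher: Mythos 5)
Your proposal is correct and follows essentially the same route as the paper: the paper's proof of this statement is simply the concatenation of Theorem \ref{thm:separable action and idempotent} (a separable based action induces a separable idempotent split one), Theorem \ref{thm: classification of actions of PSU} (these are graded up to conjugacy), and Theorem \ref{thm: graded action classification} specialized to the algebra-complete category $\PSU(2)_{p-2}$, which is exactly the chain you reconstruct via the tensor-algebra/module-endofunctor picture. The one place you diverge is your ``key lemma'': the paper takes algebra completeness (torsion-freeness) of $\PSU(2)_{p-2}$ as a known input rather than proving it, and in your sketch the assertions that a connected based module of rank $\frac{p-1}{2}$ must be the regular one and that it ``lifts uniquely to a module category'' are precisely where the real work (the $ADE$-type classification of $SU(2)_k$ module categories, or the torsion-freeness results of Arano--De Commer) would have to go; as written those two steps are claimed, not proved. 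Since the paper assumes the same fact, this is not a gap relative to the paper's own standard, but if you intend the lemma to be self-contained you would need to supply those arguments rather than the Perron--Frobenius heuristic.
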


\section{\texorpdfstring{\centering Preliminaries}{Preliminaries}}

In this section, we will provide a brief overview of the background material needed in this paper and establish some notation. For more in depth coverage of this material, refer to  \protect\cite{EGNO16} and \protect\cite{ASS06}. 

\subsection{\texorpdfstring{\centering Quivers and Path Algebras}{Quivers and Path Algebras}}

A quiver $Q=(V,E,s,t)$ consists of a vertex set $V$, edge set $E$, a target map $t:E\rightarrow V$ and a source map $s:E\rightarrow V$. In particular, a quiver is equivalent to a directed multigraph. Given a quiver, we can construct an associative algebra called the path algebra $\KQ$.

\begin{define}[Path Algebra]
    Let $Q$ be a Quiver. Let $\K$ be a field. The Path Algebra $\KQ$ is an $\K$ algebra whose basis is the set of all paths of length $l\geq0$ in $Q$. The product of paths is defined as
    $$e_{i_1}...e_{i_n}*e_{j_1}....e_{j_m}= \delta_{s(e_{i_n})t(e_{j_1})}e_{i_1}...e_{i_n}e_{j_1}....e_{j_m}$$
\end{define}

This gives us an associative algebra and if the quiver is a finite quiver with vertices $v_1,...,v_n$ it follows this is a unital associative algebra where the unit is $1=v_1+...+v_n$. In addition, the vertices form a complete set of  primitive orthogonal idempotents in the path algebra.
Given a quiver, $Q$ we will define the  quiver generated by $Q$. This quiver will be an important concept in constructing actions of $C$ on $\KQ$.

\begin{define} [The quiver generated by $\widetilde{Q}$]
Given a quiver $Q$ we define the quiver generated by $Q$ denoted $\widetilde{Q}$ as the quiver $\widetilde{Q}= \bigoplus_{i=0}^\infty Q^i$ where $Q^i$ is a quiver defined by the paths of length $i$ in $Q$.
\end{define}

\begin{remark}
    $Q^0$ is included here to ensure we account for the vertices. Equivalently, we could define $\widetilde{Q}$ as the quiver that takes a path in $Q$ and makes it an edge in $\widetilde{Q}$ and includes the identity loop at each vertex.
\end{remark}

\begin{example}
Given the following quiver $Q$
    \tikzset{every picture/.style={line width=0.75pt}} 
\begin{center}
\begin{tikzpicture}[x=0.75pt,y=0.75pt,yscale=-1,xscale=1]

\draw  [draw opacity=0] (240.16,71.47) .. controls (242.28,55.85) and (266.15,43.92) .. (295.04,44.48) .. controls (322.98,45.03) and (345.78,57.08) .. (348.6,72.08) -- (294.45,74.48) -- cycle ; \draw   (240.16,71.47) .. controls (242.28,55.85) and (266.15,43.92) .. (295.04,44.48) .. controls (322.98,45.03) and (348.78,57.08) .. (348.6,72.08) ;  
\draw  [draw opacity=0] (348.6,72.08) .. controls (348.07,57.21) and (323.4,101.52) .. (294.51,101.43) .. controls (264.57,101.34) and (240.31,87.94) .. (240.16,71.47) -- (294.61,71.43) -- cycle ; \draw   (348.94,72.08) .. controls (347.07,89.21) and (323.4,101.52) .. (294.51,101.43) .. controls (264.57,101.34) and (240.31,87.94) .. (240.16,71.47) ;  
\draw    (299,45) ;
\draw [shift={(299,45)}, rotate = 180] [color={rgb, 255:red, 0; green, 0; blue, 0 }  ][line width=0.75]    (10.93,-3.29) .. controls (6.95,-1.4) and (3.31,-0.3) .. (0,0) .. controls (3.31,0.3) and (6.95,1.4) .. (10.93,3.29)   ;
\draw   (302,105) -- (294.1,101.91) -- (302.05,98.94) ;

\draw (286,105) node [anchor=north west][inner sep=0.75pt]   [align=left] {$\displaystyle \beta $};
\draw (286,29) node [anchor=north west][inner sep=0.75pt]   [align=left] {$\displaystyle \alpha $};

\draw (230,65) node [anchor=north west][inner sep=0.75pt]   [align=left] {$\displaystyle a $};
\draw (350,65) node [anchor=north west][inner sep=0.75pt]   [align=left] {$\displaystyle b $};

\end{tikzpicture}
\end{center}

Then the quiver generated by $Q$, $\widetilde{Q}$ will look like
\begin{center}

\tikzset{every picture/.style={line width=0.75pt}} 

\begin{tikzpicture}[x=0.75pt,y=0.75pt,yscale=-1,xscale=1]

\draw  [draw opacity=0] (260.16,95.47) .. controls (262.28,79.85) and (286.15,67.92) .. (315.04,68.48) .. controls (342.98,69.03) and (365.78,81.08) .. (368.6,96.08) -- (314.45,98.48) -- cycle ; \draw   (260.16,95.47) .. controls (262.28,79.85) and (286.15,67.92) .. (315.04,68.48) .. controls (342.98,69.03) and (365.78,81.08) .. (368.6,96.08) ;  
\draw  [draw opacity=0] (368.94,97.55) .. controls (367.07,113.21) and (343.4,125.52) .. (314.51,125.43) .. controls (284.57,125.34) and (260.31,111.94) .. (260.16,95.47) -- (314.61,95.43) -- cycle ; \draw   (368.94,97.55) .. controls (367.07,113.21) and (343.4,125.52) .. (314.51,125.43) .. controls (284.57,125.34) and (260.31,111.94) .. (260.16,95.47) ;  
\draw    (319,69) ;
\draw [shift={(319,69)}, rotate = 180] [color={rgb, 255:red, 0; green, 0; blue, 0 }  ][line width=0.75]    (10.93,-3.29) .. controls (6.95,-1.4) and (3.31,-0.3) .. (0,0) .. controls (3.31,0.3) and (6.95,1.4) .. (10.93,3.29)   ;
\draw    (319,69) ;
\draw [shift={(319,69)}, rotate = 180] [color={rgb, 255:red, 0; green, 0; blue, 0 }  ][line width=0.75]    (10.93,-3.29) .. controls (6.95,-1.4) and (3.31,-0.3) .. (0,0) .. controls (3.31,0.3) and (6.95,1.4) .. (10.93,3.29)   ;
\draw    (319,69) ;
\draw [shift={(319,69)}, rotate = 180] [color={rgb, 255:red, 0; green, 0; blue, 0 }  ][line width=0.75]    (10.93,-3.29) .. controls (6.95,-1.4) and (3.31,-0.3) .. (0,0) .. controls (3.31,0.3) and (6.95,1.4) .. (10.93,3.29)   ;
\draw   (319,129) -- (311.1,125.91) -- (319.05,122.94) ;
\draw  [draw opacity=0] (260.05,95.7) .. controls (261.36,62.42) and (285.7,36.97) .. (315.19,38.5) .. controls (344.22,40.02) and (367.63,67.14) .. (368.82,99.83) -- (314.43,100.48) -- cycle ; \draw   (260.05,95.7) .. controls (261.36,62.42) and (285.7,36.97) .. (315.19,38.5) .. controls (344.22,40.02) and (367.63,67.14) .. (368.82,99.83) ;  
\draw  [draw opacity=0] (368.92,99.42) .. controls (367.88,132.74) and (343.74,159.01) .. (314.24,158.48) .. controls (284.23,157.93) and (260,129.85) .. (260.05,95.7) -- (314.5,96.48) -- cycle ; \draw   (368.92,99.42) .. controls (367.88,132.74) and (343.74,159.01) .. (314.24,158.48) .. controls (284.23,157.93) and (260,129.85) .. (260.05,95.7) ;  
\draw    (318.15,38.62) ;
\draw [shift={(318.15,38.62)}, rotate = 180] [color={rgb, 255:red, 0; green, 0; blue, 0 }  ][line width=0.75]    (10.93,-3.29) .. controls (6.95,-1.4) and (3.31,-0.3) .. (0,0) .. controls (3.31,0.3) and (6.95,1.4) .. (10.93,3.29)   ;
\draw   (321,162) -- (311,158.34) -- (321.04,155.25) ;
\draw   (210.16,95.47) .. controls (210.16,81.66) and (221.35,70.47) .. (235.16,70.47) .. controls (248.96,70.47) and (260.16,81.66) .. (260.16,95.47) .. controls (260.16,109.28) and (248.96,120.47) .. (235.16,120.47) .. controls (221.35,120.47) and (210.16,109.28) .. (210.16,95.47) -- cycle ;
\draw   (165.16,95.47) .. controls (165.16,69.24) and (186.42,47.97) .. (212.66,47.97) .. controls (238.89,47.97) and (260.16,69.24) .. (260.16,95.47) .. controls (260.16,121.7) and (238.89,142.97) .. (212.66,142.97) .. controls (186.42,142.97) and (165.16,121.7) .. (165.16,95.47) -- cycle ;
\draw   (418.16,98.55) .. controls (417.71,112.35) and (406.16,123.18) .. (392.36,122.73) .. controls (378.56,122.29) and (367.74,110.74) .. (368.18,96.94) .. controls (368.63,83.14) and (380.18,72.31) .. (393.98,72.76) .. controls (407.78,73.2) and (418.6,84.75) .. (418.16,98.55) -- cycle ;
\draw   (463.13,100) .. controls (462.28,126.22) and (440.34,146.79) .. (414.12,145.95) .. controls (387.9,145.1) and (367.33,123.16) .. (368.18,96.94) .. controls (369.03,70.72) and (390.97,50.15) .. (417.19,51) .. controls (443.41,51.84) and (463.98,73.78) .. (463.13,100) -- cycle ;
\draw   (206.54,100.14) -- (210.61,90.47) -- (215,100) ;
\draw   (160.39,100.4) -- (165.11,90.5) -- (169.39,100.6) ;
\draw   (413.54,107.14) -- (417.61,97.47) -- (422,107) ;
\draw   (458.39,107.4) -- (463.11,97.5) -- (467.39,107.6) ;

\draw (309,105) node [anchor=north west][inner sep=0.75pt]   [align=left] {$\displaystyle \beta $};
\draw (309,73) node [anchor=north west][inner sep=0.75pt]   [align=left] {$\displaystyle \alpha $};
\draw (300,170.13) node [anchor=north west][inner sep=0.75pt]  [rotate=-0.32] [align=left] {$\displaystyle \beta ( \alpha \beta )^{j}$};
\draw (300,12.38) node [anchor=north west][inner sep=0.75pt]  [rotate=-0.32] [align=left] {$\displaystyle \alpha ( \beta \alpha )^{i}$};
\draw (399,90) node [anchor=north west][inner sep=0.75pt]   [align=left] {$\displaystyle b$};
\draw (475,90) node [anchor=north west][inner sep=0.75pt]   [align=left] {$\displaystyle ( \alpha \beta )^{k}$};
\draw (123,86) node [anchor=north west][inner sep=0.75pt]   [align=left] {$\displaystyle ( \beta \alpha )^{l}$};
\draw (220,90) node [anchor=north west][inner sep=0.75pt]   [align=left] {$\displaystyle a$};
\draw (176,93) node [anchor=north west][inner sep=0.75pt]   [align=left] {$\displaystyle \dotsc $};
\draw (106,93) node [anchor=north west][inner sep=0.75pt]   [align=left] {$\displaystyle \dotsc $};
\draw (425,98) node [anchor=north west][inner sep=0.75pt]   [align=left] {$\displaystyle \dotsc $};
\draw (512,98) node [anchor=north west][inner sep=0.75pt]   [align=left] {$\displaystyle \dotsc $};
\draw (313,128) node [anchor=north west][inner sep=0.75pt]   [align=left] {$\displaystyle \vdots $};

\draw (313,182) node [anchor=north west][inner sep=0.75pt]   [align=left] {$\displaystyle \vdots $};
\draw (310,37) node [anchor=north west][inner sep=0.75pt]   [align=left] {$\displaystyle \vdots $};
\draw (310,-13) node [anchor=north west][inner sep=0.75pt]   [align=left] {$\displaystyle \vdots $};

\end{tikzpicture}

\end{center}

Quivers occur very naturally when working with semisimple linear categories. If we have an endofunctor $G:C\rightarrow C$ and $C$ is finitely semisimple (finitely many isomorphism classes of simple objects) then we can create a quiver $Q$ where the vertices are the simple objects $\{X_1,...X_n\}$ and the edges correspond to the image $G(X_1)$ where if $X_2$ is in the image of $G(X_1)$ then we have an edge going from $X_1$ to $X_2$ in our quiver. In particular, by \protect\cite{KV94}, there is a bijection between linear endofunctors of a semisimple category with $n$ isomorphism classes of simple objects and quivers defined on $n$ vertices. Thus, the question of fusion categories acting on path algebras is a natural question since there is already have a relation between quivers and endofunctors. 
\end{example}

\subsection{\texorpdfstring{\centering Tensor Categories}{Tensor Categories}}

This subsection covers the necessary tensor category theory we will need for our results. Many definitions are taken from \protect\cite{EGNO16}. For the rest of the paper $X,Y,Z$ are objects in our category $C$. An object is simple if $X$ has only trivial subobjects. When $\K$ is an algebraically closed field $X$ is simple if $\Hom(X,X)\cong \K$. A category $C$ is semisimple if all elements are isomorphic to a direct sum of simple objects.

A monoidal category is a quintuple $(C,\otimes,a,\1,i)$ where $C$ is a category $\otimes:C \times C \rightarrow C$ is a bifunctor, there is a unit object $\1$ and natural isomorphisms $a:(X\otimes Y)\otimes Z \rightarrow X\otimes (Y\otimes Z)$ and $i:\1 \otimes \1 \rightarrow \1$
satisfying the pentagon and triangle diagrams. Some examples of monoidal categories are $\VecC,$ the category of vector spaces and $\Set$ the category of sets.
 An object $X^*$ in C is said to be a left dual of $X$ if there exist morphisms $\ev_X : X^* \otimes X \rightarrow \1$ and $\coev_X : \1 \rightarrow X \otimes X^*$, called the evaluation and coevaluation, such that the compositions satisfy the usual zigzag relations. Right duals are defined analogously. Let $C^\circ$ be the full monoidal subcategory of dualizable objects of $C$.

 Let $(C, \otimes, \1, a, i)$ and $(C', \otimes', \1', a', i')$ be two monoidal categories. A monoidal functor from $C$ to $C'$ is a pair $(F,J_{X,Y})$ where $F$ is a functor from $C$ to $C'$ and $J_{X,Y}$ is a natural isomorphism between  $F(X)\otimes'F(Y)$ and $F(X \otimes Y)$. It follows that $J_{X,Y}$ constructs an isomorphism $j:F(\1)\rightarrow \1'$. If $J_{-,-}$ and $j$ are just morphisms as opposed to isomorphisms, then we call $(F,J_{-,-})$ a lax monoidal functor.

Many of the linear categories we will work with are finitely semisimple, of which there is a nice classification of. By the Yoneda embedding all finitely semisimple linear categories with $n$ simple objects are equivalent as a category to $\mathbf{\VecC}^{\oplus n}$ i.e $C \cong (\VecC,\VecC,...,\VecC)$. This implies that if $C,D$ are finitely semisimple linear categories with $n$ and $m$ isomorphism classes of simple objects, then $\Fun(C,D)$ is equivalent to the category of $n\times m$ matrices with $\VecC$ in each entry. Natural transformations between functors $F:C\rightarrow D$ can be represented as a $n\times m$ matrix of linear transformations.

  A category $C$ is a fusion category if $C$ is a finite $\K$-linear abelian rigid semisimple monoidal category such that $\Hom(\1,\1)\cong \K$. Examples of fusion categories are $\VecC(G)$, finite dimensional $G$ graded vector spaces and $\Rep(G)$, finite dimensional representations of $G$. A more abstract example of a fusion category is $\Fib$ the Fibonacci Category. This is a category with two simple objects $\1$ and $\tau$ satisfying the fusion rule that $ \tau\otimes \tau\cong \1 \oplus \tau$. \protect\cite{BD12} provides an in depth description  of the fusion categorical data in $\Fib$.

\subsection{\texorpdfstring{\centering Algebra Objects and Module Categories}{Algebra Objects and Module Categories}}

One view of tensor categories is the categorification of a ring, from that perspective we can then try to understand how modules over that ring can be categorified. A left module category $M$ over a monoidal category $C$ is a category $M$ equipped with a bifunctor $\triangleright:C \times M \rightarrow M$ and a natural isomorphism $m_{X,Y,M}:(X\otimes Y)\triangleright M \rightarrow X\triangleright (Y\triangleright M)$ satisfying the pentagon and triangle axioms. Similar to ring theory, the canonical $C$-Module category is $C$ acting on itself via left action. From \protect\cite[Chapter 7] {EGNO16} there is a bijection between structures of a $C$-module categories on M and monoidal functors $F : C \rightarrow  \End(M)$.

This a categorification of how we think of modules over a ring. A ring $R$ acting on a module $M$ is equivalent to the data of a ring homomorphism from $R$ into $\End(M)$ the endomorphism ring of $M$. Now we will state a quintessential theorem from \protect\cite{Wat} and \protect\cite{Eil60}. 
    Let $R$ and $S$ be two rings, then 
\[\Bim(R,S) \cong \Fun_{coc}(\Mod_R,\Mod_S).\]
Where $\Fun_{coc}$ is the category of colimit preserving additive functors.

This is an essential theorem for studying actions of fusion categories on algebras. If we have a $C$ module category structure on $\Mod(A)$ then we have a monoidal functor $F:C\rightarrow \End(\Mod(A))$. By the Eilenberg-Watts theorem, it follows that there is a monoidal functor $F:C\rightarrow \Bim(A)$. In particular, since monoidal functor preserve duals, it follows that $F:C\rightarrow \Bim(A)^\circ$. Note that all functors we will study in $\End(\Mod(A))$ will preserve colimits and are additive.

Let $M$ and $N$ be $C$-Module categories. A $C$-Module functor between $M$ and $N$ is a functor $G$ and a natural isomorphism $\eta_{X,M}:G(X\triangleright M) \rightarrow X\triangleright G(M)$ satisfying some coherences. Given a tensor category $C$ and a module category, $M$ the dual category $\End_C(M)$ is the category of $C$-Module endofunctors on $M$. The dual category $\End_C(M)$ is also a tensor category, in particular if $C$ is a fusion category
  and $M$ is a semisimple $C$ module category by \protect\cite{EGNO16} then $\End_C(M)$ is also a multifusion category. If in addition $M$ is indecomposable, then $\End_C(M)$ is a fusion category.
    Let $C$ be a linear monoidal category. An internal algebra object $A$ to the tensor category $C$ is a triple $(A, m, u)$, where A is an object in C, and $m : A \otimes A \rightarrow A$ and $u : \1 \rightarrow A$ are morphisms such that they satisfy the standard algebra coherences. An associative algebra is an algebra object in the category $\VecC$. A right module over an algebra $(A, m, u)$ in C is a pair $(M, p)$, where M is an object in C and $p : M \otimes A \rightarrow M$ is a morphism satisfying the usual module coherences. There is a natural left $C$-Module structure on $\Mod(A)$ for some internal algebra $A$ using the module associator and our $A$ action. We can understand all semisimple module categories in a fusion category using the machinery of algebra objects and their category of right modules by the following two results. 
    Let $C$ be a fusion category and $M$ be a semisimple left module category then $M$ then $M\cong \Mod(A)$ where $A$ is an algebra object in $C$ \protect\cite{Ost03} and if $M \cong \Mod(A)$ is a left $C$-Module category, then $\End_C(M) \cong \Bim(A)^{op}$ the opposite tensor product category \protect\cite{EGNO16}.

In practice, finding all the semisimple module categories of a fusion category is not a trivial task. We define a special type of fusion category with nice semisimple $C$ module category structures.

\begin{define}
    A fusion category $C$ is algebra complete if every semisimple module category of $C$ is equivalent to $C^{\oplus n}$ as $C$ module categories.
\end{define}
\begin{remark}
    In other places of the literature these are also referred to as torsion free fusion categories, see \protect\cite{ADC19} for an in depth discussion of them.
\end{remark}
    $\Fib$ the Fibonacci category is a member of a family algebra complete fusion category $\PSU(2)_{p-2}$. This infinite family of algebra complete fusion categories will show up later in the paper
 
\section{\texorpdfstring{\centering The Two Category of Based Algebra Actions}{The Two Category of Based Algebra Actions}}

In this section, we define a based action of a fusion category on an associative algebra. We also discuss a two category of based actions and construct notions of equivalences of these actions. Then we motivate our interest in based actions by connecting them to Hopf algebra actions.  

A group action of $G$ on $A$ is a group homomorphism $\phi:G\rightarrow \Aut(A)$. Each automorphism $\phi_g$ is generalized by a bimodule $A_{\phi_g}$ where the left action is multiplication in $A$ and the right action is twisted by the automorphism $\phi_g$ such that $A_{\phi_g}\otimes_A A_{\phi_h}=A_{\phi_{gh}}$. Fusion category actions on associative algebras generalize this notion.
An action of a fusion category on an associative algebra is a $C$ module structure on $\Mod(A)$. Using the Eilenberg-Watts Theorem, this produces our definition of an action of a fusion category $C$ on $A$.
\begin{define}
    An action of a fusion category $C$ on an associative algebra $A$ is a monoidal functor $F:C\rightarrow \Bim(A)$.
\end{define}

When we categorify the idea of a finite group, we get $\VecC(G)$.
An action of $\VecC(G)$ on $A$ is a monoidal functor $F:\VecC(G)\rightarrow \Bim(A)$. A natural question is how do actions of $G$ on $A$ relate to $\VecC(G)$ actions on $A$? We will show that based $\VecC(G)$ actions fully generalize group actions. More generally, we will show that for a semisimple Hopf algebra $H$ that based $\coRep(H)$ actions on $A$ generalize $H$ actions on $A$. This motivates our interest in defining based actions.

\begin{define}
    A based action of $C$ on $A$ is defined by a triple $(A,F,V_{-})$ where $(F,J_{-,-}):C\rightarrow \Bim(A)$ is a monoidal functor and $V_X\subset F(X)$ is a choice of finite dimensional subspace such that $\1_A\in V_1$, and morphisms preserve these subspaces, that is if $f:F(X)\rightarrow F(Y)$ then $f(V_X)\subset V_Y$.
\end{define}

\begin{lem} \label{lem:based action V_X}
    The subspace $V_X$ spans a left/right projective basis for $F(X)$.
\end{lem}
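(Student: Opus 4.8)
The plan is to derive everything from rigidity of $C$ and the fact that a monoidal functor carries dual pairs to dual pairs. Since $C$ is fusion, $X$ has a left dual $X^{*}$ and a right dual ${}^{*}X$ together with their (co)evaluations; as $(F,J)$ is monoidal, $F(X)$ is dualizable in $\Bim(A)$ with left dual $F(X^{*})$, the structure maps being the transports of $F(\ev_{X})$ and $F(\coev_{X})$ through the coherence isomorphisms. Concretely I set
\[
\widehat{\coev}=(J_{X,X^{*}})^{-1}\circ F(\coev_{X})\colon F(\1)\to F(X)\otimes_{A}F(X^{*}),\qquad
\widehat{\ev}=F(\ev_{X})\circ J_{X^{*},X}\colon F(X^{*})\otimes_{A}F(X)\to F(\1)\cong A ;
\]
these are bimodule maps, and they obey the two zig–zag identities because $\coev_{X},\ev_{X}$ do in $C$.

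Write $\widehat{\coev}(\1_{A})=\sum_{i=1}^{r}x_{i}\otimes y_{i}$. The first thing I would verify is that one may take $x_{i}\in V_{X}$ and $y_{i}\in V_{X^{*}}$. This is where the based data enters: $\1_{A}\in V_{\1}$, and $F(\coev_{X})$ is a morphism, so $F(\coev_{X})(\1_{A})\in V_{X\otimes X^{*}}$; transporting through $(J_{X,X^{*}})^{-1}$ — under which $F(X)\otimes_{A}F(X^{*})$ carries the subspace $\operatorname{span}_{\K}\!\big(V_{X}\otimes_{A}V_{X^{*}}\big)$ — places $\widehat{\coev}(\1_{A})$ in $\operatorname{span}_{\K}(V_{X}\otimes_{A}V_{X^{*}})$, so after regrouping terms every $x_{i}$ lies in $V_{X}$ and every $y_{i}$ in $V_{X^{*}}$.

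Now I push the zig–zag identity $(\id_{X}\otimes\ev_{X})\circ(\coev_{X}\otimes\id_{X})=\id_{X}$ through $F$, obtaining $(\id_{F(X)}\otimes\widehat{\ev})\circ(\widehat{\coev}\otimes\id_{F(X)})=\id_{F(X)}$; evaluating at $m\in F(X)$ gives
\[
m=\sum_{i=1}^{r}x_{i}\cdot\widehat{\ev}(y_{i}\otimes m).
\]
Since $\widehat{\ev}$ is a bimodule map, each $\phi_{i}:=\widehat{\ev}(y_{i}\otimes-)\colon F(X)\to A$ is right $A$-linear, so $\{(x_{i},\phi_{i})\}$ is a right projective basis of $F(X)$ (in particular $F(X)$ is finitely generated projective as a right $A$-module) with all $x_{i}\in V_{X}$. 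Running the identical argument with the right dual ${}^{*}X$, using $\coev'_{X}\colon\1\to{}^{*}X\otimes X$, $\ev'_{X}\colon X\otimes{}^{*}X\to\1$ and the other zig–zag, yields $v_{j}\in V_{X}$ and left $A$-linear maps $\psi_{j}\colon F(X)\to A$ with $m=\sum_{j}\psi_{j}(m)\,v_{j}$ — a left projective basis drawn from $V_{X}$. This gives the ``left/right'' statement.

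The step I expect to be the main obstacle is the second paragraph: being precise about how the subspaces $V_{-}$ propagate along $\otimes_{A}$, and checking that $(J_{X,X^{*}})^{-1}$ carries $V_{X\otimes X^{*}}$ into $\operatorname{span}_{\K}(V_{X}\otimes_{A}V_{X^{*}})$ (this is where one must lean on exactly how a based action constrains the $V_{-}$ and on the subspace-preservation condition applied to the coherence isomorphisms). Granting that compatibility, the rest is formal — preservation of dual pairs by monoidal functors, the zig–zag bookkeeping, and the observation that the (co)evaluations are bimodule maps, which is precisely what makes $\phi_{i}$ right $A$-linear and $\psi_{j}$ left $A$-linear.
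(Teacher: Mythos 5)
Your proof is correct and follows essentially the same route as the paper's: transport the dual pair through the monoidal functor, observe that $\coev_{F(X)}(\1_A)$ lands in $V_X\otimes_A V_{X^*}$ because $\1_A\in V_\1$ and the structure maps respect the based subspaces, and then read off the projective basis from the zig--zag identities. You are in fact more explicit than the paper (which treats the left-dual case as ``analogous'' and silently assumes the compatibility of $J_{X,X^*}^{-1}$ with the $V_-$ subspaces that you correctly flag as the delicate step).
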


\begin{proof}
    We will prove this result for right duals and the result for left duals is analogous. Since $F$ is a monoidal functor from, $C \rightarrow \Bim(A)$ it follows that $F(X)$ is dualizable for each $X$. In particular, we define the evaluation and coevaluation as follows, 

    \[\ev_{F(X)}:=F(X)\otimes F(X^*)\xrightarrow{J_{X,X^*}} F(X\otimes X^*)\xrightarrow{\ev_X}F(\1)\]
\[\coev_{F(X)}:=F(\1)\xrightarrow[]{\coev_X} F(X\otimes X^*)\xrightarrow[]{J_{X,X^*}^{-1}} F(X)\otimes F(X^*).\]

Notice that since $1_A\in V_1$ then $\coev_{F(X)}(1_A)\in V_{X}\otimes_A V_{X^*}$. In particular, this implies that $\coev_{F(X)}(1_A)=\sum_i v_i\otimes_A v_i^*$ where $v_i \in V_X$ and $v_i^*\in V_{X^*}$. Since ev and coev satisfy the zigzag relations, it follows that $\{v_i\}$ form a projective basis of $F(X)$.
\end{proof}

\begin{cor}\label{cor: coev lives in V_1}
    Let $z\in V_X$, then $z=\sum_ix_i\alpha_i$ where $\alpha_i\in V_{\1}$.
\end{cor}
\begin{proof}
    If $z\in V_X$ then $z\in X$ which implies that $z=\sum_i x_ix_i^*(z)$. But since the evaluation morphism sends $V_X\otimes_A V_X^*$ to $V_1$ then it follows that $x_i^*(z)\in V_1$ for each $i$.
\end{proof}

There is a natural notion of a two category of algebras. The objects are algebras, the $1$ morphisms are bimodules and then the two morphisms are bimodule homomorphisms. We now define the two category of based actions of a fusion category on an algebra, which we will denote $C-\BasedAlg$. We model our category off the groundwork laid down in \protect\cite{CJHP24} with their two category of $C^*$ algebra actions. 
\begin{define}
    Fix a fusion category $C$. $C-\BasedAlg$ is the two category defined as follows,
    \begin{enumerate}
        \item Objects are associative algebras $A$ equipped with based actions of $C$.

        \item One morphisms are based $B-A$ bimodules, that is triples $(Q,V_Q,\phi_{-})$. If we have based actions $F:C\rightarrow \Bim(A)$ and $G:C\rightarrow \Bim(B)$ then $Q$ is an object in $\Bim(B-A)$ and $\phi_X$ is a family of half braidings which satisfy the relation $\phi_X:Q \otimes_A F(X) \xrightarrow{\sim} G(X) \otimes_B Q$ such that is restricts to $\phi_X:V_Q \otimes_A V^F_{X} \xrightarrow{\sim} V_X^G \otimes_B V_Q$ and the diagram below commutes, (suppressing associators)

        \[\adjustbox{scale=0.90,center}{ \begin{tikzcd}
	& {Q \otimes_{A}F(X)\otimes_{A} F(Y)} \\
	{G(X) \otimes_B Q \otimes_{A}G(Y)} && {Q\otimes_A F(X\otimes Y) } \\
	\\
	{G(X) \otimes_{B}G(Y)\otimes_B Q} && {G(X\otimes Y) \otimes_{B} Q}
	\arrow["{\phi_X \otimes_A\id_{F(Y)}}", from=1-2, to=2-1]
	\arrow["{\id_Q \otimes J^F_{X,Y}}"', from=1-2, to=2-3]
	\arrow["{\id_{G(X)}\otimes \phi_Y}", from=2-1, to=4-1]
	\arrow["{\phi_{X,Y}}"', from=2-3, to=4-3]
	\arrow["{ J^G_{X,Y}\otimes_B Q}", from=4-1, to=4-3]
\end{tikzcd}}.\]
        
        \item Two morphisms are bimodule intertwiners compatible with the action of $C$ that preserve the $V_Q$ spaces. That is $f:(Q,V_Q,\phi_{-})\rightarrow (Q', V_{Q'},\phi'_{-})$ such that $( id_{G(X)} \otimes_B f)\circ \phi_X=\phi_X'\circ (f\otimes_A \id_{F(X)})$ for all $X$ in $C$ and $f(V_Q)\subset V_{Q'}$.
        
        \item Let $(A,F,V^F),(B,G,V^G)$ and $(C,H,V^H)$ be objects in $C-\BasedAlg$, $(Q,V_Q,\phi)\in \Bim(B-A)$ and $(Q',V_{Q'},\phi') \in \Bim(C-B)$ be one morphisms then the composite morphism is defined by 

        \[(Q'\otimes_B Q,(\phi'_X \otimes_B \id_Q)\circ (\id_{Q'}\otimes_B \phi_X)).\]

        \item Given $(Q,V_Q),(Q',V_{Q'})$ in $\Bim(B-A)$ and $(P,V_P),(P',V_{P'}) \in \Bim(C-B)$ with the corresponding intertwiners $f:Q\rightarrow Q'$ and $g:P\rightarrow P'$, horizontal composition of $f$ and $g$ is $g\otimes_B f$.
    \end{enumerate}

\end{define}

We have defined this two category of actions. We are interested in when these actions are equivalent and what different levels of equivalence of based actions exist. In the literature of fusion categories acting on algebras, there are three different levels of equivalence.

\begin{define}
     Two objects $(A,F,V)$ and $(A,F',V')$ are equivalent if there exists a monoidal natural isomorphism $\gamma$ from $F$ to $F'$ that preserves the base spaces.
\end{define}

\begin{define}
  Two objects $(A,F,V)$ and $(A,F',V')$ are conjugate, if there is an invertible objects $u\in A$ such that conjugation by $u$, $\phi_u:A\rightarrow A'$ such that $F^u$ is monoidally naturally isomorphic to $F'$ such that it  preserves the base spaces. If $X \in \Bim(A)$ then $X^u \in \Bim(A)$ where the action is $a_1\cdot x\cdot a_2$ is define as $\phi_u(a_1)\cdot x\cdot \phi_u(a_2)$. 
\end{define}

\begin{define}
    Two objects $(A,F,V)$ and $(A',F',V')$ are Morita equivalent if there exists an invertible one morphism $(Q,V_Q,\phi_{-})$ in $\Bim(A',A)$ such that $(Q,V_Q)\otimes_A (\overline{Q},\overline{V_Q})\cong (A',V_A')$ and $(\overline{Q},\overline{V_Q})\otimes_{A'}(Q,V_Q)\cong (A,V_A)$ .
\end{define}

\begin{remark}
    These are all stronger variants of the equivalent definitions for fusion category actions on algebras. For example, an equivalent based action of $C$ on $A$ is an equivalent action of $C$ on $A$.
\end{remark}

Let $(A,F,V_{-})$ be a based action. Notice that we have an assignment from $X\rightarrow V_{X}$. We will show that this produces a lax monoidal functor from $C\rightarrow \VecC.$

\begin{lem} \label{lem:lax monoidal functor}
    Let $(A,F,V_{-})$ be a based action of $C$ on $A$, then there is a lax monoidal functor $(F',J'_{-,-}):C\rightarrow \VecC$.
\end{lem}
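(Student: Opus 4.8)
The plan is to let $F'$ send an object $X$ of $C$ to the vector space $V_X$, and a morphism $g\colon X\to Y$ to $F'(g):=F(g)|_{V_X}$. This is well defined: $F(g)\colon F(X)\to F(Y)$ is a morphism in $\Bim(A)$, hence $\K$-linear, and it carries $V_X$ into $V_Y$ by the subspace-preservation clause in the definition of a based action; the functoriality and $\K$-linearity of $F'$ are then inherited verbatim from $F$. So $F'\colon C\to\VecC$ is a $\K$-linear functor at no cost, and the real content is the lax monoidal structure.

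For the tensorator I would take $J'_{X,Y}\colon V_X\otimes_\K V_Y\to V_{X\otimes Y}$ to be the corestriction of the composite
\[
V_X\otimes_\K V_Y\;\longrightarrow\;F(X)\otimes_A F(Y)\;\xrightarrow{\;J_{X,Y}\;}\;F(X\otimes Y),
\]
in which the first arrow is induced by $V_X\hookrightarrow F(X)$ and $V_Y\hookrightarrow F(Y)$ followed by the quotient onto the relative tensor product, and I would define the unit morphism $j'\colon\K\to V_\1$ by $1\mapsto\1_A$, which is legitimate because $\1_A\in V_\1$. The one point that actually needs to be checked is that this composite lands in $V_{X\otimes Y}$, i.e.\ that $J_{X,Y}$ carries $V_X\otimes_A V_Y$ into $V_{X\otimes Y}$. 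This is the compatibility of the tensorator with the chosen subspaces, and it is the step I expect to be the main obstacle; the cleanest route is the one already implicit in the proofs of Lemma~\ref{lem:based action V_X} and Corollary~\ref{cor: coev lives in V_1}: one applies the subspace-preservation property to the structure isomorphism $J_{X,Y}$ inside the monoidal subcategory of $\Bim(A)$ generated by the $F(X)$ (equivalently, one expands elements of $V_X$ and $V_Y$ in the projective bases furnished by Lemma~\ref{lem:based action V_X}, uses Corollary~\ref{cor: coev lives in V_1} to write the coefficients in $V_\1$, and invokes compatibility of $J$ with the duality data). It should also be recorded here that $J'_{X,Y}$ is typically not an isomorphism — already $V_X\otimes_\K V_Y\to V_X\otimes_A V_Y$ need not be injective — so we genuinely land in the lax rather than the strong setting.

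With $J'_{-,-}$ and $j'$ in hand, the remaining verifications are formal. Naturality of $J'_{-,-}$ in each variable follows from naturality of $J_{-,-}$ for the monoidal functor $F$ together with $F'(g)=F(g)|_V$. The associativity hexagon relating $J'$ to the associativity constraint of $C$, and the two unit triangles involving $j'$, are obtained by restricting the corresponding commuting diagrams for the strong monoidal functor $(F,J_{-,-})$ to the subspaces $V_{-}$ (and using $\1_A\in V_\1$ for the unit triangles); a commuting diagram of linear maps stays commuting upon restriction to invariant subspaces, so no new computation is needed. This assembles $(F',J'_{-,-})$ into a lax monoidal functor $C\to\VecC$.
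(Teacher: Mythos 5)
Your proposal is correct and follows essentially the same route as the paper: define $F'(X)=V_X$ with morphisms restricted from $F$, take $J'_{X,Y}$ to be the restriction of the $A$-balanced map $V_X\otimes_\K V_Y\to F(X)\otimes_A F(Y)\xrightarrow{J_{X,Y}}F(X\otimes Y)$, and verify naturality and the coherences by restricting the corresponding diagrams for $(F,J)$. If anything, you are more explicit than the paper about the one substantive point --- that $J_{X,Y}$ carries $V_X\otimes_A V_Y$ into $V_{X\otimes Y}$ --- which the paper simply asserts.
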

\begin{proof}
Since $F:C\rightarrow \Bim(A)$ is a monoidal functor then, $J_{X,Y}(F(X)\otimes_A F(Y))\cong F(X\otimes Y)$ it follows that by the definition of relative tensor product we induce a $A$ balanced morphism from $\tilde{J}_{X,Y}:F(X)\otimes F(Y)\rightarrow F(X\otimes Y)$ by the universal property of relative tensor products. In particular $V_X\otimes V_Y\xrightarrow{\tilde{J}_{X,Y}} V_{X\otimes Y}$. This induces a mapping $F':C\rightarrow \VecC$ that sends $X\rightarrow V_X$ and a morphism $J'_{X,Y}=\tilde{J}|_{V_X\otimes V_Y}$. 

We define our lax monoidal functor $F':C\rightarrow \VecC$ that sends $X\rightarrow V_X$ and $J'_{X\otimes Y}:F(X)\otimes F(Y)\rightarrow F(X\otimes Y)$. 

\[\adjustbox{scale=.8,center}{\begin{tikzcd}
	&&& {V_X\otimes V_Y} \\
	&&&& 2 && {V_{X\otimes Y}} \\
	&& 1 & {V_X\otimes_A V_Y} \\
	{V_{W}\otimes V_{Z}} && {V_{W}\otimes_A V_{Z}} & 4 \\
	&& 3 \\
	\\
	&&& {V_{W\otimes Z}}
	\arrow["{J'_{X,Y}}"{description}, from=1-4, to=2-7]
	\arrow["\pi"{description}, from=1-4, to=3-4]
	\arrow["{F'(f)\otimes F'(g)}"{description}, from=1-4, to=4-1]
	\arrow["{F(f\otimes g)=F'(f\otimes g)}"{description}, from=2-7, to=7-4]
	\arrow["{{J}_{X,Y}}"{description}, from=3-4, to=2-7]
	\arrow["{F(f)\otimes_A F(g)}"{description}, from=3-4, to=4-3]
	\arrow["\pi"{description}, from=4-1, to=4-3]
	\arrow["{J'_{W,Z}}"', from=4-1, to=7-4]
	\arrow["{J}_{W,Z}"{description}, from=4-3, to=7-4]
\end{tikzcd}}\]

$1,2,3$ commute by definition of relative tensor product, and $4$ commutes by the naturality of $J$. Thus, $J'$ is a natural transformation.
By another commutative diagram argument using the definition of $A$ balanced tensor product, similar to the one above, $J'$ will satisfy the necessary coherences to make $(F',J')$ a lax monoidal functor.   
\end{proof}
\begin{cor} \label{cor: alg object in cop}
    Let $(A,F,V)$ be a based action of a fusion category $C$ on $A$. Then $F'$ has the structure of an internal algebra object in $C^{op}$.
\end{cor}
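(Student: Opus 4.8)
The plan is to upgrade the lax monoidal functor $(F', J'_{-,-}) : C \to \VecC$ produced in Lemma \ref{lem:lax monoidal functor} into an honest algebra object. Recall the general principle: a lax monoidal functor $F' : C \to \VecC$ is precisely the data of an algebra object in the category $\Fun(C, \VecC)$ of functors, but more relevantly for us, evaluating the coherence data in the correct way yields an algebra object internal to $C^{\op}$. Concretely, the underlying object of our would-be algebra is the ``total space'' assembled from the $V_X$'s, and the multiplication comes from the maps $J'_{X,Y} : V_X \otimes V_Y \to V_{X\otimes Y}$, while the unit comes from the fact that $\1_A \in V_1$, i.e.\ from the structure morphism $j' : \K \to V_\1$ sending $1 \mapsto \1_A$.

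First I would make precise which object of $C^{\op}$ carries the algebra structure. Since $C$ is fusion, hence semisimple with finitely many simple classes $X_1, \dots, X_n$, the functor $F'$ is determined by the finite-dimensional vector spaces $V_{X_i}$, and one sets $\mathcal{A} := \bigoplus_i X_i \otimes V_{X_i}^* \in C$ (equivalently, $\mathcal{A} = \bigoplus_i \Hom_C(-, X_i)^* \otimes X_i$ via a coend-type formula); by the usual dictionary between lax monoidal functors $C \to \VecC$ and algebras in $C^{\op}$ (this is essentially the statement that $\Fun(C,\VecC) \simeq C^{\op}\text{-}\mathrm{bimod}$ content, or more elementarily that $F'(-) \cong \Hom_{C^{\op}}(\mathcal{A}, -)$ representable-style), the lax structure maps $J'_{X,Y}$ transpose to a morphism $m : \mathcal{A} \otimes^{\op} \mathcal{A} \to \mathcal{A}$ in $C^{\op}$ and $j'$ transposes to $u : \1 \to \mathcal{A}$. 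Then I would verify associativity and unitality of $(\mathcal{A}, m, u)$: these are exactly the hexagon/pentagon-type coherence of the lax monoidal functor $F'$ (associativity of $J'$ up to the associator $a$, and the unit constraints involving $i$), already established in Lemma \ref{lem:lax monoidal functor}, read off in $C^{\op}$ rather than $C$. The reversal of tensor order — i.e.\ why the algebra lives in $C^{\op}$ and not $C$ — is forced by the direction of the coend / the fact that $F'(X)\otimes F'(Y) \to F'(X\otimes Y)$ pairs the second variable first when transposed; I would spell this out carefully since it is the one genuinely content-bearing point.

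The main obstacle I anticipate is purely bookkeeping: setting up the object $\mathcal{A}$ and the transposition correspondence so that the lax coherences of $F'$ translate verbatim into the associativity and unit axioms for $(\mathcal{A}, m, u)$, with the opposite tensor product consistently threaded through every diagram. There is no hard analytic or categorical input here — given Lemma \ref{lem:lax monoidal functor} the result is formal — so I would likely present it compactly, perhaps even citing the standard equivalence between lax monoidal functors out of $C$ and algebra objects in $C^{\op}$ (as in the tensor-algebra literature, e.g.\ \cite{EKW21}), and then noting that the unit $\1_A \in V_\1$ guarantees the unit morphism $u$ is well-defined and nonzero. If a fully self-contained argument is wanted, I would just exhibit $m$ and $u$ explicitly on simples and check the two axioms by the same commutative-diagram manipulation (using the universal property of the relative tensor product and naturality of $J$) that appeared in the proof of Lemma \ref{lem:lax monoidal functor}.
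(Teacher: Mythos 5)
Your proposal is correct and takes essentially the same route as the paper: the paper's entire proof is a citation of \protect\cite[Proposition 3.3]{JP17}, which is precisely the standard dictionary you describe between lax monoidal functors $C\rightarrow \VecC$ (here $(F',J')$ from Lemma \protect\ref{lem:lax monoidal functor}) and algebra objects in $C^{\op}$. The explicit transposition you sketch (and the minor convention-checking about where the dual sits in $\mathcal{A}$) is exactly the content that the cited proposition packages, so nothing further is needed.
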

\begin{proof}
   By \protect\cite[Proposition 3.3]{JP17},
 this is the same data as an algebra object in $C^{op}$.
\end{proof}

\begin{cor}
    $F'(\1)\cong V_\1$ is a finite dimensional unital algebra in $\VecC$. 
\end{cor}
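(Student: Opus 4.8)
The plan is to unwind the statement using the two immediately preceding corollaries. By Corollary~\ref{cor: alg object in cop}, the lax monoidal functor $(F',J'_{-,-}):C\rightarrow\VecC$ from Lemma~\ref{lem:lax monoidal functor} endows $F'$ with the structure of an internal algebra object in $C^{\op}$. Evaluating this internal algebra at the unit object gives the claim, so the work is to make precise what ``evaluate at $\1$'' means and to identify the result with $V_\1$.

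First I would recall that a lax monoidal functor carries the unit structure morphism $j':\1_{\VecC}=\K\rightarrow F'(\1)$; since $F'(\1)=V_\1$ (the image of $\1$ under the assignment $X\mapsto V_X$), the image of $1\in\K$ under $j'$ is a distinguished element, namely $\1_A\in V_\1$, which will serve as the unit of the algebra. Next I would use the lax monoidal structure morphism $J'_{\1,\1}:F'(\1)\otimes F'(\1)\rightarrow F'(\1\otimes\1)$ composed with $F'$ applied to the unit isomorphism $i:\1\otimes\1\xrightarrow{\sim}\1$ of $C$ to obtain a multiplication $\mu:V_\1\otimes V_\1\rightarrow V_\1$. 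Concretely, by the construction in Lemma~\ref{lem:lax monoidal functor}, this is just the restriction of the algebra multiplication of $A$ to $V_\1\otimes V_\1$, which lands in $V_\1$ because $J'_{\1,\1}$ restricts to $\tilde J_{\1,\1}|_{V_\1\otimes V_\1}$ and $F(i)$ preserves the base spaces. Associativity and unitality then follow from the hexagon/triangle coherences of the lax monoidal functor $(F',J')$, exactly as an algebra object in $C^{\op}$ restricted to the unit; alternatively, and more cheaply, these hold because $\mu$ is literally the multiplication of the associative algebra $A$ restricted to the subspace $V_\1\ni\1_A$, so $(V_\1,\mu,\1_A)$ is a subalgebra of $A$ (it is closed under multiplication by Corollary~\ref{cor: coev lives in V_1} and the remarks following Lemma~\ref{lem:based action V_X}, and contains $\1_A$ by the definition of a based action). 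Finite dimensionality is immediate since $V_\1$ is a finite dimensional subspace by the definition of a based action.

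The only genuine content to check is that $V_\1$ is closed under the multiplication of $A$ and that $\1_A$ is a two-sided unit for it; both are essentially built into the definitions, so I expect no real obstacle. If anything, the subtle point is bookkeeping: confirming that the ``multiplication'' coming from the internal-algebra-in-$C^{\op}$ structure of $F'$ agrees, under the identification $F'(\1)=V_\1$, with the honest restriction of the product of $A$ — but this is exactly the content of the construction of $J'$ in Lemma~\ref{lem:lax monoidal functor}, where $\tilde J_{X,Y}$ is induced from $J_{X,Y}$ and $J_{\1,\1}$ (together with $j:F(\1)\rightarrow A$) encodes the unit of $A$. Hence the proof is a short assembly of Lemma~\ref{lem:lax monoidal functor} and Corollaries~\ref{cor: coev lives in V_1} and~\ref{cor: alg object in cop}, with the observation that evaluation of an internal algebra object at $\1$ produces an ordinary finite dimensional unital $\K$-algebra.
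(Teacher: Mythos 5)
Your proposal is correct and follows essentially the same route as the paper, whose entire proof is the observation that $\1$ is an algebra object in $C$ and that the lax monoidal functor $F'$ therefore carries it to an algebra object in $\VecC$; finite dimensionality is built into the definition of a based action. Your additional bookkeeping — identifying the induced multiplication with the restriction of the product of $A$ to $V_\1$ and checking closure and unitality directly — is a harmless (and arguably clarifying) elaboration of the same argument.
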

\begin{proof}
    Follows since $\1$ is an algebra in $C$.
\end{proof}

Our initial motivation for defining based actions was to generalize Hopf algebra actions to fusion categories. We shall see later that in the case of a Hopf algebra action on $A$, the based actions they generate $V_\1$ subspace is always a semisimple algebra.

\begin{define}
    A separable based action is a based action $(A,F,V)$ such that $V_\1$ is a semisimple algebra in $\VecC$.
\end{define}

\begin{define}
 Based actions of $C$ on $A$ over a functor $G$ are based actions such that the corresponding lax monoidal functor $F'\cong G$. 
\end{define}

Using this definition, we will now have the data necessary to construct $H$ actions on $A$ out of based actions of $\coRep(H)$ on $A$ for a semisimple Hopf algebra $H$. We will first show how a $H$ module algebra produces a monoidal functor $F:\coRep(H)\rightarrow \Bim(A)$.

\begin{lem}\label{lem:coRep becomes bim}
 Let $A$ be a $\Rep(H)$ module algebra and let $V$ be a corepresentation of $H$, then $A\otimes V$ is an $A-A$ bimodule.
\end{lem}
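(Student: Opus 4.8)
The plan is to exhibit the bimodule structure on $A\otimes V$ explicitly and then verify the axioms. Write the coaction of $H$ on $V$ as $\rho(v)=\sum v_{(0)}\otimes v_{(1)}$ in Sweedler notation, and recall that $A$ being an algebra in $\Rep(H)$ (an $H$-module algebra) means $H$ acts on $A$ with $h\cdot(ab)=\sum (h_{(1)}\cdot a)(h_{(2)}\cdot b)$ and $h\cdot 1_A=\epsilon(h)1_A$. The left $A$-action on $A\otimes V$ will be the obvious one, $a\cdot(b\otimes v):=ab\otimes v$, touching only the first tensorand. For the right action I would twist by the coaction, setting $(b\otimes v)\cdot a:=\sum b\,(S(v_{(1)})\cdot a)\otimes v_{(0)}$; the antipode is inserted precisely so that this is a right action in the chosen comodule convention, and it can be dropped if one instead works with left comodules.

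First I would record that the left action is a unital left $A$-module structure, which is immediate. Then I would check the right formula: unitality $(b\otimes v)\cdot 1_A=b\otimes v$ follows from $h\cdot 1_A=\epsilon(h)1_A$ together with the counit axiom $\sum \epsilon(v_{(1)})v_{(0)}=v$ for comodules, and associativity $((b\otimes v)\cdot a_1)\cdot a_2=(b\otimes v)\cdot(a_1 a_2)$ is the one substantive computation: expanding both sides, coassociativity of $\rho$ lets one split $v_{(1)}$, the module-algebra identity $h\cdot(a_1a_2)=\sum(h_{(1)}\cdot a_1)(h_{(2)}\cdot a_2)$ lets one recombine the $a_i$, and anticomultiplicativity $\Delta(S(h))=\sum S(h_{(2)})\otimes S(h_{(1)})$ matches the two sides term by term. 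Finally, the bimodule compatibility $(a\cdot\xi)\cdot a'=a\cdot(\xi\cdot a')$ for $\xi\in A\otimes V$ is essentially automatic: the left action is left multiplication on the first tensorand, while the right action only right-multiplies the first tensorand and permutes the $V$-component, so the two operations commute by associativity of multiplication in $A$.

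I expect the only genuine obstacle to be bookkeeping: fixing the placement of the antipode (equivalently, pinning down the left/right comodule convention meant by ``corepresentation'') so that the associativity check for the right action closes up, and then keeping the Sweedler indices straight when coassociativity and the module-algebra axiom are applied simultaneously. Once the formula above is in hand, each verification is a short routine manipulation.
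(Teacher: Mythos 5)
Your proposal takes essentially the same route as the paper: left action by multiplication on the first tensorand, right action by pushing $a$ through the coaction of $H$ on $V$ and letting the resulting Hopf elements act on $a$ via the module-algebra structure. The one substantive difference is that you insert the antipode, writing $(b\otimes v)\cdot a=\sum b\,(S(v_{(1)})\cdot a)\otimes v_{(0)}$, whereas the paper's formula $\sum_i a\,h_i(a_0)\otimes v_i$ has no antipode; your care here is warranted, since for a right comodule over a non-cocommutative $H$ the antipode (or a switch to the left-comodule convention, as you note) is exactly what makes the associativity check $((b\otimes v)\cdot a_1)\cdot a_2=(b\otimes v)\cdot(a_1a_2)$ close up via $\Delta(S(h))=\sum S(h_{(2)})\otimes S(h_{(1)})$. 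The paper simply asserts that the two actions are module actions and are compatible, so your sketch of the unitality, associativity, and compatibility verifications is strictly more detailed than what appears there, and is correct.
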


\begin{proof}
    Let $V$ be a corepresentation of $H$. Then we can define an $A$ bimodule by taking $A \otimes V$. The left action of $A$ on $A \otimes V$ is just left multiplication. The right action of $A$ on $A\otimes V$ is done by applying the comodule action on $V$ giving us $\sum_i(a\otimes v_i\otimes h_i)\triangleleft a_0:=\sum_iah_i(a_0)\otimes v_i$ where we apply the coaction of $H$ on $V$ and then apply $h_i$ to $a_0$ on the left. The left and right actions give module actions and are compatible, thus $A\otimes V$ is a bimodule.
\end{proof}

\begin{lem}\label{lem:hopf action anives based action}
    Let $A$ be a $\Rep(H)$ module algebra, then the action produces a monoidal functor $F:\coRep(H)\rightarrow \Bim(A)$ defined by $F(X)=A\otimes X$.
\end{lem}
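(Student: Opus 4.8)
The plan is to promote the bimodules $F(X) = A \otimes X$ produced by Lemma \ref{lem:coRep becomes bim} to a monoidal functor by exhibiting the tensorator $J_{X,Y}$ and checking the hexagon/associativity and unit coherences. First I would check functoriality: a morphism $f : X \to Y$ of corepresentations is in particular a linear map, so $\id_A \otimes f : A \otimes X \to A \otimes Y$ is well-defined; I must verify it is a bimodule map. Left $A$-linearity is immediate since the left action is just multiplication on the first leg and $f$ does not touch it. Right $A$-linearity uses that $f$ intertwines the $H$-coactions on $X$ and $Y$: writing the coaction of $H$ via its action on $A$ as in Lemma \ref{lem:coRep becomes bim}, the fact that $f$ is a comodule map means the "twist by $h_i$" data is transported correctly, so $\id_A \otimes f$ commutes with the right action. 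This makes $F$ a functor $\coRep(H) \to \Bim(A)$.

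Next I would produce the tensorator. There is a canonical $A$-bimodule isomorphism
\[
J_{X,Y} : (A \otimes X) \otimes_A (A \otimes Y) \;\xrightarrow{\ \sim\ }\; A \otimes (X \otimes Y),
\]
obtained as follows: an element $(a \otimes x) \otimes_A (b \otimes y)$ is rewritten, using the right $A$-action on $A \otimes X$, by moving $b$ across the balanced tensor product; concretely, using the coaction $x \mapsto \sum x_{(0)} \otimes x_{(1)}$ one sends $(a \otimes x) \otimes_A (b \otimes y) \mapsto \sum a\,x_{(1)}(b) \otimes x_{(0)} \otimes y$. One checks this is well-defined on the relative tensor product (it respects the $A$-balancing), is a bimodule map (left action clear; right action uses coassociativity of the $H$-coaction on $X \otimes Y$ together with the $\Rep(H)$ module algebra structure on $A$), and is invertible with inverse $a \otimes (x \otimes y) \mapsto (a \otimes x) \otimes_A (1_A \otimes y)$. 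The induced isomorphism $j : F(\1) = A \otimes \K \cong A$ is the obvious one.

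Finally I would verify the coherence axioms of a monoidal functor: the pentagon-type diagram relating $J_{X\otimes Y, Z} \circ (J_{X,Y} \otimes_A \id)$ to $J_{X, Y\otimes Z} \circ (\id \otimes_A J_{Y,Z})$ through the associators of $\Bim(A)$ and of $\coRep(H)$, and the two unit triangles. Since $\coRep(H)$ is the category of comodules with the usual (symmetric, strictly associative on underlying vector spaces) tensor product and $\Bim(A)$ has its standard associator on relative tensor products, both sides of the pentagon unwind — using coassociativity of the coaction on $X$ — to the same map $\sum a\, x_{(2)}(b)\, (x_{(1)}y_{(1)})(c)\otimes x_{(0)}\otimes y_{(0)} \otimes z$ (schematically), so the diagram commutes; the unit axioms are similarly routine. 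I expect the main obstacle to be bookkeeping in the well-definedness and bimodule-map checks for $J_{X,Y}$ on the balanced tensor product $\otimes_A$ — in particular correctly tracking how the $\Rep(H)$-module-algebra structure on $A$ interacts with the iterated $H$-coaction — rather than any conceptual difficulty; everything else is a direct Sweedler-notation computation.
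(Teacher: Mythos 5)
Your proposal is correct and follows essentially the same route as the paper: check functoriality of $F(f)=\id_A\otimes f$ using that $f$ is a comodule map, then build the tensorator $(A\otimes X)\otimes_A(A\otimes Y)\xrightarrow{\sim}A\otimes(X\otimes Y)$ by moving the middle $A$-factor across the balanced tensor product via the coaction, and verify the coherences. You are simply more explicit (Sweedler notation, the inverse map, the counit/coassociativity checks) where the paper compresses these steps into a composite of associators/unitors and the phrase ``by computation.''
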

\begin{proof}
    First, let's show that $F$ is a functor. Notice that $F(X)$ is a $A-A$ bimodule by the Lemma \protect\ref{lem:coRep becomes bim}. Let $f:X\rightarrow Y$ be a comodule homomorphism, then notice that $F(f)=\id_A\otimes f:A\otimes X\rightarrow A\otimes Y$. This is clearly a left $A$ module homomorphism, thus, we just need to check that it's a right $A$ module homomorphism. This follows since $f$ is a comodule morphism, implying $f$ commutes with the comodule action on $X$ and $Y$.  $F$ respects composition and sends the identity to the identity, thus $F$ is a functor.

    Now we will show that $F$ is monoidal. We need to construct an $\epsilon_{X,Y}:F(X)\otimes_A F(Y) \rightarrow F(X\otimes Y)$. In particular, $\epsilon_{X,Y}:(A\otimes X)\otimes_A(A\otimes Y)\rightarrow A\otimes (X\otimes Y)$. Let, 
    
    \[\epsilon_{X,Y}:=(\id_A \otimes r_X \otimes \id_Y) \circ (\id_A \otimes \alpha_{X,A,Y})\circ \alpha_{A,X,A\otimes Y}.\]

    This is a natural isomorphism because of the naturality of the associator and the unitor. We can see that $(F,\epsilon_{-,-})$ satisfies all the monoidal functor criterion by computation, thus $(F,\epsilon_{-,-})$ is a monoidal functor.
    \end{proof}

We see that the above lemma implies that $\coRep(H)$ actions generalize Hopf algebra actions. A specific example is when $H=\K G$ the group algebra. Then, this theorem translates to $\VecC(G)$ actions on $A$ generalize group actions of $G$ on $A$. Building on this relation, we would like to know when actions of $\coRep(H)$ on $A$ are $H$ module algebras. Define $F_H$ to be the canonical forgetful functor $F:\coRep(H)\rightarrow \VecC$. Notice, by Tannaka-Krein duality, this equips $\coEnd(F_H)$ with the structure of the Hopf algebra $H$. Notice that if $F_H$ is the structure of $F'$ then $V_{\1}$ is a one dimensional vector space corresponding to $1_A$. 
\begin{prop}
    Suppose we have a based action $\coRep(H)$ on $A$ over $F_H$. Then $V_X$ is the span of a projective basis of $F(X)$ for all isomorphism classes of object $X$ in $\coRep(H)$
\end{prop}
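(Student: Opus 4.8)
The plan is to exploit the hypothesis that the action is \emph{over} $F_H$, which pins $V_\1$ down to the smallest possible subspace, and then to feed this into the two structural facts already established about the subspaces $V_X$: Lemma~\ref{lem:based action V_X} (that $V_X$ contains a projective basis) and Corollary~\ref{cor: coev lives in V_1} (that every element of $V_X$ is a combination of that basis with coefficients in $V_\1$).

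First I would record the consequence of being over $F_H$. Since $F'\cong F_H$ and $F_H$ is the forgetful functor $\coRep(H)\to\VecC$, its value on the unit object is $F_H(\1)\cong\K$, so $V_\1$ is one-dimensional; as $1_A\in V_\1$ by the definition of a based action, this forces $V_\1=\K\cdot 1_A$. (This is exactly the observation made just before the statement.)

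Now fix an isomorphism class of object $X$. By Lemma~\ref{lem:based action V_X}, evaluating the coevaluation at $1_A$ gives $\coev_{F(X)}(1_A)=\sum_i x_i\otimes_A x_i^*$ with $x_i\in V_X$, $x_i^*\in V_{X^*}$, and $\{x_i\}$ a projective basis of $F(X)$; in particular $\Span\{x_i\}\subseteq V_X$. For the reverse inclusion, let $z\in V_X$. By Corollary~\ref{cor: coev lives in V_1} we may write $z=\sum_i x_i\alpha_i$ with each $\alpha_i\in V_\1$, and since $V_\1=\K\cdot 1_A$ each $\alpha_i$ is a scalar multiple of $1_A$, whence $z\in\Span\{x_i\}$. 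Thus $V_X=\Span\{x_i\}$, the span of a projective basis of $F(X)$.

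The argument is short; the only delicate point is the identification $V_\1=\K\cdot 1_A$, which is precisely where the hypothesis ``over $F_H$'' enters and which collapses the coefficients in Corollary~\ref{cor: coev lives in V_1} from the algebra $V_\1$ down to the ground field. As a byproduct one also gets $\dim V_X=\dim F_H(X)=\dim X$, so the projective basis so obtained is of minimal size, though this is not needed for the statement itself.
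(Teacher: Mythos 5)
Your proof is correct and follows essentially the same route as the paper's: identify $V_\1=\K\cdot 1_A$ from the hypothesis ``over $F_H$,'' extract the projective basis $\{x_i\}\subseteq V_X$ from the coevaluation via Lemma~\ref{lem:based action V_X}, and collapse the coefficients in Corollary~\ref{cor: coev lives in V_1} to scalars. If anything, your version is slightly more careful than the paper's (which asserts $\alpha_i=1$ rather than $\alpha_i\in\K\cdot 1_A$, and leaves the inclusion $\Span\{x_i\}\subseteq V_X$ implicit).
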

\begin{proof}
By Corollary \protect\ref{cor: coev lives in V_1} it follows that if $z\in V_X$ then $z=\sum_i x_i\alpha_i$ where $\alpha_i$ in $V_{\1}$ but since $V_{\1}$ is the one dimensional vector space corresponding to $1$ then $\alpha_i=1$. Thus, $\{x_i\}$ is a basis for $V_X$ and a projective basis for $F(X)$.
\end{proof}

Now we state and prove some lemmas connecting based actions of $\coRep(H)$ on $A$ over $F_H$ and $H$ actions on $A$.

\begin{lem}
    Given an action of $H$ on $A$ then it produces a  based action of $\coRep(H)$ on $A$ over $F_H$.
\end{lem}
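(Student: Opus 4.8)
The plan is to take an $H$-action on $A$, i.e.\ an $H$-module algebra structure, feed it into Lemma~\ref{lem:hopf action anives based action} to obtain a monoidal functor $F:\coRep(H)\rightarrow \Bim(A)$ with $F(X)=A\otimes X$, and then equip each $F(X)$ with the distinguished finite-dimensional subspace $V_X := 1_A\otimes X \subset A\otimes X$. So the real content is to verify that this choice of subspaces satisfies the axioms of a based action and that the associated lax monoidal functor $F'$ of Lemma~\ref{lem:lax monoidal functor} is (isomorphic to) the forgetful functor $F_H$.

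First I would check the base-space axioms directly. We have $V_{\1} = 1_A\otimes \Bbbk \cong \Bbbk$, which visibly contains $1_A$ (identifying $1_A\otimes 1$ with $1_A$ under the left unitor), so the unit condition holds; this also makes it a one-dimensional, hence semisimple, unital algebra in $\VecC$, so the resulting based action is in fact separable. Next, for a comodule morphism $f:X\rightarrow Y$ we have $F(f)=\id_A\otimes f$, and $(\id_A\otimes f)(1_A\otimes X) = 1_A\otimes f(X)\subseteq 1_A\otimes Y = V_Y$, so morphisms in $\Bim(A)$ coming from $\coRep(H)$ preserve the $V$-spaces. (It is worth remarking that \emph{every} bimodule morphism $F(X)\rightarrow F(Y)$ is of this form, since $F$ is full on its image by the freeness of $A\otimes X$ as a left module, so this is not merely a condition on the image of $F$.) Thus $(A,F,V_{-})$ is a based action of $\coRep(H)$ on $A$.

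It remains to identify the associated functor $F'$. By Lemma~\ref{lem:lax monoidal functor}, $F'(X)=V_X = 1_A\otimes X$, which as a vector space is canonically the underlying space of $X$; the lax structure morphism $J'_{X,Y}$ is the restriction of $\tilde{J}_{X,Y}$ (the $A$-balanced map induced by $\epsilon_{X,Y}$ from Lemma~\ref{lem:hopf action anives based action}) to $V_X\otimes V_Y$. Tracing through the definition of $\epsilon_{X,Y}$ — a composite of associators and the right unitor $r_X$ — on elements $1_A\otimes x$ and $1_A\otimes y$ one gets $1_A\otimes x\otimes y$, i.e.\ $J'_{X,Y}$ is the canonical identification $X\otimes Y \xrightarrow{\sim} X\otimes Y$ of underlying vector spaces. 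Hence $F'$ together with $J'$ is exactly the forgetful monoidal functor $F_H:\coRep(H)\rightarrow \VecC$, so the based action we constructed is a based action \emph{over} $F_H$, as required.

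The only mildly delicate point — and the step I would expect to cost the most care — is the bookkeeping in the last paragraph: checking that the $A$-balanced map $\tilde{J}_{X,Y}$ induced from $\epsilon_{X,Y}$ genuinely restricts to the naive identity on $1_A\otimes X \otimes 1_A\otimes Y$, since one must be careful about how $\otimes_A$ interacts with the unitors and about the right $A$-action on $A\otimes X$ being twisted by the $H$-coaction. Everything else (functoriality, the unit subspace, preservation of subspaces, separability) is immediate. I would also note in passing that this construction will be the one inverted by the converse lemmas that follow, so it is convenient to record the explicit formula $V_X = 1_A\otimes X$ here.
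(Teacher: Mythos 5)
Your proposal is correct and follows essentially the same route as the paper: apply Lemma~\ref{lem:hopf action anives based action} to get $F(X)=A\otimes X$, set $V_X=1_A\otimes X$, and check that this is a based action over $F_H$ (the paper's own proof is terser and merely records the base spaces and the half braiding $(1_A\otimes x)\ast a=\sum h_i(a)\otimes x_i$). One caveat: your parenthetical claim that \emph{every} bimodule morphism $F(X)\to F(Y)$ has the form $\id_A\otimes f$ is false in general --- for instance, left multiplication by a non-scalar central element of $A$ is a bimodule endomorphism of $A\otimes X$ that does not preserve $1_A\otimes X$ --- but this does not harm the argument, since the based-action axiom only needs to be checked on morphisms in the image of $F$.
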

\begin{proof}
    Let $A$ be an $H$ module algebra.  Then we induce a monoidal functor $F:\coRep(H)\rightarrow \Bim(A)$. This produces a based action where the base spaces are $1_A\otimes V$. We can construct a based action of $\coRep(H)$ on $\Bim(A)$ over $F_H$ on $A$ by mirroring the action of $H$ on $A$. As a comodule, notice that each isomorphism class of simple $V$ produces a half braiding with $a$ action on the right via comultiplication. We can just define the half braiding of $(1_a\otimes x)*a$ to be $h_i(a)\otimes x_i$. Observing this for all simple comodules $V$ constructs our based action of $\coRep(H)$ on $A$ over $F_H$.
\end{proof}

\begin{lem} \label{lem:based action give Hopf action}
Let $H$ be a semisimple Hopf algebra. Then a based action of $\coRep(H)$ on $A$ over $F_H$ produces an action of $H$ on $A$.
\end{lem}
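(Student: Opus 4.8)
The plan is to reverse the construction of Lemma~\ref{lem:hopf action anives based action}: from the based action over $F_H$ we reconstruct an $H$-module-algebra structure $\rightharpoonup\colon H\otimes A\to A$ on $A$, i.e.\ an algebra object structure on $A$ internal to $\Rep(H)$. First I would use the preceding Proposition together with the hypothesis $F'\cong F_H$ to pin down the shape of $F$. Over $F_H$ the space $V_{\1}$ is one-dimensional, so each $V_X$ is a $\K$-basis of $F(X)$ which is simultaneously a projective basis; feeding this into the duality morphisms $\ev_{F(X)},\coev_{F(X)}$ of Lemma~\ref{lem:based action V_X} (choosing the bases of $V_X$ and $V_{X^*}$ dual under $\ev$, which via $F'\cong F_H$ is the canonical pairing) turns the projective-basis relations into a genuine dual-basis system, so the canonical surjection $A^{\oplus\dim X}\twoheadrightarrow F(X)$ is an isomorphism. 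Hence $F(X)\cong A\otimes X$ as a left $A$-module, naturally in $X$ (with $F(f)=\id_A\otimes f$ on comodule maps) and compatibly with the tensor structure (the coherence $J_{X,Y}$ becomes the canonical isomorphism $(A\otimes X)\otimes_A(A\otimes Y)\cong A\otimes(X\otimes Y)$). The only remaining datum of the based action is then, for each $X$, a right $A$-module structure on $A\otimes X$ extending the left one; as right multiplication is left $A$-linear, this is encoded by a map $\sigma_X\colon X\otimes A\to A\otimes X$, $\sigma_X(x,b):=(1_A\otimes x)\triangleleft b$, and the axioms (right-module, naturality, compatibility with $J$, and $\triangleleft 1_A=\id$) become identities among the $\sigma_X$.

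Second, I would single out the regular corepresentation $H\in\coRep(H)$ (coaction $\Delta$) and define $h\rightharpoonup b:=(\id_A\otimes\epsilon)\,\sigma_H(h,b)$. The crux is to show that every $\sigma_X$ is \emph{diagonal}: $\sigma_X(x,b)=\sum (x_{(1)}\rightharpoonup b)\otimes x_{(0)}$ where $\delta_X(x)=\sum x_{(0)}\otimes x_{(1)}$. This is a Tannaka--Krein style reconstruction and is where essentially all the work lies. For $X=H$ one applies naturality of $\sigma$ to the comodule map $\Delta\colon H\to\underline H\otimes H$, where $\underline H$ carries the trivial comodule structure and the target the tensor coaction --- unlike $H\otimes H$ with the tensor coaction, this target really does receive $\Delta$ as a comodule morphism --- then evaluates the right-hand side using compatibility of $\sigma$ with $J$ and the fact that $\sigma_{\underline H}$ is multiplication, and finally contracts the last tensor leg against $\epsilon$; this yields exactly $\sigma_H(h,b)=\sum (h_{(2)}\rightharpoonup b)\otimes h_{(1)}$. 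For general $X$ one invokes cosemisimplicity of $H$ (Larson--Radford, using that $H$ is finite-dimensional and semisimple): $X$ is a retract of $H^{\oplus n}$ via comodule maps, so naturality and additivity of $\sigma$ transport the diagonal formula from $H$ to $X$.

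Finally, with the diagonal form in hand, the module-algebra axioms for $\rightharpoonup$ follow from the structural properties of $\sigma$. One has $1_H\rightharpoonup b=b$ because $\sigma_H(1_H,-)$ is obtained from $\sigma_{\1}$ through the comodule map $\K\to H$, $1\mapsto 1_H$, and $\sigma_{\1}$ is multiplication on $F(\1)=A$; one has $h\rightharpoonup 1_A=\epsilon(h)1_A$ because $\sigma_X(x,1_A)=1_A\otimes x$; the Leibniz rule $h\rightharpoonup(bc)=\sum (h_{(1)}\rightharpoonup b)(h_{(2)}\rightharpoonup c)$ comes from the right-module axiom $(z\triangleleft b)\triangleleft c=z\triangleleft(bc)$ on $F(H)$ together with the diagonal form and comodule coassociativity; and associativity $g\rightharpoonup(h\rightharpoonup b)=(gh)\rightharpoonup b$ is obtained by computing $\sigma_{H\otimes H}(g\otimes h,b)$ in two ways --- once by the diagonal formula for the comodule $H\otimes H$, once by $J$-compatibility and the diagonal formula on each tensor factor --- and contracting both tensor legs with $\epsilon$. (One should keep an eye on Sweedler conventions here: according to how one orients the coaction, the natural output is a module-algebra structure for $H$ or for $H^{\mathrm{cop}}$; either way $A$ acquires the structure of an algebra object in $\Rep(H)$, i.e.\ an action of $H$.) The main obstacle is the reconstruction step forcing the diagonal form of $\sigma$; the identification $F(X)\cong A\otimes X$ in the first step, which genuinely uses that we work over $F_H$, is the other place demanding care.
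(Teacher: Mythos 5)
Your proposal is correct and follows essentially the same route as the paper: extract the $H$-action from the half braiding on the regular corepresentation via $\Delta$ and $\epsilon$, then verify the module-algebra axioms from the bimodule and monoidal coherence data. The one place you go beyond the paper is the reconstruction step forcing $\sigma_X(x,b)=\sum(x_{(1)}\rightharpoonup b)\otimes x_{(0)}$ --- the paper simply writes the half braiding in the form $ha=h_{(2)}(a)h_{(1)}$ without justification, whereas your naturality argument against the comodule map $\Delta\colon H\to\underline{H}\otimes H$ together with cosemisimplicity supplies the argument that this diagonal form actually holds.
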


\begin{proof}
  Let $F:\coRep(H)\rightarrow \Bim(A)$ be a based action of $\coRep(H)$ over $F_H$ on $A$. This implies that $F_H\cong F':\coRep(H)\rightarrow \VecC$. By Tannaka-Krein duality, we can equip $\coEnd(F_H)$ with the structure of $H$.  Let $h\in H$ and $a\in A$, by our half braiding $ha=\sum a_ih_i$. Since $H$ is a Hopf algebra, it has a comultiplication map that sends $h$ to $\sum_ih_i\otimes h'_i=h_{(1)}\otimes h_{(2)}$. 
  
  We can define our action of $H$ on $A$ by using comultiplication and the half braiding. In particular $ha=h_{(2)}(a)h_{(1)}$, that is $(h_{(2)},a)$ goes to $h_{(2)}(a)$. Consider, 

  \[hka=h(k_{(2)}(a)k_{(1)})=h_{(2)}(k_{(2)}(a))h_{(1)}k_{(1)}\]

  \[hka=(\sum_jv_j)a=v_{(2j)}(a)v_{(1j)}.\]

  These are equal, and in particular our $H$ action is a Hopf algebra action because our multiplication is a coalgebra homomorphism. In particular, 

  \[ \Delta \circ m=(m\otimes m)\circ (\id_H\otimes \sigma_{H,H} \otimes id_H)\circ(\Delta\otimes \Delta)\]
  where $\sigma$ is the braid isomorphism of vector spaces. We see that $1_H=1_A$ thus we see that $1_Ha=a1_H=1_H(a)1_H$ which implies $1_H(a)=a$. Thus, $A$ is a $H$ module. We shall now show that $A$ has the $H$ module algebra structure. This will follow by how we defined our action, the coassociativity of the comultiplication in $H$. That is,

  \[h(ab)=h_{(2)}(ab)h_{(1)}\]

  \[(ha)b=h_{(2)}(a)h_{(1)}b=h_{(2)}(a)h_{(12)}(b)h_{(11)}.\]

 By our bimodule half braiding these are equal, and this gives an H module algebra structure on $A$ because $(\id_H\otimes \Delta)\circ\Delta=(\Delta\otimes\id_H)\circ\Delta.$ Note that $h1_A=1_A h=h$ and $h1_A=h_{(1)}(1_A)h_{(2)}$. Then using the triangle property of coalgebras yeilds that the action of $h_{(1)}(\1_A)=\epsilon(h_{(1)})1_A$.  Therefore, $A$ is a $H$ module algebra. 
  \end{proof}

Thus, for a semisimple Hopf algebra $H$ actions on a generate based $\coRep(H)$ actions over $F_{H}$ and vice versa. In the following theorem, we establish a bijection between these sets up to equivalence of based actions. Notice that if we have a based action $\coRep(H)$ on $A$ over $F_H$ then the corresponding $G:C\rightarrow \Bim(A)$ is monoidally naturally isomorphic to monoidal functor $F$ defined in Lemma \protect\ref{lem:hopf action anives based action}.

\begin{thm}
    Let $H$ be a semisimple Hopf algebra. Equivalence classes of based actions of $\coRep(H)$ on $A$ over $F_H$ are in bijection with actions of $H$ on $A$.
\end{thm}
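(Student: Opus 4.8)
The plan is to assemble the bijection from the two directions already established in Lemmas \ref{lem:based action give Hopf action} and the preceding lemma, and then check that the two assignments are mutually inverse up to the relevant notion of equivalence. First I would fix notation: write $\Psi$ for the map sending an $H$-action on $A$ to the based action of $\coRep(H)$ on $A$ over $F_H$ constructed in the lemma before Lemma \ref{lem:based action give Hopf action} (with underlying monoidal functor $F(X)=A\otimes X$, base spaces $1_A\otimes X$, and half braiding $(1_A\otimes x)\triangleleft a = h_i(a)\otimes x_i$ coming from the coaction), and write $\Phi$ for the map of Lemma \ref{lem:based action give Hopf action} sending a based action over $F_H$ to an $H$-action via $ha = h_{(2)}(a)h_{(1)}$ read off from the half braiding and the comultiplication obtained from Tannaka--Krein duality on $\coEnd(F_H)$.

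The core of the argument is the two round-trips. For $\Phi\circ\Psi = \id$: starting from an $H$-module algebra structure $h\cdot a$, the induced half braiding on $A\otimes X$ encodes exactly the coaction on $X$ twisted by the given action, so when we recover the Hopf algebra structure via Tannaka--Krein and apply $\Phi$'s formula $ha = h_{(2)}(a)h_{(1)}$, the comultiplication that reappears is the original one on $H$ and the formula collapses (using $\epsilon(h_{(1)})h_{(2)} = h$ and that $1_A \in V_\1$ forces the $V_\1$-component to be scalar) back to $h\cdot a$. For $\Psi\circ\Phi$: starting from a based action $(A,F,V)$ over $F_H$, we must show the based action $\Psi(\Phi(F))$ is \emph{equivalent} (monoidal natural isomorphism preserving base spaces) to $(A,F,V)$. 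Here I would use the remark preceding the theorem: the monoidal functor underlying any based action over $F_H$ is monoidally naturally isomorphic to the standard $F(X)=A\otimes X$ of Lemma \ref{lem:hopf action anives based action}, so after transporting along that isomorphism we may assume $F(X)=A\otimes X$ with base spaces $1_A\otimes X$ (the base spaces are forced: by the Proposition, $V_X$ is the span of a projective basis of $A\otimes X$, and by Corollary \ref{cor: coev lives in V_1} every element of $V_X$ is an $A$-combination of such a basis with $V_\1$-coefficients, and $V_\1 = \K\cdot 1_A$). Then the half braiding is the only remaining datum, and unwinding $\Phi$ followed by $\Psi$ returns precisely that half braiding, since both steps are just passing between "$a$ on the right of $x$" and "comultiplication of the recovered Hopf element," which are inverse bookkeeping operations.

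The step I expect to be the main obstacle is the $\Psi\circ\Phi$ direction, specifically verifying that the equivalence can be chosen to \emph{preserve the base spaces} and to be \emph{monoidal}, not merely that the two half braidings agree abstractly. This requires being careful that the monoidal natural isomorphism from the remark (identifying the underlying functor with $X\mapsto A\otimes X$) is compatible with the tensorator $\epsilon_{X,Y}$ of Lemma \ref{lem:hopf action anives based action} and restricts correctly to the $V_X$'s; and that the half-braiding compatibility square in the definition of $C$-$\BasedAlg$ is respected. A secondary point to check carefully is that $\Phi$ genuinely lands in $H$-actions and not merely $\coEnd(F_H)$-comodule-ish data — but this is exactly the content of Lemma \ref{lem:based action give Hopf action}, so I would simply cite it. Finally I would note that both $\Psi$ and $\Phi$ are manifestly compatible with isomorphisms on the two sides ($H$-action conjugacy/isomorphism on one side, equivalence of based actions on the other), so they descend to the quotient sets and the round-trip identities above give the bijection.
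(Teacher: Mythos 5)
Your proposal is correct and follows essentially the same route as the paper: both directions of the correspondence are delegated to the two preceding lemmas, and the bijection is established by checking that the round-trips recover the original data (the paper phrases this as injectivity — the half braiding determines and is determined by the $H$-action — plus surjectivity via Lemma \ref{lem:based action give Hopf action}, which is logically the same as your $\Phi\circ\Psi=\id$ and $\Psi\circ\Phi\simeq\id$). If anything, you are more explicit than the paper about the genuinely delicate point, namely that the equivalence in the $\Psi\circ\Phi$ direction must be monoidal and preserve the base spaces, which the paper asserts without elaboration.
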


\begin{proof}
    We need to construct a bijection between these sets. To show that this is an injection take two actions of $H$ on $A$ that produce the same equivalence class of based action of $\coRep(H)$ on $A$. Then since the half braiding of our bimodules is determined by the action of $H$ on $A$ then for both actions to generate the same based action they must have been the same. To show subjectivity, consider an equivalence class of based actions of $\coRep(H)$ on $A$ then by Lemma \protect\ref{lem:based action give Hopf action} that there is a Hopf algebra action of $H$ on $A$ that arises from this based action. In particular, notice that by Lemma \protect\ref{lem:based action give Hopf action} that we construct a based action of $\coRep(H)$ on $A$. This action is equivalent to the based action we started with. Therefore, the map is a bijection and the result follows.
\end{proof}

\begin{cor}
   Let $G$ be a finite group. Equivalence classes of based actions of $\VecC(G)$  on $A$ over $F_{\K G}$ are in bijection with actions of $G$ on $A$.
\end{cor}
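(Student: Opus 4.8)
The plan is to deduce this corollary from the theorem it follows: equivalence classes of based actions of $\coRep(H)$ on $A$ over $F_H$ are in bijection with actions of $H$ on $A$, by taking $H = \K G$ the group algebra. First I would observe that $\K G$ is a semisimple Hopf algebra when $\K$ has characteristic zero (or more generally when $|G|$ is invertible in $\K$), so the hypotheses of the theorem are met. Then the content is purely the identification of the relevant categorical and algebraic data in this special case.

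The first step is to identify $\coRep(\K G)$ with $\VecC(G)$ as fusion categories. A corepresentation of $\K G$ is a $\K G$-comodule, and since $\K G$ has basis $\{g : g \in G\}$ with comultiplication $g \mapsto g \otimes g$, a $\K G$-comodule structure on a vector space $V$ is exactly a $G$-grading $V = \bigoplus_{g \in G} V_g$; morphisms of comodules are the grading-preserving linear maps, and the tensor product of comodules corresponds to the convolution grading $(V \otimes W)_g = \bigoplus_{hk = g} V_h \otimes W_k$. This is precisely the fusion category $\VecC(G)$ of finite-dimensional $G$-graded vector spaces, so $\coRep(\K G) \cong \VecC(G)$ as monoidal categories. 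Under this identification the canonical forgetful functor $F_{\K G} : \coRep(\K G) \to \VecC$ becomes the standard fiber functor $\VecC(G) \to \VecC$ forgetting the grading, which is what $F_{\K G}$ denotes in the statement.

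The second step is to identify actions of $\K G$ on $A$ with actions of the group $G$ on $A$. A $\K G$-module algebra structure on $A$ is a $\K$-algebra homomorphism $\K G \to \End_\K(A)$ making each $g$ act by algebra automorphisms (this is forced by the group-like comultiplication: $g \cdot (ab) = (g \cdot a)(g \cdot b)$ since $\Delta(g) = g \otimes g$, and $g \cdot 1_A = 1_A$), which is exactly a group homomorphism $G \to \Aut(A)$, i.e. a group action of $G$ on $A$. With both identifications in place, the theorem applied to $H = \K G$ says equivalence classes of based actions of $\VecC(G)$ on $A$ over $F_{\K G}$ are in bijection with actions of $G$ on $A$, which is the claim.

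The main (and essentially only) obstacle is making sure these two identifications — $\coRep(\K G) \cong \VecC(G)$ as monoidal categories compatibly with the fiber functors, and $\K G$-module algebras $=$ $G$-actions compatibly with the notion of ``based action over $F_H$'' — are genuinely functorial and preserve all the structure in the definitions (half braidings, base subspaces, the monoidal coherence data), so that the bijection of the theorem transports verbatim. Since all of this is standard and each verification is routine bookkeeping, I would state these identifications as known facts (citing \cite{EGNO16} for $\coRep(\K G) \simeq \VecC(G)$) and conclude. There is no deep content beyond specializing the theorem.
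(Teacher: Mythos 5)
Your proposal is correct and takes essentially the same route as the paper: the paper's proof is a two-line appeal to the facts that $\VecC(G)\cong\coRep(\K G)$ as fusion categories and that group actions on $A$ are the same as $\K G$-module algebra structures, then specializes the preceding theorem to $H=\K G$. Your write-up just spells out these identifications (the group-like comultiplication giving the $G$-grading, and the forced automorphism action) in more detail than the paper does.
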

\begin{proof}
    $\VecC(G)$ is equivalent as fusion categories to $\coRep(\K 
 G)$ and group actions are  equivalent to $\K G$ actions. 
\end{proof}

This motivates our interest in separable based actions. We want to study separable based actions of fusion categories on the path algebra $\KQ$. This implies that $V_\1$ is a finite dimensional separable algebra, which implies that $V_\1$ is a semisimple unital subalgebra of $\KQ$.  

\begin{prop}
    The only unital semisimple subalgebras of $\KQ$ are $\K^n$.
\end{prop}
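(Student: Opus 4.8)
My plan is to exploit the path-length grading $\KQ=\bigoplus_{i\ge 0}(\KQ)_i$, where $(\KQ)_0=\Span(v_1,\dots,v_n)$ is the span of the vertices. Since the length of a composable product of paths is the sum of the lengths, the arrow ideal $R:=\bigoplus_{i\ge 1}(\KQ)_i$ is a two-sided ideal, and the projection $\pi\colon\KQ\twoheadrightarrow(\KQ)_0$ that deletes every positive-length path is a surjective algebra homomorphism with $\ker\pi=R$ and $(\KQ)_0\cong\K^n$ as an algebra.

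Given a unital semisimple subalgebra $B\subseteq\KQ$, I would set $I:=B\cap R=\ker(\pi|_B)$, a two-sided ideal of $B$. Using the Artin--Wedderburn structure of the semisimple ring $B$ (a finite product of matrix algebras over the algebraically closed field $\K$), the ideal $I$ splits off: there is a central idempotent $e\in B$ with $I=Be$ and $B=I\oplus B(1_B-e)$ as a product of algebras. The complementary factor $B(1_B-e)\cong B/I$ is carried injectively by $\pi$ into the commutative algebra $\K^n$ (since its intersection with $I=\ker(\pi|_B)$ is zero), hence it is commutative, finite-dimensional and semisimple, so $B(1_B-e)\cong\K^k$ for some $k\le n$. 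Everything therefore reduces to showing that the first factor is trivial, i.e. $e=0$.

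The key point — which I would isolate as a small lemma — is that $\KQ$ has no nonzero idempotent inside the arrow ideal $R$. If $e\ne 0$ lay in $R$, write $e=\sum_{i=\ell}^{m}e_i$ with $e_i\in(\KQ)_i$, $\ell\ge 1$ and $e_\ell\ne 0$; then every summand $e_ie_j$ of $e^2$ lives in degree $i+j\ge 2\ell>\ell$, so the degree-$\ell$ component of $e^2$ vanishes while that of $e$ is $e_\ell\ne0$, contradicting $e^2=e$. Applying this to $e\in I\subseteq R$ forces $I=0$, whence $B=B(1_B-e)\cong\K^k$, which is of the asserted form.

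I do not expect a serious obstacle: both steps are short once the grading is in hand. The two places to be slightly careful are (a) not assuming that $B$ contains the identity $1_{\KQ}$ — the argument only uses $B$'s internal unit $1_B$, and $e$ is just the ``$I$-component'' of $1_B$; and (b) recalling that a two-sided ideal of a semisimple ring is generated by a central idempotent and is a direct summand, which is standard. Note that finite-dimensionality of $B$ need not even be assumed — it falls out, since $B$ turns out to embed in $\K^n$.
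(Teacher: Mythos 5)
Your proof is correct, and it takes a recognizably different route from the paper's. The paper also starts from Artin--Wedderburn, $B\cong\bigoplus_i M_{c_i}(\K)$, but then argues directly with matrix units inside each block: the diagonal units $E_{ii}$ are idempotents and so must have a nonzero vertex (degree-zero) component, the off-diagonal units satisfy $E_{ij}^2=0$ and so must lie in the arrow ideal, and then $E_{ij}E_{ji}=E_{ii}$ forces a degree-zero term to come out of a product of elements of degree $\ge 1$ --- a contradiction that kills every block of size $c_j>1$. You instead package the same degree-zero information into the projection $\pi\colon\KQ\to(\KQ)_0\cong\K^n$, observe that $\ker(\pi|_B)=B\cap R$ is an ideal of the semisimple algebra $B$ and hence is generated by a central idempotent, and then invoke the single clean lemma that the arrow ideal contains no nonzero idempotents (the lowest-degree-component argument, which is also the implicit engine of the paper's step ``all idempotents must contain a vertex''). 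Your version buys a slightly more conceptual proof that never touches matrix units, works verbatim for subalgebras whose unit differs from $1_{\KQ}$, and makes the finite-dimensionality of $B$ a consequence rather than a hypothesis; the paper's version is more hands-on but reaches the same contradiction from the same grading. Both are complete; no gaps in yours.
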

\begin{proof}
Let $B$ be a semisimple unital subalgebra on $\KQ$. Then $B \cong \oplus_{c_i} M_{c_i}(\K)$. Consider one of these subalgebras $M_{c_j}(\K)$, this is the simple subalgebra of $c_j\times c_j$ matrix where the diagonal entries correspond to idempotents in $\KQ$. By the path length grading on $\KQ$ all idempotents must contain a vertex $p_v$ in their sum. The non-diagonal entries in $M_{c_j}(\K)$ cannot contain a vertex, since otherwise $E_{ij}^2\neq 0$. Then it follows that $E_{ij}\hspace{1mm} i\neq j$ are sums of paths of size at least $1$.  But for $i\neq j$ the product of $E_{ij}E_{ji}$ must contain at least one vertex, which is not possible since $E_{ij}$ only has paths of length greater than 0.
\end{proof}

Now it follows that in the case of a separable based action of $C$ on $\KQ$ that $V_{\1}\cong \K^n$ as algebras. The algebra $\K^n$ consists of sums of the orthogonal projections. The different possible $V_{\1}$ spaces correspond to different sets of orthogonal idempotents that sum to $1$. For a collection of such idempotents S, we can define a monoidal category $\BasedBim(\KQ)_{S}$. 

\begin{define}
      Define the monoidal category ${\BasedBim}(\KQ)_{S}$ of separable based bimodules as, 

    \begin{itemize}
        \item The unit is $(\KQ,V_{S})$ where $V_{S}$ a separable unital subalgebra of $\KQ$.
        \item Objects are pairs $(X,V_X)$ where $X$ is a dualizable bimodule and $V_X$ is a finite dimensional subspace such that $V_{S}\otimes_{\KQ} V_X\cong V_X\cong V_X\otimes_{\KQ}V_{S}$.
        \item Morphisms are bimodule homomorphisms $f:X\rightarrow Y$ such that $f(V_X)\subset V_Y$.
        \item The tensor product is defined by $(X,V_X) \otimes_{\KQ} (Y,V_Y)\cong(X\otimes_{\KQ} Y,V_{X}\otimes_{\KQ} V_{Y})$.
    \end{itemize}
\end{define}

Given a ring $R$ such that $R=\oplus I_i$ as a right ideal, then there is a decomposition of $1$ into orthogonal idempotents $1=\sum_i p_i$ such that $I_i=p_iR$. Then, we can decompose these idempotents $p_i$ into their primitive components $e_{ij}$, further decomposing $R=\oplus_{ij} J_{ij}$ as the direct sum of indecomposable right ideals. These primitive idempotents will still be pairwise orthogonal because $e_{lk}=1e_{lk}=(\sum_{ij} e_{ij})e_{lk}$, which implies that $e_{ij}e_{lk}$ is in $J_{ij}$, but these summands are direct, thus $e_{ij}e_{lk}=\delta_{ij,lk}e_{lk}$. Therefore, given a collection of orthogonal idempotents, we can refine that set into a complete set of primitive orthogonal idempotents. We can take an idempotent refinement of $V_{S}$ into $V_I:=\sum_{k,j} e_k V_{S} e_j$ where $e_k,e_j$ are a primitive orthogonal idempotent decomposition of $S$. This leads us to define separable idempotent split based bimodules of the path algebra.
\begin{define}
     Define the monoidal category $\BasedBim(\KQ)_{I}$ of separable idempotent split based bimodules as, 

    \begin{itemize}
        \item The unit is $(\KQ,V_I)$ where $V_{I}$ is the idempotent split based space.
        \item Objects are pairs $(X,V_X)$ where $X$ is a dualizable bimodule and $V_X$ is a finite dimensional subspace such that $V_I\otimes_{\KQ} V_X\cong V_X\cong V_X\otimes_{\KQ}V_I$ .
        \item Morphisms are bimodule homomorphisms $f:X\rightarrow Y$ such that $f(V_X)\subset V_Y$.
        \item The tensor product is defined by $(X,V_X) \otimes_{\KQ} (Y,V_Y)\cong(X\otimes_{\KQ} Y,V_{X}\otimes_{\KQ} V_{Y})$.
    \end{itemize}
\end{define}

There is a special idempotent split basis consisting of the vertex projections $\{p_v\}_{v\in \KQ_0}$, we will call this monoidal category $\BasedBim(\KQ)_{I_v}$. We shall see that all other complete sets of primitive orthogonal idempotents are conjugate to these idempotents, and this category is equivalent to some data related to finitely semisimple linear categories.

We can think of $\BasedBim(\KQ)_I$ as the subcategory corresponding to the monoidal corner, where our refined $V_{I}$ subspace acts as the unit. This refinement doesn't change the bimodules themselves but does change the structure of the $V_X$ subspaces in a canonical way by decomposing them into their most idempotent split form. In particular, if we idempotent split a $V_X$ space over a collection of primitive idempotents $\{e_i\}_i$ into $\oplus_{i,j}e_iV_Xe_j$, then as a vector space $V_X\subset \oplus_{i,j}e_iV_Xe_j$. 

\begin{thm}\label{thm:separable action and idempotent}
    Fix a fusion category $C$. If there is a separable based action $(\KQ,F,V)$ then there is a separable idempotent split based action $(\KQ,F,U)$.
\end{thm}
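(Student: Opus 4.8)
The plan is to keep the functor $F$ fixed and replace each base space $V_X$ by its finest idempotent refinement. First I would exploit separability: recalling that it makes $V_{\1}$ a unital semisimple subalgebra of $\KQ$ containing $\1_A$, the proposition identifying the unital semisimple subalgebras of $\KQ$ with the $\K^{n}$ gives $V_{\1}=\Span\{p_1,\dots,p_n\}$ for a complete set of orthogonal idempotents $p_i\in\KQ$ with $\sum_i p_i=\1_A$. As recalled in the discussion preceding the definition of $\BasedBim(\KQ)_I$, this set refines to a complete set $\{e_k\}_k$ of \emph{primitive} orthogonal idempotents of $\KQ$ with $\sum_k e_k=\1_A$ and $p_i=\sum_{e_k\le p_i}e_k$; in particular $V_{\1}\subseteq\Span\{e_k\}=:U_{\1}$.

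Next I would set, for every object $X$ of $C$,
\[
U_X \;:=\; \sum_{k,j}\, e_k\cdot V_X\cdot e_j \;\subseteq\; F(X),
\]
where $\cdot$ denotes the $\KQ$-bimodule action on $F(X)$; this is a direct sum inside the Peirce decomposition $F(X)=\bigoplus_{k,j}e_kF(X)e_j$. Since $\1_A=\sum_k e_k$ we get $V_X=\1_A V_X \1_A\subseteq U_X$, and if $v_1,\dots,v_d$ span $V_X$ then the finitely many elements $e_k v_l e_j$ span $U_X$, so $U_X$ is finite dimensional. For $X=\1$, using $e_k p_i e_j=e_k$ when $k=j$ and $e_k\le p_i$, and $e_k p_i e_j=0$ otherwise, one gets $\sum_{k,j}e_k V_{\1}e_j=\Span\{e_k\}=U_{\1}$, consistent with the notation above; thus $\1_A\in U_{\1}$, $U_{\1}\cong\K^{n'}$ is semisimple, and its distinguished idempotents $\{e_k\}$ form a complete set of primitive orthogonal idempotents of $\KQ$, i.e. $U_{\1}$ is an idempotent-split base space.

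Then I would check that $(\KQ,F,U)$ is again a based action and that it lands in $\BasedBim(\KQ)_{I}$ for $I=\{e_k\}$. Stability under morphisms is immediate: for a bimodule map $f\colon F(X)\to F(Y)$ one has $f(e_k v e_j)=e_k f(v) e_j\in e_k V_Y e_j\subseteq U_Y$ since $f$ preserves the $V$-spaces. The lax structure maps restrict to $U_X\otimes U_Y\to U_{X\otimes Y}$ by the argument of Lemma~\ref{lem:lax monoidal functor} applied with $U$ in place of $V$, and the unit coherences hold because, under $\KQ\otimes_{\KQ}F(X)\cong F(X)$, the subspace $U_{\1}\otimes_{\KQ}U_X$ is carried onto $U_{\1}\cdot U_X=\sum_k e_kU_X=\sum_{k,j}e_k V_X e_j=U_X$, and symmetrically on the other side. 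Hence $(\KQ,F,U)$ is the required separable idempotent split based action.

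The construction is essentially forced, so I do not anticipate a conceptual obstacle; the one point deserving care is the idempotent-refinement step, which rests entirely on the ring structure of $\KQ$ recalled earlier (the vertices give a complete set of primitive orthogonal idempotents, and any complete set of orthogonal idempotents refines to such a set) and not on the $C$-action at all. Everything else is a routine transfer of the defining clauses of a based action, of separability, and of the idempotent-split condition from $V$ to $U$; the only mildly delicate bookkeeping is checking that the Peirce blocks $e_kF(X)e_j$ interact correctly with the relative tensor product when verifying the unit coherences.
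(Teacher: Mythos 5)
Your proposal is correct and follows essentially the same route as the paper: define $U_X=\sum_{k,j}e_kV_Xe_j$ for a primitive orthogonal idempotent refinement $\{e_k\}$ of the idempotents spanning $V_{\1}$, and observe that morphisms (being bimodule maps), the tensorator, and the unit constraints all restrict to these refined spaces. The paper's own proof is far terser, leaning on the preceding discussion of idempotent refinement, so your more explicit verification of finite-dimensionality, of $U_{\1}=\Span\{e_k\}$, and of the unit coherences simply fills in details the paper leaves implicit.
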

\begin{proof}
 Suppose that $(\KQ,F,V)$ is a separable based action of $C$ on $\KQ$. Then this implies that there is a monoidal functor $F:C\rightarrow \Bim(\KQ)$ such that $X\rightarrow F(X)$ and $V_X\subset F(X)$. Now since $f(V_X)\subset V_Y$ is a bimodule homomorphism it follows that $f(U_X)\subset U_Y$. Similarly, $J_{X,Y}$ is an isomorphism implying that it will be compatible with the idempotent splitting. This gives rise to an idempotent split action $(\KQ,F,U)$.
\end{proof}

\begin{cor}
    Given an idempotent split separable based action $(\KQ,F,V)$, we can recover a separable based action if $F':C\rightarrow \VecC$ has a subalgebra $F'':C\rightarrow \VecC$ that corresponds to the desired separable action $(\KQ,F,U)$.
\end{cor}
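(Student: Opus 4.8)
The plan is to reverse the idempotent refinement of Theorem~\ref{thm:separable action and idempotent}. There, a separable based action was refined to an idempotent split one by splitting its base spaces over a complete set of primitive orthogonal idempotents; here we travel in the opposite direction, the key point being that all the data of the coarser action is already packaged as an algebra object. Given the idempotent split separable based action $(\KQ,F,V)$, Lemma~\ref{lem:lax monoidal functor} produces its lax monoidal functor $F':C\rightarrow \VecC$, $X\mapsto V_X$, and Corollary~\ref{cor: alg object in cop} equips $F'$ with the structure of an internal algebra object in $C^{\op}$. Unwinding definitions, a subalgebra $F''$ of $F'$ is a choice of subspace $U_X\subseteq V_X$ for each $X$, natural with respect to the morphisms of $C$, closed under the multiplication $J'_{X,Y}:V_X\otimes V_Y\rightarrow V_{X\otimes Y}$, and containing the unit of $F'$, namely $\1_A\in V_\1$. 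Composing the inclusions $U_X\subseteq V_X\subseteq F(X)$ yields a candidate triple $(\KQ,F,U)$, and the work is to verify that it is a separable based action.

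First I would check the based action axioms for $(\KQ,F,U)$. The condition $\1_A\in U_\1$ is immediate, since $F''$ contains the unit of $F'$. Compatibility of the base spaces with the monoidal constraint $J_{X,Y}$ of $F$ is the statement that the induced map $J'_{X,Y}$ restricts further to $U_X\otimes U_Y\rightarrow U_{X\otimes Y}$, which is exactly the requirement that $F''$ be a subalgebra object in $C^{\op}$. The substantive axiom is that every $\KQ$-bimodule morphism $f:F(X)\rightarrow F(Y)$ satisfies $f(U_X)\subseteq U_Y$; here is where the hypothesis that $F''$ corresponds to an honest separable based action $(\KQ,F,U)$ is invoked. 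Concretely one argues as in Lemma~\ref{lem:based action V_X} and Corollary~\ref{cor: coev lives in V_1}: pick a projective basis $\{v_i\}\subset V_X$ of $F(X)$, write $z=\sum_i v_i\, v_i^*(z)$ for $z\in U_X$, apply $f$ to obtain $f(z)=\sum_i f(v_i)\, v_i^*(z)$, and check that this lands in $U_Y$ using the algebra-object structure of $F''$ together with the $\KQ$-bimodule structure; the hypothesis guarantees precisely that this closure holds.

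Finally, separability comes for free: $U_\1=F''(\1)$ is a subspace of $V_\1\subseteq\KQ$ that contains $\1_A$ and is closed under multiplication, hence is a unital subalgebra of $\KQ$, so by the classification of unital semisimple subalgebras of $\KQ$ it is isomorphic to $\K^n$ for some $n$, and in particular semisimple. Thus $(\KQ,F,U)$ is a separable based action, reconstructed from $(\KQ,F,V)$ together with the subalgebra $F''$. The step I expect to be the real obstacle, and the reason the hypothesis is phrased with ``corresponds to the desired separable action'' rather than merely ``is a subalgebra object'', is the bimodule-morphism stability: a subalgebra of the lax functor $F'$ is constrained only by the morphisms of $C$, whereas a based action must be preserved by every $\KQ$-bimodule map among the $F(X)$ --- a much larger collection than the image of $F$ --- so this closure must either be assumed, as it is here, or established separately, e.g.\ via the projective-basis computation sketched above.
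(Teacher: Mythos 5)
Your proposal is correct and follows essentially the same route as the paper: both rest on Corollary~\ref{cor: alg object in cop} equipping $F'$ with an algebra-object structure and on identifying the coarser separable action's base spaces with a unital subalgebra $F''\subseteq F'$. The paper's own proof is only three sentences and argues the correspondence in the reverse direction (a generating separable action yields a subalgebra), whereas you verify the based-action axioms for $(\KQ,F,U)$ explicitly and correctly isolate the bimodule-morphism stability as the point carried by the hypothesis, so your write-up is a more detailed version of the same argument rather than a genuinely different one.
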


\begin{proof}
Given an idempotent split separable based action, the $V_X$ spaces are larger than a separable based action. By Corollary \ref{cor: alg object in cop} $F'$ has the structure of an algebra object in $C^{op}.$ If there were a separable action that this idempotent split action was generated by it would follow that $F''$ would correspond to a unital subalgebra of $F'$.
\end{proof}

Obtaining a full classification of separable based actions of $C$ on $\KQ$ is a challenging question. Since each separable action corresponds to an idempotent split action we will focus on the classification of idempotent split separable based actions of $C$ on $\KQ$. The path algebra $\KQ$ is a semiperfect ring, implying that all complete sets of primitive orthogonal idempotents are conjugate \cite{TL91}. Also, for an idempotent $e\KQ$ is a finitely generated projective module. Since $\KQ$ is semiperfect, all finitely generated projective \cite{TL91} modules are isomorphic to direct sums of modules generated by primitive orthogonal idempotents, which are in turn isomorphic to vertex modules $p_v\KQ$. Thus, up to conjugacy, all idempotent splittings of a separable based action are equivalent to separable based actions over the separable algebra corresponding to the vertex projections. Later, when we provide a full classification of idempotent split separable based actions, this will be up to conjugacy by an invertible element in the path algebra.

\begin{example}
Consider the following quiver.
   \tikzset{every picture/.style={line width=0.75pt}} 
\begin{center}
\begin{tikzpicture}[x=0.75pt,y=0.75pt,yscale=-1,xscale=1]

\draw  [draw opacity=0] (240.16,71.47) .. controls (242.28,55.85) and (266.15,43.92) .. (295.04,44.48) .. controls (322.98,45.03) and (345.78,57.08) .. (348.6,72.08) -- (294.45,74.48) -- cycle ; \draw   (240.16,71.47) .. controls (242.28,55.85) and (266.15,43.92) .. (295.04,44.48) .. controls (322.98,45.03) and (348.78,57.08) .. (348.6,72.08) ;  
\draw  [draw opacity=0] (348.6,72.08) .. controls (348.07,57.21) and (323.4,101.52) .. (294.51,101.43) .. controls (264.57,101.34) and (240.31,87.94) .. (240.16,71.47) -- (294.61,71.43) -- cycle ; \draw   (348.94,72.08) .. controls (347.07,89.21) and (323.4,101.52) .. (294.51,101.43) .. controls (264.57,101.34) and (240.31,87.94) .. (240.16,71.47) ;  
\draw    (299,45) ;
\draw [shift={(299,45)}, rotate = 180] [color={rgb, 255:red, 0; green, 0; blue, 0 }  ][line width=0.75]    (10.93,-3.29) .. controls (6.95,-1.4) and (3.31,-0.3) .. (0,0) .. controls (3.31,0.3) and (6.95,1.4) .. (10.93,3.29)   ;
\draw   (302,105) -- (294.1,101.91) -- (302.05,98.94) ;

\draw (286,105) node [anchor=north west][inner sep=0.75pt]   [align=left] {$\displaystyle \beta $};
\draw (286,29) node [anchor=north west][inner sep=0.75pt]   [align=left] {$\displaystyle \alpha $};

\draw (230,65) node [anchor=north west][inner sep=0.75pt]   [align=left] {$\displaystyle a $};
\draw (350,65) node [anchor=north west][inner sep=0.75pt]   [align=left] {$\displaystyle b $};

\end{tikzpicture}
\end{center}
Separable algebras in this quiver are of the form $\K$ or $\K^2$. If the separable algebra is of the form $\K$ then $\K=1=p_a+p_b$. If $\K^2$ is the separable algebra, then there are multiple algebras. For example, $(p_a+\alpha,p_b-\alpha)$, this is a unital algebra isomorphic to $\K^2$, but this algebra is conjugate to $(p_a,p_b)$ where the invertible element is $1+\alpha$.
\end{example}

\begin{lem} \label{lem: conjugacy of actions}
    If there is a monoidal functor $F:C\rightarrow \BasedBim(\KQ)_{I}$ then it is conjugate to a monoidal functor $F^u:C\rightarrow \BasedBim(\KQ)_{I_v}$, where the idempotent split algebra $I_v$ is the canonical vertex projections
\end{lem}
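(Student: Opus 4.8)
The plan is to transport the based action along the conjugation isomorphism provided by semiperfectness. Since $\KQ$ is semiperfect, by \cite{TL91} any complete set of primitive orthogonal idempotents is conjugate to any other; in particular the idempotents underlying $V_I$ are conjugate to the vertex projections $\{p_v\}_{v\in Q_0}$ via some invertible $u\in\KQ$. Concretely, writing $V_I$ as spanned by a complete set of primitive orthogonal idempotents $\{e_{ij}\}$ (with the $e_{ii}$ summing to $1$ and each $e_{ii}\KQ\cong p_{v}\KQ$), there is $u\in\KQ^\times$ with $u e_{ii} u^{-1} = p_{v(i)}$ for a suitable relabelling $i\mapsto v(i)$ of vertices, and then $u e_{ij}u^{-1}$ are matrix units sitting inside the corner between $p_{v(i)}\KQ$ and $p_{v(j)}\KQ$.

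The key steps, in order: first I would invoke the conjugacy statement from \cite{TL91} to produce the invertible $u$ matching the primitive idempotents of $I$ with the vertex primitive idempotents $I_v$. Second, I would define the conjugation functor $\phi_u:\Bim(\KQ)\to\Bim(\KQ)$, $X\mapsto X^u$, where $X^u$ has the same underlying space but the actions twisted by $a_1\cdot x\cdot a_2 := \phi_u(a_1)x\phi_u(a_2)$ as in the definition of conjugacy of actions; this is a monoidal autoequivalence of $\Bim(\KQ)$ since $\phi_u$ is an algebra automorphism and relative tensor products are preserved (one checks $(X\otimes_{\KQ}Y)^u\cong X^u\otimes_{\KQ}Y^u$ because $\phi_u$ intertwines the balancing maps). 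Third, I would set $F^u := \phi_u\circ F$ and equip it with the base spaces $V^u_X := u V_X u^{-1}$ (equivalently, $V_X$ viewed inside $X^u$); one verifies $\1_{\KQ}=u\1_{\KQ}u^{-1}\in V^u_{\1}$ and $V^u_{\1}=u V_I u^{-1}=\K\langle u e_{ij}u^{-1}\rangle$ — but this last algebra is the span of matrix units matching the vertex projections, so after the idempotent refinement of Theorem \ref{thm:separable action and idempotent} it is precisely $V_{I_v}$, landing $F^u$ in $\BasedBim(\KQ)_{I_v}$. Fourth, I would confirm $\phi_u$ carries morphisms preserving $V_\bullet$ to morphisms preserving $V^u_\bullet$ and that $F^u$ inherits monoidality from $F$ and $\phi_u$, so it is a genuine monoidal functor into $\BasedBim(\KQ)_{I_v}$, and that $F$ and $F^u$ are conjugate in the sense of the earlier definition.

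The main obstacle I expect is bookkeeping rather than anything deep: verifying that conjugation by $u$ is a \emph{monoidal} autoequivalence of the based bimodule category — i.e. that the twisting is compatible with $\otimes_{\KQ}$, with the coherence isomorphisms $J_{X,Y}$, and with the idempotent-split base spaces simultaneously. Care is also needed because $u V_I u^{-1}$ need not literally equal the vertex-projection algebra on the nose; one only gets that its primitive idempotent refinement coincides with $I_v$, so the statement is really about the corner $\BasedBim(\KQ)_{I_v}$ after passing to the refined base spaces as in Theorem \ref{thm:separable action and idempotent}. Once the autoequivalence is set up, everything else is a direct diagram chase using that $\phi_u$ is an algebra automorphism fixing $1$.
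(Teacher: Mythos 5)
Your proposal is correct and follows essentially the same route as the paper: both invoke the semiperfectness of $\KQ$ to conjugate the given complete set of primitive orthogonal idempotents onto the vertex projections via an invertible $u$, then transport the functor, the base spaces $V_X\mapsto uV_Xu^{-1}$, the morphisms, and the monoidal structure maps $J_{X,Y}$ along conjugation. Your extra remark that one may need to pass to the idempotent refinement of $uV_Iu^{-1}$ is a reasonable precaution, though in a semiperfect ring $u$ can be chosen so the primitive idempotents match the vertex projections on the nose, which is what the paper implicitly does.
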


\begin{proof}
    We have already established that any complete set of primitive orthogonal idempotents are conjugate. Thus, it suffices to show that when we conjugate an idempotent split action to the vertex projections, it's monoidally naturally isomorphism as based actions to an idempotent split based action over the vertex projection.

    Let $F:C\rightarrow \BasedBim(\KQ)_I$ be an idempotent split based action with idempotents $(e_1,...,e_n)$, then there is a basis of $V_X$ such that each $x\in V_X$ is of the form $e_iv_xe_j$. We will conjugate this action by invertible object $u$ that conjugates our primitive idempotents to the vertex projections. We define the conjugate bimodule of $F(X)$ to be $F^u(X)$, this is still a $\KQ$ bimodule. Notice that if we conjugate a monoidal functor $F$ by some invertible element $u$ then we have defined a monoidal functor $F^u:C\rightarrow \Bim(\KQ)$. $J_{X,Y}$ being a natural bimodule isomorphism implies that $J^u_{X,Y}$ is also natural bimodule isomorphism. If there is a morphism $f:F(X)\rightarrow F(Y)$ then this produces a morphism $f^u:F^u(X)\rightarrow F^u(Y)$.
    
    Then notice this conjugation will conjugate our based spaces as well. Given  basis of $V_X$ such that each $x\in V_X$ is of the form $e_iv_xpe_j$ then our new conjugate action must have $ue_iu^{-1}v_xue_ju^{-1}=p_vv_xp_w$ form a basis for our new conjugate $V_X^u$ space for some vertices $v,w\in \KQ_0$. Let $f:F(X)\rightarrow F(Y)$ such that $f(V_X)\subset V_Y$ then it follows that $f(e_iv_xe_j)\xrightarrow{u} f^u(ue_iu^{-1}v_xue_ju^{-1})=f^u(p_uv_xp_w)=p_vf^u(v_x)p_w$ which implies that $f^u$ will also preserve conjugated based spaces. Since $J_{-,-}$ and the unitor are bimodule isomorphisms they will be compatible with the conjugation action by the reasoning shown above. Thus, we have produced an idempotent split based action over the vertex projections. Therefore, it follows that any idempotent split action is conjugate to an idempotent split action over the vertex projections $F^u:C\rightarrow \BasedBim(\KQ)_{I_v}$. This process is invertible since conjugation is invertible so we can go back as well.
\end{proof}

For the rest of the paper we will consider monoidal functor $F:C\rightarrow \BasedBim(\KQ)_{I_v}$ since all idempotent split based actions are conjugate to these ones and all separable based actions correspond to idempotent split ones.

\section{\texorpdfstring{\centering  Endofunctor Subcategories }{Endofunctor Subcategories}}

In this section, we will study two important subcategories of endofunctor categories. We will then classify monoidal functors from $C$ into those categories in some nice classes of examples.

We define the endofunctor category $\End_{(\widetilde{Q},m,i)}(\VecC(M))$. We will need to understand and use $\VecC(M)$ the category of $M$ graded vector spaces of a finitely semisimple linear category $M$ to achieve this. 

\begin{define}[From \protect\cite{JP17}] $\VecC(M)$ is  defined to be the category of linear functors $F:M^{op}\rightarrow \VecC$ whose morphisms are natural transformations of these functors. 
    
\end{define}

 Conceptually, we think of this category as allowing there to be infinite direct sums of simple objects as opposed to only having finite direct sums in $M.$ This category is equivalent to $\Ind(M)$ the Ind completion of $M$. We remark that this description is only this nice in the case where $M$ is a finitely semisimple linear category. Otherwise, the Ind completion and the cocompletion are not necessarily equivalent. We use $\VecC(M)$ to now define two important endofunctor subcategories, $\End_{\widetilde{Q}}(\VecC(M))$ and $\End_{(\widetilde{Q},m,i)}(\VecC(M))$. Remember, given a quiver, $Q$ we have an endofunctor corresponding to it. Then endofunctor $\widetilde{Q}$ corresponds to the quiver generated by a quiver $Q$ that comprises all paths of any length in $Q$.

\begin{define}
    We define the monoidal category $\End_{\widetilde{Q}}(\VecC(M))=\{(F,\phi_F)|F\in \End(\VecC(M)),\hspace{1mm} \\\phi_F:\widetilde{Q}\circ F\xrightarrow{\sim}F\circ \widetilde{Q} \}$ 
    where morphisms are natural transformations that commute with the half braiding, that is $f\in\Hom(F,G)$ such that $\phi_G\circ(f\otimes Id_{\widetilde{Q}})=(Id_{\widetilde{Q}}\otimes f)\circ \phi_F$.
\end{define}

This is a monoidal category. We shall see later  that there is a nice classification of monoidal functors $F: C\rightarrow \End_{\widetilde{Q}}(\VecC(M))$. Now we define a subcategory $\End_{(\widetilde{Q},m,i)}(\VecC(M))$ where $(\widetilde{Q},m,i)$ is a monad structure on $\widetilde{Q}$.

\begin{define}
We define the subcategory $\End_{(\widetilde{Q},m,i)}(\VecC(M))$ as,
   \[
    \End_{(\widetilde{Q},m,i)}(\VecC(M))=\{(F,\phi_F)\in \End_{\widetilde{Q}}(\VecC(M))\}\]
    such that the following properties are satisfied for each $(F,\phi_F)$,
    \begin{enumerate}
    \item $\phi_F \circ(m\otimes \id_F)=(\id_F \otimes m)\circ(\phi_F\otimes\id_{\widetilde{Q}})\circ(\id_{\widetilde{Q}}\otimes \phi_F)$. Where $m$ is a natural transformation $m:\widetilde{Q} \circ \widetilde{Q} \rightarrow \widetilde{Q}$ that corresponds to multiplication in the monad $\widetilde{Q}$.  
        \item $\phi_{F}\circ(i\otimes \id_F)=(\id_F \otimes i) \circ \phi^{\Id}_{F}$. Where $i:\Id_M\rightarrow \widetilde{Q}$ is the embedding of the identity functor in, $\widetilde{Q}$ and $\phi^{\Id}_{F}$ is the trivial half braiding with the identity.
    \end{enumerate}
  
\end{define}
\begin{remark}
    $(\widetilde{Q},m,i)$ can be thought of a choice of monad structure on $\widetilde{Q}$ generated by a finite subfunctor $Q$ and the identity $\Id$. The category $\End_{(\widetilde{Q},m,i)}(\VecC(M))$ is the collection of endofunctors who's half braidings are compatible with the monad structure.
\end{remark}

\begin{lem}
    $\End_{(\widetilde{Q},m,i)}(\VecC(M))$ is a full subcategory of $\End_{\widetilde{Q}}(\VecC(M))$.
\end{lem}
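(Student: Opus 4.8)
\emph{Proof proposal.} The plan is to unwind the definitions and observe that conditions (1)--(2) cutting $\End_{(\widetilde{Q},m,i)}(\VecC(M))$ out of $\End_{\widetilde{Q}}(\VecC(M))$ constrain \emph{objects} only, so that no morphisms are discarded. First I would record that the object class of $\End_{(\widetilde{Q},m,i)}(\VecC(M))$ is by construction a subclass of that of $\End_{\widetilde{Q}}(\VecC(M))$: a pair $(F,\phi_F)$ lies in the former exactly when it lies in the latter and in addition its half braiding $\phi_F$ is compatible with the fixed monad multiplication $m$ and unit $i$ in the sense of (1)--(2). Since $m$ and $i$ are part of the data of the monad $(\widetilde{Q},m,i)$, these are genuine properties of the pair $(F,\phi_F)$ and not extra choices of structure.

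Next I would verify that this really is a subcategory, i.e.\ that it is closed under identities and composition. For an object $(F,\phi_F)$ the identity natural transformation satisfies $\phi_F\circ(\id_F\otimes\id_{\widetilde{Q}})=\phi_F=(\id_{\widetilde{Q}}\otimes\id_F)\circ\phi_F$, so it is a morphism of $\End_{\widetilde{Q}}(\VecC(M))$; and given $f\colon (F,\phi_F)\to (G,\phi_G)$ and $g\colon (G,\phi_G)\to (H,\phi_H)$, a one-line diagram chase from the two intertwining relations for $f$ and $g$ yields $\phi_H\circ\big((g\circ f)\otimes\id_{\widetilde{Q}}\big)=(\id_{\widetilde{Q}}\otimes(g\circ f))\circ\phi_F$, so $g\circ f$ is again a morphism. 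Hence $\End_{(\widetilde{Q},m,i)}(\VecC(M))$ is a subcategory.

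Finally I would prove fullness, which is the actual content. By the stated definition, a morphism of $\End_{(\widetilde{Q},m,i)}(\VecC(M))$ between two of its objects is nothing more than a morphism of $\End_{\widetilde{Q}}(\VecC(M))$ between those objects --- a natural transformation $f\colon F\to G$ intertwining the half braidings $\phi_F$ and $\phi_G$ --- with no further condition imposed, because (1) and (2) mention $F$, $\phi_F$, $m$, $i$ alone and place no restriction on $f$. Therefore the Hom-set of $\End_{(\widetilde{Q},m,i)}(\VecC(M))$ between any objects $(F,\phi_F)$ and $(G,\phi_G)$ coincides with that of $\End_{\widetilde{Q}}(\VecC(M))$ between them, which is exactly the assertion that the inclusion is full. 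There is no genuine obstacle here; the only care needed is bookkeeping between the two compositions, $\otimes$ for whiskering/horizontal composition in the endofunctor category and $\circ$ for vertical composition of natural transformations, and keeping straight that $m$ and $i$ are fixed throughout.
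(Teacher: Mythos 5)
Your argument is correct and is essentially the same as the paper's: the extra conditions (1)--(2) constrain only the objects, while morphisms between two such objects are exactly the intertwining natural transformations of $\End_{\widetilde{Q}}(\VecC(M))$, so the Hom-sets coincide and the inclusion is full. Your additional check that the subcategory is closed under identities and composition is a harmless (and slightly more careful) elaboration of the same observation.
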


\begin{proof}
We need to show that $I:\End_{(\widetilde{Q},m,i)}(\VecC(M)) \rightarrow \End_{\widetilde{Q}}(\VecC(M)) $ is a fully faithful functor. This follows immediately because the data of an object in $\End_{(\widetilde{Q},m,i)}(\VecC(M))$ is still $(F,\phi_F)$ but $\phi_F$ the half braiding is compatible with $m$ and $i$. The morphisms between objects are unchanged. Thus, $\End_{(\widetilde{Q},m,i)}(\VecC(M))$ is a full subcategory of $\End_{(\widetilde{Q},m,i)}(\VecC(M))$.
\end{proof}

We are interested in studing monoidal functors from $C \rightarrow \End_{\widetilde{Q}}(\VecC(M))$ and what categorical data classifies them. This will be important later when we connect these categories to based bimodules of path algebras.

\begin{thm}\label{thm: classification of functors into End(Vec(M))}
    A monoidal functor $F:C\rightarrow \End_{\widetilde{Q}}(\VecC(M))$ is equivalent to the data of a $C$ module category structure on $\VecC(M)$ and a $C$ module endofunctor $(\widetilde{Q},_{-})$ in $\End_C(\VecC(M))$.
    
\end{thm}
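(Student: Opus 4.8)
The plan is to unwind both sides of the claimed equivalence via the universal property that a $C$ module category structure on a category $\mathcal{M}$ is the same data as a monoidal functor $C \to \End(\mathcal{M})$ (quoted from \cite[Chapter 7]{EGNO16}), applied here with $\mathcal{M} = \VecC(M)$. So I would start from a monoidal functor $F : C \to \End_{\widetilde{Q}}(\VecC(M))$ and post-compose with the forgetful monoidal functor $U : \End_{\widetilde{Q}}(\VecC(M)) \to \End(\VecC(M))$, $(G,\phi_G) \mapsto G$. Since $U$ is monoidal (the tensor product in $\End_{\widetilde{Q}}(\VecC(M))$ is composition of endofunctors with the composite half braiding, exactly as in $\End(\VecC(M))$), the composite $U \circ F : C \to \End(\VecC(M))$ is monoidal, hence by the cited result equips $\VecC(M)$ with a $C$ module category structure, with action $X \triangleright (-) := F(X)(-)$ and module associator built from the $J$'s of $F$.

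Next I would extract the module endofunctor. The first step is to observe that each object of $\End_{\widetilde{Q}}(\VecC(M))$ is a pair $(G,\phi_G)$ where $\phi_G : \widetilde{Q}\circ G \xrightarrow{\sim} G\circ\widetilde{Q}$; in particular $\widetilde{Q}$ itself, with $\phi = \id$, is such an object, but the relevant content is that the \emph{half braiding datum of $F$} is what turns $\widetilde{Q} \in \End(\VecC(M))$ into a $C$-module endofunctor of the module category just constructed. Concretely, for each $X$ the isomorphism $\phi_{F(X)} : \widetilde{Q}\circ F(X) \xrightarrow{\sim} F(X)\circ \widetilde{Q}$ is exactly a natural isomorphism $\widetilde{Q}(X\triangleright (-)) \xrightarrow{\sim} X \triangleright \widetilde{Q}(-)$, i.e. the structure isomorphism $\eta_{X,-}$ making $(\widetilde{Q},\eta)$ a $C$-module functor $\VecC(M)\to\VecC(M)$. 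The module-functor coherence axioms (compatibility of $\eta$ with the module associator and the unit constraint) should translate directly into the compatibility of $\phi_{F(-)}$ with the $J$'s of $F$ — which is precisely the naturality/hexagon-type condition packaged into the definition of a morphism, and more importantly into the requirement that $F$ is a \emph{monoidal} functor landing in $\End_{\widetilde{Q}}(\VecC(M))$ (so that $\phi$ is natural in $X$ and behaves correctly on tensor products). I would verify these axiom-by-axiom.

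For the converse direction I would run the construction backwards: given a $C$ module category structure on $\VecC(M)$ (equivalently a monoidal $\Phi : C \to \End(\VecC(M))$) together with a $C$-module endofunctor $(\widetilde{Q},\eta) \in \End_C(\VecC(M))$, I define $F(X) := \Phi(X)$ as an endofunctor and equip it with the half braiding $\phi_{F(X)} := \eta_{X,-}^{\pm 1} : \widetilde{Q}\circ F(X) \xrightarrow{\sim} F(X)\circ\widetilde{Q}$; the module-functor axioms for $\eta$ give exactly that each $(F(X),\phi_{F(X)})$ is a well-defined object of $\End_{\widetilde{Q}}(\VecC(M))$, that $J^F_{X,Y}$ (the monoidal structure of $\Phi$) is a morphism there, and that $F$ is monoidal. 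One then checks the two assignments are mutually inverse up to the appropriate notion of equivalence, which is essentially formal once the dictionary "half braiding $\phi_{F(X)}$ $\leftrightarrow$ module-functor structure $\eta_{X,-}$" is set up.

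The main obstacle I anticipate is bookkeeping rather than conceptual: carefully matching the coherence diagram defining a $C$-module functor (the pentagon involving the module associator $m_{X,Y,M}$ and the unit triangle) against the condition hidden in "$F$ is a monoidal functor into $\End_{\widetilde{Q}}(\VecC(M))$", since the latter bundles together (i) naturality of $X \mapsto \phi_{F(X)}$, (ii) the hexagon relating $\phi_{F(X\otimes Y)}$, $\phi_{F(X)}$, $\phi_{F(Y)}$ and $J^F_{X,Y}$, and (iii) the normalization $\phi_{F(\1)} = \id$ under the unit isomorphism $j : F(\1)\xrightarrow{\sim}\Id$. I would devote the bulk of the proof to writing out (ii) as a commuting square of endofunctors and recognizing it as the module-functor compatibility of $\eta$ with the module associator; the rest follows. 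It is also worth remarking that no coherence on $\widetilde{Q}$ as a \emph{monad} is used here — that is exactly why the target is $\End_{\widetilde{Q}}(\VecC(M))$ and not the smaller $\End_{(\widetilde{Q},m,i)}(\VecC(M))$ — so I would make sure the proof only invokes $\widetilde{Q}$ as an object of $\End(\VecC(M))$.
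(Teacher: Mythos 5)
Your proposal is correct and is essentially the same argument as the paper's, which simply cites the bijection between monoidal functors $C\to\End(M)$ and $C$-module category structures and observes that the half braidings exhibit $\widetilde{Q}$ as a $C$-module (dual) functor. You have merely spelled out in detail the dictionary (half braiding $\phi_{F(X)}$ corresponds to the module-functor structure $\eta_{X,-}$, and the hexagon with $J_{X,Y}$ corresponds to compatibility with the module associator) that the paper leaves implicit.
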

\begin{proof}
    This is just a direct application of the bijection between monoidal functors $F:C\rightarrow \End(M)$ and module category structure on $M$ \protect\cite{EGNO16} as well as realizing that $\widetilde{Q}$ is precisely a dual functor.
\end{proof}
\begin{remark}
    A tensor functor $F:C\rightarrow \End(\VecC(M))$ actually produces a $C$ module structure on just $M$. This is because we can identify endofunctors with a $\Irr(M)\times \Irr(M)$ matrix of vector spaces $V_{ij}$. Since $C$ is a fusion category all images $F(X)$ are dualizable functors, then $V_{ij}$ must all be finite dimensional and thus $F(X)$ is actually an endofunctor on $M$, therefore producing a module category structure on $M$.
\end{remark}

Given two monoidal functors $F:C\rightarrow \End_{\widetilde{Q}}(\VecC(M))$ and $G:C\rightarrow\End_{\widetilde{Q}}(\VecC(M))$ we want to understand when they are equivalent up to monoidal natural isomorphism.

\begin{lem} \label{lem:C mod functor and mon nat iso}
 Given monoidal functors $F,G:C\rightarrow \End(M)$ are monoidally naturally isomorphic iff there exists a $C$ module functor $(\Id,\rho)$  such that $\Id\circ F_X \xrightarrow{\rho_X}G_X\circ \Id$ for all objects $X$ in $C$.
\end{lem}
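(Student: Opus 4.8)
The plan is to unwind both directions of the claimed equivalence through the bijection, recalled from \cite{EGNO16}, between monoidal functors $C \to \End(M)$ and $C$-module category structures on $M$. Under this dictionary, a monoidal functor $F$ corresponds to the module action $X \triangleright^F m := F_X(m)$, together with a module associator built from the tensorator $J^F_{X,Y}$, and similarly for $G$. So the first step is to translate the statement ``$F$ and $G$ are monoidally naturally isomorphic'' and the statement ``there is a $C$-module functor $(\Id,\rho)$'' into concrete coherence diagrams in terms of these data, so we can see they coincide.

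First I would prove the forward direction. Suppose $\gamma : F \Rightarrow G$ is a monoidal natural isomorphism, so for each $X$ we have $\gamma_X : F_X \xrightarrow{\sim} G_X$ in $\End(M)$ and these are compatible with $J^F$, $J^G$ and the units. Define $\rho_X := \gamma_X$, viewed as a natural transformation $\Id \circ F_X \Rightarrow G_X \circ \Id$ (both are just $\gamma_X : F_X \Rightarrow G_X$ since the identity functor does nothing). The data of a $C$-module functor $(\Id,\rho)$ requires a coherence square relating $\rho_{X \otimes Y}$ to $\rho_X$, $\rho_Y$ and the two module associators; but the module associators for $F$ and $G$ are exactly $J^F$ and $J^G$ up to the fixed associator of $C$, so this square is precisely the hexagon (really the compatibility square) expressing that $\gamma$ commutes with $J^F$ and $J^G$. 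The unit coherence of the module functor likewise becomes the statement that $\gamma$ respects $j^F, j^G$. Each naturality requirement is inherited from naturality of $\gamma$. So $(\Id,\rho)$ is a $C$-module functor, and it is an equivalence because each $\rho_X = \gamma_X$ is invertible.

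Then I would prove the converse by running the same translation backwards: given a $C$-module functor $(\Id,\rho)$ with $\rho_X : F_X \Rightarrow G_X$ iso, set $\gamma_X := \rho_X$; the module-functor coherence square is exactly the assertion that $\gamma$ intertwines $J^F$ and $J^G$, and the module-functor unit axiom gives compatibility with the unit constraints, so $\gamma$ is a monoidal natural isomorphism. The only genuinely delicate point — and the step I expect to be the main obstacle — is bookkeeping the identifications: matching the module associator $m^F_{X,Y,m}$ coming out of the \cite{EGNO16} correspondence with $J^F_{X,Y}$ (and the associator $a$ of $C$), and checking that the module-functor structure constraint for the \emph{identity} underlying functor degenerates exactly to the monoidal-natural-transformation axiom with no stray associators or unitors surviving. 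Once the strictification conventions are pinned down (one may assume $C$ and $\End(M)$ strict, which trivializes the coherence isomorphisms), the two sets of axioms are literally the same diagrams, so the equivalence is immediate; I would state this explicitly rather than drawing all the pasting diagrams.
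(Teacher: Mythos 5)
Your proposal is correct and follows essentially the same route as the paper: the paper also defines the module-functor structure map on $\Id$ by conjugating the monoidal natural isomorphism with the unitors ($\overline{\psi}=r^{-1}\circ\psi_X\circ l$) and then checks via a pasting diagram that the module-functor coherence square and the monoidal-naturality square are interchangeable. The only cosmetic difference is that the paper keeps the unitors explicit throughout rather than invoking strictness.
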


\begin{proof}
    We shall first construct the morphism and then show it satisfies the necessary data. Given $\psi:F\rightarrow G$ monoidal natural isomorphism we can construct $\overline{\psi}: \Id\circ F_X \rightarrow G_X\circ \Id$ by defining $\overline{\psi}:=  r_x^{-1}\circ\psi_X\circ l_x:\Id\circ F_X\xrightarrow[]{\sim}G_X\circ \Id.$

    Here $l_x$ and $r_x$ here are the usual left and right unitor natural isomorphisms. Conversely, if we had a dual endofunctor $(\Id,\overline{\psi})$ we could define monoidal natural isomorphism $\psi$ by inverting both the unitors. It's clear that both $\psi$ and $\overline{\psi}$ are natural isomorphism with the desired target and range. Now to show that they satisfy the desired extra data consider the following diagram.

\[\adjustbox{scale=.65,center}{\begin{tikzcd}
	&&&& {\Id\circ F_{X\otimes Y}(M)} & 10 \\
	\\
	\\
	&& 1 && {F_{X\otimes Y}(M)} & 2 \\
	{\Id\circ(F_x\circ(F_Y(M))} &&& {F_X\circ F_Y(M)} & 3 & {G_{X\otimes Y}(M)} &&& {G_{X\otimes Y}\circ \Id(M)} \\
	& 4 && 5 & {G_X\circ G_Y(M)} \\
	&&& {} &&& 9 \\
	& {G_X\circ F_Y} && 6 \\
	&& 7 && {G_X\circ F_Y} \\
	&&&& 8 \\
	{G_X\circ(\Id\circ F_Y(M))} &&&&&&&& {(G_X\circ G_Y)\circ \Id(M)}
	\arrow["{l_{F_{X\otimes Y}}}"{description}, from=1-5, to=4-5]
	\arrow["{\Id(J_{X,Y}^F)}"', from=1-5, to=5-1]
	\arrow["{\overline{\psi_{X\otimes Y}}}", from=1-5, to=5-9]
	\arrow["{J_{X,Y}^F}"', from=4-5, to=5-4]
	\arrow["{\psi_{X\otimes Y}}", from=4-5, to=5-6]
	\arrow["{l_{F_X}\otimes\Id_Y}"{description}, from=5-1, to=5-4]
	\arrow["{\overline{\psi_X}\otimes\Id_Y}"', from=5-1, to=11-1]
	\arrow["{\psi_X\otimes \psi_Y}", from=5-4, to=6-5]
	\arrow["{\psi_{F_X}\otimes\id_{F_Y}}"{description}, from=5-4, to=8-2]
	\arrow["{r^{-1}_{G_{X\otimes Y}}}"', from=5-6, to=5-9]
	\arrow["{J_{X,Y}^G}", from=5-6, to=6-5]
	\arrow["{J_{X,Y}^G\otimes \Id_{\Id}}", from=5-9, to=11-9]
	\arrow["{\id_X\otimes r_{G_Y}^{-1}}", from=6-5, to=11-9]
	\arrow["{\Id_{G_X}\otimes \psi_Y}"{description}, from=8-2, to=6-5]
	\arrow["\Id"{description}, from=8-2, to=9-5]
	\arrow["{\Id_{G_X}\otimes \psi_Y}"{description}, from=9-5, to=6-5]
	\arrow["{r_{G_X}\otimes \Id_{F_Y}}"{description}, from=11-1, to=8-2]
	\arrow["{\Id_{G_X}\otimes l_{F_Y}}"{description}, from=11-1, to=9-5]
	\arrow["{\Id_X\otimes \overline{\psi_Y}}"', from=11-1, to=11-9]
\end{tikzcd}}\]
Diagrams (1,9) commute by naturality.
Diagrams (2,4,8) commutes by definition of $\psi$ and $\overline{\psi}$. Diagram 5 commutes by the tensorator. Diagram 6 commutes trivially. Diagram 7 is the triangle diagram in the catgegory of endofunctors. That leaves us with diagram 3 and diagram 10. This corresponds to the definition of monoidal natural isomorphism for 3 and $C$ module functor for 10. So assuming one implies the other will commute and vice versa thus giving us a bijection.
\end{proof}

\begin{prop}\label{prop: equiv of C mod and mon nat iso1}
    Given monoidal functors $F,G:C\rightarrow \End_{\widetilde{Q}}(\VecC(M))$ if,
    
    \begin{enumerate}
        \item $F,G$ both equip $M$ with the same $C$ module structure up to the identity $C$ module functor $\Id$ and a half braiding $\rho$.
        \item There is an isomorphism of $C$ module endofunctors $(\Id,\rho)\circ (\widetilde{Q}^F,\phi^F_{\widetilde{Q}})\circ (\Id,\rho)^{-1}\cong (\widetilde{Q}^G,\phi^G_{\widetilde{Q}}) $.
    \end{enumerate}
Then $F$ and $G$ monoidally naturally isomorphic.
\end{prop}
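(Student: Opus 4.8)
The plan is to derive the statement from Theorem~\ref{thm: classification of functors into End(Vec(M))} together with Lemma~\ref{lem:C mod functor and mon nat iso}, upgrading the latter so that it also tracks the half-braiding with $\widetilde{Q}$. Recall that Theorem~\ref{thm: classification of functors into End(Vec(M))} expresses a monoidal functor $F\colon C\to\End_{\widetilde{Q}}(\VecC(M))$ as the following package of data: its underlying monoidal functor $\bar F\colon C\to\End(\VecC(M))$, which by \cite{EGNO16} is a $C$-module structure $\triangleright^F$ on $M$ (extended to $\VecC(M)=\Ind(M)$; by the remark after that theorem this loses nothing, as each $F(X)$ already restricts to an endofunctor of $M$), together with the half-braidings $\phi^F_X\colon\widetilde{Q}\circ F(X)\xrightarrow{\sim}F(X)\circ\widetilde{Q}$. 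Writing $F(X)=X\triangleright^F(-)$, this family is precisely a $C$-module endofunctor structure $(\widetilde{Q}^F,\phi^F_{\widetilde{Q}})$ on $\widetilde{Q}$ over $(\VecC(M),\triangleright^F)$, with the naturality and monoidality of $\phi^F$ in $X$ translating into the module-functor coherence axioms. The same applies to $G$, with $\triangleright^G$ and $(\widetilde{Q}^G,\phi^G_{\widetilde{Q}})$.

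First I would feed hypothesis (1) into Lemma~\ref{lem:C mod functor and mon nat iso}: the $C$-module functor $(\Id,\rho)\colon(\VecC(M),\triangleright^F)\to(\VecC(M),\triangleright^G)$ is, by that lemma, the same datum as a monoidal natural isomorphism $\psi\colon\bar F\Rightarrow\bar G$ of the underlying functors into $\End(\VecC(M))$, with $\psi_X$ recovered from $\rho_X$ by composing with the unitors of $\End(\VecC(M))$ exactly as in the proof of that lemma. Thus $\psi$ is already a candidate for the desired monoidal natural isomorphism $F\Rightarrow G$; everything left to do is to verify that each component $\psi_X$ is a morphism in $\End_{\widetilde{Q}}(\VecC(M))$, that is, that it satisfies $\phi^G_X\circ(\psi_X\otimes\id_{\widetilde{Q}})=(\id_{\widetilde{Q}}\otimes\psi_X)\circ\phi^F_X$.

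This compatibility is exactly what hypothesis (2) delivers. Unwinding the conjugate module endofunctor $(\Id,\rho)\circ(\widetilde{Q}^F,\phi^F_{\widetilde{Q}})\circ(\Id,\rho)^{-1}$, its underlying functor is $\Id\circ\widetilde{Q}\circ\Id=\widetilde{Q}$ and its module-functor structure at $X$ is $\phi^F_X$ conjugated by $\rho_X$ and $\rho_X^{-1}$; hypothesis (2) provides a $C$-module natural isomorphism $\theta\colon\widetilde{Q}\Rightarrow\widetilde{Q}$ identifying this structure with $\phi^G_{\widetilde{Q}}$. A diagram chase — using the module-functor coherence for $\theta$, the relation between $\psi_X$ and $\rho_X$, and the cancellation of the two occurrences of the identity functor's structure — converts the statement ``$(\Id,\rho)$ carries $(\widetilde{Q}^F,\phi^F_{\widetilde{Q}})$ to $(\widetilde{Q}^G,\phi^G_{\widetilde{Q}})$ up to $\theta$'' into precisely the displayed compatibility equation for the components of $\psi$; equivalently, $\theta$ is absorbed by transporting the half-braidings of $G$ along it (a monoidally isomorphic copy of $G$), and under the natural normalization $\theta=\id_{\widetilde{Q}}$ no correction is needed. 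Hence $\psi$ is a monoidal natural isomorphism between $F$ and $G$ viewed as functors into $\End_{\widetilde{Q}}(\VecC(M))$, as claimed.

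The step I expect to be the main obstacle is this last diagram chase: matching the two-sided conjugation $(\Id,\rho)\circ(-)\circ(\Id,\rho)^{-1}$ of module endofunctors — which carries along $\rho$, $\rho^{-1}$, and the composition coherences for module functors — with the one-sided whiskered square that defines a morphism of $\End_{\widetilde{Q}}(\VecC(M))$, while being careful that the auxiliary isomorphism $\theta$ is correctly accounted for. Conceptually nothing new happens here; it is a larger instance of the computation already carried out in Lemma~\ref{lem:C mod functor and mon nat iso}, and the real work lies in organizing the notation so that all the structure morphisms cancel as asserted.
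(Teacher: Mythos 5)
Your proposal is correct and takes essentially the same route as the paper: apply Lemma~\ref{lem:C mod functor and mon nat iso} to hypothesis (1) to produce a monoidal natural isomorphism between $F$ and $G$ viewed as functors into $\End(\VecC(M))$, then invoke hypothesis (2) to see that its components commute with the half-braidings of $\widetilde{Q}$, so the isomorphism lifts to $\End_{\widetilde{Q}}(\VecC(M))$. The paper's own proof is in fact terser (it simply asserts the lift), so your explicit bookkeeping of the auxiliary isomorphism $\theta$ is a refinement of, not a departure from, the intended argument.
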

\begin{proof}
    Suppose that $F$ and $G$ generate $C$ module categories $M_F$ and $M_G$ such that $(\Id,\phi):M_F\rightarrow M_G$ is an equivalence of $C$ module categories and $(\Id,\rho)\circ (\widetilde{Q}^F,\phi^F_{\widetilde{Q}})\circ (\Id,\rho)^{-1}\cong (\widetilde{Q}^G,\phi^G_{\widetilde{Q}})$. By Lemma \protect\ref{lem:C mod functor and mon nat iso}  we can construct a monoidal natural isomorphism between $F$ and $G$ when thought of as functors into $\End(\VecC(M))$. This natural isomorphism lifts to $\End_{\widetilde{Q}}(\VecC(M))$ since  $(\Id,\rho)\circ (\widetilde{Q}^F,\phi^F_{\widetilde{Q}})\circ (\Id,\rho)^{-1}\cong (\widetilde{Q}^G,\phi^G_{\widetilde{Q}})$. 
\end{proof}

\begin{prop}\label{prop:equiv of C mod and mon nat iso2}
    If $F,G:C\rightarrow \End_{\widetilde{Q}}(\VecC(M))$ are monoidally naturally isomorphic then,
    \begin{enumerate}
        \item $F,G$ both equip $M$ with the same $C$ module structure up to the identity $C$ module functor $\Id$ and a half braiding $\rho$.
        \item There is an isomorphism of $C$ module endofunctors $(\Id,\rho)\circ (\widetilde{Q}^F,\phi^F_{\widetilde{Q}})\circ (\Id,\rho)^{-1}\cong (\widetilde{Q}^G,\phi^G_{\widetilde{Q}}) $.
    \end{enumerate}
\end{prop}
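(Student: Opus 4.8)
The plan is to reverse the construction in the proof of Proposition \ref{prop: equiv of C mod and mon nat iso1}, extracting from a monoidal natural isomorphism both the equivalence of module structures and the conjugation of the $\widetilde{Q}$-endofunctors. Fix a monoidal natural isomorphism $\psi : F \Rightarrow G$ of functors $C \to \End_{\widetilde{Q}}(\VecC(M))$. Forgetting the half-braiding data, $\psi$ is in particular a monoidal natural isomorphism of the underlying functors $F, G : C \to \End(\VecC(M))$, and by the remark following Theorem \ref{thm: classification of functors into End(Vec(M))} these restrict to monoidal functors into $\End(M)$ equipping $M$ with $C$-module structures $M_F$ and $M_G$. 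Applying Lemma \ref{lem:C mod functor and mon nat iso} to $\psi$ produces the $C$-module functor $(\Id,\rho)$ with components $\rho_X := r_X^{-1}\circ\psi_X\circ l_X : \Id\circ F_X \xrightarrow{\sim} G_X\circ\Id$; this is an equivalence of $C$-module categories $M_F \to M_G$, which is exactly item (1).

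For item (2), recall that a morphism in $\End_{\widetilde{Q}}(\VecC(M))$ is by definition a natural transformation commuting with the half-braidings, so for every object $X$ the component $\psi_X : (F(X),\phi^F_X) \to (G(X),\phi^G_X)$ satisfies $\phi^G_X\circ(\psi_X\otimes\id_{\widetilde{Q}}) = (\id_{\widetilde{Q}}\otimes\psi_X)\circ\phi^F_X$. Under the correspondence of Theorem \ref{thm: classification of functors into End(Vec(M))}, the half-braiding data $\phi^F_{(-)}$ is precisely the $C$-module endofunctor structure $\phi^F_{\widetilde{Q}}$ making $\widetilde{Q}^F$ an object of $\End_C(\VecC(M))$, and likewise for $G$. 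Transporting the displayed relation through the unitors in the same way that $\rho$ was built from $\psi$ turns it into the statement that $\rho$ is a morphism of $C$-module endofunctors $(\Id,\rho)\circ(\widetilde{Q}^F,\phi^F_{\widetilde{Q}}) \Rightarrow (\widetilde{Q}^G,\phi^G_{\widetilde{Q}})\circ(\Id,\rho)$; since $(\Id,\rho)$ is invertible, this rearranges to the claimed isomorphism $(\Id,\rho)\circ(\widetilde{Q}^F,\phi^F_{\widetilde{Q}})\circ(\Id,\rho)^{-1}\cong(\widetilde{Q}^G,\phi^G_{\widetilde{Q}})$.

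The one thing that genuinely needs checking is that $\rho$, already known to respect the $C$-action coherences by Lemma \ref{lem:C mod functor and mon nat iso}, also respects the module-endofunctor coherence of $\widetilde{Q}$ — i.e.\ that the square above, once conjugated by the unitors, is compatible with the associativity/unit coherences of $\widetilde{Q}$ as a module endofunctor. This is a diagram chase of the same flavour as the large pentagon in the proof of Lemma \ref{lem:C mod functor and mon nat iso}, with one additional commuting square supplied by naturality of $\psi$ against the half-braiding, so no new idea is required. The main hazard is purely notational: the term ``half-braiding'' is used both for the datum $\rho$ coming from $\psi$ and for the datum $\phi_{\widetilde{Q}}$ encoding $\widetilde{Q}$, and one must keep these two roles separate throughout the chase to see that the combined coherence holds.
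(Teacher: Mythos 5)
Your proposal is correct and follows essentially the same route as the paper: apply Lemma \ref{lem:C mod functor and mon nat iso} to extract $(\Id,\rho)$ from $\psi$, then observe that the components $\psi_X$ commute with the half-braidings by definition of morphisms in $\End_{\widetilde{Q}}(\VecC(M))$, which yields the conjugation relation for $\widetilde{Q}$. In fact you supply the justification for the second step that the paper only asserts with ``it follows,'' so your write-up is, if anything, more complete.
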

\begin{proof}
    Suppose there exists a monoidal natural isomorphism $\psi:F\rightarrow G$. Then $F_X\xrightarrow[]{\psi_X}G_X$ is a natural isomorphism of functors for isomorphism classes of objects $X \in C$. By Lemma \protect\ref{lem:C mod functor and mon nat iso} this data is equivalence of module categories $M_G$ and $M_F$ up to an invertible $C$ module functor $(\Id,\rho)$. It follows that the dual functors $(\widetilde{Q}^F,\phi^F_{\widetilde{Q}})$ and $(\widetilde{Q}^G,\phi^G_{\widetilde{Q}})$ are equivalent up to conjugacy by that invertible $C$ module functor, $(\Id,\rho)\circ (\widetilde{Q}^F,\phi^F_{\widetilde{Q}})\circ (\Id,\rho)^{-1}\cong (\widetilde{Q}^G,\phi^G_{\widetilde{Q}})$. 
\end{proof}

\begin{thm}\label{thm:based action bijection}
 Fix a fusion category $C$. Isomorphism classes of monoidal functors $F:C\rightarrow \End_{\widetilde{Q}}(\VecC(M))$ up to monoidal natural isomorphism are in bijection with

   \begin{enumerate}
        \item A choice of $C$ module category structure on $M$ up to invertible $C$ module functors of the form $(\Id,\rho)$.
        \item A conjugacy class of objects $[(\widetilde{Q},\phi_{-})]$ in $\End_C(\VecC(M))$ up to conjugation by $C$ module endofunctors of the form $(\Id,\rho)$. 
   \end{enumerate} 
\end{thm}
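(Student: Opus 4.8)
The plan is to assemble Theorem~\ref{thm:based action bijection} directly from the three preceding results, Theorem~\ref{thm: classification of functors into End(Vec(M))}, Proposition~\ref{prop: equiv of C mod and mon nat iso1}, and Proposition~\ref{prop:equiv of C mod and mon nat iso2}, so that the argument is essentially a bookkeeping exercise on equivalence classes. First I would recall that by Theorem~\ref{thm: classification of functors into End(Vec(M))} the assignment sending a monoidal functor $F:C\rightarrow \End_{\widetilde{Q}}(\VecC(M))$ to the pair consisting of the induced $C$ module category structure on $M$ (via the classical bijection of \cite{EGNO16}) together with the $C$ module endofunctor $(\widetilde{Q},\phi_{-})\in \End_C(\VecC(M))$ is a bijection at the level of raw data, i.e. before passing to isomorphism classes. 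So the content to verify is that this bijection descends to the indicated quotients on both sides.

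Next I would argue the two directions of the descent. For \emph{well-definedness} of the map on isomorphism classes: if $F$ and $G$ are monoidally naturally isomorphic, Proposition~\ref{prop:equiv of C mod and mon nat iso2} says precisely that their associated module category structures agree up to an invertible $C$ module functor of the form $(\Id,\rho)$, and that the associated dual functors $(\widetilde{Q}^F,\phi^F_{\widetilde{Q}})$ and $(\widetilde{Q}^G,\phi^G_{\widetilde{Q}})$ are conjugate by that same $(\Id,\rho)$; hence $F$ and $G$ are sent to the same class in items (1) and (2). For \emph{injectivity} of the induced map: if $F$ and $G$ have the same module category structure up to $(\Id,\rho)$ and conjugate dual functors via $(\Id,\rho)$, Proposition~\ref{prop: equiv of C mod and mon nat iso1} gives a monoidal natural isomorphism $F\cong G$. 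Combined with the surjectivity already packaged in Theorem~\ref{thm: classification of functors into End(Vec(M))} --- every such pair of data is realized by some $F$ --- this yields the claimed bijection.

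One point I would be careful to spell out is the compatibility of the two quotient relations: the relation "up to $(\Id,\rho)$" appearing in item (1) and the relation "conjugation by $(\Id,\rho)$" in item (2) must be linked by the \emph{same} module functor $(\Id,\rho)$, so strictly speaking the right-hand side is the set of equivalence classes of \emph{pairs} under the diagonal action of the groupoid of invertible module functors of the form $(\Id,\rho)$, rather than a product of two independent quotients. I would phrase item (2) as "relative to" the chosen representative of item (1) so that the statement is unambiguous, and note that both Propositions~\ref{prop: equiv of C mod and mon nat iso1} and~\ref{prop:equiv of C mod and mon nat iso2} are already stated with this coupling built in, so nothing new is needed.

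The main obstacle --- such as it is --- is not a deep one but a presentational one: making sure the equivalence relation on the right-hand side is stated coherently, and checking that the $C$ module structure recovered from $F$ genuinely lands in $\End(\VecC(M))$ and restricts to an endofunctor of $M$ (as noted in the remark following Theorem~\ref{thm: classification of functors into End(Vec(M))}, dualizability of $F(X)$ forces each component to be finite dimensional). Once those are in place, the proof is just: cite Theorem~\ref{thm: classification of functors into End(Vec(M))} for the underlying bijection, cite Proposition~\ref{prop:equiv of C mod and mon nat iso2} for well-definedness, and cite Proposition~\ref{prop: equiv of C mod and mon nat iso1} for injectivity.
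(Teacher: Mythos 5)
Your proposal is correct and follows essentially the same route as the paper, which simply cites Propositions~\ref{prop: equiv of C mod and mon nat iso1} and~\ref{prop:equiv of C mod and mon nat iso2} for the two directions of the descent (with Theorem~\ref{thm: classification of functors into End(Vec(M))} implicitly supplying surjectivity). Your additional remark that the quotients in items (1) and (2) are coupled through the \emph{same} module functor $(\Id,\rho)$ is a useful clarification that the paper leaves implicit.
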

\begin{proof}

This bijection follows from Propositions \protect\ref{prop: equiv of C mod and mon nat iso1}, \protect\ref{prop:equiv of C mod and mon nat iso2}.
\end{proof}

\begin{remark}
    This is very similar data to the data that classifies tensor algebras in \protect\cite{EKW21}. The main difference here is the fineness of our equivalences. Monoidal natural isomorphism allows for less flexibility than classification up to Morita equivalence of module categories and module endofunctors. 
\end{remark}

For algebra complete fusion categories, we can say with more certainty what the module categories are up to monoidal natural isomorphism. For example for $\Rep(S_3)$, a non algebra complete fusion category there are indecomposable two module categories structures on a linear semisimple category with $3$ simple objects, $\Rep(S_3)$ and $\Rep(A_3)$. Conversely, for algebra complete fusion categories each indecomposable semisimple module category is equivalent to $C$.

\begin{cor}\label{cor:algebra complete mon nat iso}
Fix an algebra complete fusion category $C$. Isomorphism classes of monoidal functors $F:C\rightarrow \End_{\widetilde{Q}}(\VecC(M))$ up to monoidal natural isomorphism are in bijection with

   \begin{enumerate}
        \item A choice of family of bijections $\chi_i:\Irr(M_i)\rightarrow \Irr(M_i)$.
        \item An isomorphism class of dual functors $(\widetilde{Q},\phi_{-})$ in $\End_C(\VecC(M))$. 
   \end{enumerate} 
\end{cor}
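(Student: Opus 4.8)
The plan is to specialize Theorem \ref{thm:based action bijection} to the case where $C$ is algebra complete and translate each of the two pieces of data into the claimed combinatorial/functorial form. Recall that Theorem \ref{thm:based action bijection} already gives a bijection between isomorphism classes of monoidal functors $F:C\rightarrow \End_{\widetilde{Q}}(\VecC(M))$ and pairs consisting of (1) a choice of $C$ module category structure on $M$ up to invertible $C$ module functors of the form $(\Id,\rho)$, and (2) a conjugacy class of objects $[(\widetilde{Q},\phi_{-})]$ in $\End_C(\VecC(M))$ up to conjugation by $C$ module endofunctors of the form $(\Id,\rho)$. So it suffices to show that for an algebra complete $C$, item (1) is the same data as a family of bijections $\chi_i:\Irr(M_i)\rightarrow\Irr(M_i)$, and that under this identification item (2) becomes just an isomorphism class of dual functors rather than a conjugacy class.

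For item (1): decompose $M$ into indecomposable module subcategories $M=\bigoplus_i M_i$. Since $C$ is algebra complete, each indecomposable semisimple $C$ module category $M_i$ is equivalent (as a $C$ module category) to $C^{\oplus n_i}$ for some $n_i$; in particular the underlying linear category of $M_i$ has $n_i\cdot|\Irr(C)|$ simple objects, and the module structure is rigid once we know how $\Irr(M_i)$ is identified with $\Irr(C^{\oplus n_i})=\Irr(C)\times\{1,\dots,n_i\}$. Two such identifications differ precisely by a bijection of $\Irr(M_i)$ that is compatible with the $C$ action, i.e., by the automorphisms of the module category $C^{\oplus n_i}$, which act on simple objects as permutations; modding out by invertible module functors of the form $(\Id,\rho)$ — those whose underlying endofunctor is the identity on the linear category — leaves exactly the residual freedom of relabeling, which is recorded by a bijection $\chi_i:\Irr(M_i)\rightarrow\Irr(M_i)$. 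I would spell out that an arbitrary invertible $C$ module endofunctor of $M_i$ is, up to the $(\Id,\rho)$ part, a permutation of the simple summands, so the equivalence classes in item (1) of Theorem \ref{thm:based action bijection} are in bijection with the $\chi_i$.

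For item (2): once the module category is pinned down up to $(\Id,\rho)$, the residual conjugation group acting on $[(\widetilde{Q},\phi_{-})]$ consists exactly of the module functors of the form $(\Id,\rho)$. But conjugation by $(\Id,\rho)$ is precisely the relation that was already used in Proposition \ref{prop: equiv of C mod and mon nat iso1} and Proposition \ref{prop:equiv of C mod and mon nat iso2} to pass between monoidally-naturally-isomorphic functors; conjugating a dual functor $(\widetilde{Q},\phi_{-})$ by $(\Id,\rho)$ produces an \emph{isomorphic} object of $\End_C(\VecC(M))$ (the unitors of the endofunctor category furnish the isomorphism, just as in Lemma \ref{lem:C mod functor and mon nat iso}). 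Hence the conjugacy classes under $(\Id,\rho)$-conjugation collapse to ordinary isomorphism classes of dual functors in $\End_C(\VecC(M))$. Combining the two reductions gives the stated bijection.

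The main obstacle I anticipate is bookkeeping rather than conceptual: carefully separating the "relabeling of simple objects" part of an invertible $C$ module functor (which survives as $\chi_i$) from the "$(\Id,\rho)$" part (which gets quotiented away), and verifying that these two pieces interact correctly so that no data is double-counted or lost — in other words, checking that the group of invertible $C$ module autoequivalences of $C^{\oplus n_i}$ really does factor, up to the relevant equivalence, as permutations times $(\Id,\rho)$-type functors. I would handle this by invoking the explicit description of module functors between $C^{\oplus n}$ and $C^{\oplus m}$ as matrices of objects of $C$ (analogous to the matrix description of functors between finitely semisimple linear categories used earlier in the paper), under which invertibility forces the matrix to be a permutation matrix with invertible (hence, by algebra completeness considerations, trivial-up-to-$\rho$) entries.
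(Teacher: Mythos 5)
Your overall strategy --- specialize Theorem \ref{thm:based action bijection} and show that under algebra completeness the two pieces of data simplify to the stated form --- is the paper's strategy, and your treatment of item (1) (module structures on $M=\bigoplus_i M_i$ reduce to labelings of $\Irr(M_i)$ by $\Irr(C)$) matches the paper's. However, your justification for item (2) has a genuine gap. You claim that conjugating $(\widetilde{Q},\phi_{-})$ by any $(\Id,\rho)$ yields an \emph{isomorphic} object of $\End_C(\VecC(M))$, with the isomorphism furnished by the unitors. The unitors only identify the underlying plain endofunctors $\Id\circ\widetilde{Q}\circ\Id^{-1}\cong\widetilde{Q}$; they do not in general intertwine the half-braiding $\phi$ with the $\rho$-conjugated one, so they need not be morphisms in $\End_C(\VecC(M))$. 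Indeed, if your claim held for arbitrary fusion categories, the conjugacy class in item (2) of Theorem \ref{thm:based action bijection} would already be an isomorphism class and the algebra completeness hypothesis would be superfluous --- the whole point of that theorem's formulation is that conjugation by a nontrivial $(\Id,\rho)$ can change the isomorphism type of the module endofunctor.

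The missing ingredient, which is exactly what the paper uses, is that algebra completeness kills the nontrivial $(\Id,\rho)$'s: since every semisimple module category is of the form $C^{\oplus n}$ and $\End_C(C)\cong C^{\op}$ (module endofunctors of $C$ are $-\otimes X$, determined up to isomorphism by $X=F(\1)$), any $C$-module functor whose underlying endofunctor is the identity has $F(\1)\cong\1$ in each diagonal block and is therefore isomorphic as a module functor to the monoidal unit $(\Id,\rho_{\Id})$ of $\End_C(M)$. Conjugation by (something isomorphic to) the unit acts trivially on isomorphism classes, so the conjugacy classes in item (2) collapse to isomorphism classes. Your closing remark about entries being ``trivial-up-to-$\rho$ by algebra completeness'' gestures at this for item (1), but you need to deploy that same observation as the actual argument for item (2), in place of the unitor argument.
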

\begin{proof}
By Theorem \protect\ref{thm:based action bijection} monoidal functors $F$ up to monoidal natural isomorphism are classified by an equivalence class of $C$ module categories up a dual functor of the form $(\Id,\rho)$ and a conjugacy class $[\widetilde{Q},\phi]$. Notice that for an algebra complete fusion category since all semisimple module categories are of the form $C^{\oplus n}$ and $\End_C(C)\cong C^{op}$, then the only $C$ module functor of the form $(\Id,\rho)$ is $(\Id,\rho_{\Id})$. Suppose $M$ is a semisimple module category such that $M\cong \oplus_{i=1}^n M_i$ where $M_i$ is an indecomposable semisimple module category. Then, a choice of $C$ module category corresponds to a labeling of the simple objects from each $M_i$ with the simple objects in $C$. This corresponds to a family of bijections $\{\chi_i:\Irr(M_i)\rightarrow \Irr(M_i)\}_{i=1}^n$. Now our conjugacy class of $C$ module endofunctors $[(\widetilde{Q},\phi)]$ just reduces to an isomorphism class of $C$ module endofunctors $(\widetilde{Q},\phi)$, since the only invertible $C$ module endofunctor is of the form $(\Id,\rho_{\Id})$.
\end{proof}

\begin{remark}
    Our goal later in the paper will be to figure out which monoidal functor $F:C\rightarrow \End_{\widetilde{Q}}(\VecC(M))$ are actually monoidal functors into $\End_{(\widetilde{Q},m,i)}(\VecC(M))$. Since $\End_{(\widetilde{Q},m,i)}(\VecC(M))$ is a full subcategory of $\End_{\widetilde{Q}}(\VecC(M))$ we can determine this by checking if they satisfy the required two properties.
\end{remark}

To better understand the category $\End_{\widetilde{Q}}(\VecC(M))$ and what monoidal functors from $C\rightarrow \End_{\widetilde{Q}}(\VecC(M))$ exist, we must first understand the dual functor $\widetilde{Q}$. 
For a semisimple module category, $M$ then $\End_C(M)$ is a multifusion category with simple objects $\{Q_1,...,Q_m\}$. The category $\VecC(\End_C(M))$ can be thought of allowing infinite sums of these simple $C$-Module endofunctors. We want to understand the relationship between $\End_C(\VecC(M))$ and the category $\VecC(\End_C(M))$. 

\begin{thm}\label{thm:dual cat and filtered colim}
Let $C$ be a fusion category and fix a semisimple $C$ module category structure on $M$. There is an equivalence of categories between $\End_C(\VecC(M))$ and $\VecC(\End_C(M))$. 
\end{thm}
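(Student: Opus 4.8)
The plan is to exhibit a pair of functors between $\End_C(\VecC(M))$ and $\VecC(\End_C(M))$ and check they are mutually inverse equivalences. The key point is that both categories are built out of the finite data of $\End_C(M)$, which is a multifusion category (fusion if $M$ is indecomposable) with finitely many simple objects $Q_1,\dots,Q_m$, together with an Ind-completion. So I would first establish the two descriptions concretely: $\VecC(\End_C(M))$ is by definition linear functors $\End_C(M)^{\mathrm{op}}\to\VecC$, i.e. the Ind-completion $\Ind(\End_C(M))$, and its objects are ``formal (possibly infinite) direct sums'' $\bigoplus_k Q_{i_k}$ of simple $C$-module endofunctors with finite-dimensional multiplicity spaces; on the other side, a $C$-module endofunctor $\Phi$ of $\VecC(M)$ is determined (since $\VecC(M)\simeq\Ind(M)$ and $\Phi$ is a colimit-preserving module functor) by its restriction to the simple objects of $M$, and this restriction decomposes into simples of $\End_C(M)$.

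First I would construct the functor $\Psi:\VecC(\End_C(M))\to\End_C(\VecC(M))$. Given $V\in\VecC(\End_C(M))$, write it as a filtered colimit of its finite subobjects in $\End_C(M)$; since each object of $\End_C(M)$ is literally a $C$-module endofunctor of $M$, hence extends (by Ind-completion / colimit-preservation) canonically to a $C$-module endofunctor of $\VecC(M)$, and since $\VecC(M)$ is cocomplete, we can take the colimit of these extensions in $\End_C(\VecC(M))$. This uses that $\VecC(M)$ is a cocomplete category and that $C$-module endofunctors that preserve colimits are closed under filtered colimits, which follows from the matrix-of-vector-spaces description of endofunctors of a semisimple category recalled in Section 2. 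Conversely, I would build $\Theta:\End_C(\VecC(M))\to\VecC(\End_C(M))$ by restricting a $C$-module endofunctor $\Phi$ of $\VecC(M)$ to the full subcategory $M\subset\VecC(M)$ on the simple objects: for each simple $X_i\in\Irr(M)$, $\Phi(X_i)\in\VecC(M)$ decomposes as a (possibly infinite) direct sum of simples with finite multiplicities because $\Phi$ is a dual/rigid functor (here I invoke the remark after Theorem \ref{thm: classification of functors into End(Vec(M))} that dualizability forces the multiplicity spaces to be finite-dimensional), and assembling these gives an object of $\Ind(\End_C(M))=\VecC(\End_C(M))$. Both assignments are visibly functorial, sending module-functor natural transformations to natural transformations and back.

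Then I would check $\Theta\circ\Psi\cong\id$ and $\Psi\circ\Theta\cong\id$. The first is essentially formal: restricting the colimit-extension of a finite object back to $M$ recovers the original object, and both sides commute with filtered colimits, so it holds on all of $\VecC(\End_C(M))$. The second direction is the place I expect the real work: one must show that a colimit-preserving $C$-module endofunctor $\Phi$ of $\VecC(M)$ is recovered from its restriction to $M$ — i.e. that $\Phi$ is the left Kan extension along $M\hookrightarrow\VecC(M)$ of $\Phi|_M$. This is where I would use that $\VecC(M)\simeq\Ind(M)$, that every object of $\VecC(M)$ is a filtered colimit of objects of $M$, and that $\Phi$ preserves these colimits (built into the setup, as all functors considered in $\End(\Mod(A))$-type categories here preserve colimits and are additive); the $C$-module structure is carried along because the module associator constraints are natural and determined on $M$. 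Compatibility with tensor/composition and units is then a routine check, but it is not needed for the statement as phrased (an equivalence of categories, not monoidal categories). The main obstacle is cleanly verifying this Kan-extension/density argument in the $C$-module-functor setting rather than just at the level of plain linear functors, so I would isolate it as a lemma: ``a colimit-preserving $C$-module endofunctor of $\Ind(M)$ is the unique colimit-preserving extension of its restriction to $M$.''
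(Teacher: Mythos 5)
Your overall strategy (a pair of mutually inverse functors, with restriction to $M$ in one direction and colimit extension in the other) is compatible in spirit with the paper's, but there is a genuine gap at the step you treat as routine: the claim that for $\Phi\in\End_C(\VecC(M))$ the restriction $\Phi|_M$ ``decomposes into simples of $\End_C(M)$.'' This is precisely the nontrivial content of the theorem, and your proposal asserts it rather than proving it. The Kan-extension/density lemma you isolate as the hard part is in fact the formal part: a cocontinuous functor on $\Ind(M)$ is determined by its restriction to $M$, and the module constraint is carried along by naturality. What is not formal is that a $C$-module functor $M\to\VecC(M)$ --- equivalently, a colimit-preserving $C$-module endofunctor of $\VecC(M)$ --- is a (possibly infinite) direct sum, i.e.\ filtered colimit, of honest objects of $\End_C(M)$. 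A priori such a functor could fail to have enough compact $C$-module subfunctors: a compact subobject of $\Phi|_M$ as a plain linear functor need not be a sub-\emph{module}-functor. The paper closes exactly this gap by writing $M\cong\Mod(A)$ for an algebra $A$ in $C$, identifying $\End_C(M)$ with algebras over the monad $T(-)=A\otimes -\otimes A$ on $C$, and showing that any compact subobject $W$ of a $T$-algebra $V$ in $\VecC(C)$ is contained in a compact sub-$T$-algebra, namely the image of $A\otimes W\otimes A\to V$; the filtered colimit of these recovers $V$. Some such device is needed in your argument, and nothing in the proposal supplies it.

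A secondary, more minor issue: you justify finiteness of the multiplicity spaces of $\Phi(X_i)$ by dualizability, citing the remark after Theorem \ref{thm: classification of functors into End(Vec(M))}. That remark applies to images $F(X)$ of a monoidal functor out of a fusion category, which are dualizable; a general object of $\End_C(\VecC(M))$ is not, and indeed the paper explicitly allows infinite multiplicities ($n_i\in\Z_{\geq 0}\cup\{\infty\}$) in the decomposition of $\widetilde{Q}$. This slip is harmless for your construction, since $\VecC(\End_C(M))$ admits infinite multiplicities, but the finiteness claim should be dropped.
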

\begin{proof}
The dual category $\End_C(M)$ is equivalent to the category $C^T$ of algebras over the monad $T:C\rightarrow C$ that sends $T(X)\rightarrow A\otimes X \otimes A$. The modules over this monad are $A$ bimodules which are equivalent as categories  to the dual category $\End_C(M)$. Thus, it suffices to show $\VecC(C^T)\cong (\VecC(C))^T$. There is a fully faithful functor $F:\VecC(C^T)\rightarrow \VecC(C)^T$ that forgets the $T$ algebra structure and then remembers it. That is, this functor takes an element in $\VecC(C^T)$ which is a functor $G:(C^T)^{op}\rightarrow \VecC$ and forgets the $T$ algebra structure to produce a functor, $G':C^{op}\rightarrow \VecC$ then re-equips $G'$ with a $T$ algebra structure. We will show that $G$ is essentially surjective. Let $V$ be an object in $\VecC(C)^T$ that is $V:C^{op}\rightarrow \VecC$ with the structure of a $T$ algebra. Take a compact subobject of $V$ denoted $W$, by compact subobject we mean a subfunctor $W:C^{op}\rightarrow \VecC$ such that $\dim(W(X))$ is finite dimensional for all simple $X$. We can construct a $T$ algebra containing $W$ by considering $A\otimes W \otimes A$ denoted $\widetilde{W}$. Notice that $A\otimes W\otimes A$ is a compact object in $\VecC(C)$ since $W$ and $A$ are dualizable hence compact. By construction $\widetilde{W}$ has a $T$-algebra structure which implies that it's a compact object in $\VecC(C^T)$.

Now, it suffices to show that the embedding of $A\otimes W\otimes A\rightarrow A\otimes V \otimes A\rightarrow V$ is a compact subobject of $V$. Note that if $V$ is compact, this follows automatically. $V$ is compact when $V$ is an object in $\End_C(M)$. Thus, we consider when $V$ is not compact. Since $\widetilde{W}$ is a functor from $C\rightarrow \VecC$ it implies that it can be represented as a row vector of vector spaces $(\widetilde{W}_{X_1},....,\widetilde{W}_{X_n})_{X_i\in \Irr(C)}$ where each $\widetilde{W}_{X_i}$ is finite dimensional. Now $V$ can be represented as a row vector $(V_{X_1},...,V_{X_n})_{X_i\in \Irr(C)}$ where $V_i$ is a vector space of any dimension. A morphism between these two objects is a natural transformation which can be represented as a row vector of linear transformations
$(\phi_{X_1},...,\phi_{X_n})$ where $\phi_{X_i}:\widetilde{W}_{X_i}\rightarrow V_{X_i}$ is a linear transformation. The image of a finite dimensional vector space is finite dimensional, thus the image of $\widetilde{W}$ is a compact subobject.

Now consider a chain of compact subobjects $W_i\subset V$ such that the filtered colimit of the $W_i$'s is $V$. Then we can construct a chain of $T$ algebras $\widetilde{W_i}\subset V$. This chain of $T$ algebras has a filtered colimit isomorphic to $V$. Thus $V$ is in $\VecC(C^T)$ and $F(V)\cong V$. Thus $F$ is essentially surjective and we have that
$\End_C(\VecC(M))\cong\VecC(\End_C(M))$.
\end{proof}

\begin{cor}
    $\widetilde{Q}$ decomposes as a direct sum of simple $\End_C(M)$ module functors, that is $\widetilde{Q} \cong \bigoplus_{Q_i} n_iG_i,\hspace{1 mm} n_i \in \Z_{\geq 0} \cup \{\infty\},\hspace{1 mm} G_i\in \Irr(\End_C(M))$.
\end{cor}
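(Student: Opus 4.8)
The plan is to read this off directly from Theorem~\ref{thm:dual cat and filtered colim}. First, note that $\widetilde{Q}$ is precisely the $C$-module (dual) endofunctor that appears in Theorem~\ref{thm: classification of functors into End(Vec(M))}, so it is an honest object of $\End_C(\VecC(M))$ and not merely of $\End(\VecC(M))$. Applying the equivalence of categories $\End_C(\VecC(M))\cong\VecC(\End_C(M))$ supplied by Theorem~\ref{thm:dual cat and filtered colim} transports $\widetilde{Q}$ to an object $V$ of $\VecC(\End_C(M))$, i.e.\ a linear functor $(\End_C(M))^{\op}\to\VecC$.

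Next I would invoke the structure of the target category. Since $M$ is a semisimple $C$-module category, $\End_C(M)$ is a multifusion category; in particular it is semisimple with a finite set $\Irr(\End_C(M))=\{G_1,\dots,G_m\}$ of simple objects, and $\VecC(\End_C(M))\cong\Ind(\End_C(M))$. Every object of the Ind-completion of a semisimple category is the filtered colimit of its compact subobjects (finite direct sums of the $G_i$), and because the ambient category is semisimple this colimit splits; hence $V\cong\bigoplus_{i} n_i G_i$, where $n_i$ is the dimension of the multiplicity space of $G_i$ in $V$ (concretely, $n_i=\dim V(G_i)$). Transporting this decomposition back along the equivalence of Theorem~\ref{thm:dual cat and filtered colim} --- which, being an equivalence of additive categories, preserves direct sums and simple objects --- yields $\widetilde{Q}\cong\bigoplus_{G_i} n_i G_i$ in $\End_C(\VecC(M))$. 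Finally, to see that each $n_i$ lies in $\Z_{\ge 0}\cup\{\infty\}$, recall $\widetilde{Q}=\bigoplus_{j\ge 0}Q^j$ with each $Q^j$ a finite-dimensional endofunctor of $M$; thus $\widetilde{Q}$ has at most countable total dimension, so no simple $G_i$ (which has strictly positive dimension) can occur with uncountable multiplicity.

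There is no real obstacle here; the statement is an immediate corollary once Theorem~\ref{thm:dual cat and filtered colim} is in hand. The only points that need a word of care are (i) confirming that the decomposition produced is one of $C$-module endofunctors rather than of plain endofunctors --- this is automatic because the entire argument takes place inside $\End_C(\VecC(M))$ via the genuine equivalence of categories, under which coproduct decompositions correspond --- and (ii) the bound on the multiplicities, which follows from the countable, finite-dimensional-summand description of $\widetilde{Q}$ noted above.
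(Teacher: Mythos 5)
Your argument is correct and follows the same route as the paper: invoke the equivalence $\End_C(\VecC(M))\cong\VecC(\End_C(M))$ from Theorem~\ref{thm:dual cat and filtered colim} and then use the semisimplicity of the multifusion category $\End_C(M)$ to split the transported object into simples. The extra care you take over the countability of the multiplicities and over transporting the decomposition back along the equivalence is sound but goes beyond what the paper records.
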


\begin{proof}
     Since $C$ is fusion, then $\End_C(M)$ is multifusion and thus has simple objects $G_i$ and every element is a direct sum of those simple objects $G_i$. By Theorem \protect\ref{thm:dual cat and filtered colim} it follows that $\End_C(\VecC(M)) \cong \VecC(\End_C(M)) $ and thus we conclude that the objects in $\End_C(\VecC(M))$ are possibly infinite direct sums of simple objects in $\End_C(M)$.
\end{proof}

 We will now use this to classify the number of monoidal functor $F:C\rightarrow \End_{\widetilde{Q}}(\VecC(M))$ when $Q$ is a quiver with strongly connected components. A quiver is strongly connected if for each vertex $a$ and $b$ there is a path from $a$ to $b$ and a path from $b$ to $a$. Monoidal functors from $C$ into $\End_{\widetilde{Q}}(\VecC(M))$ correspond to the different decompositions of $\widetilde{Q}$ into simple objects. We shall see for these quivers, there are either $1,$ or $\infty$ of these decompositions, and thus there are either $0,1$ or $\infty$ monoidal functors from $C\rightarrow  \End_{\widetilde{Q}}(\VecC(M))$.

\begin{thm} \label{thm: counting TFEnd_Q}
Let $Q$ be a quiver with strongly connected components. Let $C$ be a fusion category and fix a semisimple $C$ module category structure on $M$. Assuming there is a monoidal functor $F:C\rightarrow \End_{\widetilde{Q}}(\VecC(M))$, then there is only 1 monoidal functor up to monoidal natural isomorphism iff for all $G \in \Irr(\End_C(M))$ that appears in the decomposition of $\widetilde{Q}$ there exists $M_i,M_j \in M$ such that $G$ is the only simple such that $\dim(\Hom(M_j,G(M_i))>0$.
\end{thm}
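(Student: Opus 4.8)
The plan is to run the statement through the classification of the previous section and then reduce it to a combinatorial problem about matrices over $\Z_{\geq 0}\cup\{\infty\}$. Since the $C$-module category structure on $M$ is held fixed, Theorem~\ref{thm:based action bijection} identifies monoidal functors $F:C\to\End_{\widetilde{Q}}(\VecC(M))$ up to monoidal natural isomorphism with the objects $\widehat{Q}\in\End_C(\VecC(M))$ whose underlying plain endofunctor of $\VecC(M)$ is $\widetilde{Q}$, taken up to isomorphism and up to conjugation by the invertible $C$-module endofunctors of the form $(\Id,\rho)$. Using Theorem~\ref{thm:dual cat and filtered colim} to pass to $\VecC(\End_C(M))$, and representing endofunctors of the semisimple category $\VecC(M)$ as $\Irr(M)\times\Irr(M)$ matrices over $\Z_{\geq 0}\cup\{\infty\}$, the datum of $\widehat{Q}$ becomes a multiplicity vector $(n_G)_{G\in\Irr(\End_C(M))}$ solving $\sum_G n_G\,[U(G)] = [\widetilde{Q}]$, where $U$ forgets the module structure and each $[U(G)]$ is a finite matrix. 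Since the invertible $(\Id,\rho)$'s form a finite group acting by permutations of $\Irr(\End_C(M))$, each conjugacy class of solutions is finite, so ``exactly one monoidal functor up to monoidal natural isomorphism'' is the same as ``the displayed equation has a unique solution''; this is the statement I would then prove.

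The key input is that strong connectedness controls the matrix $N:=[\widetilde{Q}]$. First I would observe that $N$ is block-diagonal with respect to the connected components of $Q$, that on any component containing at least one edge every entry of the corresponding block equals $\infty$ (concatenate a path, a return path, and a path to produce paths of arbitrarily large length between any two vertices), and that on an isolated vertex the block is $[1]$ and pins down the multiplicities. A half braiding on $\widetilde{Q}$ respects this block decomposition, so it suffices to analyze a single block all of whose entries are $\infty$. Over such a block the equation $\sum_G n_G [U(G)] = N$ holds precisely when $S_\infty:=\{G:n_G=\infty\}$ is a \emph{covering} set for the positions (each $(i,j)$ is hit by some $G\in S_\infty$), with the finite multiplicities unconstrained.

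For the ($\Leftarrow$) direction I would argue that if every simple $G$ occurring in the decomposition has a private position $(i_G,j_G)$ — one where only $G$ among all simples is supported — then reading off that entry forces $n_G(U(G))_{i_Gj_G}=N_{i_Gj_G}=\infty$, hence $n_G=\infty$; so all occurring simples lie in $S_\infty$, and strong connectedness forces any simple that is the unique cover of some position to occur. Given the hypothesis this makes the occurring simples exactly a minimal covering set, and moreover the only covering set, which yields uniqueness of the solution. For the ($\Rightarrow$) direction (contrapositive) I would show that if some occurring simple $G_0$ has no private position, then one can perturb the multiplicity vector to produce infinitely many solutions — enlarging $n_{G_0}$ (or some other finite multiplicity) when one exists, and otherwise lowering $n_{G_0}$ from $\infty$, using that $S_\infty\setminus\{G_0\}$ still covers every position — hence, modulo the finite conjugation action, more than one monoidal functor.

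The main obstacle I anticipate is the bookkeeping around possibly-infinite multiplicities: one must exploit that ``$\infty+k=\infty$'' makes many finite perturbations legitimate, and simultaneously check that the private-position hypothesis, together with strong connectedness, is exactly strong enough to exclude all of them. The cleanest route is to phrase everything in terms of which covering sets of positions are realizable and to verify that ``every occurring simple has a private position'' is equivalent to ``there is a unique covering set''; the isolated-vertex blocks, and the precise interaction between ``occurring in the decomposition of $\widetilde{Q}$'' and ``having a private position'', also need to be tracked with care.
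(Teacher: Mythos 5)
Your proposal follows essentially the same route as the paper's proof: identify monoidal functors (for the fixed module structure on $M$) with decompositions $\widetilde{Q}\cong\sum n_G G$ into simples of $\End_C(M)$, use a private position of each occurring simple together with strong connectedness to force $n_G=\infty$ and hence uniqueness in one direction, and perturb the multiplicity of a simple lacking a private position to manufacture extra decompositions in the other. Your covering-set and block-diagonal bookkeeping, and your flagging of the $(\Id,\rho)$-conjugation ambiguity and of the $\infty$-arithmetic, are refinements of points the paper silently elides, but the core argument is identical.
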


\begin{proof}
  Suppose for all simple $G_k \in \End_C(M)$ that appear in the decomposition of $\widetilde{Q}$ there exists $M_i,M_j \in M$ such that $G$ is the only simple such that $\dim(\Hom(M_j,G(M_i))>0$. Then if $\widetilde{Q}\cong\sum n_kG_k$ it follows that $G_k$ is the only simple dual functor such that $\dim(\Hom(M_i,G_k(M_j))>0$ for some $i,j$. Then we have a unique scalar $n_k$ in an entry of the matrix representation of our functor. Since $Q$ is strongly connected, then $\widetilde{Q}$ has an infinity in each non-zero entry which implies that each $n_k$ must be $\infty$. Therefore, there is only one decomposition and thus one action.

  Conversely, suppose that there is one monoidal functor $F:C\rightarrow \End_{\widetilde{Q}}(\VecC(M))$. Suppose there existed a $G\in \Irr(\End_C(M))$ such that for all $M_i,M_j \in M$ if $\dim(\Hom(M_j,G(M_i))>0$ then there exists some $G'$ such that $\dim(\Hom(M_j,G'(M_i))>0$. Since $Q$ is a quiver with strongly connected components, it follows that each non-zero edge in $\widetilde{Q}$ has infinite edges. This implies that if $G$ was removed from $\sum n_kG_k$ then $\widetilde{Q}$ would remain unchanged. Thus, we get a different monoidal functor for each coefficient $n_g$ which is a contradiction. 
\end{proof}

\begin{cor}\label{cor:actions on EndQ}
    Let $Q$ be a quiver with strongly connected components. Let $C$ be a fusion category and fix a semisimple $C$ module category structure on $M$. If there exists a monoidal functor $F:C\rightarrow \End_{\widetilde{Q}}(\VecC(M))$ then there are up to monoidal natural isomorphism there are either $1$ or $\infty$ such functors.
\end{cor}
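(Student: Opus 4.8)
The plan is to deduce the corollary directly from Theorem \ref{thm: counting TFEnd_Q} together with Theorem \ref{thm:based action bijection} (and its consequence that $\widetilde{Q}$ decomposes as a possibly-infinite direct sum of simple $C$-module endofunctors). The key observation is that Theorem \ref{thm: counting TFEnd_Q} gives a dichotomy for the ``$=1$'' case: either the combinatorial condition on the simples $G \in \Irr(\End_C(M))$ appearing in $\widetilde{Q}$ holds, in which case there is exactly one monoidal functor up to monoidal natural isomorphism, or it fails. So the whole content of the corollary is to show that when that condition \emph{fails}, there are in fact infinitely many such functors, not merely ``more than one.''

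First I would invoke the corollary to Theorem \ref{thm:dual cat and filtered colim} to write $\widetilde{Q} \cong \bigoplus_{G_k \in \Irr(\End_C(M))} n_k G_k$ with $n_k \in \Z_{\geq 0} \cup \{\infty\}$, and recall from Theorem \ref{thm:based action bijection} that a monoidal functor $F : C \to \End_{\widetilde{Q}}(\VecC(M))$ up to monoidal natural isomorphism is (for a fixed module-category structure on $M$, which we have fixed) the same data as a conjugacy class of objects $(\widetilde{Q}, \phi_{-})$ in $\End_C(\VecC(M))$ — i.e., essentially a choice of such a decomposition realizing the given endofunctor $\widetilde{Q}$. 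Next I would use that $Q$ has strongly connected components: as argued in the proof of Theorem \ref{thm: counting TFEnd_Q}, each nonzero entry in the matrix representation of $\widetilde{Q}$ over $M$ is infinite-dimensional, so any $G_k$ that does appear must appear with coefficient $n_k = \infty$. Then, in the failure case, there is some simple $G$ such that every $(M_i, M_j)$ with $\dim\Hom(M_j, G(M_i)) > 0$ also has some other simple $G'$ contributing to that same entry; hence replacing the coefficient of $G$ by any element of $\Z_{\geq 0} \cup \{\infty\}$ (while keeping all infinite entries infinite) yields the \emph{same} underlying endofunctor $\widetilde{Q}$ but a genuinely different object of $\End_C(\VecC(M))$ up to the allowed conjugation, and these are pairwise non-monoidally-naturally-isomorphic. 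That produces infinitely many monoidal functors.

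Finally I would assemble the trichotomy: if no monoidal functor exists the count is $0$; if one exists, Theorem \ref{thm: counting TFEnd_Q} says the count is exactly $1$ precisely when the combinatorial condition holds, and the paragraph above shows the count is $\infty$ when it fails; so the count is always $0$, $1$, or $\infty$, and under the hypothesis that a functor exists it is $1$ or $\infty$. The main obstacle I anticipate is the bookkeeping in the failure case: one must be careful that altering the coefficient $n_G$ really does leave the \emph{isomorphism class of the endofunctor} $\widetilde{Q}$ unchanged (this is where strong connectivity, forcing the relevant entries to already be infinite, is essential) while still changing the object up to the equivalence of Theorem \ref{thm:based action bijection} — in other words, that the conjugation by module endofunctors of the form $(\Id, \rho)$ cannot collapse these distinct choices. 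This should follow because such conjugations permute simple objects within $M$ but do not merge distinct simples $G_k$ of $\End_C(M)$, so distinct coefficient assignments on a ``hidden'' simple $G$ remain distinct.
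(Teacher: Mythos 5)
Your proposal is correct and follows essentially the same route as the paper: invoke Theorem \ref{thm: counting TFEnd_Q} so that having more than one functor forces the existence of a ``hidden'' simple $G$ whose nonzero entries are all covered by some other simple, then use strong connectivity (all nonzero entries of $\widetilde{Q}$ are infinite) to vary the coefficient $n_G$ over $\Z_{\geq 0}\cup\{\infty\}$ without changing $\widetilde{Q}$, producing infinitely many functors. Your added care about why conjugation by $(\Id,\rho)$ cannot collapse these distinct decompositions is a detail the paper's proof leaves implicit, but the argument is the same.
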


\begin{proof}
    Suppose that $Q$ has more than one monoidal functor. Then this implies that there existed a $G\in \Irr(\End_C(M))$ such that for all $M_i,M_j \in M$ if $\dim(\Hom(M_j,G(M_i))>0$ then there exists some $G'$ such that $\dim(\Hom(M_j,G'(M_i))>0$. Then in the decomposition, $\widetilde{Q}=\sum n_kG_k$ the coefficient on $G$ can be any element in $\Z_{\geq 0}\cup \infty$, which corresponds to a different monoidal functor from $C$ into $\End_{\widetilde{Q}}(\VecC(M))$. Thus, if there are more than one action, there are infinite.
\end{proof} 

If there is no module category structure on $M$ then there will be no monoidal functors  $F:C \rightarrow \End_{\widetilde{Q}}(\VecC(M))$. There are nice examples that we can construct for each of the three cases  $F:C\rightarrow \End_{\widetilde{Q}}(\VecC(M))$. We describe one of each of them below,

\begin{example}
    For an example of a category with $0$ monoidal functors in $\End_{\widetilde{Q}}(\VecC(M))$ consider any fusion category that doesn't have a fiber functor to $M\cong\VecC_{f.d}$ the category of finite dimensional vector spaces. It follows that we have no monoidal functors of $C$ into $\End_{\widetilde{Q}}(\VecC(M))$, since there are no semisimple module categories of $C$ with one  isomorphism class of simple objects. 
\end{example}

\begin{example}{\label{ex:one action}}
    We can construct an example of $\Fib$ acting on $\Fib^{op}$ then one can compute that $\End_{\Fib}(\Fib)\cong \Fib^{op}$ and thus, $\widetilde{Q} \cong  n_\1\1\oplus n_{\tau}\tau$. Notice that by Theorem \protect\ref{thm: counting TFEnd_Q} that for $\tau$ and $\1$ we have that $\tau$ is the only dual functor taking $\1$ to $\tau$ then it follows that $n_i=\infty$ for all $i$ implying that there is only one decomposition of $\widetilde{Q}$ and thus only one action. 
\end{example}

\begin{example} 
For a case of infinite actions, take any non-trivial fusion category $C$ that has a fiber functor onto $\VecC$. An example of this is $C\cong\VecC(G)$ acting on $\VecC$ via the forgetful functor. The dual category here is equivalent to $\Rep(G)$. Each $C$ module endofunctor can be determined by the image of $\K$. We can construct that infinite number of combinations of the dual objects to produce $\widetilde{Q}$ giving us an infinite amount of actions. 
\end{example}

\begin{remark}
    It's also possible to have no monoidal functors $F:C\rightarrow\End_{\widetilde{Q}}(\VecC(M))$ when $\widetilde{Q}$ doesn't have a decomposition into a sum of simple dual functors. 
\end{remark}

For $\End_{(\widetilde{Q},m,i)}(\VecC(M))$ we produce the following corollary about the number of monoidal functors from $C$ to $\End_{(\widetilde{Q},m,i)}(\VecC(M))$ when $Q$ is quiver with strongly connected components. 

\begin{cor} \label{cor:either 0, at most 1 or at most infinty}
Let $Q$ be a quiver with strongly connected components. For a fixed semisimple $C$ module category structure on $M$, there are either at most $1$ or at most infinite monoidal functors from $C$ to $\End_{(\widetilde{Q},m,i)}(\VecC(M))$ up to monoidal natural isomorphism.
\end{cor}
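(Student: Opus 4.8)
The plan is to read this off from Corollary \ref{cor:actions on EndQ} by pushing monoidal functors along the inclusion of the full subcategory. Recall we have already established that $\End_{(\widetilde{Q},m,i)}(\VecC(M))$ is a full subcategory of $\End_{\widetilde{Q}}(\VecC(M))$, and the inclusion functor $I:\End_{(\widetilde{Q},m,i)}(\VecC(M))\hookrightarrow \End_{\widetilde{Q}}(\VecC(M))$ is strict monoidal for the composition monoidal structure (it does nothing to objects, morphisms, or tensor products). So the first step is to observe that post-composition with $I$ sends a monoidal functor $F:C\rightarrow \End_{(\widetilde{Q},m,i)}(\VecC(M))$ to a monoidal functor $I\circ F:C\rightarrow \End_{\widetilde{Q}}(\VecC(M))$ with the same tensorator, and sends a monoidal natural isomorphism $F\Rightarrow F'$ to a monoidal natural isomorphism $I\circ F\Rightarrow I\circ F'$.

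The second step is to check that this descends to an \emph{injection} on isomorphism classes. Suppose $I\circ F$ and $I\circ F'$ are monoidally naturally isomorphic via some $\psi$. Each component $\psi_X$ is a morphism of $\End_{\widetilde{Q}}(\VecC(M))$ between objects $F(X)$ and $F'(X)$ which both lie in the full subcategory; by fullness, $\psi_X$ is already a morphism of $\End_{(\widetilde{Q},m,i)}(\VecC(M))$. Invertibility of $\psi_X$, naturality in $X$, and the monoidal coherence conditions on $\psi$ are all equalities of such morphisms, hence hold verbatim in the subcategory. Therefore $F$ and $F'$ are monoidally naturally isomorphic as functors into $\End_{(\widetilde{Q},m,i)}(\VecC(M))$, and the map on isomorphism classes is injective.

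The final step is to transport the count. Since $Q$ has strongly connected components, Corollary \ref{cor:actions on EndQ} says the set of monoidal functors $C\rightarrow \End_{\widetilde{Q}}(\VecC(M))$ up to monoidal natural isomorphism is empty, a singleton, or infinite. Composing the injection from the previous step with this trichotomy: in the empty or singleton case there is at most $1$ monoidal functor $C\rightarrow \End_{(\widetilde{Q},m,i)}(\VecC(M))$ up to monoidal natural isomorphism, and in the infinite case there are at most infinitely many. This is precisely the stated dichotomy.

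I do not expect a genuine obstacle here; the one point that needs care is the injectivity on isomorphism classes, which relies essentially on the inclusion being \emph{full} (so that "isomorphic in the ambient category" coincides with "isomorphic in the subcategory"). It is also worth noting that the statement, like Corollary \ref{cor:actions on EndQ}, is conditional: it is only informative when a semisimple $C$ module structure on $M$ together with a compatible monad endofunctor $\widetilde{Q}$ actually exists.
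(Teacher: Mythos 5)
Your proposal is correct and follows exactly the route the paper intends: the paper omits an explicit proof of this corollary, but its earlier remark that $\End_{(\widetilde{Q},m,i)}(\VecC(M))$ is a \emph{full} subcategory of $\End_{\widetilde{Q}}(\VecC(M))$ signals precisely the restriction argument you carry out, combined with the trichotomy of Corollary \ref{cor:actions on EndQ}. Your write-up is in fact more careful than the paper, since you spell out why fullness makes the map on monoidal-isomorphism classes injective.
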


Algebra complete fusion categories have nice properties to them that allow us to say how many monoidal $F:C\rightarrow \End_{\widetilde{Q}}(\VecC(M))$ there are regardless of the quiver structure.
\begin{lem}\label{lem:actions of morita triv}
    Fix an algebra complete fusion category $C$ and fix an indecomposable module category structure on $M$, if there exists a monoidal functor $F:C\rightarrow \End_{\widetilde{Q}}(\VecC(M))$, then $F$ is unique.
\end{lem}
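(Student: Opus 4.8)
The plan is to reduce the uniqueness claim to the already-established counting results by exploiting the defining feature of an algebra complete fusion category: every semisimple module category is of the form $C^{\oplus n}$, and for an \emph{indecomposable} module category structure we in fact have $M\cong C$ as a $C$ module category, so $\End_C(M)\cong C^{op}$. First I would invoke Theorem \ref{thm:based action bijection} to translate the question into the combinatorics of decomposing the dual functor $\widetilde{Q}$ into simples of $\End_C(\VecC(M))$: monoidal functors $F:C\rightarrow\End_{\widetilde{Q}}(\VecC(M))$ up to monoidal natural isomorphism correspond to a choice of $C$ module structure on $M$ up to $(\Id,\rho)$-equivalence together with a conjugacy class of $[(\widetilde{Q},\phi_{-})]$ in $\End_C(\VecC(M))$. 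As observed in the proof of Corollary \ref{cor:algebra complete mon nat iso}, since $\End_C(M)\cong C^{op}$ the only invertible $C$ module endofunctor of the form $(\Id,\rho)$ is the trivial one $(\Id,\rho_{\Id})$; hence the conjugacy class collapses to an honest isomorphism class, and the choice of module structure is already fixed (it is the given indecomposable one). So the count of monoidal functors equals the number of ways to write $\widetilde{Q}\cong\bigoplus_{i}n_iG_i$ with $G_i\in\Irr(\End_C(M))$ and $n_i\in\Z_{\geq0}\cup\{\infty\}$, consistent with the prescribed endofunctor $\widetilde{Q}$.

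The key step is then that this decomposition is \emph{forced}. Because $M\cong C$ as a $C$ module category, the simple objects of $M$ are indexed by $\Irr(C)$, and the simple $C$ module endofunctors $G_i\in\Irr(\End_C(M))\cong\Irr(C^{op})$ are, under this identification, given by tensoring: $G_i$ sends the simple labeled $X$ to the object labeled $X\otimes Z_i^{op}$ (equivalently acts by $Z_i$ on the regular module), so $\dim\Hom(M_j,G_i(M_k))$ is a fusion coefficient $N^{j}_{k,i}$ (up to the op-convention). The point is that in the regular module category the matrix of $G_i$ is the (right) multiplication matrix of the simple $Z_i$, and these matrices are linearly independent over the simples — indeed each simple $Z_i$ is detected by the entry $\dim\Hom(\1, G_i(\1))=\delta$-type condition, namely $G_i$ is the unique simple dual functor with $\dim\Hom(M_{\1}, G_i(M_{X}))>0$ exactly when $X\cong Z_i^{*}$. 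Thus for every simple $G$ appearing in $\widetilde{Q}$ there exist $M_i,M_j\in M$ (namely a unit-labeled object and the appropriately-labeled object) for which $G$ is the only simple with $\dim\Hom(M_j,G(M_i))>0$. This is precisely the hypothesis of Theorem \ref{thm: counting TFEnd_Q}, whose conclusion is that there is exactly one monoidal functor up to monoidal natural isomorphism. Combining, $F$ — if it exists — is unique.

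I would organize the write-up as: (1) use algebra completeness plus indecomposability to get $M\cong C$ and $\End_C(M)\cong C^{op}$, killing all nontrivial $(\Id,\rho)$; (2) identify the simple module endofunctors with right tensor-multiplication by simples of $C$ and record that the corresponding matrices are ``unit-detectable'' in the sense above; (3) cite Theorem \ref{thm: counting TFEnd_Q} (and note the strongly-connected hypothesis there is not needed for the \emph{uniqueness} direction once every simple is unit-detectable, since a single entry then already pins down each coefficient) to conclude. The main obstacle I anticipate is step (2): being careful with the op-conventions on $\End_C(M)\cong \Bim(A)^{op}\cong C^{op}$ and verifying that the ``only simple with positive Hom into a suitable target'' condition genuinely holds for \emph{every} simple appearing in $\widetilde{Q}$, not merely generically — this is where one must use that the fusion matrices of distinct simples of $C$ cannot be expressed in terms of one another, equivalently that the regular representation of the fusion ring is faithful, which holds for any fusion category. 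Everything else is a direct appeal to the cited theorems.
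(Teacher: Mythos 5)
Your proposal is correct and matches the paper's own argument: both reduce to the fact that, since $M\cong C$ and $\End_C(M)\cong C^{op}$, each simple dual functor $F_X$ is the unique simple taking $\1$ to $X$, so the coefficients in the decomposition of $\widetilde{Q}$ are forced. The paper states this directly rather than routing it through Theorem \ref{thm: counting TFEnd_Q}, but your observation that the strongly-connected hypothesis there is not needed for the uniqueness direction is accurate, so the two arguments are essentially identical.
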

\begin{proof}
  Note that since $M$ is indecomposable, then $M\cong C$ as $C$ module categories. Fix an algebra complete category $C$ and module category $M$, then if there exists a monoidal functor $F:C\rightarrow \End_{\widetilde{Q}}(\VecC(M))$ up to monoidal natural isomorphism we must choose a bijection of $\Irr(M)$ and a conjugacy class of $[\widetilde{Q},\phi]$. Then by Theorem \protect\ref{thm:dual cat and filtered colim} it follows that $\widetilde{Q}\cong Z$ for some isomorphism class of objects $Z \in C$. Monoidal functors up to monoidal natural isomorphism correspond to different decompositions of $\widetilde{Q}$ where all simple endofunctors are $F_X$ which denotes the action of isomorphism class of simple object $X$. Notice that $F_X(\1)\cong X$ for all $X\in \Irr(C)$ and $F_X$ is the only functor that takes $\1$ to $X$. It then follows that decomposition of $\widetilde{Q}$ is unique. Thus, there is only one monoidal functor up to monoidal natural isomorphism.
\end{proof}
\begin{cor}\label{cor:alg complete actions}
    Let $C$ be an algebra complete fusion category and fix a semisimple module category structure on $M$. If there exists a monoidal functor $F:C\rightarrow \End_{\widetilde{Q}}(\VecC(M))$, then $F$ is unique.
\end{cor}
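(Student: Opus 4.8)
The plan is to reduce Corollary \ref{cor:alg complete actions} to the indecomposable case handled in Lemma \ref{lem:actions of morita triv}, using the direct sum decomposition of a semisimple module category into indecomposable pieces and the block-diagonal structure this imposes on the dual category $\End_C(\VecC(M))$. First I would write $M \cong \bigoplus_{i=1}^n M_i$ where each $M_i$ is an indecomposable semisimple $C$ module category; since $C$ is algebra complete, each $M_i \cong C$ as a $C$ module category. Correspondingly, $\End_C(M)$ is multifusion and its simple objects are the ``matrix unit'' functors $G_{ij}$ supported between the block $M_j$ and the block $M_i$, each of which (because $\End_C(M_i)\cong C^{\mathrm{op}}$ and similarly for the off-diagonal Hom-categories) is of the form $F^{(ij)}_X$ for a unique isomorphism class of simple $X\in C$, where $F^{(ij)}_X(\1_{M_j}) \cong X$ viewed in $M_i$. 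This last identification is exactly the off-diagonal analogue of the observation $F_X(\1)\cong X$ used in the proof of Lemma \ref{lem:actions of morita triv}.

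Next I would invoke Theorem \ref{thm:dual cat and filtered colim}, which gives $\End_C(\VecC(M)) \cong \VecC(\End_C(M))$, so that the dual functor $\widetilde{Q}$ decomposes as a (possibly infinite) direct sum $\widetilde{Q} \cong \bigoplus_{i,j}\bigoplus_X n^{ij}_X\, F^{(ij)}_X$ with $n^{ij}_X \in \Z_{\geq 0}\cup\{\infty\}$. The point is that these multiplicities are determined by $\widetilde{Q}$ alone: $n^{ij}_X = \dim \Hom_{M_i}(X, \widetilde{Q}(\1_{M_j}))$ (reading $\widetilde{Q}(\1_{M_j})$ in the block $M_i$), and since $F^{(ij)}_X$ is the \emph{only} simple $C$ module endofunctor carrying $\1_{M_j}$ to a copy of $X$ in $M_i$, there is no ambiguity in how to read off the decomposition. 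By Corollary \ref{cor:algebra complete mon nat iso}, a monoidal functor $F:C\to\End_{\widetilde{Q}}(\VecC(M))$ up to monoidal natural isomorphism amounts to a family of bijections $\chi_i:\Irr(M_i)\to\Irr(M_i)$ together with an isomorphism class of the dual functor $(\widetilde{Q},\phi_{-})$; but the $\chi_i$ are absorbed into the choice of $C$ module category structure on $M$, which is fixed by hypothesis, so the only remaining datum is the decomposition of $\widetilde{Q}$, which we have just argued is unique.

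Assembling these observations, I would conclude: fix the semisimple $C$ module category structure on $M$; if a monoidal functor $F:C\to\End_{\widetilde{Q}}(\VecC(M))$ exists, then the corresponding decomposition of $\widetilde{Q}$ into simple $C$ module endofunctors is forced, and the associated half-braiding data is likewise rigid because $\End_C(M)$ is multifusion with no non-trivial invertible objects of the form $(\Id,\rho)$ (again as in Lemma \ref{lem:actions of morita triv}). Hence $F$ is unique up to monoidal natural isomorphism. The main obstacle I anticipate is the bookkeeping for the off-diagonal blocks: one must be careful that the classification of simple objects of the multifusion category $\End_C(M)$ genuinely reduces, block by block, to the ``$F_X$ is the unique functor sending $\1$ to $X$'' statement, rather than just the diagonal blocks, and that the infinite multiplicities allowed by the $\VecC(\mathord-)$ completion do not reintroduce the freedom that made the strongly connected case have $\infty$ many functors --- here the indecomposability-free statement works precisely because algebra completeness pins down each simple dual functor by its value on the unit, independently of any connectivity assumption on $Q$.
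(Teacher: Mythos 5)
Your proposal is correct and follows essentially the same route as the paper: decompose $M\cong C^{\oplus n}$ via algebra completeness, view $\widetilde{Q}$ as a block matrix of objects in $\End_C(\VecC(C))\cong\VecC(C^{\op})$, and use the fact that each simple $C$ module functor between blocks is pinned down by its value on the unit object, so the decomposition of each block $\widetilde{Q}_{ij}$ is forced. Your explicit attention to the off-diagonal blocks is a reasonable elaboration of the step the paper handles by citing Lemma \ref{lem:actions of morita triv} blockwise, but it is not a different argument.
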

\begin{proof}
    Consider a quiver $Q$ such that there exists a monoidal functor $F:C\rightarrow \End_{\widetilde{Q}}(\VecC(M))$. Since $M\cong C^{\oplus n}$, $\widetilde{Q}$ can be represented by an $n\Irr(C)\times n\Irr(C)$ block matrix where each block contains an element in $\End_C(\VecC(M))$. Given an element in $\End_C(\VecC(M))$ by Lemma \protect\ref{lem:actions of morita triv} there is only one decomposition of each $\widetilde{Q}_{ij}$ into simple objects. Thus, up to monoidal natural isomorphism there is one monoidal functor $F:C\rightarrow \End_{\widetilde{Q}}(\VecC(M))$. 
\end{proof}

\section{\texorpdfstring{\centering  Based Bimodules of Path Algebras and Endofuntors}{The  Based Bimodules of Path Algebras and Endofuntors}}
 In this section, we describe an equivalence of monoidal categories between $\End_{(\widetilde{Q},m,i)}(\VecC(M))$ and $\BasedBim(\KQ)_{I_v}$. We have previously shown that monoidal functors into $\End_{\widetilde{Q}}(\VecC(M))$ and $\End_{(\widetilde{Q},m,i)}(\VecC(M))$ can be expressed in terms of $C$ module categories. We want to use this results to classify based actions of fusion categories on path algebras. We first shall prove a couple of lemmas.

 \begin{lem} \label{lem: elements in V_X}
     Let $(X,V_X)$ be a based bimodule in $\BasedBim(\KQ)_{I_v}$. Then for all $z\in V_X$, $z=\sum_i c_ix_ip_{v_i}$ where $x_i$ are the elements in the projective basis of $V$ and $p_{v_i}$ is a vertex in $V_{\KQ}$. 
 \end{lem}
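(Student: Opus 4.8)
The plan is to apply the projective-basis normal form already established in Corollary~\ref{cor: coev lives in V_1}, and then use that the unit subspace of $\BasedBim(\KQ)_{I_v}$ is, by construction, nothing but the $\K$-span of the vertex projections $\{p_v\}_{v\in\KQ_0}$. First I would check that the single object $(X,V_X)$ of $\BasedBim(\KQ)_{I_v}$ meets the hypotheses used in the proofs of Lemma~\ref{lem:based action V_X} and Corollary~\ref{cor: coev lives in V_1}: $X$ is a dualizable $\KQ$-bimodule, the unit $1_{\KQ}=\sum_{v\in\KQ_0}p_v$ lies in $V_{I_v}$, and the dual $(X^*,V_{X^*})$ of $(X,V_X)$ inside the corner is set up so that the evaluation and coevaluation are based morphisms, i.e. $\coev_X(1_{\KQ})\in V_X\otimes_{\KQ}V_{X^*}$ and $\ev_X(V_{X^*}\otimes_{\KQ}V_X)\subseteq V_{I_v}$. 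The arguments of those two results then go through verbatim and produce a right projective basis $\{x_i\}\subseteq V_X$ of $X$, with dual functionals $x_i^*\colon X\to\KQ$, such that every $z\in V_X$ admits the expansion $z=\sum_i x_i\,\alpha_i$ with $\alpha_i=x_i^*(z)\in V_{I_v}$.

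Given $z\in V_X$, I would then finish by expanding each coefficient in the vertex basis of $V_{I_v}$. Writing $\alpha_i=\sum_{v\in\KQ_0}c_{i,v}\,p_v$ with $c_{i,v}\in\K$ and substituting into $z=\sum_i x_i\alpha_i$ gives
\[
z=\sum_i x_i\Bigl(\sum_{v\in\KQ_0}c_{i,v}\,p_v\Bigr)=\sum_{i}\sum_{v\in\KQ_0}c_{i,v}\,x_i\,p_v .
\]
Discarding the summands with $x_ip_v=0$ and relabelling the surviving pairs $(i,v)$ by a single index $k$, this is precisely $z=\sum_k c_k\,x_k\,p_{v_k}$ with each $x_k$ a projective basis element of $V_X$ and each $p_{v_k}$ a vertex projection, as asserted.

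The one step with genuine content is the verification in the first paragraph — that the duality of the monoidal corner $\BasedBim(\KQ)_{I_v}$ is compatible with the based subspaces, so that a projective basis of $X$ may be located inside $V_X$ with its coefficient functionals valued in $V_{I_v}$. This is a direct transcription of the proofs of Lemma~\ref{lem:based action V_X} and Corollary~\ref{cor: coev lives in V_1} from the setting of a full based action to that of a single based bimodule, using only that $1_{\KQ}\in V_{I_v}$ and the defining isomorphisms $V_{I_v}\otimes_{\KQ}V_X\cong V_X\cong V_X\otimes_{\KQ}V_{I_v}$; once it is in place the remainder is the one-line computation above, and I do not anticipate further obstacles.
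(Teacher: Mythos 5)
Your proposal is correct and follows the same route as the paper: the paper's entire proof is the citation of Corollary~\ref{cor: coev lives in V_1}, which you invoke and then make explicit by expanding the coefficients $\alpha_i\in V_{I_v}$ in the basis of vertex projections. Your extra care in checking that the duality data of the corner $\BasedBim(\KQ)_{I_v}$ is compatible with the based subspaces is a detail the paper leaves implicit, but it does not change the argument.
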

 \begin{proof}
    Follows from Corollary \protect\ref{cor: coev lives in V_1}
 \end{proof}

\begin{lem} \label{lem:PBKQ}
    Let $\{x_i\}$ be a right projective basis for $X$ a right module of $\K Q$. Then $\{x_ip_v\}_{i,v}$ is a right projective basis for $X$.
\end{lem}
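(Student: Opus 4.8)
The plan is to exhibit the dual functionals for the enlarged family explicitly. Recall that saying $\{x_i\}$ is a right projective basis for $X$ means there are right $\KQ$-module homomorphisms $x_i^* \in \Hom_{\KQ}(X,\KQ)$ with $x = \sum_i x_i\,x_i^*(x)$ for every $x \in X$ (a finite sum). For the family $\{x_i p_v\}_{i,v}$, where $v$ runs over the vertices of $Q$, I would define the candidate functionals by $(x_i p_v)^*\colon X \to \KQ$, $x \mapsto p_v\, x_i^*(x)$.

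First I would verify that each $(x_i p_v)^*$ is a morphism of right $\KQ$-modules: left multiplication by $p_v$ commutes with the right $\KQ$-action, and $x_i^*$ is right $\KQ$-linear, so $(x_i p_v)^*(x a) = p_v\,x_i^*(x a) = p_v\,x_i^*(x)\,a = (x_i p_v)^*(x)\,a$. Next comes the reconstruction identity. Using that the vertices of $Q$ form a complete set of orthogonal idempotents of $\KQ$ — in particular $p_v^2 = p_v$ and $\sum_v p_v = 1$ — one computes
\[
\sum_{i,v} (x_i p_v)\,(x_i p_v)^*(x) \;=\; \sum_{i,v} x_i\,p_v\,p_v\,x_i^*(x) \;=\; \sum_i x_i \Big(\sum_v p_v\Big) x_i^*(x) \;=\; \sum_i x_i\,x_i^*(x) \;=\; x.
\]
Finiteness of the double sum is inherited from that of $\{x_i\}$: since $x_i^*(x) = 0$ for all but finitely many $i$, we get $(x_i p_v)^*(x) = 0$ for all but finitely many pairs $(i,v)$ (and the vertex set is finite in any case). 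This is precisely the statement that $\{x_i p_v\}_{i,v}$, together with these functionals, is a right projective basis for $X$.

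There is really no serious obstacle here; the proof is a one-line manipulation once the functionals are guessed correctly. The only input that must be invoked is the structural fact, recorded earlier, that $\{p_v\}$ is a complete set of orthogonal idempotents summing to $1_{\KQ}$, which is what makes the inner sum over $v$ collapse. If one prefers, the same argument can be phrased via the splitting $X \hookrightarrow \bigoplus_i \KQ \twoheadrightarrow X$ of the identity: post-composing with the coordinatewise maps $a \mapsto (p_v a)_v$ refines it to a splitting through $\bigoplus_{i,v}\KQ$, which is the coordinate-free form of the computation above.
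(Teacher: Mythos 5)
Your proof is correct and follows essentially the same route as the paper's: both define $(x_ip_v)^*$ as left multiplication by $p_v$ composed with $x_i^*$, check right $\KQ$-linearity, and use $\sum_v p_v = 1$ together with idempotency to collapse the double sum back to $\sum_i x_i x_i^*(x) = x$. Your write-up is in fact slightly more careful than the paper's, which omits the explicit verification of module-linearity and the idempotent collapse.
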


\begin{proof}
    Since $\{x_i\}$ is a right projective basis, then for all $x \in X$ we have that

    \[x=\sum_{i=1}^n x_ix_i^*(x) \]

    where $x_i^*:X_{\KQ}\rightarrow \KQ_{\KQ}$.

    Now, each $x_i$ can be broken down into a $x_ip_v$. Consider,

    \[\sum_{i=1,v}^n x_ip_v(x_ip_v)^*(x)\]

    where we define $(x_ip_v)^*$ as the projection $\pi_v$ composed with $(x_i)^*$. $\pi_v:\KQ\rightarrow \KQ$ is the right module homomorphism that sends $\alpha$ to $p_v\alpha$. Thus,

 \[x=\sum_{i=1,v}^n x_ip_v(\pi_v\circ (x_i)^*)(x).\]

 which implies that $\{x_ip_v\}_{i,v}$ is a projective basis of $X$.
\end{proof}
Now consider the base space $V_X$. Then by above it follows that for each $z\in V_X$ we have that for some $i,v$ that, 
\[z=\sum_{i=1,v}^n x_ip_v.\] 
Since $z$ can be written as a sum of objects in our projective basis it follows that there is a half braiding of element $\alpha \in \KQ$ and $V_X$,

\[\alpha z=\alpha \sum_{i=1,v}^n x_ip_v=\sum_{j,u}x_jp_u(x_jp_u)^*(\alpha \sum_{i=1,v}^n x_ip_v).\]

Noting that $ x_jp_u=z_{j,u}$ for some element $z_{i,k}\in V_X$ this implies,

\[\alpha z=\sum_{i=1,v}^n z_{i,v}\beta \hspace{2mm } \beta \in \KQ.\]
Since all our bimodules are dualizable, this defines a bijection between $\KQ V_X$ and $V_X\KQ$ as sets that is compatible with multiplication and the unit in $\KQ$. We will use this fact throughout this section.

 Now we will state and prove our main theoretical result of the paper. This result is what is going to allow us to understand and classify monoidal functors from $C$ into $\BasedBim(\KQ)_{I_v}$.
 
\begin{thm}\label{thm:TE BimEnd}
    There is an equivalence of monoidal categories between $\BasedBim(\KQ)_{I_v}$ and  
 $\\\End_{(\widetilde{Q},m,i)}(\VecC(M))$, where $M$ is the semisimple category equivalent to $\Mod(\KQ_0)$.
\end{thm}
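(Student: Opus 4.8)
The plan is to exhibit a pair of mutually inverse monoidal functors, built on the dictionary ``$\KQ$-bimodule with idempotent-split based space $\leftrightarrow$ $\KQ_0$-bimodule together with a half-braiding against the path monad'', which is essentially the content of the paragraph immediately preceding the theorem combined with Eilenberg--Watts. First I would fix the identifications $\VecC(M)\cong\Mod(\KQ_0)$ (all, not necessarily finite-dimensional, $\KQ_0$-modules, where $\KQ_0\cong\K^n$ with $n$ the number of vertices) and, by Eilenberg--Watts, $\End(\VecC(M))\cong\Bim(\KQ_0)$, a bimodule $\bar V$ going to $\bar V\otimes_{\KQ_0}(-)$. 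Under this identification $\widetilde Q$ becomes the monad $\KQ\otimes_{\KQ_0}(-)$, with $m$ concatenation of paths, $i$ insertion of lazy paths, Eilenberg--Moore category $\Mod(\KQ)$, and $\widetilde Q(\KQ_0)\cong\KQ$; since $\widetilde Q$ fails to preserve finite-dimensionality once $Q$ has cycles, one genuinely needs $\VecC(M)=\Ind(M)$ rather than $M$. I would also record the reinterpretation of the defining conditions (1)--(2) of $\End_{(\widetilde Q,m,i)}(\VecC(M))$: they say exactly that $\phi_F$ is descent data exhibiting $F$ as the restriction, along the forgetful functor $\Mod(\KQ)\to\Mod(\KQ_0)$, of a $\KQ$-linear endofunctor.

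For the functor $\Phi\colon\BasedBim(\KQ)_{I_v}\to\End_{(\widetilde Q,m,i)}(\VecC(M))$, given a based bimodule $(X,V_X)$: by Lemma~\ref{lem: elements in V_X} and the idempotent-split hypothesis, $V_X$ is a finite-dimensional $\KQ_0$-sub-bimodule of $X$, and by Lemmas~\ref{lem:based action V_X} and~\ref{lem:PBKQ} it is both a left and a right projective basis of $X$; put $F:=V_X\otimes_{\KQ_0}(-)$. The bijection $\KQ\cdot V_X\leftrightarrow V_X\cdot\KQ$ constructed just before the theorem is $\KQ_0$-bilinear and natural, so it assembles into an isomorphism $\phi_F\colon\widetilde Q\circ F\xrightarrow{\sim}F\circ\widetilde Q$, and ``compatibility with multiplication and the unit in $\KQ$'' is exactly conditions (1) and (2); hence $\Phi(X,V_X):=(F,\phi_F)$ lands in $\End_{(\widetilde Q,m,i)}(\VecC(M))$. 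A morphism $f\colon X\to Y$ with $f(V_X)\subseteq V_Y$ restricts to a $\KQ_0$-bimodule map, hence a natural transformation $F\to G$, which is forced to intertwine $\phi_F,\phi_G$ by $\KQ$-linearity. For monoidality, the projective-basis property gives $V_{X\otimes_{\KQ}Y}\cong V_X\otimes_{\KQ_0}V_Y$ as $\KQ_0$-bimodules, whence $F_{X\otimes_{\KQ}Y}\cong F_X\circ F_Y$ compatibly with the half-braidings, and the unit $(\KQ,V_{I_v})$ maps to the identity endofunctor with trivial half-braiding; the associator and unitor coherences are routine.

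For the inverse $\Psi$, given $(F,\phi_F)$ write $F=\bar V\otimes_{\KQ_0}(-)$ with $\bar V:=F(\KQ_0)$, and set $X:=\bar V\otimes_{\KQ_0}\KQ$ with the evident right $\KQ$-action and with left $\KQ$-action $\alpha\cdot(v\otimes\beta):=\phi_F(\alpha\otimes v)\cdot\beta$; conditions (1)--(2) are precisely associativity and unitality of this action, so $X$ is a $\KQ$-bimodule, and $V_X:=\bar V\otimes 1\subseteq X$ is idempotent-split based by construction. Here $X$ is dualizable iff $\bar V$ is finite-dimensional iff $F$ has finite rank; I would make this matching explicit so that $\Psi$ is well-defined into $\BasedBim(\KQ)_{I_v}$ (and, to keep the theorem literally correct, note that this is the finiteness tacitly imposed on the objects of $\End_{(\widetilde Q,m,i)}(\VecC(M))$ in play). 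Mutual inverseness: $\Psi\Phi\cong\Id$ via the map $v\otimes\alpha\mapsto v\alpha\colon V_X\otimes_{\KQ_0}\KQ\to X$, whose bijectivity is exactly the projective-basis statement of Lemmas~\ref{lem:based action V_X}, \ref{lem: elements in V_X}, \ref{lem:PBKQ}; and $\Phi\Psi\cong\Id$ via the Eilenberg--Watts counit, using that $\phi_F$ agrees with the half-braiding read off from the twisted left action it defines. Finally I would verify that these are monoidal natural isomorphisms.

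The main obstacle is the bookkeeping in the previous two paragraphs: turning conditions (1)--(2) of $\End_{(\widetilde Q,m,i)}(\VecC(M))$ line-by-line into the associativity and unitality of the twisted bimodule $\bar V\otimes_{\KQ_0}\KQ$, and dually checking that the half-braiding extracted from a based bimodule is the unique one compatible with $\Phi$ and $\Psi$. Once the three projective-basis lemmas are invoked this reduces to naturality of relative tensor products and string-diagram manipulation; the monoidal coherence data, though routine, should be spelled out, and the finiteness/dualizability matching above deserves an explicit argument rather than being left implicit.
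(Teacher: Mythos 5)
Your proposal is correct and its mathematical core coincides with the paper's: in both, the endofunctor attached to $(X,V_X)$ is determined by $V_X$ viewed as a finite-dimensional $\KQ_0$-bimodule (the paper's $F_{V_X}(v)=\bigoplus_i \dim(p_{w_i}V_Xp_v)\,w_i$ is exactly your $V_X\otimes_{\KQ_0}(-)$), and the half-braiding against $\widetilde{Q}$ is read off from the bijection $\KQ\cdot V_X\leftrightarrow V_X\cdot\KQ$ supplied by the projective-basis lemmas. Where you differ is in organization: the paper verifies fully faithful plus essentially surjective directly (with a matrix-of-linear-maps argument for faithfulness/fullness), whereas you build an explicit quasi-inverse $\Psi$ via the twisted bimodule $\bar V\otimes_{\KQ_0}\KQ$ and route everything through the Eilenberg--Watts identification $\End(\VecC(M))\cong\Bim(\KQ_0)$ and the recognition of $\widetilde Q$ as the monad $\KQ\otimes_{\KQ_0}(-)$. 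That framing buys a cleaner conceptual reading of conditions (1)--(2) as associativity and unitality of a left $\KQ$-action, at the cost of more setup. One point in your favor: you flag that $\Psi$ lands in dualizable bimodules only when $F$ has finite rank, whereas the paper's essential-surjectivity step silently sets $\dim(p_vV_Xp_w)=\dim\Hom(Y_w,F(Y_v))$ without noting that objects of $\End_{(\widetilde{Q},m,i)}(\VecC(M))$ are not a priori of finite rank; making that finiteness hypothesis explicit (or restricting to the dualizable/compact part of the endofunctor category) is needed for the statement to be literally an equivalence, and your proposal is the more careful of the two on this point.
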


\begin{proof}
    A monoidal functor is a pair $(G,\epsilon_{-,-})$ where $G$ is a functor and $\epsilon_{-,-}$ is a natural isomorphism. We will show that $G$ gives equivalence of categories and then define $\epsilon_{-,-}$. We define $G$ as follows,

    \[G:\BasedBim(\KQ)_{I_v}\rightarrow\End_{(\widetilde{Q},m,i)}(\VecC(M)). \]

    \[G((X,V_X)):= (F_{V_X},\phi_{F_X}).\]

    Where $F_{V_X}(v)=\bigoplus n_iw_i$ such that $n_i=\dim(p_{w_i}V_X p_v)$. We define our half braiding by using the half braiding of $V_X$ with $\KQ$. That is $\KQ*V_X=V_X*\KQ$ translates to $\phi_F:\widetilde{Q}\circ F_{V_X}\xrightarrow{\sim} F_{V_X}\circ 
 \widetilde{Q}$. Since our bimodule half braiding is compatible with multiplication and the projections, it follows that $\phi_F$ will be a half braiding in $\End_{(\widetilde{Q},m,i)}(\VecC(M))$. We will show that $G$ is fully faithful and essentially surjective.

   We will first show  that $G$ is essentially surjective. Consider $(F,\phi_F)\in \End_{(\widetilde{Q},m,i)}(\VecC(M))$ that is an endofunctor $F:M\rightarrow M$ and a natural isomorphism $\phi_F:\widetilde{Q}\circ F\rightarrow F\circ\widetilde{Q}$. We want to find a based $\KQ$ bimodule $(X,V_X)$ such that $G((X,V_X))\cong (F,\phi_F)$. We can construct a bimodule $(X,V_X)$ such that $\dim(p_vV_Xp_w)$ is equal to $\dim(\Hom(Y_w,F(Y_v))$ and who's half braiding with $\KQ$ is defined exactly how $\phi_F:\widetilde{Q}\circ F\xrightarrow{\sim} F\circ \widetilde{Q}$ is defined. This will be a $\KQ$ bimodule since $\phi_F$ is compatible with multiplication and the unit. Thus, $G(X,V_X)\cong (F,\phi_F)$ and $G$ is essentially surjective.  

 To see that $G$ is faithful, let $f:(X,V_X)\rightarrow (Y,V_Y)$ be a $\KQ$ bimodule homomorphism such that $G(f)=0$ the zero natural transformation. That is $G(f)$ is a natural transformation from $F_X\rightarrow F_Y$. This implies that $g$ can be represented by a $|\Irr(M)| \times |\Irr(M)|$ matrix of linear transformations. Since the linear map  is zero in each entry, it follows that $f$ must also be zero in each entry.

 Let $g \in \Hom(G(X, V_X),G(Y,V_Y))$. That is $g$ is a natural transformation from $F_{V_X}\rightarrow F_{V_Y}$ that is compatible with the half braiding of $\widetilde{Q}$. This implies that $g$ can be represented by a $|\Irr(M)| \times |\Irr(M)|$ matrix where each entry is a linear transformation  $g_{ji}$ satisfying the property $\phi_{F_Y}\circ (g\otimes \id_{\widetilde{Q}})=(\id_{\widetilde{Q}}\otimes g)\circ \phi_F$. Then we define $f$ such that $f$ is a $\BasedBim(\KQ)_{I_v}$ homomorphism whose component linear maps $f_{ji}$ corresponds to the morphism $g_{ji}$. Since $f$ is a $\KQ$ bimodules homomorphism, it follows that $\phi_{F_Y}\circ (G(f)\otimes \id_{\widetilde{Q}})=(\id_{\widetilde{Q}}\otimes \hspace{1mm} G(f))\circ \phi_F$. In particular, the property we are using is \[\leftidx_a\alpha_bf(\leftidx_bx_c)=f(\leftidx_a\alpha_b\leftidx_bx_c)=\sum_d f(\leftidx_ax_d\leftidx_d\beta_c)=\sum_d f(\leftidx_ax_d)\leftidx_d\beta_c.\] Thus it follows that $G(f)=g$ and that $G$ is full. Since $G$ is fully faithful and essentially surjective now it follows that $G$ is an equivalence of categories.

It suffices to show $G$ is monoidal. A monoidal functor $(G,\epsilon)$ has some extra structure than a functor. We require the data of a natural isomorphic $\epsilon_{-,-}$ and show it satisfies some criterion. First, by our definition of based bimodules we have that $(X,V_X) \otimes_{\KQ} (Y,V_Y)\cong (X \otimes_{\KQ} Y,V_X \otimes_{\KQ} V_Y)$. We compute the following, 

\[G((X \otimes_{\KQ}Y,V_X \otimes_{\KQ} V_Y))\cong(F_{V_X \otimes_{\KQ} V_Y}, \phi_{F_{V_{X \otimes_{\KQ} Y}}})\] 

\[G((X,V_X))\otimes G((Y,V_Y))\cong (F_{V_X},\phi_{F_X})\otimes (F_{V_Y},\phi_{F_Y}).\]

The tensor product of the maps (suppressing associators) is defined by the morphism \[\phi_{F_{X \otimes_{\KQ} Y}}= (\id_{F_{V_X}} \otimes \phi_{F_Y})\circ(\phi_{F_X} \otimes \id_{F_{V_Y}}).\] This is equivalent to the half braiding defined for $(F_{V_X},\phi_{F_X})\otimes (F_{V_Y},\phi_{F_Y})$.
Thus, it follows that $G((X \otimes_{\KQ}Y,V_X \otimes_{\KQ} V_Y)) \cong G((X,V_X))\otimes G((Y,V_Y))$ as objects in $\End_{(\widetilde{Q},m,i)}(\VecC(M))$. The isomorphism, $\epsilon_{X,Y}$ the morphism that identifies the composition of two functors with the functor they compose to and identifies the composition of two half braidings with the half braiding they compose to. By observation, this is a natural isomorphism.

 Now we need to check that our natural isomorphism $\epsilon_{X,Y}$ satisfies the diagram below.
In particular, when we plug in $G(X,V_X), G(Y,V_Y)$ and $G(Z,V_Z)$ we see that this diagram we need to check is,
   \[\begin{tikzcd}
	{(G(X,V_X)\otimes G(Y,V_Y))\otimes G(Z,V_Z)} && {G(X,V_X)\otimes (G(Y,V_Y)\otimes G(Z,V_Z))} \\
	G(X\otimes_{\KQ} Y,V_X\otimes_{\KQ} V_Y)\otimes G(Z,V_Z) && G(X,V_X) \otimes G(Y\otimes_{\KQ} Z,V_{Y\otimes_{\KQ} Z}) \\
  G\big{(}(X\otimes_{\KQ} Y)\otimes_{\KQ}Z,V_{(X\otimes_{\KQ} Y)\otimes_{\KQ}Z}\big{)}&& G\big{(}X\otimes_{\KQ} (Y\otimes_{\KQ}Z),V_{X\otimes_{\KQ} (Y\otimes_{\KQ}Z)}\big{)}
	\arrow["{\epsilon_{X,Y}\otimes_{\KQ}\id_Z}", from=1-1, to=2-1]
	\arrow["{\epsilon_{X\otimes_{\KQ} Y,Z}}", from=2-1, to=3-1]
	\arrow["{G(b_{X,Y,Z})}", from=3-1, to=3-3]
	\arrow["{a_{X,Y,Z}}", from=1-1, to=1-3]
	\arrow["{\id_X\otimes_A\epsilon_{Y,Z}}", from=1-3, to=2-3]
	\arrow["{\epsilon_{X,Y\otimes_{\KQ}Z}}", from=2-3, to=3-3]
\end{tikzcd}.\]
 By direct computation this diagram above commutes. 
\end{proof}

Thus, we conclude that $(G,\epsilon_{-,-})$ is an equivalence of monoidal categories.

\begin{cor}
   Given a monoidal functor $F:C\rightarrow \BasedBim(\KQ)_{I_v}$  there is a corresponding monoidal functor $H:C\rightarrow \End_{(\widetilde{Q},m,i)}(\VecC(M))$ and vice versa.
\end{cor}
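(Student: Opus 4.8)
The plan is to read this off directly from Theorem \ref{thm:TE BimEnd}. That theorem produces a monoidal functor $(G,\epsilon_{-,-})$ which is an equivalence of monoidal categories $\BasedBim(\KQ)_{I_v}\xrightarrow{\sim}\End_{(\widetilde{Q},m,i)}(\VecC(M))$. First I would recall the standard fact that the composite of two monoidal functors is again monoidal: if $(F,J^F_{-,-})$ and $(G,\epsilon_{-,-})$ are monoidal, then $(G\circ F,\ G(J^F_{-,-})\circ\epsilon_{F(-),F(-)})$ is monoidal, the pentagon and unit coherences following formally from those of $F$ and $G$. Hence, given a monoidal functor $F:C\rightarrow\BasedBim(\KQ)_{I_v}$, the composite $H:=G\circ F:C\rightarrow\End_{(\widetilde{Q},m,i)}(\VecC(M))$ is a monoidal functor, as claimed.

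For the converse direction I would invoke the fact (see e.g.\ \cite{EGNO16}) that an equivalence of monoidal categories admits a monoidal quasi-inverse; write $(G^{-1},\epsilon^{-1}_{-,-})$ for such a quasi-inverse of $(G,\epsilon_{-,-})$. Then given a monoidal functor $H:C\rightarrow\End_{(\widetilde{Q},m,i)}(\VecC(M))$, the composite $F:=G^{-1}\circ H:C\rightarrow\BasedBim(\KQ)_{I_v}$ is monoidal by the same composition principle, and moreover $G\circ F\cong H$ as monoidal functors, so this assignment is genuinely inverse, up to monoidal natural isomorphism, to the first one.

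There is essentially no obstacle here; the only content is the bookkeeping of the tensorators under composition, which is routine. If one preferred to avoid quasi-inverses entirely, one could instead construct $F$ from $H$ by hand, exactly as in the essential-surjectivity step of Theorem \ref{thm:TE BimEnd}: transport each $(H(X),\phi_{H(X)})\in\End_{(\widetilde{Q},m,i)}(\VecC(M))$ to a based bimodule $(F(X),V_{F(X)})$ with $\dim(p_vV_{F(X)}p_w)=\dim\Hom(Y_w,H(X)(Y_v))$ and $\KQ$-bimodule half braiding read off from $\phi_{H(X)}$, then equip $X\mapsto(F(X),V_{F(X)})$ with the tensorator induced by $\epsilon_{-,-}^{-1}$ and $H$'s tensorator. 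Invoking the monoidal equivalence directly is the cleaner route.
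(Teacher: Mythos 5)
Your proposal is correct and matches the paper's own argument, which likewise obtains $H$ by composing $F$ with the monoidal equivalence $G$ of Theorem \ref{thm:TE BimEnd} and obtains the converse via the (quasi-)inverse of that equivalence. The extra detail you supply about tensorators of composites and the by-hand alternative is fine but not needed beyond what the paper records.
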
 
\begin{proof}
    This comes from the equivalence of monoidal categories. A monoidal functor $F:C\rightarrow \Bim(\KQ)$ must give us a monoidal functor $H:C\rightarrow \End_{(\widetilde{Q},m,i)}(\VecC(M))$ that factors through the monoidal equivalence $G$ stated in the above theorem.
\end{proof}

Now applying our theory of monoidal functors from $C$ to $\End_{(\widetilde{Q},m,i)}$ we can see the following two theorems are true.

\begin{thm}\label{thm:Actions of cat of path alg is moncat+dualfun}
   If a monoidal functor $F:C\rightarrow \BasedBim(\KQ)_{I_v}$ exists up to monoidal natural isomorphism it corresponds to,

   \begin{enumerate}
        \item A semisimple $C$ module category structure on $M$ up to invertible $C$ module functors of the form $(\Id,\rho)$.
        \item A conjugacy class of objects $[(\widetilde{Q},\phi_{-})]$ in $\End_C(\VecC(M))$ up to conjugaction by $C$ module endofunctors of the form $(\Id,\rho)$.
   \end{enumerate}
\end{thm}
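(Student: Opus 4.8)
The plan is to deduce the statement from Theorem~\ref{thm:based action bijection} by transporting the classification problem along the monoidal equivalence of Theorem~\ref{thm:TE BimEnd}. First I would record that post-composition with the monoidal equivalence $G\colon\BasedBim(\KQ)_{I_v}\xrightarrow{\sim}\End_{(\widetilde{Q},m,i)}(\VecC(M))$ sends a monoidal functor $F\colon C\to\BasedBim(\KQ)_{I_v}$ to a monoidal functor $G\circ F\colon C\to\End_{(\widetilde{Q},m,i)}(\VecC(M))$, and that this induces a bijection on monoidal natural isomorphism classes, with inverse given by composition with a quasi-inverse of $G$; this is formal, since $G$ is monoidal and an equivalence, hence carries monoidal natural isomorphisms to monoidal natural isomorphisms in both directions. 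So it suffices to classify monoidal functors $C\to\End_{(\widetilde{Q},m,i)}(\VecC(M))$ up to monoidal natural isomorphism.

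Next I would use that $\End_{(\widetilde{Q},m,i)}(\VecC(M))$ is a full monoidal subcategory of $\End_{\widetilde{Q}}(\VecC(M))$, so that composing with the inclusion $\iota$ identifies monoidal functors $C\to\End_{(\widetilde{Q},m,i)}(\VecC(M))$, up to monoidal natural isomorphism, with those monoidal functors $C\to\End_{\widetilde{Q}}(\VecC(M))$ whose structure objects all satisfy the two monad-compatibility axioms in the definition of $\End_{(\widetilde{Q},m,i)}(\VecC(M))$. Theorem~\ref{thm:based action bijection} (which itself rests on Theorem~\ref{thm: classification of functors into End(Vec(M))} and Propositions~\ref{prop: equiv of C mod and mon nat iso1} and~\ref{prop:equiv of C mod and mon nat iso2}) classifies monoidal functors $C\to\End_{\widetilde{Q}}(\VecC(M))$ up to monoidal natural isomorphism by precisely the two data listed in the present statement: a semisimple $C$-module category structure on $M$ up to invertible $C$-module functors of the form $(\Id,\rho)$, together with a conjugacy class $[(\widetilde{Q},\phi_{-})]$ in $\End_C(\VecC(M))$ up to conjugation by $C$-module endofunctors of the form $(\Id,\rho)$. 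Thus the only thing left to verify is that imposing the monad-compatibility axioms neither discards any of these functors nor coarsens the equivalence relation.

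The step I expect to be the main obstacle is exactly this last verification. I would carry it out by recalling that $(m,i)$ is the canonical monad structure on $\widetilde{Q}=\bigoplus_i Q^i$ generated by the finite subfunctor $Q$ (path concatenation, with the vertex loops as unit), and by arguing that the half braiding attached to $\widetilde{Q}$ by Theorem~\ref{thm: classification of functors into End(Vec(M))} is automatically compatible with $m$ and $i$: the module-functor coherence data is functorial under composition of $C$-module endofunctors, and $m$, $i$ are assembled from such compositions. Equivalently, this is the same compatibility used inside the proof of Theorem~\ref{thm:TE BimEnd} — that the half braiding of a based $\KQ$-bimodule respects multiplication in $\KQ$ and the vertex idempotents. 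Consequently every monoidal functor appearing in Theorem~\ref{thm:based action bijection}, for the fixed $\widetilde{Q}$ coming from the quiver $Q$, already factors through $\iota$, the extra condition is vacuous, and the classification transports verbatim through $G$ to $\BasedBim(\KQ)_{I_v}$, giving the two claimed items. I would end by noting that the $(\Id,\rho)$-type equivalences survive the transport because $G$ matches monoidal natural isomorphisms exactly, so the bookkeeping of equivalence classes is carried along unchanged under $G$.
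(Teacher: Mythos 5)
Your overall route is the same as the paper's: the paper's proof of this theorem is literally a one-line appeal to Theorem~\ref{thm:TE BimEnd}, and you have correctly filled in the implicit steps (transport along the monoidal equivalence $G$, then use that $\End_{(\widetilde{Q},m,i)}(\VecC(M))$ is a full monoidal subcategory of $\End_{\widetilde{Q}}(\VecC(M))$ so that Theorem~\ref{thm:based action bijection} applies to the functors that land there, with the equivalence relation unchanged by fullness).

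One caution: your final paragraph asserts that the monad-compatibility axioms are \emph{vacuous}, i.e.\ that every monoidal functor $C\to\End_{\widetilde{Q}}(\VecC(M))$ automatically factors through $\End_{(\widetilde{Q},m,i)}(\VecC(M))$. That claim is both unjustified and in tension with the paper itself: the remark following Corollary~\ref{cor:algebra complete mon nat iso} explicitly treats ``which monoidal functors into $\End_{\widetilde{Q}}(\VecC(M))$ actually land in $\End_{(\widetilde{Q},m,i)}(\VecC(M))$'' as a nontrivial question, and Corollary~\ref{cor:either 0, at most 1 or at most infinty} weakens ``$1$ or $\infty$'' to ``at most $1$ or at most $\infty$'' precisely because the monad condition can cut functors out. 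Compatibility of $\phi_F$ with $m$ amounts to $m$ being a morphism of $C$-module endofunctors for the chosen module structure, which is extra data, not a formal consequence of functoriality. Fortunately the theorem is stated as a conditional (``if a monoidal functor exists, it corresponds to\dots''), so only the injective direction is needed and your vacuousness claim is superfluous rather than fatal; I would simply delete it.
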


\begin{proof}
    Follows from Theorem \protect\ref{thm:TE BimEnd}.
\end{proof}

  Based actions fully generalize graded and filtered actions on the path algebra. We now study graded and filtered based actions of $C$ on $\KQ$ since they correspond to properties of our one morphism $(\widetilde{Q},\phi_{-})$. 
    We define these two full subcategories for graded and filtered based bimodules of the path algebra.

\begin{define}
 We define the subcategory $\grBasedBim(\KQ)_{I_v}$ as the subcategory of idempotent split based bimodules who's half braiding respects the grading.
\end{define}

\begin{define}
 We define the subcategory $\filBasedBim(\KQ)_{I_v}$ as the subcategory of idempotent split based bimodules who's half braiding respects the filtration. 
\end{define}

\begin{remark}
    In both of these subcategories, the $V_X$ spaces are our zero graded/filtered component. 
\end{remark}
    
    We first define the endofunctor category $\End_Q(M)$. We will show that $\End_Q(M)$ equivalent to $\grBasedBim(\KQ)_{I_v}$ as monoidal categories.

    \begin{define}
        We define the category $\End_Q(M)=\{(F,\phi_F)|F\in \End(M), \phi_F:Q\circ F\xrightarrow{\sim}F\circ Q\}$ where morphisms are natural transformations that commute with the half braiding. 
    \end{define}

\begin{lem}\label{lem:graded actions are mod cat + dual functor}
  There is an equivalence of monoidal categories between $\grBasedBim(\KQ)_{I_v}$ and  
 $\End_{Q}(M)$, where $M$ is the semisimple category equivalent to $\Mod(\KQ_0)$.
\end{lem}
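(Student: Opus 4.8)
The plan is to restrict the monoidal equivalence $(G,\epsilon_{-,-})$ of Theorem~\ref{thm:TE BimEnd} to the full subcategory $\grBasedBim(\KQ)_{I_v}$ and identify its essential image with $\End_Q(M)$. Concretely, I would define $G'\colon \grBasedBim(\KQ)_{I_v}\to \End_Q(M)$ by the same recipe on objects, $(X,V_X)\mapsto (F_{V_X},\phi_{F_X})$ with $F_{V_X}(v)=\bigoplus_i n_i w_i$ and $n_i=\dim(p_{w_i}V_Xp_v)$, except that the half braiding is now taken only against the degree-one part $Q=(\KQ)_1$: because the action is graded, for $\alpha\in Q$ and $z\in V_X$ the element $\alpha z$ lies in $V_X\cdot Q$ (again degree one), which is precisely the datum of a natural isomorphism $\phi_F\colon Q\circ F_{V_X}\xrightarrow{\sim}F_{V_X}\circ Q$; no $m,i$-compatibility is imposed since $\End_Q(M)$ records nothing beyond this half braiding. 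Note $Q$ is a finite quiver, so both $Q$ and $F_{V_X}$ restrict to endofunctors of $M$ itself, which is why the target is $\End_Q(M)$ rather than $\End_Q(\VecC(M))$. On morphisms $G'$ is the restriction of $G$.

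Full faithfulness is then the same matrix-of-linear-maps argument used in the proof of Theorem~\ref{thm:TE BimEnd}: a morphism in $\grBasedBim(\KQ)_{I_v}$ is a $\KQ$-bimodule map preserving the $V$-spaces, hence is determined by an $|\Irr(M)|\times|\Irr(M)|$ matrix of linear maps on the pieces $p_wV_Xp_v$, and commuting with the $Q$-half-braiding on one side corresponds exactly to being a morphism in $\End_Q(M)$ on the other; faithfulness is immediate, since a map vanishing on each $p_wV_Xp_v$ vanishes.

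The substantive step, and where I expect the real work, is essential surjectivity. Given $(F,\phi_F)\in\End_Q(M)$ — a half braiding with $Q$ alone — I would show it generates a unique graded-compatible half braiding against all of $\widetilde{Q}$. The key observation is that $\widetilde{Q}=\bigoplus_{i\ge 0}Q^i$ with $Q^0=\Id$, and the monad multiplication $m$ is concatenation of paths; iterating $\phi_F$, with the trivial half braiding on $Q^0=\Id$ as base case, produces natural isomorphisms $Q^i\circ F\xrightarrow{\sim}F\circ Q^i$ for all $i$, and associativity of concatenation makes the resulting total half braiding automatically satisfy the two monad-compatibility axioms defining $\End_{(\widetilde{Q},m,i)}(\VecC(M))$. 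Conversely the monad-unit axiom forces trivial behaviour in degree $0$, so this construction is inverse to restriction to degree one; hence $(F,\phi_F)$ lifts uniquely to an object of $\End_{(\widetilde{Q},m,i)}(\VecC(M))$. By Theorem~\ref{thm:TE BimEnd} that object is $G(X,V_X)$ for some $(X,V_X)$, and since its $\widetilde{Q}$-half-braiding was assembled so that degree-one elements of $\KQ$ move degree-one elements of $V_X$ (and higher degrees are obtained by concatenation), the underlying bimodule half braiding respects the grading; thus $(X,V_X)\in\grBasedBim(\KQ)_{I_v}$ and $G'(X,V_X)\cong(F,\phi_F)$.

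Finally, for monoidality the tensorator $\epsilon_{-,-}$ of Theorem~\ref{thm:TE BimEnd} restricts: the half braiding of $(X,V_X)\otimes_{\KQ}(Y,V_Y)$ with $Q$ is $(\id_{F_{V_X}}\otimes\phi_{F_Y})\circ(\phi_{F_X}\otimes\id_{F_{V_Y}})$, which is again concentrated in degree one when both factors are graded, so $G'$ is compatible with tensor products; the pentagon for $\epsilon$ is the same direct computation as before. This yields the monoidal equivalence $\grBasedBim(\KQ)_{I_v}\cong\End_Q(M)$.
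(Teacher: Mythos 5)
Your proposal is correct and follows essentially the same route as the paper, which simply defines the functor $(X,V_X)\mapsto(F_{V_X},\phi_{F_X})$ with the half braiding taken against $Q$ and defers the rest to the argument of Theorem~\ref{thm:TE BimEnd}. In fact you supply more detail than the paper does, in particular the essential-surjectivity step of extending a $Q$-half-braiding to a monad-compatible $\widetilde{Q}$-half-braiding by iteration, which the paper leaves implicit.
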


\begin{proof}
    Much of the proof is the same as Theorem \protect\ref{thm:TE BimEnd}. Thus, our functor is as follows, 

    \[H:\grBasedBim(\KQ)_{I_v}\rightarrow \End_Q(M).\]

    \[(X,V_X)\rightarrow (F_{V_X},\phi_{F_X}).\]

    Where $\phi_F$ is the natural isomorphism from $\phi_{F_X}: Q\circ F_X\xrightarrow{\sim} F_X\circ Q$.
\end{proof}

\begin{lem} \label{lem:graded actions are module cat and endo}
    Fix a fusion category $C$. Up to monoidal natural isomorphism, monoidal functors $F:C\rightarrow \End_{Q}(M)$ are classified by,

    \begin{enumerate}
        \item A semisimple $C$ module category structure on $M$ up to invertible $C$ module functors of the form $(\Id,\rho)$.
        \item A conjugacy class of objects $[(Q,\phi_{-})]$ in $\End_C(M)$ up to conjugation by 
 a $C$ module endofunctors of the form $(\Id,\rho)$. 
   \end{enumerate}

\end{lem}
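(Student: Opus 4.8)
The plan is to run the argument of Theorem~\ref{thm:based action bijection} in the finite setting, replacing $\widetilde{Q}$ by $Q$ and $\VecC(M)$ by $M$ throughout. Since $Q$ is a genuine finite endofunctor of the semisimple category $M$ (its edges give only finitely many copies of simple objects), none of the Ind-completion bookkeeping of Theorem~\ref{thm:dual cat and filtered colim} is needed, so the argument is in fact shorter than its unbounded counterpart.

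First I would unpack the data of a monoidal functor $F:C\rightarrow\End_Q(M)$. Composing with the strictly monoidal forgetful functor $\End_Q(M)\rightarrow\End(M)$ that discards the half braiding yields a monoidal functor $\overline{F}:C\rightarrow\End(M)$, which by \cite[Chapter~7]{EGNO16} (cf. the remark after Theorem~\ref{thm: classification of functors into End(Vec(M))}) is precisely a semisimple $C$ module category structure on $M$, with $X\triangleright m=F_X(m)$. The remaining data of $F$ is, for each $X$, the natural isomorphism $\phi_{F_X}\colon Q\circ F_X\xrightarrow{\sim}F_X\circ Q$, compatible with morphisms of $C$ and with the tensorator $J^F_{X,Y}$. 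I would then observe that this is exactly a $C$ module endofunctor structure on $Q$ relative to $\overline{F}$: writing the structure isomorphisms of a module functor as $s_X\colon Q(X\triangleright -)\to X\triangleright Q(-)$, we have $s_X=\phi_{F_X}$, and the module functor coherence axiom is literally the compatibility of $\{\phi_{F_X}\}$ with $J^F_{X,Y}$. Hence $(Q,\phi_{-})$ is an object of $\End_C(M)$, and conversely a module structure on $M$ together with such an object reconstructs $F$. This gives a bijection between monoidal functors $F\colon C\rightarrow\End_Q(M)$ and pairs consisting of a $C$ module category structure on $M$ and an object $(Q,\phi_{-})\in\End_C(M)$.

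Next I would cut this bijection down to the stated equivalence relation. Two monoidal functors $F,G\colon C\rightarrow\End_Q(M)$ are monoidally naturally isomorphic if and only if, by Lemma~\ref{lem:C mod functor and mon nat iso} applied directly to $\overline{F},\overline{G}\colon C\rightarrow\End(M)$, the two module structures agree up to an invertible $C$ module functor of the form $(\Id,\rho)$, and — lifting the isomorphism from $\End(M)$ back to $\End_Q(M)$ — the module endofunctors are intertwined by that same $(\Id,\rho)$, i.e. $(\Id,\rho)\circ(Q,\phi^F_{-})\circ(\Id,\rho)^{-1}\cong(Q,\phi^G_{-})$. These two implications are the finite analogues of Propositions~\ref{prop: equiv of C mod and mon nat iso1} and~\ref{prop:equiv of C mod and mon nat iso2}, and their proofs are verbatim (the large commutative diagram in Lemma~\ref{lem:C mod functor and mon nat iso} never used anything special about $\widetilde{Q}$). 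Passing to isomorphism classes on both sides then yields the claimed classification: a semisimple $C$ module category structure on $M$ up to $(\Id,\rho)$-equivalence, together with a conjugacy class $[(Q,\phi_{-})]$ in $\End_C(M)$ up to conjugation by $C$ module endofunctors of the form $(\Id,\rho)$.

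The only step requiring genuine care — the nearest thing to an obstacle — is the identification of coherence conditions: one must check that the hexagon-type diagram expressing compatibility of the family $\{\phi_{F_X}\}$ with $J^F_{X,Y}$ coincides, after substituting $X\triangleright(-)=F_X(-)$, with the associativity axiom for a $C$ module functor $(Q,s_{-})$, and likewise that the unit constraint matches the module functor unit axiom. This is the same verification that underlies Theorem~\ref{thm:TE BimEnd}; it is routine but notation-heavy, and everything else in the proof is formal.
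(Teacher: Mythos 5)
Your proposal is correct and follows exactly the route the paper takes: the paper's own proof of this lemma is simply the statement that the argument of Theorem~\ref{thm:based action bijection} (via Lemma~\ref{lem:C mod functor and mon nat iso} and Propositions~\ref{prop: equiv of C mod and mon nat iso1}--\ref{prop:equiv of C mod and mon nat iso2}) carries over with $\widetilde{Q}$ replaced by $Q$ and $\VecC(M)$ by $M$. Your write-up just makes explicit the details the paper leaves implicit, including the correct observation that no Ind-completion bookkeeping is needed in the finite case.
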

   \begin{proof}
       The proof is the same as Theorem \protect\ref{thm:based action bijection}.
   \end{proof}

   \begin{cor}
   Fix an algebra complete fusion category $C$. Up to monoidal natural isomorphism, monoidal functors $F:C\rightarrow \End_{Q}(M)$ are classified by,

    \begin{enumerate}
        \item A choice of a family of bijections $\chi_i:\Irr(M_i)\rightarrow \Irr(M_i)$.
        \item An isomorphism class of objects $(Q,\phi_{-})$ in $\End_C(M)$.
   \end{enumerate}
\end{cor}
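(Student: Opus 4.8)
The plan is to run the same argument that established Corollary~\ref{cor:algebra complete mon nat iso}, but starting from Lemma~\ref{lem:graded actions are module cat and endo} in place of Theorem~\ref{thm:based action bijection}, so that $\End_C(\VecC(M))$ is replaced throughout by $\End_C(M)$ and the role of $\widetilde{Q}$ is played by $Q$. First I would invoke Lemma~\ref{lem:graded actions are module cat and endo}: monoidal functors $F\colon C\to\End_Q(M)$ up to monoidal natural isomorphism are the same data as (i) a semisimple $C$ module category structure on $M$ modulo invertible $C$ module functors of the form $(\Id,\rho)$, together with (ii) a conjugacy class $[(Q,\phi_-)]$ of objects in $\End_C(M)$ under conjugation by such $(\Id,\rho)$. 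The task is then to simplify both pieces of data using that $C$ is algebra complete.

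For the first piece, algebra completeness gives a decomposition $M\cong\bigoplus_{i=1}^n M_i$ with each indecomposable summand $M_i\cong C$ as a $C$ module category. A choice of module category structure on $M$, up to equivalence, is therefore just a choice for each $i$ of a relabeling of the simple objects of $M_i$ by those of $C$; packaging these relabelings gives precisely the family of bijections $\chi_i\colon\Irr(M_i)\to\Irr(M_i)$ appearing in the statement.

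For the second piece, the key point is that the only invertible $C$ module endofunctor of $M$ of the form $(\Id,\rho)$ is the trivial one $(\Id,\rho_{\Id})$: indeed $\End_C(C)\cong C^{\op}$, and the only invertible object of $C^{\op}$ sitting over the identity functor is the unit; this applies block by block on $M\cong C^{\oplus n}$. Hence the conjugation in part (ii) is trivial, so the conjugacy class $[(Q,\phi_-)]$ collapses to an honest isomorphism class of objects $(Q,\phi_-)$ in $\End_C(M)$. Combining the two simplifications yields the stated classification.

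The only genuinely delicate point — and the step I would spell out most carefully — is the claim that an invertible $C$ module endofunctor whose underlying functor is $\Id$ must be isomorphic to the identity module functor; this is where algebra completeness (equivalently, $\End_C(C)\cong C^{\op}$ together with triviality of the relevant invertible objects) does the real work. Everything else is a transcription of the argument already given for Corollary~\ref{cor:algebra complete mon nat iso}.
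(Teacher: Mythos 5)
Your proposal is correct and follows essentially the same route as the paper: the paper's proof literally reads ``the proof is the same as Corollary~\ref{cor:algebra complete mon nat iso},'' and you carry out exactly that transcription, substituting Lemma~\ref{lem:graded actions are module cat and endo} for Theorem~\ref{thm:based action bijection}, $Q$ for $\widetilde{Q}$, and $\End_C(M)$ for $\End_C(\VecC(M))$. Your identification of the key point --- that algebra completeness forces the only invertible $C$ module endofunctor of the form $(\Id,\rho)$ to be the trivial one, collapsing the conjugacy class to an isomorphism class --- is precisely the step the paper's cited argument relies on.
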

 \begin{proof}
       The proof is the same as Corollary \protect\ref{cor:algebra complete mon nat iso}.
   \end{proof}

   \begin{remark}
       With this connection we just estabilished, one could think of our origional monoidal category $\End_{\widetilde{Q}}(\VecC(M))$ as equivalent to graded actions of an infinite quivers $\widetilde{Q}$.
   \end{remark}

\begin{define}
    Define $\End_{(\Id \oplus Q,i)}(M)=\{(F,\phi_{F})|F\in \End(M),\hspace{1mm} \phi_{F}:(\Id\oplus Q) \circ F\xrightarrow{\sim}F\circ (\Id\oplus Q)\}$ such that $\phi_{F}\circ(i\otimes \id_F)=(\id_F \otimes i) \circ \phi^{\Id}_{F}$, where $i:\Id\rightarrow \Id\oplus Q$ is the embedding and $\phi_F^{\Id}$ is the trivial half braiding. 
    Morphisms are natural transformations that commute with the half braiding.
\end{define}

\begin{lem}
     There is an equivalence of monoidal categories between $\filBasedBim(\KQ)_{I_v}$ and  
 $\End_{(\Id \oplus Q,i)}(M)$, where $M$ is the semisimple category equivalent to $\Mod(\KQ_0)$.
\end{lem}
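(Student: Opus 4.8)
\emph{Proof idea.} The plan is to imitate the proof of Theorem~\ref{thm:TE BimEnd}, replacing the monad $\widetilde{Q}$ by its filtration-degree-$\le 1$ truncation $\Id\oplus Q$ (here $\Id$ records the vertices $\K Q_0$ and $Q$ the edges). First I would define a functor $H:\filBasedBim(\KQ)_{I_v}\rightarrow\End_{(\Id\oplus Q,i)}(M)$ by $H(X,V_X)=(F_{V_X},\phi_{F_X})$, where $F_{V_X}(Y_v)=\bigoplus_w n_w Y_w$ with $n_w=\dim(p_w V_X p_v)$ exactly as before, and $\phi_{F_X}$ is the restriction of the filtration-preserving half braiding of $V_X$ with $\KQ$ to the subspace of $\KQ$ spanned by paths of length at most one, i.e.\ to $\Id\oplus Q\subseteq\widetilde{Q}$. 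One checks that $(F_{V_X},\phi_{F_X})$ really lands in $\End_{(\Id\oplus Q,i)}(M)$: the identity-compatibility $\phi_{F_X}\circ(i\otimes\id_{F_X})=(\id_{F_X}\otimes i)\circ\phi^{\Id}_{F_X}$ holds because the vertex projections constitute precisely the unit $V_{I_v}$, so the half braiding of $V_X$ with the degree-$0$ component $\K Q_0$ is the canonical isomorphism $V_{I_v}\otimes_{\KQ}V_X\cong V_X\cong V_X\otimes_{\KQ}V_{I_v}$.

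Fullness and faithfulness transfer verbatim from Theorem~\ref{thm:TE BimEnd} and Lemma~\ref{lem:graded actions are mod cat + dual functor}: a filtration-preserving $\KQ$-bimodule homomorphism preserving the $V$-spaces is the same datum as an $|\Irr(M)|\times|\Irr(M)|$ matrix of linear maps between the multiplicity spaces, and the condition that such a matrix intertwine the two half braidings of $\Id\oplus Q$ is exactly the morphism condition in $\End_{(\Id\oplus Q,i)}(M)$ — the degree-$0$ piece being automatic by the unit condition, the degree-$1$ piece being the edge compatibility already used in Lemma~\ref{lem:graded actions are mod cat + dual functor}.

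The real content lies in essential surjectivity. Given $(F,\phi_F)\in\End_{(\Id\oplus Q,i)}(M)$ I would first construct the bimodule $(X,V_X)$ with $\dim(p_v V_X p_w)=\dim\Hom(Y_w,F(Y_v))$ just as in Theorem~\ref{thm:TE BimEnd}. The new issue is that $\phi_F$ is only given against $\Id\oplus Q$, whereas a based $\KQ$-bimodule needs a half braiding against all of $\widetilde{Q}$; I would extend it by multiplicativity, declaring the half braiding of $V_X$ with a path of length $n$ to be the evident composite of $n$ copies of $\phi_F|_Q$, and the half braiding with a vertex to be the identity, which is forced by the $i$-compatibility. \textbf{The main obstacle} is to check that this extension is well defined — independent of how a path is written as a composite of edges — and that it genuinely equips $X$ with a filtration-preserving $\KQ$-bimodule structure; this is where one invokes that the associated graded of $\KQ$ under the path-length filtration is $\KQ$ with its path-length grading, i.e.\ a free algebra generated in degrees $0$ and $1$ with no relations, so there is no consistency obstruction to the extension and the filtration on $X$ is the obvious one coming from lengths. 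Granting this, $H(X,V_X)\cong(F,\phi_F)$ by construction, so $H$ is essentially surjective.

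Finally, monoidality of $H$ is obtained exactly as in Theorem~\ref{thm:TE BimEnd}: since $(X,V_X)\otimes_{\KQ}(Y,V_Y)\cong(X\otimes_{\KQ}Y,V_X\otimes_{\KQ}V_Y)$, I would take $\epsilon_{X,Y}$ to be the canonical identification of $F_{V_X\otimes_{\KQ}V_Y}$ with $F_{V_X}\circ F_{V_Y}$ together with the identification of the composite half braiding with $(\id_{F_{V_X}}\otimes\phi_{F_Y})\circ(\phi_{F_X}\otimes\id_{F_{V_Y}})$, now restricted to $\Id\oplus Q$, and then verify the associativity hexagon by the same direct computation. This produces the asserted equivalence of monoidal categories.
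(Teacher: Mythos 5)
Your proposal is correct and follows essentially the same route as the paper: the paper's own proof of this lemma simply defers to Theorem \ref{thm:TE BimEnd} and Lemma \ref{lem:graded actions are mod cat + dual functor}, and your argument is exactly the elaboration of that reduction, including the one genuinely new point (extending the half braiding from $\Id\oplus Q$ to $\widetilde{Q}$ by multiplicativity, which is well defined because $\KQ$ is the tensor algebra $T_{\KQ_0}(\KQ_1)$, so paths factor uniquely into edges). No gaps.
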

\begin{proof}
    The proof is essentially the same as the proof of Lemma \protect\ref{lem:graded actions are mod cat + dual functor} and Theorem \protect\ref{thm:TE BimEnd}.
\end{proof}

We produce a reduction of the filtered based bimodule case to the graded bimodule case. In the following lemma, we shall show that filtered actions are classified by the same data as graded actions. The idea used in this proof is a modification of the idea used in \protect\cite[Proposition 3.19]{EKW21}.

\begin{lem}\label{lem:fil actions are graded}
  Up to monoidal natural isomorphism, monoidal functors $F:C\rightarrow \End_{(Q\oplus\Id,i)}(M)$ are classified by
  \begin{enumerate}
        \item A semisimple $C$ module category structure on $M$ up to invertible $C$ module functors of the form $(\Id,\rho)$.
        \item A conjugacy class of objects $[(Q,\phi_{-})]$ in $\End_C(M)$ up to conjugation by 
 a $C$ module endofunctors of the form $(\Id,\rho)$. 
   \end{enumerate} 
\end{lem}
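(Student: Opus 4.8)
The plan is to reduce to the graded classification of Lemma~\ref{lem:graded actions are module cat and endo} by showing that every filtered based action is monoidally naturally isomorphic to its associated graded, in the spirit of \cite[Proposition~3.19]{EKW21}. First I would extract the invariants. In an object $(F,\phi_F)$ of $\End_{(\Id\oplus Q,i)}(M)$ the compatibility of $\phi_F$ with the embedding $i\colon\Id\to\Id\oplus Q$ forces $\phi_F$ to be triangular: it restricts on the $\Id$-summand to the canonical identification, so the genuinely new datum is $\phi_F|_{Q\circ F}\colon Q\circ F\to F\circ(\Id\oplus Q)$, which splits as a leading term $b_F\colon Q\circ F\xrightarrow{\sim}F\circ Q$ — a half braiding in the sense of $\End_Q(M)$ — together with a lower-order correction $a_F\colon Q\circ F\to F$. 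Passing to leading terms $(F,\phi_F)\mapsto (F,b_F)$ is a monoidal functor $\End_{(\Id\oplus Q,i)}(M)\to\End_Q(M)$, since the leading-term part of the tensor-compatibility of the $\phi$'s is exactly the defining identity of $\End_Q(M)$; composing it with the equivalence of Lemma~\ref{lem:graded actions are mod cat + dual functor} and the classification of Lemma~\ref{lem:graded actions are module cat and endo} assigns to a monoidal functor $F\colon C\to\End_{(\Id\oplus Q,i)}(M)$ a semisimple $C$-module category structure on $M$ (up to invertible $C$-module functors $(\Id,\rho)$) and a conjugacy class $[(Q,\phi_-)]$ in $\End_C(M)$. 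Surjectivity of this assignment is immediate, since a graded based action is a filtered one with vanishing correction.

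For injectivity — the substance of the lemma — I would argue that a monoidal functor $F\colon C\to\End_{(\Id\oplus Q,i)}(M)$ is monoidally naturally isomorphic to the filtered action $F^{\mathrm{gr}}$ with vanishing correction attached to its leading term. The mechanism is the one behind \cite[Proposition~3.19]{EKW21}: via Theorem~\ref{thm:TE BimEnd} and the remark after the definition of $\End_{(\widetilde Q,m,i)}(\VecC(M))$, the monad $(\widetilde Q,m,i)$ is freely generated by the finite subfunctor $Q$ and $\Id$ — equivalently, $\KQ=T_{\KQ_0}(\KQ_1)$ is a tensor algebra over its degree-zero part — so that a based action is determined by, and any compatible choice of ``degree $\le 1$'' data extends to, its restriction to $\Id\oplus Q$. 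Concretely I would choose a straightening: an isomorphism of the degree-$1$ truncations that fixes the module structure and the leading term $b$ while killing the correction $a$; then extend it multiplicatively along $m$ — this is where freeness of the tensor algebra enters — to an isomorphism of based actions $\Phi\colon F^{\mathrm{gr}}\Rightarrow F$; and finally observe that $\Phi$ induces the identity on each associated-graded piece, so that an induction on path length upgrades it from a filtered morphism to an isomorphism.

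I expect the straightening to be the main obstacle: it must be carried out compatibly with all the structure at once — the tensorators $J^F$ and $J^{F^{\mathrm{gr}}}$, so that $\Phi$ is a monoidal natural transformation, the based subspaces $V_X$, and the monad multiplication $m$ — and then one must run the path-length induction to see that $\Phi$ is invertible rather than merely filtered. A subsidiary bookkeeping point is to check that the residual ambiguity in the choice of straightening is precisely conjugation by $C$-module endofunctors of the form $(\Id,\rho)$, so that the invariants that emerge coincide with those of Lemma~\ref{lem:graded actions are module cat and endo}; rigidity and semisimplicity of $C$ enter to guarantee that the functors in play are dualizable and the half braidings exist as honest isomorphisms.
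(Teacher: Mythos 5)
Your opening analysis is on target: compatibility with $i$ does force $\phi_F$ to be triangular, the new data is a leading term $b_F\colon Q\circ F\xrightarrow{\sim}F\circ Q$ plus a correction $a_F\colon Q\circ F\to F$, and the whole lemma comes down to showing the correction can be absorbed. But you then leave exactly that step — the ``straightening'' — as an acknowledged obstacle, and the mechanism you propose for it is the wrong one. The category $\End_{(\Id\oplus Q,i)}(M)$ carries only the unit compatibility $i$; there is no monad multiplication $m$ in its definition, no $\widetilde{Q}$, and no tensor-algebra freeness to extend anything along, so the ``extend multiplicatively along $m$ and induct on path length'' plan has nothing to act on here. (You appear to be importing the structure of $\End_{(\widetilde{Q},m,i)}(\VecC(M))$ into a lemma that is stated for the finite, filtered category.)

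The idea that actually closes the gap is the one you relegate to a throwaway clause at the end: semisimplicity. The compatibility with $i$ says precisely that $i\colon(\Id,\psi_{-})\to(\Id\oplus Q,\phi_{-})$ is a monomorphism of $C$-module endofunctors, so one gets a short exact sequence
\[0\rightarrow (\Id,\psi_{-})\xrightarrow{i}(\Id\oplus Q,\phi_{-})\rightarrow (\Coker(i),\varphi_{-})\rightarrow 0\]
in the multifusion (hence semisimple abelian) category $\End_C(M)$, which therefore splits. The splitting is your straightening: it identifies $(\Id\oplus Q,\phi_{-})$ with $(\Id,\psi_{-})\oplus(Q,\phi^Q_{-})$ as $C$-module endofunctors in one step, kills $a_F$, and reduces the classification to that of Lemma~4.6 with no induction and no multiplicative extension. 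Your subsidiary worry about the residual ambiguity being conjugation by $(\Id,\rho)$ is then handled verbatim by the graded classification. As written, your proposal is a plan whose central step is both unexecuted and aimed at machinery that is not present in this lemma.
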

\begin{proof}
       A monoidal functor $F:C\rightarrow \End_{(\Id\oplus Q,i)}(M)$ corresponds to a semisimple module category $M$ and a dual functor $(\Id \oplus Q,\phi_{-})$ up to $C$ module endofunctors of the form $(\Id,\rho)$. Since $\End_C(M)$ is a multifusion category, it has a unit $(\Id,\psi_{-})$ with the trivial half braiding. In particular, $\End_C(M)$ is an abelian category so all cokernels exist. Thus, we produce the following short exact sequence,

       \[0\rightarrow (\Id,\psi_{-})\xrightarrow{i}(\Id\oplus Q,\phi_{-})\rightarrow (\Coker(i),\varphi_{-})\rightarrow 0.\]
       Since $\End_C(M)$ is a multifusion category this short exact sequence splits, which implies that $(\Coker(i),\varphi_{})$ is isomorphism as a $C$ module endofunctor to $(Q,\phi^Q)$. Thus, an action of $C$ of filtered based bimodules of $\KQ$ is classified by a semisimple $C$ module category and a $C$ module endofunctors take up to conjugation by a $C$ module functor of the form $(\Id,\rho)$.
\end{proof}
\begin{cor}
    All separable filtered actions of $C$ on $\KQ$ up to conjugacy are in fact separable graded actions of $C$ on $\KQ$ up to conjugacy.
\end{cor}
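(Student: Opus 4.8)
The plan is to read the statement off the classification lemmas already in hand, transporting them back through the monoidal equivalences with endofunctor categories. The crucial observation is that Lemma~\ref{lem:fil actions are graded} and Lemma~\ref{lem:graded actions are module cat and endo} attach to a monoidal functor $C\to\End_{(Q\oplus\Id,i)}(M)$ and to a monoidal functor $C\to\End_Q(M)$, respectively, the \emph{same} invariant up to monoidal natural isomorphism: a semisimple $C$ module category structure on $M$, taken up to invertible $C$ module functors of the form $(\Id,\rho)$, together with a conjugacy class $[(Q,\phi_-)]$ of $C$ module endofunctors in $\End_C(M)$. So the two classification problems have literally the same answer, and the corollary should follow once this coincidence is pushed back to based bimodules of $\KQ$.

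Concretely, I would start with a separable filtered based action of $C$ on $\KQ$. By Theorem~\ref{thm:separable action and idempotent} it refines to a separable idempotent split based action, and by Lemma~\ref{lem: conjugacy of actions} the latter is conjugate to one over the canonical vertex projections, i.e.\ to a monoidal functor $F\colon C\to\filBasedBim(\KQ)_{I_v}$; note that over $I_v$ the algebra $V_{\1}$ is the span of the vertex projections, hence $\cong\K^n$, so separability is automatic and need not be tracked further. Applying the filtered analogue of Theorem~\ref{thm:TE BimEnd} (the monoidal equivalence $\filBasedBim(\KQ)_{I_v}\simeq\End_{(\Id\oplus Q,i)}(M)$) turns $F$ into a monoidal functor $C\to\End_{(\Id\oplus Q,i)}(M)$ whose underlying dual functor is $(\Id\oplus Q,\phi_-)$.

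Next I would invoke the splitting mechanism of Lemma~\ref{lem:fil actions are graded}: since $\End_C(M)$ is multifusion, the short exact sequence $0\to(\Id,\psi_-)\xrightarrow{i}(\Id\oplus Q,\phi_-)\to(\Coker(i),\varphi_-)\to 0$ splits, so $(\Coker(i),\varphi_-)\cong(Q,\phi^Q)$ as a $C$ module endofunctor, and the action is classified by the pair $\big(M\text{-module structure},[(Q,\phi^Q)]\big)$. Feeding this same pair into Lemma~\ref{lem:graded actions are module cat and endo} produces a monoidal functor $C\to\End_Q(M)$, which under Lemma~\ref{lem:graded actions are mod cat + dual functor} (the equivalence $\grBasedBim(\KQ)_{I_v}\simeq\End_Q(M)$) is a separable graded based action of $C$ on $\KQ$. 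Because the passage ``filtered $\leftrightarrow$ classifying pair $\leftrightarrow$ graded'' is invertible at the level of classifying data, and because conjugacy of based actions is coarser than monoidal natural isomorphism, the filtered action we started with is conjugate to this graded action; the reverse inclusion, that every graded action is in particular filtered, is immediate, so the two sets of conjugacy classes coincide.

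The only friction I expect is bookkeeping: one must check that the chain of monoidal equivalences (Theorem~\ref{thm:TE BimEnd} together with its filtered and graded variants) carries the conjugacy relation on one side to the conjugacy relation on the other, so that ``same classifying data up to monoidal natural isomorphism'' really descends to ``conjugate as based actions of $C$ on $\KQ$''. This is a diagram chase using only that all the functors involved are monoidal equivalences and that conjugating a based action on $\KQ$ by an invertible element corresponds, after normalizing to the vertex idempotents, to the ambiguity already absorbed into the $(\Id,\rho)$-equivalence; no idea is needed beyond what appears in the proofs of Theorem~\ref{thm:based action bijection} and Lemma~\ref{lem:fil actions are graded}, which already carry the substantive content.
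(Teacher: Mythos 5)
Your argument is correct and follows essentially the same route as the paper: the paper's (much terser) proof likewise reduces via Theorem~\ref{thm:separable action and idempotent} and Lemma~\ref{lem: conjugacy of actions} to the idempotent split case over the vertex projections, and then invokes the splitting of the short exact sequence in Lemma~\ref{lem:fil actions are graded} to identify the filtered classifying data with the graded classifying data, noting that the half braiding is unchanged when passing between the separable and idempotent split pictures. Your extra care about descending "same classifying data up to monoidal natural isomorphism" to "conjugate as based actions" is sound, since equivalence of based actions (conjugation by $u=1$) is finer than conjugacy.
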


\begin{proof}
    This follows since separable  actions given us separable idempotent split actions. The half brading does not change when we recover the separable action, thus filtered separable actions of $C$ on $\KQ$ are graded.
\end{proof}

\begin{remark}
    The work done in Lemma \protect\ref{lem:graded actions are mod cat + dual functor} similar data to classify graded/filtered actions of fusion categories on path algebras that \protect\cite{EKW21} produced to classify finitely generated tensor algebras. The main difference, as before, is the fineness of our equivalence compared to the equivalence used in \protect\cite{EKW21}. The framework of $F:C\rightarrow \Bim(A)$ allows us to study actions of fusion categories on path algebras to this degree of fineness. One can refer to their paper for a plethora of useful examples and computations of module categories and module functors.
\end{remark}

\section{\texorpdfstring{\centering  Based Actions of Fusion Categories on $\KQ$}{Based Actions of Fusion Categories on Path Algebras}}
In this section, we will apply our theory to some examples. We will first start out by looking at graded actions. Then we will study based actions of $\PSU(2)_{p-2}$ on $\BasedBim(\KQ)_{I_v}$.
\subsection{\texorpdfstring{\centering Graded Actions of $C$ on $\KQ$}{ Graded Actions}}

In this subsection, we shall apply our theory of graded actions of fusion categories on path algebras to a few examples. 
We will first show some examples in $\VecC(G)$. To apply the theory developed in Lemma \protect\ref{lem:fil actions are graded} and Lemma \protect\ref{lem:graded actions are mod cat + dual functor} we need to understand what our $\VecC(G)$ module categories and their dual endofunctors.
\begin{define}
    The twisted group algebra $\K  H_\varphi$ in $\VecC(G)$ is $\bigoplus_{h\in H} \delta_h$ with multiplication $\delta_h \delta_{h'}=\varphi(h,h')\delta_{hh'}$ where $\varphi \in H^2(G,\K)$. 
\end{define}
We denote the category of right $\K H_\varphi$ modules by $M(H,\varphi)$. We can classify module categories of $\VecC(G)$ by the following data from \protect\cite{EGNO16}.
    Every indecomposable semisimple module category over $\VecC(G)$ is defined by a pair $(L,\psi)$ where $L \leq G$ and $\psi \in H^2(L,\K^*)$. Equivalence of module categories $(L,\psi)$ and $(L',\psi')$ occurs when $L'=gLg^{-1}$ and $\psi'$ is cohomologous to $\psi^g:=\psi(gxg^{-1},gyg^{-1})$.

Now that we understand the classification of semisimple module categories of $\VecC(G)$ we now need to understand the classification of dual functors between those module categories.
Dual functors are represented by $(L,\psi)- (K,\phi)$ bimodules plus some extra data. These bimodules are actually $L \times K^{op}$ equivariant objects. Thus, by \protect\cite{BN12} we have the following classification of $L \times K^{op}$ equivariant objects.

    Let $\{g_i\}$ be a choice of representative of the $L-K$ double cosets. Then $(L,\psi)- (K,\phi)$ bimodules can be classified by pairs $(g_k,\rho)$ where
    \begin{enumerate}
        \item $g_i$ represents a $L-K$ double coset.
        \item $\rho$ is an irreducible $\pi$ projective representation of the stabilizer $\Stab_{L\times K^{op}}(g_i)$ where $\pi=\psi\times \phi^{op}|_{\Stab_{L\times K^{op}}(g_i)}$.
    \end{enumerate}

\begin{example}
    Let's first look at the example of $\VecC(\Z/p\Z)$. The only semisimple indecomposable module categories of $\VecC(\Z/p\Z)$ are $\VecC(\Z/p\Z)$ and $\VecC_{f.d}$. Since $\Z/p\Z$ and the trivial group have trivial second cohomology, there are no twists of these module categories. These semisimple module categories corresponded to the algebras $
    \1$ and the group algebra $\K(\Z/p\Z)$.

    For $\Mod(\1)$, module endofunctors correspond to pairs of double cosets and choice projective representation of the stabilizer. Each element in $\VecC(\Z/p\Z)$ is in its own double cosets and the stabilizer is trivial, thus we have $p$ distinct module endofunctors corresponding to our double cosets with trivial stabilizers. 

    For $\Mod(\K(\Z/p\Z))$, there is one double coset of $\K(\Z/p\Z)$, namely itself. Then our choice of projective representation of the stabilizer is just any representation of $\Z/p\Z$ which there are $p$ of and all of them have trivial cohomology, leaving us with that there are $p$ distinct module endofunctors. More generally it has been proven that $\End_C(\Mod(\1))\cong \VecC(\Z/p\Z)^{op}$ and  $\End_{\VecC(\Z/p\Z)}(\Mod(\K\Z/p\Z)) \cong \Rep(\Z/p\Z)$ which confirms our explanation above. 
    
    If $M$ is not indecomposable then $M$ is a direct sum of $\VecC$ and $\VecC(\Z/p\Z)$ and our dual endofunctor can be broken down into block matrices with $\Hom_{\VecC(\Z/p\Z)}(M_i,M_j)$ in each block. Dual functors between $\VecC$ and $\VecC(\Z/p\Z)$ correspond to $e-\Z/p\Z$ double cosets which there is one corresponding to $\Z/p\Z$ which stabilizer equal to $\Z/p\Z$. Thus, there are $p$ module functors from $\VecC$ to $\VecC(\Z/p\Z)$. The computation is the same from $\VecC(\Z/p\Z)$ to $\VecC$. 
\end{example}

\begin{example}
    Now looking at $\VecC(\Z/n\Z)$. The exact algebras in $\VecC(\Z/n\Z)$ are group algebras $\K\Z/m\Z$ corresponding to the divisors of $n$. These algebras correspond to indecomposable semisimple module categories. Given a subgroup, $\Z/m\Z$ the $\VecC(\Z/n\Z)$ module endofunctors are classified by double cosets plus irreducible projective representation of the stabilizer. Since $\Z/n\Z$ is abelian double cosets are the same as left cosets. Thus, there are $\frac{n}{m}$ cosets. The stabilizer of each of these cosets is $\Z/m\Z$. Thus, there are $m$ irreducible representations of the stabilizer for each subgroup $\Z/m\Z$. 
    
    If our module category is decomposable, then we need to compute dual functors from $\Mod(\Z/m\Z)$ to $\Mod(\Z/k\Z)$. This corresponds to $\Z/m\Z-\Z/k\Z$ double cosets paired with a projective representation of their stabilizer. $\Z/m\Z-\Z/k\Z$ double cosets are $\Z/mk\Z$ left cosets whose stabilizers are equivalent to $\Z/(\frac{mk}{\gcd(m,k)})\Z$ which there are $\frac{mk}{\gcd(m,k)}$ irreducible representations of $\Z/(\frac{mk}{\gcd(m,k)})\Z$.
\end{example}

\begin{remark}
    For each of these last two examples, if we want to classify monoidal functor $F:\VecC(\Z_n)\rightarrow \grBasedBim(\KQ)_{I_v}$ we must be a little more specific. For each module category, we must choose a bijection of the isomorphism classes of simple elements in $M$ and an ordering of the indecomposable module categories. Once we choose that bijection, the structure of those module categories and $\VecC(\Z_n)$ module endofunctors will be described in the examples above.
    \end{remark}

For a more in depth classification of module categories and dual functors of pointed fusion categories, refer to \protect\cite[Section 4]{EKW21}. The authors delve into classification of $C$ module categories and $C$ module functors for $\VecC(G)$ with non-trivial second cohomology, $\VecC(G,\omega)$ for non-trivial $\omega$ and some group theoretic fusion categories.

With this is mind, we now understand what data defines a graded action of $C$ on $\KQ$. To classify all graded actions of $C$ on a path algebra is equivalent to classifying all the semisimple module categories and corresponding dual categories up to $C$ module functors of the form $(\Id,\rho)$. For algebra complete fusion categories, we can explicitly write down this data in terms of properties of a quiver $Q$.

\begin{thm}\label{thm: graded action classification}
    Fix a quiver $Q$ and an algebra complete fusion category $C$. Then there is a bijection between 
    
    \begin{enumerate}
        \item Separable idempotent split graded actions of $C$ on $\KQ$ up to conjugacy of based actions
        \item Partitions of the vertices into $n$ sets $S_1,...,S_n$ of size $|\Irr(C)|$ such that for each $S_k$ we pick a bijection $\psi_k:S_k\rightarrow \Irr(C)$ that maps each vertex $v_{l_k}$ to an isomorphism class of simple object $X \in \Irr(C)$ and the subquiver $Q_{ij}$ of all paths from $S_i$ to $S_j$ can be expressed as $|(Q_{ij})_{l_i,m_{j}}|=N_{Z_{ij},\psi_i(v_{l_i})}^{\psi_j(v_{m_j})}$ for some isomorphism class of objects $Z_{ij}\in C$.
    \end{enumerate}

\end{thm}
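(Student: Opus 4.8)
The strategy is to chain together the monoidal equivalences and classification results of the previous section and then translate the resulting categorical data into combinatorics of $Q$.

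First I would reduce to actions valued in $\grBasedBim(\KQ)_{I_v}$. A separable idempotent split graded action of $C$ on $\KQ$ is a monoidal functor into $\grBasedBim(\KQ)_{I}$ for some complete set of primitive orthogonal idempotents $I$; by Lemma~\ref{lem: conjugacy of actions} it is conjugate to one valued in $\grBasedBim(\KQ)_{I_v}$, where the unit carries $V_{\1}=\KQ_0\cong\K^{|V|}$. Moreover, conjugation of based actions of $\KQ$ is implemented by an inner automorphism, and no unit of $\KQ$ can move a vertex projection $p_v$ out of its class modulo the arrow ideal $\KQ_{\geq 1}$ (a product of arrows of positive length is never a vertex idempotent), so such conjugation acts trivially on the module-categorical invariants extracted below. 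Hence conjugacy classes of separable idempotent split graded actions biject with monoidal-natural-isomorphism classes of monoidal functors $C\to\grBasedBim(\KQ)_{I_v}$. Applying the monoidal equivalence $\grBasedBim(\KQ)_{I_v}\simeq\End_Q(M)$ of Lemma~\ref{lem:graded actions are mod cat + dual functor}, with $M=\Mod(\KQ_0)$ finitely semisimple and $\Irr(M)$ indexed by the vertex set $V$, it remains to classify monoidal functors $C\to\End_Q(M)$ up to monoidal natural isomorphism; by the algebra-complete corollary following Lemma~\ref{lem:graded actions are module cat and endo}, these are the data of (a) a $C$-module-category structure on $M$, and (b) an isomorphism class of objects $(Q,\phi_-)$ in the multifusion category $\End_C(M)$.

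Next I would unwind (a). Since $C$ is algebra complete, every semisimple $C$-module category is $C^{\oplus n}$, so a $C$-module structure on $M$ amounts to a decomposition $M\simeq\bigoplus_{i=1}^n M_i$ with each $M_i$ equivalent as a $C$-module category to $C$. On simple objects this is exactly a partition of $V$ into subsets $S_1,\dots,S_n$, necessarily of common size $|\Irr(C)|$ (so $|V|=n\,|\Irr(C)|$), together with, for each $k$, the bijection $\psi_k\colon S_k\to\Irr(C)$ induced by the chosen equivalence $M_k\simeq C$. As in the algebra-complete case, the only invertible $C$-module endofunctor of the form $(\Id,\rho)$ is trivial, so no further identification is quotiented out. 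Then I would unwind (b): decomposing $\End_C(M)\simeq\bigoplus_{i,j}\Fun_C(M_i,M_j)$ and using $M_i,M_j\simeq C$ to get $\Fun_C(M_i,M_j)\simeq\Fun_C(C,C)\simeq C^{\op}$, an object $(Q,\phi_-)$ of $\End_C(M)$ is a matrix $(Z_{ij})_{i,j=1}^n$ of isomorphism classes of objects of $C$, the $(i,j)$ block being the $C$-module functor given by tensoring with $Z_{ij}$. Under the equivalence of Lemma~\ref{lem:graded actions are mod cat + dual functor} the endofunctor $(Q,\phi_-)$ has $Q$ itself as underlying quiver, and the number of arrows from $v_{l_i}\in S_i$ to $v_{m_j}\in S_j$ is the multiplicity of the simple object labelled $\psi_j(v_{m_j})$ in the image of the simple object labelled $\psi_i(v_{l_i})$, so that
\[
|(Q_{ij})_{l_i,m_j}|=\dim\Hom_C\!\big(\psi_j(v_{m_j}),\,Z_{ij}\otimes\psi_i(v_{l_i})\big)=N_{Z_{ij},\,\psi_i(v_{l_i})}^{\,\psi_j(v_{m_j})}.
\]
Thus a monoidal functor with the module-category data of (a) exists exactly when $Q$ decomposes over the blocks $S_i\to S_j$ with these multiplicities for some choice of $Z_{ij}\in C$, and any such choice reconstructs $(Q,\phi_-)$ and hence the action; running the two directions gives the asserted bijection for fixed $Q$.

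The main obstacle is step (b): pinning down, block by block, that the object $(Q,\phi_-)\in\End_C(M)$ really is ``tensor with $Z_{ij}$'' and that the quiver attached to this endofunctor of $M$ reads off the fusion multiplicities, which requires threading the identifications $M_i\simeq C$, $\Fun_C(M_i,M_j)\simeq C^{\op}$ and the endofunctor-versus-quiver dictionary together carefully, and in particular fixing the conventions so that the order of the factors in $N_{Z_{ij},\psi_i(v_{l_i})}^{\psi_j(v_{m_j})}$ comes out as stated. The conjugacy bookkeeping in the first paragraph is routine once one observes that $\KQ$ admits no unit inducing a nontrivial permutation of vertices.
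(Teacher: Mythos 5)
Your proposal is correct and follows essentially the same route as the paper: reduce to the vertex projections via the conjugacy lemma, pass through the monoidal equivalence with $\End_Q(M)$, invoke the algebra-complete classification to get a family of bijections plus an object of $\End_C(M)\cong\bigoplus_{i,j}C^{\op}$, and read off the block multiplicities as fusion coefficients. Your additional care about why conjugation by a unit of $\KQ$ cannot nontrivially permute the vertex projections, and your explicit threading of the identifications in step (b), only supply detail the paper leaves implicit.
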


\begin{proof}
  By Lemma \ref{lem: conjugacy of actions} all idempotent split graded actions are conjugate to idempotent split actions over the canonical vertex projection. Lemma \protect\ref{lem:graded actions are module cat and endo} implies that an equivalence class of idempotent split graded actions over the vertex projections of $C$ on $\KQ$ is in bijection with a choice of a family of bijections $\chi_i:\Irr(M_i)\rightarrow \Irr(M_i)$ and a dual $(Q,\phi_{-})$ in $\End_C(M)$. Thus is suffices to show that this data is in bijection with the ordered partitions above.

 Choose a family of bijections $\chi_i:\Irr(C)\rightarrow \Irr(C)$. This is clearly in bijection with a partition $S_1,...,S_n$ of the vertices such that we pick a bijection $\psi_k$ for $S_k$.

 Now consider $C$ module endofunctor $(Q,\phi)$ in $\End_C(M)$. Since $(Q,\phi_{-})$ is a $C$ module endofunctor on $C^{\oplus n}$, it follows that $(Q,\phi_{-})$ can be decomposed into a bunch of objects $Q_{ij}$ in $\End_C(C)\cong C^{op}$. This implies that each object $Q_{ij}\cong Z_{ij}$ as elements in $C^{op}$. If we translate the data of $Q$ and in particular $Q_{ij}$ into quiver data it's in bijection  with quivers $Q$ such that $Q_{ij}$ of all paths from $S_i$ to $S_j$ can be expressed as $|(Q_{ij})_{l_i,l_{j}}|=N_{Z_{ij},\psi_i(v_{l_i})}^{\psi_j(v_{l_j})}$ for some isomorphism class of objects $Z_{ij}\in C$.
\end{proof}

\begin{example}
    Let's now apply this theory to the fusion category $\Fib$. Fix a quiver $Q$. As stated earlier, $\Fib$ is an algebra complete fusion category, which implies that the only semisimple indecomposable module category of $\Fib$ is $\Fib$. This implies that for $\Fib$, $\End_{\Fib}(\Fib)\cong \Fib^{op}$. By Lemma \protect\ref{lem:fil actions are graded} and semisimplicity, we can say that our idempotent split graded based actions correspond to a choice of a bijection of the simple objects and dual functor. That is $Q$ must be of the form,
    \[|Q_{ij}|=N_{Z,\psi(v_i)}^{\psi(v_j)} ,\]
for some $Z\cong \1^{\oplus n} \oplus \tau^{\oplus m}$.

If we let, $M \cong \Fib^{\oplus k}$ then the dual functor corresponds to k bijections of $\Irr(\Fib)\rightarrow \Irr(\Fib)$ and a choice of dual endofunctor. Thus our endofunctor must be of the form,
 \[|(Q_{ij})_{l_i,m_{j}}|=N_{Z_{ij},\psi_k(v_{l_i})}^{\psi_j(v_{m_j})} \]
for some $Z_{ij}\cong \1^{\oplus n_{ij}} \oplus \tau^{\oplus m_{ij}}$.
\end{example}

 It's a natural follow-up to ask if these graded actions are the only idempotent split based actions of $C$ on a path algebra $\KQ$. In the next example, we will construct a based action on $\BasedBim(\KQ)_{I_v}$ that does not preserve the grading or the filtration, giving us motivation to consider all based actions of $\BasedBim(\KQ)_{I_v}$.
\begin{example}
    Consider the following quiver $Q$,

\begin{center}

\tikzset{every picture/.style={line width=0.75pt}} 

\begin{tikzpicture}[x=0.75pt,y=0.75pt,yscale=-1,xscale=1]

\draw    (320,90) -- (381,180) ;
\draw    (260,179.5) -- (320,90) ;
\draw    (260,179.5) -- (381,180) ;
\draw   (280.98,136.89) -- (291.14,133.02) -- (290.02,143.83) ;
\draw   (315.51,174.05) -- (324.94,179.64) -- (315.87,185.44) ;
\draw   (350.77,126.5) -- (352.26,137.26) -- (341.98,133.75) ;

\draw (248,176) node [anchor=north west][inner sep=0.75pt]   [align=left] {a};
\draw (315,72) node [anchor=north west][inner sep=0.75pt]   [align=left] {b};
\draw (388,178) node [anchor=north west][inner sep=0.75pt]   [align=left] {c};
\draw (268,122.4) node [anchor=north west][inner sep=0.75pt]    {$\alpha $};
\draw (359,118.4) node [anchor=north west][inner sep=0.75pt]    {$\beta $};
\draw (314,187.4) node [anchor=north west][inner sep=0.75pt]    {$\gamma $};

\end{tikzpicture}

\end{center}

The corresponding path algebra $\KQ$ over $Q$ is a $7$ dimensional algebra with basis elements $\{p_a,p_b,p_c,\alpha,\beta,\gamma,\beta\alpha\}$.
 Let $\phi:\KQ\rightarrow \KQ$ be the algebra automorphism that fixes each basis element except $\phi(\gamma)=-\gamma -\beta\alpha$. Notice that,

\[\phi^2(\gamma)=\phi(-\gamma -\beta\alpha)=-\phi(\gamma)-\phi(\beta\alpha)=-(-\gamma-\beta\alpha)-\beta\alpha=\gamma.\]
Thus, $\phi^2=\id$. 

Since this automorphism is not inner this induces a non-trivial action of $\Z_2$ on $\KQ$. Then by Lemma \protect\ref{lem:hopf action anives based action} we produce a based action of $\VecC(\Z_2)$ that does not preserve the filtration. In particular, $F(X_0):= \KQ \otimes X_0\cong \KQ$ and $F(X_1):=\KQ \otimes X_1$ with based spaces $\1_{\KQ}\otimes X_i$. The braiding of these bimodules is defined by the action of $g$ on $\KQ$. Then we apply our splitting of the base space into our idempotent split form giving us that bases spaces for each bimodules that are $V_{X_0}=\{p_a\otimes X_0, p_b\otimes X_0, p_c\otimes X_0\}$ and $V_{X_1}=\{p_a\otimes X_1, p_b\otimes X_1, p_c\otimes X_1\}$. This split doesn't change the braiding, and thus we produce a based action on the idempotent split bimodules such that the action is not graded or filtered. An example of this form motivates our interest to study based actions where the half braiding doesn't necessarily preserve the filtration/grading.
\end{example}

\subsection{\texorpdfstring{\centering Based Actions of $\PSU(2)_{p-2}$ on $\KQ$} {Based Actions of the PSU(2) on Path Algebras}}

We would like to find a family of fusion categories that our theory can fully classify up to equivalence separable idempotent split based actions on $\KQ$.
For most fusion categories, the existence of non-trivial semisimple module categories makes classification challenging. We shall show that the only idempotent split based actions of the family of fusion categories $\PSU(2)_{p-2}$ on $\KQ$ are actions coming from Lemma \protect\ref{lem:graded actions are mod cat + dual functor}, thus giving us a full classification up to conjugacy of based actions of $\PSU(2)_{p-2}$ on $\KQ$.

The family of fusion categories $\PSU(2)_{p-2}$ are a collection of algebra complete fusion categories with simple objects indexed by $\{X_{2j},0\leq j\leq \frac{p-3}{2}\}$ where $X_0\cong \1$, satisfying the fusion rules from \protect\cite{BK01},
\[X_{2j}\otimes X_{2i}=\sum N_{2i,2j}^{2m}V_{2m}.\]

Where,

\[N_{2i,2j}^{2m}=
\begin{cases}
    1 \hspace{1mm}\text{if} \hspace{1mm} |2i-2j|\leq 2m\leq \min \{2i+2j, 2(p-2)-2i-2j \} \\
    0 \hspace{1mm} \text{otherwise}
    
\end{cases}.\]

Given a simple object $X_{2j}$ , $\dim(X_{2j})=[2j+1]_q$ and unnormalized $S$-matrix entries are $S_{2j,2k}=[(2j+1)(2k+1)]_q$, where $q=e^{\frac{\pi i}{p}}$ a $2p^{th}$ root of unity. The Fibonacci Category, $\Fib$ discussed earlier, is a member of this family $\Fib \cong \PSU(2)_{5-2}$. By the fusion rules above, $\PSU(2)_3$ has two simple elements $X_0$ and $X_2$ satisfying $X_2 \otimes X_2 \cong X_0\oplus X_2$.

 $\PSU(2)_{p-2}$ is a family of modular tensor categories. A modular category is a non-degenerate ribbon fusion category \protect\cite{EGNO16,BK01}. In particular, the $S$ matrix diagonalizes the fusion rules of $C$. In particular for $\PSU(2)_{p-2}$ this implies that the columns of the $S$ matrix are a basis for $\R^{\frac{p-1}{2}}$. There have been some extensive results shown about orbits of Galois groups of modular tensor categories \protect\cite{NWZ,PSYZ24}. In particular, it was shown that $\PSU(2)_{p-2}$ is a transitive modular category \protect\cite{NWZ}. The transitivity implies that if we divide the $S_{-,2j}$ column of the $S$-matrix entries by $d_{X_{2j}}$, then the Galois group of $\Q(\frac{S_{X,Y}}{d_Y}|X,Y\in \Irr(\PSU(2)_{p-2})= \Q(d_X|X\in \Irr(\PSU(2)_{p-2})$ acts transitively on these quotient columns \protect\cite{NWZ}. Furthermore, $\Q(d_X|X\in \Irr(\PSU(2)_{p-2})$ is a real index $2$ subfield of $\Q(\omega_{2p})$ where $\omega_{2p}$ is a primitive $2p^{th}$ root of unity. We shall use this transitivity to show that all separable idempotent split based actions of $\PSU(2)_{p-2}$ on path algebras are in fact graded actions, implying that all separable based actions of $\PSU(2)_{p-2}$ on path algebras are in fact graded actions. To accomplish this, we will state and prove a few results about $\PSU(2)_{p-2}$.

\begin{lem} \label{lem: PSU2 Smat entries}
    The entries $S$-matrix of the fusion category $\PSU(2)_{p-2}$ can be represented in terms of $\pm d_X$ for some simple element $X$.
\end{lem}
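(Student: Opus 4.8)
The plan is to pass to the trigonometric description of the quantum integers and to reduce an argument modulo $2p$. Since $q=e^{\pi i/p}$ we have $q-q^{-1}=2i\sin(\pi/p)$ and $q^{n}-q^{-n}=2i\sin(n\pi/p)$, hence $[n]_q=\sin(n\pi/p)/\sin(\pi/p)$ for every integer $n$. In particular the unnormalized entry $S_{2j,2k}=[(2j+1)(2k+1)]_q$ equals $\sin(N\pi/p)/\sin(\pi/p)$ with $N:=(2j+1)(2k+1)$, while $d_{X_{2m}}=[2m+1]_q=\sin((2m+1)\pi/p)/\sin(\pi/p)$, and $X_m$ is simple precisely when $m$ is even with $0\le m\le p-3$. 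So it suffices to produce, for the odd integer $N\ge 1$, an odd integer $n$ with $1\le n\le p-2$ and $\sin(N\pi/p)=\pm\sin(n\pi/p)$; then $S_{2j,2k}=\pm d_{X_{n-1}}$ with $X_{n-1}$ simple, as claimed.

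First I would reduce modulo the period: since $x\mapsto\sin(x\pi/p)$ is $2p$-periodic, let $r$ be the residue of $N$ mod $2p$ with $0\le r<2p$, so $\sin(N\pi/p)=\sin(r\pi/p)$. As $N$ is odd and $2p$ is even, $r$ is odd, so in particular $r\neq 0$. Next I would exclude $r=p$: this would force $p\mid N=(2j+1)(2k+1)$, but $p$ is prime (the standing hypothesis on the family $\PSU(2)_{p-2}$, the same one making it algebra complete) and $1\le 2j+1,2k+1\le p-2<p$, so $p$ divides neither factor. Thus $r$ is an odd element of $\{1,\dots,2p-1\}\setminus\{p\}$.

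Then I would split into the two remaining cases. If $r<p$, then since $r$ and $p$ are both odd in fact $1\le r\le p-2$, and we are done with $n=r$ and sign $+$. If $r>p$, set $r':=2p-r$; again by parity $1\le r'\le p-2$ with $r'$ odd, and the reflection identity gives $\sin(r\pi/p)=\sin(2\pi-r'\pi/p)=-\sin(r'\pi/p)$, so we are done with $n=r'$ and sign $-$. In either case $S_{2j,2k}=\pm d_{X_{n-1}}$, which proves the lemma.

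I do not expect any genuine obstacle: after the translation to sines, everything is periodicity together with the identity $\sin(2\pi-\theta)=-\sin\theta$. The only place where structure is really used is the exclusion $r\neq p$, which needs $p$ prime. If one preferred to avoid trigonometry, the same bookkeeping runs with $q^{2p}=1$ and $q^{p}=-1$: rewrite $q^{N}-q^{-N}=q^{r}-q^{-r}$ using $q^{-r}=q^{2p-r}$, and when $r>p$ use $[s]_q=[p-s]_q$ (equivalently $\sin(\pi-\theta)=\sin\theta$) to replace the even residue $s=r-p$ by the complementary odd integer $p-s=2p-r$; I would present whichever of the two versions reads more cleanly.
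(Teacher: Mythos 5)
Your proof is correct and follows essentially the same route as the paper's: both reduce $(2j+1)(2k+1)$ modulo $2p$, observe the residue is odd and (by primality of $p$) not equal to $p$, and then split into the cases residue $<p$ (sign $+$) and $>p$ (sign $-$ via $\sin(2\pi-\theta)=-\sin\theta$). Your write-up is if anything slightly more explicit about why the residue cannot be $0$ or $p$, but there is no substantive difference.
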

\begin{proof}
     Let $X_{2l}$ be a simple object where $0\leq l\leq \frac{p-3}{2}$. The dimensions of the simple objects are $\dim(X_{2l})=[2l+1]_q$ where $q$ is a primitive $2p^{th}$ root of unity.

    \[\dim(X_{2l})=\frac{q^{2l+1}-q^{2p-(2l+1)}}{q-q^{2p-1}}=\frac{\sin(\frac{(2l+1)\pi}{p})}{\sin(\frac{\pi}{p})}.\]

    Since $l$ ranges from $0\leq l\leq \frac{p-3}{2}$ the quantum integers will stop at the odd number before $p$. This implies that the dimensions of the simple objects are $[1]_q,...,[p-2]_q$. Now consider $S_{2j,2k}$,

    \[S_{2j,2k}=\frac{q^{(2j+1)(2k+1)\modd 2p}-q^{2p-{(2j+1)(2k+1)}\modd 2p}}{q-q^{2p-1}}.\]

    Notice that since $(2k+1)$ and $(2j+1)$ are odd, then their product will be an odd $2p^{th}$ root of unity. In particular, $(2k+1),(2j+1)< p$ then $(2k+1)(2j+1)\neq 0\modd p$. Let $(2j+1)(2k+1)\modd 2p=2i+1$,

    \[S_{2j,2k}=\frac{q^{2i+1}-q^{2p-{(2i+1)}}}{q-q^{2p-1}}.\]

    Note $2i+1\neq 2p-(2i+1)$. If $0\leq 2i\leq p-3$ then $S_{2j,2k}=d(X_{2i})$ and if $p+1\leq 2i \leq 2p-2$ then $S_{2j,2k}=-d(X_{2p-2i-2})$.
\end{proof}

\begin{cor} \label{cor: PSU Smat cols}
    The $S$-matrix of the fusion category $\PSU(2)_{p-2}$ each row and column of the $S$ matrix has an entry $\pm d_X$ for all $X \in \Irr(\PSU(2)_{p-2})$.
\end{cor}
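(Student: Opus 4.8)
The plan is to build directly on Lemma \ref{lem: PSU2 Smat entries}, which already tells us that every single entry $S_{2j,2k}$ of the $S$-matrix equals $\pm d_X$ for one simple $X$, the one attached to the odd residue $(2j+1)(2k+1)\bmod 2p$. What remains is purely combinatorial: fix a row index $j$ and show that, as $k$ runs over $\{0,1,\dots,\frac{p-3}{2}\}$, every simple object of $\PSU(2)_{p-2}$ is realized. Since the formula $S_{2j,2k}=[(2j+1)(2k+1)]_q$ is symmetric in $j$ and $k$, the column statement will follow immediately from the row statement, so I would only argue the row case.

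First I would record the bookkeeping used implicitly in the proof of Lemma \ref{lem: PSU2 Smat entries}. The odd residues modulo $2p$ different from $p$ break into $\frac{p-1}{2}$ unordered pairs $\{a,2p-a\}$, and exactly one element of each pair lies in $R:=\{1,3,\dots,p-2\}$; since $\dim X_{2i}=[2i+1]_q$ and $2i+1$ ranges over $R$ as $i$ ranges over $\{0,\dots,\frac{p-3}{2}\}$, the set $R$ — equivalently, the set of these pairs — is in bijection with $\Irr(\PSU(2)_{p-2})$. In this language, Lemma \ref{lem: PSU2 Smat entries} says precisely that $S_{2j,2k}=\pm d_X$, where $X$ is the simple attached to the pair containing $(2j+1)(2k+1)\bmod 2p$.

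Next I would analyze the multiplication-by-$(2j+1)$ map on $\Z/2p\Z$. Because $2j+1$ is odd with $1\le 2j+1\le p-2$ and $p$ is prime, $\gcd(2j+1,2p)=1$, so this map is a bijection of $\Z/2p\Z$; it preserves oddness and sends $\{x,2p-x\}$ to $\{(2j+1)x,-(2j+1)x\}$, hence it permutes the $\frac{p-1}{2}$ pairs above. Now as $k$ runs over $\{0,\dots,\frac{p-3}{2}\}$ the integers $2k+1$ run exactly over $R$, i.e.\ over a complete set of pair-representatives; applying the permutation of pairs, the residues $(2j+1)(2k+1)\bmod 2p$ therefore meet each of the $\frac{p-1}{2}$ pairs exactly once. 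Combining with the dictionary of the previous step, the entries $S_{2j,2k}$ for $0\le k\le \frac{p-3}{2}$ realize $\pm d_X$ for every $X\in\Irr(\PSU(2)_{p-2})$, which is the claim for rows; columns follow by the symmetry $S_{2j,2k}=S_{2k,2j}$.

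I do not anticipate a genuine obstacle; the only point needing care is the coprimality $\gcd(2j+1,2p)=1$, which is exactly where the primality of $p$ (already built into the definition of the algebra-complete family $\PSU(2)_{p-2}$) is used — it is what makes the multiplication map a bijection and hence a permutation of the pairs. Everything else is direct bookkeeping with the explicit description of entries from Lemma \ref{lem: PSU2 Smat entries}.
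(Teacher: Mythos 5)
Your proof is correct, and it takes a genuinely different route from the paper. The paper deduces the corollary from the transitivity of the Galois action on the columns of the normalized $S$-matrix (citing [NWZ]) together with the rational independence of the dimensions $d_X$ (citing [EJ24]): it supposes two entries of a column agree up to sign, pushes this through a Galois automorphism to get $d_a=\pm d_c$, and derives a contradiction; a counting argument then finishes. You instead work directly with the explicit formula $S_{2j,2k}=[(2j+1)(2k+1)]_q$ and observe that multiplication by the unit $2j+1$ of $\Z/2p\Z$ permutes the $\frac{p-1}{2}$ pairs $\{a,2p-a\}$ of odd residues, so as $k$ varies the products meet every pair exactly once; combined with Lemma \ref{lem: PSU2 Smat entries} (which identifies the pair containing $(2j+1)(2k+1)\bmod 2p$ with the simple object whose dimension appears, up to sign) this gives the claim row by row, and columns follow by symmetry of $S$. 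Your bookkeeping is sound: $\gcd(2j+1,2p)=1$ since $2j+1$ is odd and less than the prime $p$; the product is never $\equiv p$ since $p$ divides neither factor; and exactly one element of each pair lies in $\{1,3,\dots,p-2\}$, matching the indexing of $\Irr(\PSU(2)_{p-2})$. What each approach buys: yours is elementary and self-contained, needing neither the transitivity theorem nor rational independence, and it even yields the sharper statement that each $\pm d_X$ occurs exactly once per row; the paper's argument is less computation-specific and fits the Galois-theoretic toolkit it reuses in the subsequent propositions (e.g.\ Propositions \ref{prop:no eigenvector -1} and \ref{prop:unique magnitudes}), where the explicit residue arithmetic would not suffice.
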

\begin{proof}
    Consider the characters associated with the $S$-matrix. These are functionals where we replace each entry $S_{a,b}$ by $\frac{S_{a,b}}{d_b}$. That is we have functions for each $Y \in \Irr(C)$, $\chi_Y:\Irr(C)\rightarrow \C$ define by,

    \[\chi_Y(X):=\frac{S_{X,Y}}{d_Y}.\]
    
    It follows by \protect\cite{NWZ} that the Galois group associated with $\Q(\frac{S_{a,b}}{d_b}|a,b\in \Irr(\PSU(2)_{p-2}))$ acts transitively on the characters. Suppose that two entries of a column of the $S$ matrix had $\pm d_X$ as an entry. This implies that for some $a\neq c,b\in \Irr(\PSU(2)_{p-2})$ that $S_{a,b}=\pm S_{c,b}$, then $\frac{S_{a,b}}{d_b}=\pm \frac{S_{c,b}}{d_b}$. Since the Galois action is transitive, there exists a $\sigma$ such that $\sigma(\frac{S_{a,0}}{d_0})=\frac{S_{a,b}}{d_b}$. This implies that $\sigma_b(d_a)=\pm\sigma_b(d_c)$. It follows that $\sigma_b(d_a)\mp\sigma_b(d_c)=0$. Since $\sigma$ is a field automorphism, $\sigma_b(d_a\mp d_c)=0$. This implies that, $d_a=\pm d_c$ which contradicts the rational independence of simple objects in $\PSU(2)_{p-2}$ \protect\cite[Lemma 3.3]{EJ24}. Thus, for each column $\pm d_X$ appears exactly once and since $S$ is symmetric this property applies for rows as well.
 \end{proof}

  We are going to need some more information about the decomposition of tensor products of objects in $\PSU(2)_{p-2}$ into simple objects. It's a well known nontrivial result that $\Z[\omega_n]$ is the ring of integers for the field $\Q(\omega_{n})$ for any root of unity $\omega_{n}$ \protect\cite[Theorem 6.4]{Mil08}. In addition it has been shown that the ring of integers of $\Q(\omega_p+\omega_p^{-1})$, the largest real subfield of $\Q(\omega_p)$, is $\Z[\omega_p+\omega_p^{-1}]$. Since $\Q(\omega_p)=\Q(\omega_{2p})$ it implies that the ring of integers $\Z[\omega_p]=\Z[\omega_{2p}]$ as rings. There is sizable effort to understand the units of the $\Z[\omega_p]$ and the corresponding real subring $\Z[\omega_p+\omega_p^{-1}]$. There is a family of units in $\Z[\omega_p+\omega_p^{-1}]$ called the real cyclotomic units defined as the following real numbers,

  \[\omega_p^{\frac{1-a}{2}}\frac{{1-\omega_p^a}}{1-\omega_p}=\frac{\omega_p^{\frac{a}{2}}-\omega_p^{\frac{-a}{2}}}{\omega_p^{\frac{1}{2}}-\omega_p^{\frac{-1}{2}}}=\frac{\sin(\frac{a\pi}{p})}{\sin(\frac{\pi}{p})}, \hspace{1mm}1<a<\frac{p}{2}.\]

  These units are well studied and the group generated by them is closely connected to the class number of (real) cyclotomic rings \protect\cite{Mil14,Was12,DDK19}. We notice that there are $\frac{p-1}{2}-1$ of these units and that they correspond to the dimensions of our simple objects in $\PSU(2)_{p-2}$. The dimensions of our simple objects are,

  \[d_{X_{2k}}=\frac{\sin(\frac{(2k+1)\pi}{p})}{\sin(\frac{\pi}{p})}.\]

  If $1<(2k+1)<\frac{p}{2}$ then $d_{X_{2k}}$ is a real cyclotomic unit, otherwise if $\frac{p}{2}<(2k+1)<p-1$ then $\sin(\frac{(2k+1)\pi}{p})=\sin(\frac{(p-(2k+1))\pi}{p})$ which is an even number $a$ such that $1<a<\frac{p}{2}$ implying that $d_{X_{2k}}$ for $2k+1>\frac{p}{2}$ is also a real cyclotomic unit. We also note that there are both $\frac{p-1}{2}-1$ of these cyclotomic units and dimensions $d_{X_{2k}}$. Thus the real cyclotomic units are in bijection with the $d_{X_{2j}}$ for a prime $p$.

  The group of real cyclotomic units is the group generated by $\langle -1,d_{X_{2k}}|1\leq k\leq \frac{p-2}{2}\rangle$. By \protect\cite[Lemma 8.1]{Was12} these numbers are multiplicatively independent and thus the group generated by them is isomorphic as groups to $\Z/2\Z\oplus \Z^{\frac{p-3}{2}}$. Thus we have arrived at the following corollary.

\begin{cor}\label{cor:uniqueness of tensor product in PSU(2)}
Let $Y$ be an isomorphism class of objects in $\PSU(2)_{p-2}$. If $Y$ has a decomposition in the tensor product of simple objects, then that decomposition is unique up to a sequence of braidings and multiplications by the unit.
\end{cor}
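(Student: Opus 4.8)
The plan is to reduce the statement to the multiplicative independence of the real cyclotomic units recorded in the paragraph preceding this corollary. First I would pass to dimensions. Since $\dim$ (equivalently $\FPdim$) is a ring homomorphism from the Grothendieck ring of $\PSU(2)_{p-2}$ to $\R$, a decomposition $Y \cong X_{2i_1}\otimes \cdots \otimes X_{2i_k}$ into simple tensor factors yields
\[
\dim(Y)=\prod_{r=1}^{k} d_{X_{2i_r}}=\prod_{j=1}^{(p-3)/2} d_{X_{2j}}^{\,a_j},
\]
where $a_j\in\Z_{\geq 0}$ counts the occurrences of $X_{2j}$ among the factors and the factor $X_0\cong\1$, having $d_{X_0}=1$, contributes nothing. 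A second such decomposition produces exponents $b_j\in\Z_{\geq 0}$ with $\prod_j d_{X_{2j}}^{\,a_j}=\dim(Y)=\prod_j d_{X_{2j}}^{\,b_j}$.

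Next I would invoke \cite[Lemma 8.1]{Was12}, as already used above: the elements $d_{X_{2j}}$, $1\le j\le (p-3)/2$, are multiplicatively independent, so they generate a free abelian subgroup of $\R^{\times}_{>0}$ of rank $(p-3)/2$. (The torsion generator $-1$ of the full real cyclotomic unit group plays no role here, since each $d_{X_{2j}}=\sin((2j+1)\pi/p)/\sin(\pi/p)>0$.) Consequently the equality of the two products forces $a_j=b_j$ for every $j$. Thus the two decompositions of $Y$ contain exactly the same multiset of non-unit simple factors; they can differ only in the number of tensor factors equal to $\1$ and in the order in which the factors are written.

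Finally I would note that reordering the tensor factors is realized by the braiding of $\PSU(2)_{p-2}$, which is a modular and in particular braided fusion category, while inserting or deleting a tensor factor $\1$ is exactly a multiplication by the unit; composing finitely many such moves carries one decomposition to the other, which is the claim. The only step needing a moment of care is the first reduction, namely that $\dim$ really is additive on direct sums and multiplicative on tensor products so that the dimension of a tensor product equals the product of the dimensions of the factors; this is standard, and I expect no genuine obstacle here, the substantive content being entirely the number-theoretic independence already established.
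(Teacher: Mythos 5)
Your argument is correct and follows essentially the same route as the paper: reduce to the equality of products of quantum dimensions, invoke the multiplicative independence of the real cyclotomic units $d_{X_{2j}}$ from \cite[Lemma 8.1]{Was12} to conclude the multisets of non-unit simple factors coincide, and account for reordering and unit factors via the braiding and unitors. Your write-up is in fact somewhat more careful than the paper's (e.g.\ in noting that positivity of the $d_{X_{2j}}$ disposes of the torsion generator $-1$), but there is no substantive difference in approach.
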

\begin{proof}
    Follows from \protect\cite[Lemma 8.1] {Was12}. If $Y$ has two  decompositions into non-trivial simple object then the those have to correspond to the same element in the group of real cyclotomic units in $\Z[\omega_p+\omega_p^{-1}]$. Since these objects are multiplicativly independent then $\dim(Y)$ decomposes into a product of positive powers of the generators, which is unique with the exception of multiplication by $1$.
    Therefore it follows that there decompostion is the same up to half braidings and multiplication by the unit.
\end{proof}
Let $Q$ be a finite quiver, then $\widetilde{Q}$ is the quiver generated by $Q$. We are interested in classifying monoidal functors from $\PSU(2)_{p-2}$ to $\BasedBim(\KQ)_{I_v}$. By Theorem \protect\ref{thm:TE BimEnd} this is equivalent to a semisimple module category and a dual functor $\widetilde{Q}$ satisfying some properties. The dual functor decomposes into sums of simple dual functors in $\End_C(M)$.

We have already constructed a nice family of separable idempotent split actions on $\PSU(2)_{p-2}$ coming from graded actions up to conjugacy. If fix a quiver $Q$ a graded separable idempotent split based action up to equivalence are in bijection with partitions of the vertices into $n$ sets $S_1,...,S_n$ of size $\frac{p-1}{2}$ such that for each $S_k$ we pick a bijection $\psi_k:S_k\rightarrow \Irr(\PSU(2)_{p-2}$ that maps each vertex $v_{l_k}$ to an isomorphism class of simple object $X \in \Irr(C)$ and the subquiver $Q_{ij}$ of all paths from $S_i$ to $S_j$ can be expressed as $|(Q_{ij})_{l_i,m_{j}}|=N_{Z_{ij},\psi_i(v_{l_i})}^{\psi_j(v_{m_j})}$ for some isomorphism class of objects $Z_{ij}\in C$. We are going to show that these are up to equivalence the only separable idempotent split based actions of $\PSU(2)_{p-2}$ on $\KQ$. We will first prove a couple small propositions that will make proving our main results easier.  Define $T_{X_2}$ to be the linear operator define by conjugation by the fusion matrix of $X_2$. The fusion matrix associated with the simple element $X_2$ in $\PSU(2)_{p-2}$ with respect to the order basis $\{X_0,...,X_{2\frac{p-3}{2}}\}$ is a $(\frac{p-1}{2})\times (\frac{p-1}{2})$ matrix with the entries,

\[F_{X_2}=\begin{bmatrix}
    0 & 1 & 0  & \dots & 0&0\\
     1 & 1 & 1 & 0 & \dots&0\\
     0 & 1  & 1  & \ddots &0 &\vdots\\
     \vdots & 0&1 &\ddots& 1 & 0\\
     0 & \vdots &0 & \ddots &1 &1\\
     0 & 0 & \dots& 0 & 1& 1
\end{bmatrix}.\]

This is an invertible symmetric matrix implying that the matrix transformation arising from conjugation by $X_2$ is $T_{X_2}=F_{X_2}^{T}\otimes F_{X_2}^{-1}=F_{X_2}\otimes F_{X_2}^{-1}$. Since $F_{X_2}$ is diagonalizable then it follows that $F_{X_2}^{-1}$ is also diagonalizable and it follows that all eigenpairs can be found by taking Kronecker products of eigenpairs of $F_{X_2}$ and $F_{X_2}^{-1}$. That is if $(\alpha_i,v_i)$ and $(\beta_j,u_j)$ are our eigenpairs of $F_{X_2}$, then the eigenpairs of the Kronecker product are $(\alpha_i\otimes \beta_j^{-1},u_i\otimes v_j)_{i,j}$.

\begin{prop} \label{prop:no eigenvector -1}
   $T_{X_2}$ has no eigenvectors of eigenvalue $-1$
\end{prop}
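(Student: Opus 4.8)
\end{prop}

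\begin{proof}
The plan is to reduce everything to the spectrum of $F_{X_2}$. Since $F_{X_2}$ is a real symmetric matrix it is diagonalizable over $\R$ with real eigenvalues $\lambda_0,\dots,\lambda_{(p-3)/2}$, and by hypothesis it is invertible, so $\lambda_k\neq 0$ for all $k$. Consequently $T_{X_2}=F_{X_2}\otimes F_{X_2}^{-1}$ is diagonalizable, with eigenbasis $\{v_i\otimes v_j\}$ built from an eigenbasis $\{v_i\}$ of $F_{X_2}$, the eigenvalue on $v_i\otimes v_j$ being $\lambda_i/\lambda_j$. Hence $T_{X_2}$ admits an eigenvector of eigenvalue $-1$ if and only if $\lambda_i=-\lambda_j$ for some indices $i,j$, and it suffices to prove that no two eigenvalues of $F_{X_2}$ are negatives of one another; the diagonal case $\lambda_i=-\lambda_i$ is already excluded since each $\lambda_i\neq 0$.

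Next I would make the eigenvalues explicit. As $F_{X_2}$ is the fusion matrix of $X_2$ and the $S$-matrix diagonalizes the fusion rules, the eigenvalues of $F_{X_2}$ are the normalized $S$-column entries $\lambda_k=S_{X_2,X_{2k}}/d_{X_{2k}}$, $0\leq k\leq\tfrac{p-3}{2}$. Using $S_{2,2k}=[3(2k+1)]_q$ and $d_{X_{2k}}=[2k+1]_q$ with $q=e^{\pi i/p}$, a direct quantum-integer computation (expanding $\tfrac{x^{3}-x^{-3}}{x-x^{-1}}$ at $x=q^{2k+1}$) gives
\[
\lambda_k=\frac{[3(2k+1)]_q}{[2k+1]_q}=q^{2(2k+1)}+1+q^{-2(2k+1)}=1+2\cos\!\Big(\tfrac{2(2k+1)\pi}{p}\Big).
\]

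The heart of the argument is then a cyclotomic linear-independence step. Writing $\omega=q^{2}$, a primitive $p$-th root of unity, and $a=2i+1$, $b=2j+1$, the equation $\lambda_i=-\lambda_j$ becomes $\omega^{a}+\omega^{-a}+\omega^{b}+\omega^{-b}+2=0$, where $a,b$ are odd and $1\leq a,b\leq p-2$. I would substitute $2=-2(\omega+\omega^{2}+\cdots+\omega^{p-1})$, coming from $1+\omega+\cdots+\omega^{p-1}=0$, to obtain a $\Q$-linear relation among $\omega^{1},\dots,\omega^{p-1}$. Since $p$ is prime these are linearly independent over $\Q$, so every residue in $\{1,\dots,p-1\}$ must occur with multiplicity exactly $2$ in the multiset $\{a,\,p-a,\,b,\,p-b\}$ (each of $a,p-a,b,p-b$ lies in $\{1,\dots,p-1\}$ precisely because $1\leq a,b\leq p-2$, and $a\neq p-a$, $b\neq p-b$ because $p$ is odd). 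A four-element multiset in which every value has multiplicity $2$ consists of two distinct values each repeated twice, forcing $\{a,p-a\}=\{b,p-b\}$, i.e.\ $a=b$ or $a=p-b$. If $a=b$ then $i=j$ and $\lambda_i=-\lambda_i$, so $\lambda_i=0$, contradicting invertibility of $F_{X_2}$; if $a=p-b$ then the left side is odd while $p-b$ is even (as $p$ and $b$ are both odd), again a contradiction. Thus no two eigenvalues of $F_{X_2}$ sum to zero, and $-1$ is not an eigenvalue of $T_{X_2}$.

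I expect the substantive content to be exactly the quantum-integer identity for $\lambda_k$ together with the linear independence of $\omega,\dots,\omega^{p-1}$ over $\Q$; the remaining obstacle is purely bookkeeping, namely confirming the identification of the eigenvalues of the fusion matrix with the normalized $S$-column and keeping the index ranges tight enough that the residues $a,p-a,b,p-b$ genuinely lie in $\{1,\dots,p-1\}$ and that the two displayed cases are the only possibilities. Both of those closing cases are dispatched immediately, one by invertibility of $F_{X_2}$ and the other by parity.
\end{proof}
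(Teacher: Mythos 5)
Your proof is correct, but it takes a genuinely different route from the paper's. The paper never computes the eigenvalues of $F_{X_2}$ explicitly: it argues structurally, using the Galois transitivity of $\PSU(2)_{p-2}$ from [NWZ] together with Lemma \ref{lem: PSU2 Smat entries} (every $S$-entry is $\pm d_X$) and the multiplicative independence of the real cyclotomic units (Corollary \ref{cor:uniqueness of tensor product in PSU(2)}), to show that $\lambda_j=-\lambda_k$ would force both eigenvalues to equal $\pm 1$, contradicting transitivity of the Galois action. You instead compute $\lambda_k=[3(2k+1)]_q/[2k+1]_q=1+2\cos(2(2k+1)\pi/p)$ directly and reduce the question to the $\Q$-linear independence of $\omega,\dots,\omega^{p-1}$ for $\omega$ a primitive $p$-th root of unity; this is more elementary and self-contained, at the cost of being special to $X_2$ rather than reusing the machinery the paper needs elsewhere (e.g.\ in Proposition \ref{prop:unique magnitudes}). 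One small remark on your bookkeeping: once linear independence forces \emph{every} residue in $\{1,\dots,p-1\}$ to occur with multiplicity exactly $2$ in the four-element multiset $\{a,p-a,b,p-b\}$, counting gives $2(p-1)=4$, which is already absurd for $p\geq 5$; your subsequent two-case analysis ($a=b$ killed by invertibility, $a=p-b$ killed by parity) is a valid but unnecessary detour stemming from weakening that conclusion to "every value occurring in the multiset has multiplicity $2$." Either way the argument closes correctly.
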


\begin{proof}
     Given an isomorphism class of simple objects, $X
_j$ the $S$ matrix diagonalizes the fusion rules $X_{2i}$. The corresponding eigenvalues are, 

\[(\lambda_{i})_j=\frac{S_{i,j}}{d_{X_j}}.\]

Notice that $\sigma((\lambda_{i})_j)=\sigma(\frac{S_{i,j}}{d_{X_j}})=\frac{S_{i,\sigma(j)}}{d_{X_{\sigma(j)}}}$ for a Galois automorphism $\sigma$. This implies that Galois group acts on the eigenvalues of $X_{2i}$. For $\PSU(2)_{p-2}$, choosing our simple object as $X_2$ we see that,

\[(\lambda_{X_2})_{2j}=\frac{S_{2,2j}}{d_{X_{2j}}}.\]

It suffices to show that $(\lambda_Y)_{2j}\neq -(\lambda_Y)_{2k}$ for all $k\neq j$. Suppose that

\[\frac{S_{2,2j}}{d_{X_{2j}}}+\frac{S_{2,2k}}{d_{X_{2k}}}=0 \] 
for some $k\neq j$. This implies that,

\[S_{2,2j}d_{X_{2k}}=S_{2,2k}d_{X_{2j}}.\]

By Lemma \protect\ref{lem: PSU2 Smat entries} it follows that $S$-matrix entries can be expressed as $\pm d_{X_{2i}}$ for some simple object $X_{2i}$. Thus,

\[\pm d_{X_{2i}}d_{X_{2k}}=\pm d_{X_{2l}}d_{X_{2j}}.\]

By Corollary \protect\ref{cor:uniqueness of tensor product in PSU(2)}, since $j\neq k$ it follows that $d_{X_{2i}}d_{X_{2k}}=d_{X_{2l}}d_{X_{2j}}$ only when $X_{2i}\cong X_{2j}$ and $X_{2l}\cong X_{2k}$. Thus, it follows that,

\[\frac{S_{2,2j}}{d_{X_{2j}}}=\frac{\pm d_{X_{2j}}}{d_{X_{2j}}}=\pm 1, \frac{S_{2,2k}}{d_{X_{2k}}}=\frac{\pm d_{X_{2k}}}{d_{X_{2k}}}=\pm 1.\]

This implies that the fusion matrix of $X_2$ has an eigenvector of eigenvalue $1$. This is a contraction, since the Galois group acts transitively on the eigenvalues of $X_2$, thus none of the eigenvalues can be rational. Therefore if $a$ is an eigenvalue then $-a$ cannot be an eigenvalue.
\end{proof}

An immediate realization from the proof is that eigenvalues of $X_{2k}$ are unique for any $1\leq k\leq \frac{p-1}{2}$. This implies that the diagonal matrix has distinct entries on each diagonal for any non-trivial simple object.

\begin{prop} \label{prop:unique magnitudes}
    The magnitude of the eigenvalues of $T_{X_2}$ that are not equal to $1$ are distinct.
\end{prop}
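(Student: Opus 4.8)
The plan is to diagonalize $T_{X_2}$ and translate the statement into a statement about the quantum dimensions. Since $T_{X_2}=F_{X_2}\otimes F_{X_2}^{-1}$, its eigenvalues are the ratios $\lambda_i/\lambda_j$, where $\lambda_0,\dots,\lambda_{\frac{p-3}{2}}$ are the eigenvalues of $F_{X_2}$; as in the proof of Proposition~\ref{prop:no eigenvector -1} these are $\lambda_a=S_{X_2,X_{2a}}/d_{X_{2a}}$, and by the remark following that proposition (and the transitivity of the Galois action used in its proof) they are pairwise distinct and irrational. Consequently the eigenvalues of $T_{X_2}$ different from $1$ are exactly the ratios $\lambda_i/\lambda_j$ with $i\neq j$, and the proposition is equivalent to: $|\lambda_i/\lambda_j|=|\lambda_k/\lambda_l|$ with $i\neq j$ implies $(i,j)=(k,l)$.

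Next I would rewrite the magnitudes in terms of dimensions. By Lemma~\ref{lem: PSU2 Smat entries} we have $S_{X_2,X_{2a}}=\pm d_{X_{2\pi(a)}}$ for a single simple object $X_{2\pi(a)}$, and by Corollary~\ref{cor: PSU Smat cols} the map $a\mapsto\pi(a)$ is a permutation of $\Irr(\PSU(2)_{p-2})$; hence $|\lambda_a|=d_{X_{2\pi(a)}}/d_{X_{2a}}$. Since $d_{X_0}=1$ while $d_{X_2},\dots,d_{X_{2\frac{p-3}{2}}}$ are multiplicatively independent by Corollary~\ref{cor:uniqueness of tensor product in PSU(2)}, the multiplicative group they generate is free abelian of rank $\frac{p-3}{2}$, which I identify with $\mathbb{Z}^{\frac{p-3}{2}}$ via $d_{X_{2b}}\mapsto\mathbf e_b$ for $b\ge1$ and $d_{X_0}\mapsto 0$. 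Under this identification $|\lambda_a|$ corresponds to $w_a:=\mathbf e_{\pi(a)}-\mathbf e_a$, and $|\lambda_i/\lambda_j|=|\lambda_k/\lambda_l|$ becomes precisely $w_i-w_j=w_k-w_l$, i.e.\ $w_i+w_l=w_k+w_j$. Thus the proposition follows once I show that $\{w_a\}_{a\in\Irr}$ is a Sidon set (its pairwise sums $w_a+w_b$ are pairwise distinct): then $\{i,l\}=\{k,j\}$, and $i\neq j$ forces $i=k$, $j=l$.

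For the Sidon property I would exploit the cycle structure of $\pi$. A fixed point of $\pi$ would give $|\lambda_a|=1$, hence $\lambda_a=\pm1$, contradicting irrationality; and since the proof of Lemma~\ref{lem: PSU2 Smat entries} shows $2\pi(a)+1\equiv\pm3(2a+1)\pmod{2p}$, a fixed point of $\pi^2$ would force $2p\mid 8$ or $2p\mid 10$, i.e.\ $p=5$ --- a case with only two simples where Sidonness is immediate. So for $p>5$ every $\pi$-cycle has length $\ge 3$. The $w_a$ coming from a single $\pi$-cycle $C$ lie in the coordinate subspace spanned by $\{\mathbf e_b:b\in C,\ b\neq0\}$, and these subspaces are independent for distinct cycles; projecting a putative relation $w_a+w_b=w_c+w_d$ onto each cycle thus reduces it to a relation among at most two of the $w$'s of that cycle on each side. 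Encoding subsets of a $t$-cycle inside $\mathbb{Z}[z]/(z^t-1)$, such a relation says that $(z-1)(\chi_P-\chi_Q)$ is supported on at most the single monomial $z^{s_0}$ coming from the index $0$ (when $0\in C$); applying the augmentation $z\mapsto 1$ kills that monomial, and $\operatorname{Ann}(z-1)=\mathbb{Z}\cdot(1+z+\dots+z^{t-1})$ together with a total-coefficient count then forces $\chi_P=\chi_Q$. The same device shows that, for $t\ge 3$, no sum of one or two of the $w$'s of $C$ can be zero or equal a single $w$ --- which is what lets one match the per-cycle multiplicities --- and assembling the cycles gives $\{a,b\}=\{c,d\}$.

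The main obstacle is this last combinatorial step. The index $0$ spoils the naive picture in which each $\pi$-cycle occupies its own basis vectors (its basis vector is $0$), so the group-ring/augmentation argument is needed to neutralize it; and one must separately rule out $2$-cycles of $\pi$, since that is exactly where genuine Sidon failures --- two transpositions of $\pi$ each summing to $0$ --- would otherwise occur. Everything else (the diagonalization, the reduction to the vectors $w_a$ through Lemma~\ref{lem: PSU2 Smat entries} and Corollary~\ref{cor: PSU Smat cols}, and the appeal to multiplicative independence) is routine given the results already in hand.
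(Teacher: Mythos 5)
Your proposal is correct, and it reaches the conclusion by a genuinely different route from the paper in the one step that actually requires work. Both arguments begin identically: write the eigenvalues of $T_{X_2}$ as ratios $\lambda_i/\lambda_j$ of the distinct, irrational eigenvalues of $F_{X_2}$, use Lemma~\ref{lem: PSU2 Smat entries} and Corollary~\ref{cor: PSU Smat cols} to get $|\lambda_a|=d_{X_{2\pi(a)}}/d_{X_{2a}}$ for a fixed-point-free permutation $\pi$, and invoke the multiplicative independence of the nontrivial quantum dimensions to convert an equality of magnitudes into an additive identity among exponent vectors. The paper then does a case analysis on which $S$-matrix entries can equal which dimensions, and disposes of the one genuinely problematic configuration --- which in your language is exactly a pair of $2$-cycles of $\pi$ --- by a Galois argument: $a\pm a^{-1}\in\Q$ forces $a$ into a quadratic subfield of $\Q(\omega_p+\omega_p^{-1})$, which is then excluded using transitivity and the relation $d_{X_2}^2=d_{X_4}+d_{X_2}+1$. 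You instead observe that $\pi$ is induced by multiplication by $3$ on $(\Z/p)^{*}/\{\pm1\}$, so a fixed point of $\pi^2$ would give $9\equiv\pm1\pmod{p}$, hence $p=5$ (the divisibility should read $p\mid 8$ or $p\mid 10$ after cancelling the unit $2a+1$ and the factor of $2$, not $2p\mid 8$ or $2p\mid 10$, but the conclusion is unaffected); for $p>5$ every cycle has length at least $3$, and your group-ring computation in $\Z[z]/(z^t-1)$ --- augmentation killing the stray monomial $z^{s_0}$ contributed by the index $0$, then $\operatorname{Ann}(z-1)=\Z\cdot(1+z+\dots+z^{t-1})$ plus a coefficient count --- correctly shows the vectors $w_a=\mathbf{e}_{\pi(a)}-\mathbf{e}_a$ form a Sidon set, which is exactly the statement needed. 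Your route is more elementary and more structural: it isolates the $2$-cycle obstruction explicitly and replaces the quadratic-subfield detour with a one-line congruence, at the cost of a somewhat longer combinatorial verification; the paper's argument is shorter to state but leans on finer number-theoretic input.
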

\begin{proof}
    Suppose that there are two eigenvalues of with the same magnitude. This implies,

    \[\left|\frac{S_{2,2j}}{d_{X_{2j}}}\frac{d_{X_{2k}}}{S_{2,2k}}\right|=\left|\frac{S_{2,2l}}{d_{X_{2l}}}\frac{d_{X_{2m}}}{S_{2,2m}}\right|\]
    Then this implies,
 \[\frac{S_{2,2j}}{d_{X_{2j}}}\frac{d_{X_{2k}}}{S_{2,2k}}\pm\frac{S_{2,2l}}{d_{X_{2l}}}\frac{d_{X_{2m}}}{S_{2,2m}}=0\]
 Finding a common denominator,

 \[\frac{S_{2,2j}d_{X_{2k}}d_{X_{2l}}S_{2,2m}\pm d_{X_{2j}}S_{2,2l}S_{2,2k}d_{X_{2m}}}{d_{X_{2j}}S_{2,2k}d_{X_{2l}}S_{2,2m}}=0\]

 Which implies,

 \[S_{2,2j}d_{X_{2k}}d_{X_{2l}}S_{2,2m}\pm d_{X_{2j}}S_{2,2l}S_{2,2k}d_{X_{2m}}=0\]

 By Lemma \protect\ref{lem: PSU2 Smat entries}, it follows that each $S_{2,2a}$ is equal to $d_{X_{2b}}$ for some isomorphism class of simple objects $X_{2b}$. Corollary \protect\ref{cor:uniqueness of tensor product in PSU(2)} implies that the dimensions of both sides on both sides of the equation must be the same up to swapping. This implies $S_{2,2j}$ must equal one of $d_{X_{2j}},S_{2,2l},S_{2,2k},d_{X_{2m}}$. If $S_{2,2j}=\pm d_{X_{2j}}$ this is a contradiction, since then we have an eigenvector of eigenvalue $1$ contradiction the transitivity of our Galois action. If $S_{2,2j}=S_{2,2k}$ it implies that,

 \[\left|\frac{S_{2,2j}}{d_{X_{2j}}}\frac{d_{X_{2k}}}{S_{2,2k}}\right|=1.\]
 This gives us an eigenvalue $1$ in $T_{X_2}$ which is allowed. If $S_{2,2j}=S_{2,2l}$ then it follows that $l=j$ and then it implies that $m=k$ for the desired equation to be true. We note in this case that by the proof of Proposition \protect\ref{prop:no eigenvector -1} if $a$ is an eigenvalue then $-a$ cannot be an eigenvalue as well. Thus, the only potential problematic case is when $S_{2,2j}=\pm d_{X_{2m}}$. By a similar arguement, it follows that the only nonobvious case is when $S_{2,2k}=d_{X_{2l}}$, $S_{2,2m}=d_{X_{2j}}$, and $S_{2,2l}=d_{X_{2k}}$. It follows that we have reduced our equation to,
 \[\left|\frac{d_{X_{2m}}}{d_{X_{2j}}}\frac{d_{X_{2k}}}{d_{X_{2l}}}\right|=\left|\frac{d_{X_{2k}}}{d_{X_{2l}}}\frac{d_{X_{2m}}}{d_{X_{2j}}}\right|.\]

Let $a=\frac{d_{X_{2m}}}{d_{X_{2j}}}$ and $b=\frac{d_{X_{2k}}}{d_{X_{2l}}}$. By definition, one of, $\pm a$ and $\pm b$ are eigenvalues for the fusion matrix of $X_2$. For this equation to be true, it implies that one of $\pm \frac{1}{a}$ and $\pm \frac{1}{b}$ are eigenvalues of the fusion matrix of $X_2$. Since the Galois group acts transitively on the eigenvalues, it follows that there exists automorphism $\sigma$ and $\gamma$ such that $\sigma(a)=\pm \frac{1}{a}$ and $\gamma(b)=\pm \frac{1}{b}$. This implies that $a\pm \frac{1}{a}$ is fixed by the automorphism $\sigma$, thus is in $\Q$. Consider the equation $a\pm \frac{1}{a}=\frac{r}{s}$. Solutions to this equation are $a$ such that $a^2+\frac{r}{s}a\pm1=0$ which implies that $a$ lives in a degree two extension of $\Q$. The only degree two extension of $\Q(\omega_p)$ is either $\Q(\sqrt{p})$ or $\Q(\sqrt{ip})$. If $\frac{p-1}{2}$ is odd then it follows that there is no quadratic subfeild of $Q(\omega_p+\omega_p^{-1})$ implying that this case cannot happen. Thus suppose that $\frac{p-1}{2}$ is even and $a \in \Q(\sqrt{p})$.  Now since the Galois group acts transitively on the eigenvalues it follows that there exists an automorphism $\rho$ such that $\rho(a)=d_{X_2}$, thus $\rho(a\pm\frac{1}{a})=d_{X_2}\pm\frac{1}{d_{X_2}}=\frac{r}{s}$. This produces the equation,

\[d_{X_2}^2-\frac{r}{s}d_{X_2}\pm 1=0.\]

For $p\neq 5$, $d_{X_2}^2=d_{X_4}+d_{X_2}+1$, thus

\[sd_{X_4}+sd_{X_2}+s-rd_{X_2}\pm s=0.\]

It follows that by the rational independence of $d_{X_{2j}}$ \protect\cite{EJ24} that there is no possible solution unless $p=5$ and and $s=r$ and $\pm 1=-1$. This is precisely the case when $\PSU(2)_{3}\cong \Fib$ and the eigenvalues are $\frac{1}{d_{X_2}}$ and $-d_{X_2}$, in this case we can check that there are unique  eigenvalues magnitudes by observation. For $p\neq 5$ it follows that this senario cannot happen as well and the result follows.

\end{proof}

\begin{prop}\label{prop: S matrix neg/pos}
    Let $S_{X_{2j}}$ be a column of the $S$ matrix of $\PSU(2)_{p-2}$ then if $j\neq 0$ then $S_{X_{2j}}$ has a negative and positive entry.
\end{prop}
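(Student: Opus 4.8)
The plan is to pit the orthogonality of the columns of the $S$-matrix of $\PSU(2)_{p-2}$ against the positivity of quantum dimensions. Since $\PSU(2)_{p-2}$ is modular and every simple object is self-dual, its symmetric unnormalized $S$-matrix has pairwise orthogonal columns for the standard real inner product (the entries $S_{X_{2i},X_{2j}}=[(2i+1)(2j+1)]_q$ being real); concretely $\sum_{k} S_{X_{2i},X_{2k}}\,S_{X_{2j},X_{2k}} = 0$ whenever $X_{2i}\not\cong X_{2j}$. First I would isolate the column indexed by the unit $X_0$: its entries are the quantum dimensions $S_{X_{2k},X_0}=d_{X_{2k}}$, which are strictly positive.

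Now fix $j\neq 0$ and apply orthogonality to the columns $S_{X_0}$ and $S_{X_{2j}}$:
\[\sum_{k} d_{X_{2k}}\, S_{X_{2k},X_{2j}} = 0 .\]
The weights $d_{X_{2k}}$ are strictly positive, and the column $S_{X_{2j}}$ is not the zero vector because $S$ is invertible. A family of real numbers, not all zero, whose weighted sum with strictly positive weights vanishes must contain both a strictly positive and a strictly negative member; hence $S_{X_{2j}}$ has a negative entry. Together with the positive entry $S_{X_0,X_{2j}}=d_{X_{2j}}>0$, this is exactly the claim.

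There is no real obstacle here; the only care needed is the bookkeeping for the unnormalized $S$-matrix — namely that whatever normalization constant appears in $\sum_k S_{X_{2i},X_{2k}}S_{X_{2j},X_{2k}}=(\mathrm{const})\,\delta_{ij}$ is irrelevant, since we use only the vanishing of the off-diagonal inner products — and the remark that self-duality of all simples in $\PSU(2)_{p-2}$ is what makes the column $S_{X_0}$ orthogonal to $S_{X_{2j}}$ (rather than to $S_{X_{2j}^{*}}$). Alternatively, staying within the machinery already developed: by Corollary~\ref{cor: PSU Smat cols} the entries of $S_{X_{2j}}$ form, as a multiset, a signed set $\{\epsilon_X d_X : X\in\Irr(\PSU(2)_{p-2})\}$ with $\epsilon_X\in\{+1,-1\}$; if all $\epsilon_X=+1$ then $S_{X_{2j}}$ would be a rearrangement of the strictly positive vector $S_{X_0}$, which is impossible since the two columns are orthogonal. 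I would present the orthogonality argument as the primary one.
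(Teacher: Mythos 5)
Your proposal is correct and follows essentially the same route as the paper: orthogonality of the column $S_{X_{2j}}$ against the strictly positive column of quantum dimensions $S_{X_0}$ forces a negative entry, while $S_{X_0,X_{2j}}=d_{X_{2j}}>0$ supplies the positive one. Your explicit note that the column is nonzero (by invertibility of $S$) is a small point the paper leaves implicit, but the argument is the same.
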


\begin{proof}
Since $\PSU(2)_{p-2}$ is a transitive modular category that the $S$ matrix is a real symmetric matrix and each simple object is self dual \protect\cite[Lemma 3.3]{NWZ}. Since $S_{X,0}=d_X$ then it follows that column corresponding to the identity $X_0$ in the $S$-matrix is strictly positive. Since $\PSU(2)_{p-2}$ is transitive, it follows that the columns of the $S$ matrix are orthogonal \protect\cite{NWZ}, implying that the dot product of $S_{X_i}\cdot S_{X_j}=0$ unless $i=j$. Thus, $S_{X_0}  \cdot S_{X_{2j}}=0$ for all $j\neq 0$ which implies that all the other columns have a negative entry. $S_{0,X}=d_X$ so there is also a positive entry in each column. An entry cannot be zero, otherwise it would contradict the transitivity of the Galois action.

\end{proof}

This immediatly implies that $S_{X_{2i}}\otimes S_{X_{2j}}$ will have a negative and positive entry unless $i=j=0$. If $i\neq j$ then at least one $S_{X_{2i}},S_{X_{2j}}$ has a negative entry, $-d_{2_i}$. Then $-d_{2i}S_{0,2j}$ will be negative and $S_{0,2i}S_{0,2j}$ will be positive. The arguement where $S_{X_{2j}}$ has the negative entry is analogous. We now use all of these propositions to describe what quivers can have a separable based action on them.

\begin{thm}\label{thm:PSU(2) actions}
 Let $Q$ be a quiver with $\frac{p-1}{2}$ vertices. If there exists a monoidal functor $F:\PSU(2)_{p-2}\rightarrow \BasedBim(\KQ)_{I_v}$ then there is a choice of ordered basis for $Q$ such that the adjacency matrix of $Q$ is of the form, \[|Q_{ij}|=N_{Z,X_{2(i-1)}}^{X_{2(j-1)}} ,\]
 for some isomorphism class of objects $Z$ in $\PSU(2)_{p-2}$.

\end{thm}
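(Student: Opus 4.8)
The plan is to feed the hypothesis through the classification of Theorem~\ref{thm:TE BimEnd} and then use the arithmetic rigidity of $\PSU(2)_{p-2}$ recorded in the preceding propositions to show that the single one-morphism $\widetilde Q$ attached to the action already forces $Q$ to be a fusion matrix. First I would translate the hypothesis: by Theorem~\ref{thm:TE BimEnd} a monoidal functor $F\colon \PSU(2)_{p-2}\to\BasedBim(\KQ)_{I_v}$ is the same datum as a monoidal functor $F'\colon \PSU(2)_{p-2}\to\End_{(\widetilde Q,m,i)}(\VecC(M))\subset\End_{\widetilde Q}(\VecC(M))$, with $M\cong\Mod(\KQ_0)$ having $\tfrac{p-1}{2}$ isomorphism classes of simple objects. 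By Theorem~\ref{thm: classification of functors into End(Vec(M))} this equips $M$ with a semisimple $\PSU(2)_{p-2}$-module structure and makes the fixed endofunctor $\widetilde Q$ a $\PSU(2)_{p-2}$-module endofunctor. Since $\PSU(2)_{p-2}$ is algebra complete and $M$ has exactly $|\Irr(\PSU(2)_{p-2})|$ simple objects, $M$ admits no nontrivial decomposition, so $M\cong\PSU(2)_{p-2}$ as a module category; fixing such an equivalence identifies $\Irr(M)$ with $\{X_0,X_2,\dots,X_{p-3}\}$ and is precisely the ``choice of ordered basis for $Q$'' in the statement. Under this identification (and using Corollary~\ref{cor:algebra complete mon nat iso}) $\End_{\PSU(2)_{p-2}}(M)\cong\PSU(2)_{p-2}^{\mathrm{op}}$ carries the same commutative fusion ring $\mathcal F\subset M_{(p-1)/2}(\C)$ as $\PSU(2)_{p-2}$, the images $F'(X_{2j})$ become the fusion matrices $F_{X_{2j}}$, and $\widetilde Q$ becomes a fusion matrix $\widetilde A=F_W$ with $W=\bigoplus_j n_j X_{2j}$, $n_j\in\Z_{\ge 0}\cup\{\infty\}$. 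Because $\widetilde Q=\bigoplus_{k\ge 0}Q^k$, writing $A$ for the adjacency matrix of $Q$ we get $\widetilde A=\sum_{k\ge 0}A^k$ entrywise in $\Z_{\ge 0}\cup\{\infty\}$. So it suffices to prove $A\in\mathcal F$: reading off the zeroth row then forces $A=F_Z$ for $Z=\bigoplus_l A_{0l}X_{2l}$, which is exactly $|Q_{ij}|=N_{Z,X_{2(i-1)}}^{X_{2(j-1)}}$.

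Next I would dispose of the convergent case. If every entry of $\widetilde A$ is finite — equivalently $\rho(A)<1$, equivalently $W$ has only finite multiplicities — then $\sum_{k\ge 0}A^k=(I-A)^{-1}$ in $M_{(p-1)/2}(\C)$. Since $\widetilde A=F_W\in\mathcal F$ and $\mathcal F\cong\C^{(p-1)/2}$ is closed under inverses of invertible elements, $\widetilde A^{-1}=I-A\in\mathcal F$, hence $A\in\mathcal F$ and we are done.

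The remaining, and main, case is $\rho(A)\ge 1$, where $\widetilde A$ has $\infty$ entries and the relation $\widetilde A=I+A\widetilde A$ no longer inverts $\widetilde A$; this is the crux and is where the $\PSU(2)_{p-2}$-specific inputs enter. The half-braiding $\phi:=\phi_{F'(X_2)}\colon\widetilde Q\circ F'(X_2)\xrightarrow{\sim}F'(X_2)\circ\widetilde Q$ is, by the unitality and $m$-multiplicativity built into $\End_{(\widetilde Q,m,i)}(\VecC(M))$, determined by its restriction $\phi|_Q\colon Q\circ F'(X_2)\to F'(X_2)\circ\widetilde Q$; decomposing $\widetilde Q=\bigoplus_k Q^k$ by path length produces components $\phi^{(\ell)}\colon Q\circ F'(X_2)\to F'(X_2)\circ Q^\ell$. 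Expressing everything in the $S$-eigenbasis and using $[\widetilde A,F_{X_2}]=0$ (because $\widetilde A\in\mathcal F$), the invertibility and multiplicativity of $\phi$ become a spectral constraint on the matrices underlying the $\phi^{(\ell)}$, phrased through the conjugation operator $T_{X_2}=F_{X_2}(-)F_{X_2}^{-1}$. Here Propositions~\ref{prop:no eigenvector -1} and \ref{prop:unique magnitudes} (the eigenvalues of $T_{X_2}$ are never $-1$ and its non-unit eigenvalues have pairwise distinct magnitudes), Proposition~\ref{prop: S matrix neg/pos} (every non-identity column of the $S$-matrix has entries of both signs, the Perron--Frobenius obstruction forcing the unique positive eigenvector to be $S_{\cdot,0}$), and Corollary~\ref{cor:uniqueness of tensor product in PSU(2)} together with Lemma~\ref{lem: PSU2 Smat entries} combine to exclude any ``mixing between path-length degrees'' in $\phi$ beyond what a genuine module endofunctor allows, and hence force $AF_{X_2}=F_{X_2}A$. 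Since the Galois action on the eigenvalues of $F_{X_2}$ is transitive, $F_{X_2}$ has $\tfrac{p-1}{2}$ distinct eigenvalues, so its commutant is exactly $\mathcal F$; thus $A\in\mathcal F$, and the conclusion follows as above.

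I expect the spectral/combinatorial step of the last paragraph to be the principal obstacle. The difficulty is exactly that, as the $\Z/2$-example at the end of the previous section shows, the half-braiding of a based action need not respect the path-length grading, so one cannot simply strip off a ``degree-one part'' to get a half-braiding $Q\circ X_2\cong X_2\circ Q$ and invoke the graded classification (Theorem~\ref{thm: graded action classification}); the arithmetic rigidity of $\PSU(2)_{p-2}$ — distinctness of eigenvalue magnitudes, absence of the eigenvalue $-1$, and the sign pattern of $S$ — is precisely what is needed to show that the higher-degree components of the half-braiding cannot occur and hence that $Q$ is already a fusion matrix $F_Z$.
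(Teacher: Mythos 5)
Your setup is correct and matches the paper's: translate the action through Theorem~\ref{thm:TE BimEnd} into a monoidal functor into $\End_{(\widetilde{Q},m,i)}(\VecC(M))$, note that algebra completeness and the vertex count force $M\cong\PSU(2)_{p-2}$ as a module category, identify the eigenvalue-$1$ eigenspace of $T_{X_2}$ with $\mathrm{span}\{F_{X_{2j}}\}$, and reduce to showing the adjacency matrix $A$ lies in that span. Your treatment of the convergent case is fine as far as it goes, but note it is nearly vacuous: for an integer matrix, $\sum_k A^k$ converging forces $A$ nilpotent, and a nilpotent $A$ in $\mathrm{span}\{F_{X_{2j}}\}$ is $0$, so this case only produces the edgeless quiver.

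The genuine gap is in the main case, and you flag it yourself. The entire content of the theorem is the mechanism by which the half-braiding constrains $A$, and your proposal replaces that mechanism with the assertion that Propositions~\ref{prop:no eigenvector -1}, \ref{prop:unique magnitudes}, \ref{prop: S matrix neg/pos} ``combine to exclude any mixing between path-length degrees'' and ``hence force $AF_{X_2}=F_{X_2}A$.'' No argument is given for either claim, and the first is not even the right intermediate target: the theorem constrains the adjacency matrix, not the grading of $\phi$ (gradedness of $\PSU(2)_{p-2}$-actions is only deduced afterwards, in Theorem~\ref{thm: classification of actions of PSU}, from a separate uniqueness argument). The paper's actual mechanism is different and concrete: restricting the isomorphism $\phi_{F_{X_2}}\colon\widetilde{Q}\circ F_{X_2}\xrightarrow{\sim}F_{X_2}\circ\widetilde{Q}$ to the finite subfunctor $Q$ shows that $T_{X_2}(A)=F_{X_2}^{-1}AF_{X_2}$ is again the matrix of a finite subfunctor of $\widetilde{Q}$, hence a non-negative \emph{integer} matrix, and iterating gives $T_{X_2}^{k}(A)\in\Mat(\Z_{\ge 0})$ for all $k$. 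One then expands $A$ in the eigenbasis $S_i\otimes S_j$ of $T_{X_2}$ and runs an asymptotic argument: the unique largest-magnitude eigenvalue exceeding $1$ has an eigenvector with entries of both signs (Propositions~\ref{prop:unique magnitudes} and \ref{prop: S matrix neg/pos}), so its coefficient must vanish or a negative entry eventually appears; inductively all components with $|\lambda|>1$ die; and the components with $|\lambda|<1$ decay, so integrality of $T_{X_2}^{k}(A)$ for large $k$ forces them to vanish exactly. Only the eigenvalue-$1$ eigenspace survives, giving $A=F_Z$. Without deriving the ``non-negative integer for all $k$'' input from the half-braiding and without this Perron--Frobenius-style elimination, the three propositions have nothing to act on, so the proof as proposed does not go through.
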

\begin{proof}
A  based action $\PSU(2)_{p-2}$ on $\BasedBim(\KQ)_{I_v}$ is equivalent to a monoidal functor $F:C\rightarrow \End_{(\widetilde{Q},m,i)}(\VecC(M))$. This assigns a bijection $\psi$ on the simple objects $\{Y_1,...,Y_{\frac{p-3}{2}}\}$. The quiver $Q$ can be realized as a finite subfunctor of $\widetilde{Q}$. Since $(F_X,\phi_{F_X})$ is an object in $\End_{(\widetilde{Q},m,i)}(\VecC(M))$ it follows that $\phi_{F_X}$ is an isomorphism. $\widetilde{Q}\circ F_X\xrightarrow{\phi_{F_X}} F_X\circ \widetilde{Q}$. Consider the restriction to the subfunctor $Q$, $Q\circ F_X\xrightarrow{\phi_{F_X}} F_X\circ \hat{Q}$, where $Q$ is an adjacency matrix of a quiver. In particular, $\hat{Q}$ must also be a non-negative integer matrix. We want to show that the only matrices that remain nonnegative integer matrices under finite braidings are the matrices of the form,
$|Q_{ij}|=N_{Z,X_{2i}}^{X_{2j}}$ for some isomorphism class of objects $Z$ in $\PSU(2)_{p-2}$.

We will pick our simple object $X$ to be $X_2$ and consider $F_{X_2}^{-1}QF_{X_2}=\hat{Q}$. The fusion matrix associated with the simple element $X_2$ in $\PSU(2)_{p-2}$ with respect to the order basis $\{X_0,...,X_{2\frac{p-3}{2}}\}$ is a $(\frac{p-1}{2})\times (\frac{p-1}{2})$ matrix with the entries,

\[F_{X_2}=\begin{bmatrix}
    0 & 1 & 0  & \dots & 0&0\\
     1 & 1 & 1 & 0 & \dots&0\\
     0 & 1  & 1  & \ddots &0 &\vdots\\
     \vdots & 0&1 &\ddots& 1 & 0\\
     0 & \vdots &0 & \ddots &1 &1\\
     0 & 0 & \dots& 0 & 1& 1
\end{bmatrix}.\]

Given this, we want to show that a non-negative integer matrix $Q$ such that $T^k_{X_2}(Q)$ is also a non-negative integer matrix for all $k$ then $Q$ must be of the form, $|Q_{ij}|=N_{Z,X_{2i}}^{X_{2j}}$  for some isomorphism class of objects $Z$ in $\PSU(2)_{p-2}$.

Since $\PSU(2)_{p-2}$ is a modular tensor category it follows that the $S$-matrix diagonalizes each simple object. This implies that the columns of the $S$ matrix are the eigenvectors and the eigenvalues can be computed by the formula in \protect\cite{BK01},

\[(D_i)_{XZ}=\delta_{XZ}\frac{S_{i,X}}{S_{\1,X}}.\]

There are three properties of the operator $T_{X_2}$ we need to show to prove this result. 
\begin{enumerate}
\item All eigenvectors have eigenvalues $|\lambda|> 1, \lambda=1,$ or $1>|\lambda|$. In particular, there is no eigenvalue $-1$.

\item  The magnitude of the eigenvalues of $T_{X_2}$ that are not equal to $1$ are distinct.
  \item Each eigenvalues $|\lambda|>1$ the corresponding eigenvector has a negative and a positive entry in it.

\end{enumerate}
These three conditions follow from Propositions \protect\ref{prop:no eigenvector -1}, \protect\ref{prop:unique magnitudes}, \protect\ref{prop: S matrix neg/pos}.

The eigenpairs $(\lambda_{ij},S_{ij})$ of conjugation by $F_{X_2}$ are,

\[\lambda_{ij}=(\frac{S_{2,2i}}{d_{X_{2i}}})(\frac{S_{2,2j}}{d_{X_{2j}}})^{-1},\hspace{1mm} S_{ij}=S_i\otimes S_j.\] 
Since $F_{X_2}$ is diagonalizable, it follows that $F_{X_2}\otimes F_{X_2}^{-1}$ is diagonalizable, and the eigenvectors $\{S_{ij}\}$ form a basis of $\R^{(\frac{p-1}{2})^2}$. Thus, any potential $Q$ we would like to consider must be a linear combination of these eigenvectors. If $Q =\sum \alpha_{ij}S_{ij}$ then applying our conjugation transformation $T_{F_{X_2}}$ will scale $\alpha_{ij}S_{ij}$ by $\lambda_{ij}$. Since the  conjugation operator is equivalent to the Kronecker product, there will be $\frac{p-1}{2}$ eigenvectors with eigenvalue $1$, and then $\frac{(\frac{p-1}{2})^2-(\frac{p-1}{2})}{2}$ eigenvectors with eigenvalues $|\lambda|<1$ and $|\lambda|>|1|$. Since this is a finite list, consider the one with the largest magnitude eigenvector, denote it $\mu_{kl}$ with the corresponding eigenvector $S_k\otimes S_l$. By Proposition \protect\ref{prop:unique magnitudes} $\mu_{kl}$ is unique. Since $\mu_{kl}\neq 1$, it follows that $S_k\otimes S_l$ contains a negative and a positive entry. The sign of the entries of $T_{X_2}^k(Q)$ as $k\rightarrow \infty$ are completely determined by the sign of the entries of $S_k\otimes S_l$ since $\mu_{ij}$ is the largest magnitude eigenvalue. Thus, $S_k\otimes S_l$ cannot be in the decomposition of $T_{X_2}^k(Q)$ otherwise we would guarantee the existence of a negative entry for some $k$, which is impossible. Repeating this for each eigenvalue $\mu$ such that $|\mu|>1$ eliminates all the corresponding eigenvectors from the decomposition. 

Now we consider the eigenvectors with $|\lambda|<1$ and $\lambda=1$. Let $Q=\sum \beta_{mn}S_{mn}^1+ \alpha_{ij}S_{ij}^{<1}$ where $S_{mn}^1$ is an eigenvector of eigenvalue 1 and $S_{ij}^{<1}$ have eigenvalue $|\lambda_{ij}|<1$. We require that $T_{X_2}^k(Q)$ is a non-negative integer matrix for each $k\in \N$.  There are two cases to consider, when $\sum \beta_{mn}S_{mn}^1 \in \Mat(\Z)$ and when $\sum \beta_{mn}S_{mn}^1\not \in \Mat(\Z)$. When $\sum \beta_{mn}S_{mn}^1\not \in \Mat(\Z)$ then there exists at least one entry in the column vector that is not an integer. Then this entry is $\epsilon$ away from the nearest integer for some $\epsilon>0$. We can choose a $K$ such that for each $ij$,

\[|\alpha_{ij}\lambda^K_{ij}S_{ij}^{<1}|<\left(\frac{2\epsilon}{(\frac{p-1}{2})^2-(\frac{p-1}{2})},\frac{2\epsilon}{(\frac{p-1}{2})^2-(\frac{p-1}{2})},\dots,\frac{2\epsilon}{(\frac{p-1}{2})^2-(\frac{p-1}{2})}\right)^T.\]

It follows that,
\[T^K_{X_2}(Q)=\sum \beta_{mn}S_{mn}^1+ \alpha_{ij}\lambda^K_{ij}S_{ij}^{<1} \not \in \Mat(\Z).\]

Suppose that for $\sum \beta_{mn}S_{mn}^1 \in \Mat(\Z)$. For some $K$ large enough, it follows that  

\[\sum \alpha_{ij}\lambda^K_{ij}S_{ij}^{<1} =0.\]

 But, this contradicts the linear independence of the basis vectors $S_{ij}^{<1}$, thus $\alpha_{ij}=0$ for all $ij$. Therefore, only the eigenvalue $1$ vectors can be in the decomposition. Notice that a basis for $S_{kl}^1$ eigenvectors of eigenvalue $1$ is $\{F_{X_{2j}}\}_{j=0}^{\frac{p-3}{2}}$. Since $F_{X_{2j}}$ is the only functor to take $\1$ to $X_{2j}$ it follows that the coefficient $n_{X_{2j}}\in \Z_{\geq 0}$, then it follows that, there is a choice of ordered basis for $Q$ such that the adjacency matrix of $Q$ is of the form, \[|Q_{ij}|=N_{Z,X_{2(i-1)}}^{X_{2(j-1)}} ,\]
 for some isomorphism class of objects $Z$ in $\PSU(2)_{p-2}$.
\end{proof}

\begin{cor} \label{cor:PSU(2) actions}
     If there exists a monoidal functor $F:\PSU(2)_{p-2}\rightarrow \BasedBim(\KQ)_{I_v}$ then there exists an ordered basis of $Q$ such that the adjacency matrix of $Q$ can be decomposed into a $k|\Irr(\PSU(2)_{p-2})|\times k|\Irr(\PSU(2)_{p-2})|$ block matrix where each block $Q_{ij}$ can be expressed in form,
     \[|(Q_{ij})_{lm}|=N_{Z_{ij},X_{2(l-1)}}^{X_{2(m-1)}}\]
     for some isomorphism class of objects $Z_{ij}$ in $\PSU(2)_{p-2}$.
\end{cor}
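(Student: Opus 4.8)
The plan is to bootstrap from Theorem~\ref{thm:PSU(2) actions} exactly as Corollary~\ref{cor:alg complete actions} bootstraps from Lemma~\ref{lem:actions of morita triv}, using that $\PSU(2)_{p-2}$ is algebra complete. First I would recall that a monoidal functor $F:\PSU(2)_{p-2}\rightarrow\BasedBim(\KQ)_{I_v}$ corresponds, via Theorem~\ref{thm:TE BimEnd} and Theorem~\ref{thm:Actions of cat of path alg is moncat+dualfun}, to a semisimple $C$ module category structure on $M\cong\Mod(\KQ_0)$ together with a dual functor $(\widetilde{Q},\phi_-)$ in $\End_C(\VecC(M))$. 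Since $\PSU(2)_{p-2}$ is algebra complete, every semisimple module category is of the form $C^{\oplus k}$, so $M\cong C^{\oplus k}$, and consequently the $\frac{p-1}{2}\cdot k$ vertices of $Q$ are partitioned into $k$ blocks $S_1,\dots,S_k$, each of size $|\Irr(\PSU(2)_{p-2})|=\frac{p-1}{2}$, with a chosen bijection $\psi_i:S_i\rightarrow\Irr(\PSU(2)_{p-2})$ coming from the module category identification.

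Next I would decompose the dual functor blockwise. Because $M\cong C^{\oplus k}$, an endofunctor in $\End_C(\VecC(M))$ is a $k\times k$ matrix of objects in $\End_C(C)\cong C^{\op}$, i.e.\ a $k\times k$ matrix $(\widetilde{Q}_{ij})$ with $\widetilde{Q}_{ij}\in\VecC(C)$ by Theorem~\ref{thm:dual cat and filtered colim}. Restricting to the finite subfunctor $Q$ as in the proof of Theorem~\ref{thm:PSU(2) actions}, each $Q_{ij}$ is the adjacency matrix of the subquiver of paths from $S_i$ to $S_j$, and is a non-negative integer matrix that remains non-negative under conjugation by the fusion matrix $F_{X_2}$ acting on the source block $S_i$ (and its inverse on the target block $S_j$). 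The key point is that the argument of Theorem~\ref{thm:PSU(2) actions} is entirely local to a pair of blocks: the operator $T_{X_2}$ there is precisely $F_{X_2}\otimes F_{X_2}^{-1}$, so applying that theorem's eigenvalue analysis (Propositions~\ref{prop:no eigenvector -1}, \ref{prop:unique magnitudes}, \ref{prop: S matrix neg/pos}) to each block $Q_{ij}$ separately forces $|(Q_{ij})_{lm}|=N_{Z_{ij},X_{2(l-1)}}^{X_{2(m-1)}}$ for some isomorphism class $Z_{ij}\in\PSU(2)_{p-2}$, after reordering each $S_i$ so that $\psi_i(v_{l})=X_{2(l-1)}$.

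Finally I would assemble these into the claimed block form: choosing the ordered basis of $Q$ that concatenates the reordered blocks $S_1,\dots,S_k$ gives an adjacency matrix that is a $k|\Irr(\PSU(2)_{p-2})|\times k|\Irr(\PSU(2)_{p-2})|$ block matrix whose $(i,j)$ block has entries $|(Q_{ij})_{lm}|=N_{Z_{ij},X_{2(l-1)}}^{X_{2(m-1)}}$. The only subtlety, and the step I expect to require the most care, is verifying that the conjugation constraint genuinely restricts to each block independently rather than coupling the blocks — but this follows because the module category structure $M\cong C^{\oplus k}$ makes $F_{X_2}$ act as a block-diagonal operator (one copy of the fusion matrix per summand $C$), so conjugation $F_{X_2}^{-1} Q F_{X_2}$ acts as $F_{X_2}^{-1}$ on the left index within block $S_j$ and $F_{X_2}$ on the right index within block $S_i$, i.e.\ blockwise as $T_{X_2}$ on $Q_{ij}$. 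Hence Theorem~\ref{thm:PSU(2) actions} applies verbatim to each $Q_{ij}$ and the corollary follows.
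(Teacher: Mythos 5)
Your proposal is correct and follows essentially the same route as the paper: the paper likewise represents $Q$ as a $k\frac{p-1}{2}\times k\frac{p-1}{2}$ block matrix, conjugates by the block-diagonal matrix with $F_{X_2}$ in each diagonal block so that the action on the $(i,j)$ block is $F_{X_2}^{-n}Q_{ij}F_{X_2}^{n}$, and then invokes Theorem \protect\ref{thm:PSU(2) actions} blockwise. Your explicit justification that the constraint decouples across blocks is the same observation the paper makes implicitly through its displayed block-matrix computation.
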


\begin{proof}
    Given a separable idempotent split based action of $\PSU(2)_{p-2}$ we can represent our quiver as a $k(\frac{p-1}{2})\times k(\frac{p-1}{2})$ block matrix. We conjugate by the block matrix with $X_2$ block on the diagonal.

     \[\begin{bmatrix}
            F_{X_2}&0&\dots& 0 \\
            0&F_{X_2}&\dots& 0 \\
            \vdots &\vdots &\ddots &\vdots  \\
            0 &0 & 0 &F_{X_2}
     \end{bmatrix} ^{-n}\begin{bmatrix}
            Q_{11}&Q_{12}& \dots& Q_{1n} \\
            Q_{21}&Q_{22}&\dots& Q_{2n} \\
            \vdots & \vdots  &\vdots & \vdots \\
            Q_{n1} &Q_{n2}  &\dots &Q_{nn}
     \end{bmatrix}\begin{bmatrix}
            F_{X_2}&0&\dots& 0 \\
            0&F_{X_2}&\dots& 0 \\
            \vdots &\vdots &\ddots &\vdots  \\
            0 &0 & 0 &F_{X_2}
     \end{bmatrix}^n . \]
Is equal to,
     \[\begin{bmatrix}
            F^{-n}_{X_2}Q_{11}F_{X_2}^n&F^{-n}_{X_2}Q_{12}F_{X_2}^n& \dots&F^{-n}_{X_2} Q_{1n}F_{X_2}^n \\
            F^{-n}_{X_2}Q_{21}F_{X_2}^n&F^{-n}_{X_2}Q_{22}F_{X_2}^n&\dots&F^{-1}_{X_2} Q_{2n}F_{X_2}^n \\
            \vdots & \vdots  &\vdots & \vdots \\
           F^{-n}_{X_2} Q_{n1}F_{X_2}^n &F^{-n}_{X_2}Q_{n2}F_{X_2}^n  &\dots &F^{-n}_{X_2}Q_{nn}F_{X_2}^n
     \end{bmatrix}.\]

     Then, by Theorem \protect\ref{thm:PSU(2) actions} it follows that this is an action only when $Q_{ij}$ is of the form,
     \[|(Q_{ij})_{lm}|=N_{Z_{ij},X_{2(l-1)}}^{X_{2(m-1)}},\]
for some isomorphism class of objects $Z_{ij}$ in $\PSU(2)_{p-2}$.
\end{proof}

\begin{example}
Let $p=7$ and consider $\PSU(2)_{7-2}=\PSU(2)_5$.
There are three simple objects in this category $X_0,X_2$ and $X_4$ with the dimensions $1$, $[3]_q$ and $[5]_q$ where $q=e^{\frac{\pi i}{7}}$ a $14^{th}$ root of unity.

The fusion matrix associated with $X_0,X_2$ and $X_4$ are,

\[X_2=\begin{bmatrix}
    1 &0 & 0 \\
    0 &1 & 0 \\
    0 &0 & 1 \\
\end{bmatrix},\hspace{1mm} X_2=\begin{bmatrix}
    0 &1 & 0 \\
    1 &1 & 1 \\
    0 &1 & 1 \\
\end{bmatrix},\hspace{1mm} X_4=\begin{bmatrix}
    0 &0 & 1 \\
    0 &1 & 1 \\
    1 &1 & 0 \\
\end{bmatrix}.\]

The unnormalized $S$-matrix for $\PSU(2)_5$ is,

\[\begin{bmatrix}
    1 &[3]_q & [5]_q \\
    [3]_q &[9]_q & [15]_q \\
    [5]_q &[15]_q & [25]_q \\
\end{bmatrix}.\]

Taking this $\modd 14$ yields,
\[\begin{bmatrix}
    [1]_q &[3]_q & [5]_q \\
    [3]_q &[9]_q & [1]_q \\
    [5]_q &[1]_q & [11]_q \\
\end{bmatrix}.\]

Notice that,

\[[9]_q= \frac{(e^{\frac{\pi i}{7}})^9-(e^{\frac{\pi i}{7}})^{-9}}{(e^{\frac{\pi i}{7}})-(e^{\frac{\pi i}{7}})^{-1}}.\]

This simplifies to,

\[[9]_q= \frac{(e^{\frac{\pi i}{7}})^9-(e^{\frac{\pi i}{7}})^{5}}{(e^{\frac{\pi i}{7}})-(e^{\frac{\pi i}{7}})^{13}}=-\frac{(e^{\frac{\pi i}{7}})^5-(e^{\frac{\pi i}{7}})^{9}}{(e^{\frac{\pi i}{7}})-(e^{\frac{\pi i}{7}})^{13}}=-[5]_q.\]

Similarly, we see that, $[11]_q=-[3]_q$ which implies that the $S$ matrix is,

\[\begin{bmatrix}
    [1]_q &[3]_q & [5]_q \\
    [3]_q &-[5]_q & [1]_q \\
    [5]_q &[1]_q & -[3]_q \\
\end{bmatrix}.\]

The corresponding eigenvalues are $\frac{[3]_q}{[1]_q}\simeq 2.245$, $\frac{-[5]_q}{[3]_q}\simeq -.802$ and $\frac{[1]_q}{[5]_q}\simeq .555$. The corresponding eigenpairs for the conjugation action are,

\[\begin{bmatrix}
    \hspace{5mm}1  & \hspace{9.5mm}\frac{2.245} {-.802}&\hspace{5.5mm} \frac{2.245}{.555} &\hspace{5.5mm}  \frac{-.802}{2.245} & \hspace{8mm} 1 & \hspace{9.5mm} \frac{-.802}{.555} & \hspace{6mm} \frac{.555}{2.245} & \hspace{5.5mm} \frac{.555}{-.802} & \hspace{10.5mm}  1 \phantom{blah}
\end{bmatrix}.\]

\[\begin{bmatrix}
    [1]_q [1]_q &[1]_q[3]_q & [1]_q[5]_q  & [3]_q[1]_q &   [3]_q[3]_q& [3]_q[5]_q &[5]_q[1]_q   & [5]_q[3]_q  & [5]_q[5]_q \\
    [1]_q[3]_q& -[1]_q[5]_q&    [1]_q[1]_q & [3]_q[3]_q& -[3]_q[5]_q &[3]_q[1]_q&[5]_q[3]_q   & -[5]_q[5]_q  & [5]_q[1]_q \\
    [1]_q[5]_q & [1]_q[1]_q& -[1]_q[3]_q  &  [3]_q[5]_q& [3]_q[1]_q & -[3]_q[3]_q& [5]_q[5]_q  &  [5]_q[1]_q  &  -[5]_q[3]_q\\
     [3]_q[1]_q & [3]_q[3]_q&    [3]_q[5]_q &  -[5]_q[1]_q&-[5]_q[3]_q&-[5]_q[5]_q& [1]_q[1]_q  & [1]_q[3]_q  &  [1]_q[5]_q\\
    [3]_q[3]_q & -[3]_q[5]_q&   [3]_q[1]_q &  -[5]_q[3]_q& [5]_q[5]_q&-[5]_q[1]_q&  [1]_q[3]_q  & -[1]_q[5]_q & [1]_q[1]_q\\
    [3]_q[5]_q & [3]_q[1]_q&     -[3]_q[3]_q & -[5]_q[5]_q&-[5]_q[1]_q&[5]]_q[3]_q& [1]_q[5]_q   & [1]_q[1]_q  &-[1]_q[3]_q \\
     [5]_q[1]_q & [5]_q[3]_q&   [5]_q[5]_q  &  [1]_q[1]_q& [1]_q[3]_q&[1]_q[5]_q&   -[3]_q[1]_q  &  -[3]_q[3]_q &-[3]_q[5]_q\\
    [5]_q[3]_q & -[5]_q[5]_q&   [5]_q[1]_q   &  [1]_q[3]_q& -[1]_q[5]_q&[1]_q[1]_q&  -[3]_q[3]_q  &   [3]_q[5]_q & -[3]_q[1]_q\\
    [5]_q[5]_q & [5]_q[1]_q&    -[5]_q[3]_q & [1]_q[5]_q&[1]_q[1]_q& -[1]_q[3]_q& -[3]_q[5]_q  &  [3]_q[1]_q & [3]_q[3]_q\\ 
\end{bmatrix}.\]

Then we apply the results from our theorem to eliminate any $Q$ except those that are positive integer combinations of the eigenvectors of eigenvalue $1$. Those eigenvectors have a nice basis consisting of $X_0,X_2,X_4$.

Thus, we conclude that with respect to this choice of simple object basis, if $F:\PSU(2)_5\rightarrow \BasedBim(\KQ)_{I_v}$ then $Q$ has a choice of ordered basis such that $Q$ is $3k \times 3k$ block matrix where each block is of the form,

\[Q_{ij}=\begin{bmatrix}
    n_0 & n_2 & n_4 \\
     n_2 & n_0+n_2+n_4 & n_2+n_4 \\
      n_4 & n_2+n_4 & n_0+n_2 \\
\end{bmatrix}\]
\end{example}

Now we will to write down a bijection between separable idempotent split based actions of $\PSU(2)_{p-2}$ on $\KQ$ and some quiver data. This will give us a full classification of the monoidal functors of $\PSU(2)_{p-2}$ on $\BasedBim(\KQ)_{I_v}$ up to monoidal natural isomorphism. Fix a quiver $Q$. This generates a quiver $\widetilde{Q}$. Then, given $F:C\rightarrow \End_{\widetilde{Q}}(\VecC(M))$ ($M\cong C^{\oplus n}$) up to monoidal natural isomorphism $F$ corresponds to,
\begin{enumerate}
        \item A choice of family of bijections $\chi_i:\Irr(C)\rightarrow \Irr(C)$.
        \item A dual functor $(\widetilde{Q},\phi_{-})$ in $\End_C(\VecC(M))$. 
   \end{enumerate} 

Consequently, there is only one isomorphism $\phi_X:\widetilde{Q}\circ F_X\xrightarrow{\sim}F_X\circ \widetilde{Q}$. Since $\widetilde{Q}$ is generated by $Q$, if there were two different half braidings of $Q\circ F_X\xrightarrow{\phi_1} F_X\circ \hat{Q}$ and $Q\circ F_X\xrightarrow{\phi_2} F_X\circ \overline{Q}$ this would contradict our maximum of one action. In Theorem \protect\ref{thm: graded action classification} the quiver data for those actions corresponds to the only separable idempotent split based action $\PSU(2)_{p-2}$ of $\KQ$ with respect to those choice of bijections and quiver. In particular, for each ordering of a quiver that satisfies the criterion of Corollary \protect\ref{cor:PSU(2) actions}, we have constructed a graded separable idempotent split action in Theorem \protect\ref{thm: graded action classification}. Furthermore, this graded action is the only separable idempotent split action up to conjugacy of $\PSU(2)_{p-2}$ on this partition by the reasoning above. Thus, we have in fact proved the following, 
\begin{thm}\label{thm: classification of actions of PSU}
   Every separable idempotent split based action of $\PSU(2)_{p-2}$ on $\KQ$ up to conjugacy is a graded separable idempotent split action of $\PSU(2)_{p-2}$ on $\KQ$. 
\end{thm}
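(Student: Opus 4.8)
The plan is to assemble the theorem from the machinery already built up, treating it essentially as a corollary of the structural results on $\End_{(\widetilde{Q},m,i)}(\VecC(M))$ together with the rigidity facts about $\PSU(2)_{p-2}$ proved in Propositions \ref{prop:no eigenvector -1}, \ref{prop:unique magnitudes}, \ref{prop: S matrix neg/pos} and Corollary \ref{cor:PSU(2) actions}. First I would recall that, by Theorem \ref{thm:TE BimEnd} and Lemma \ref{lem: conjugacy of actions}, a separable idempotent split based action of $\PSU(2)_{p-2}$ on $\KQ$ is the same data, up to conjugacy, as a monoidal functor $F\colon \PSU(2)_{p-2}\to \End_{(\widetilde{Q},m,i)}(\VecC(M))$ with $M\cong C^{\oplus n}$ (using that $\PSU(2)_{p-2}$ is algebra complete). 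By Theorem \ref{thm:based action bijection} and Corollary \ref{cor:algebra complete mon nat iso}, such an $F$ up to monoidal natural isomorphism is determined by a family of bijections $\chi_i\colon\Irr(M_i)\to\Irr(M_i)$ — equivalently, a labelling of the vertices of $Q$ by simple objects, grouped into blocks $S_1,\dots,S_n$ of size $\tfrac{p-1}{2}$ — together with a dual functor $(\widetilde{Q},\phi_{-})$ in $\End_C(\VecC(M))$.

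Next I would invoke Corollary \ref{cor:PSU(2) actions}: the existence of $F$ forces the adjacency matrix of $Q$, in a suitable ordered basis, to decompose into $\tfrac{p-1}{2}\times\tfrac{p-1}{2}$ blocks of the form $|(Q_{ij})_{lm}| = N_{Z_{ij},X_{2(l-1)}}^{X_{2(m-1)}}$ for some isomorphism classes $Z_{ij}\in \PSU(2)_{p-2}$. This is exactly the quiver data appearing in Theorem \ref{thm: graded action classification} (item 2), which classifies separable idempotent split \emph{graded} actions of an algebra complete fusion category on $\KQ$ up to conjugacy. So for any quiver $Q$ admitting a based action of $\PSU(2)_{p-2}$, and for the partition/bijection data determined by $F$, Theorem \ref{thm: graded action classification} already produces a graded separable idempotent split based action on that same data. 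It therefore remains only to verify that $F$ itself \emph{is} that graded action, i.e. that the half braiding $\phi_{-}$ of the dual functor $\widetilde{Q}$ is forced to be the one arising from the grading.

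The crux is a uniqueness statement for the half braiding. I would argue as follows: since $\PSU(2)_{p-2}$ is algebra complete and $M\cong C^{\oplus n}$, Lemma \ref{lem:actions of morita triv} and Corollary \ref{cor:alg complete actions} give that whenever a monoidal functor $F\colon C\to \End_{\widetilde{Q}}(\VecC(M))$ exists it is unique up to monoidal natural isomorphism; in particular the dual functor $(\widetilde{Q},\phi_{-})$ admits a unique decomposition into simple dual functors, and hence the isomorphism $\phi_X\colon \widetilde{Q}\circ F_X \xrightarrow{\sim} F_X\circ\widetilde{Q}$ is uniquely determined. Restricting this unique $\phi_X$ to the finite subfunctor $Q\subset\widetilde{Q}$ yields a half braiding $Q\circ F_X\xrightarrow{\sim}F_X\circ\hat Q$; a second, genuinely different half braiding on $Q$ would generate a second monoidal functor into $\End_{\widetilde{Q}}(\VecC(M))$, contradicting uniqueness. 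But the graded action constructed in Theorem \ref{thm: graded action classification} provides \emph{one} such half braiding on $Q$ compatible with the monad structure, so it must be the half braiding of $F$. Consequently $F$ lands in $\grBasedBim(\KQ)_{I_v}$, i.e. $F$ is a graded separable idempotent split based action, and the theorem follows.

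The main obstacle I anticipate is making the last paragraph fully rigorous: one must be careful that ``$F$ is unique up to monoidal natural isomorphism'' (from Corollary \ref{cor:alg complete actions}) really does pin down the restriction of $\phi$ to the generating subfunctor $Q$ and not merely to the full $\widetilde{Q}$, and that the graded action of Theorem \ref{thm: graded action classification} and the given $F$ induce the \emph{same} partition/bijection data so that the uniqueness comparison applies. Equivalently, one should check that passing from a half braiding on $Q$ to one on $\widetilde{Q}=\bigoplus_i Q^i$ (by iterating the tensorator / monad multiplication $m$) is injective on isomorphism classes — which is exactly the content of the compatibility axioms defining $\End_{(\widetilde{Q},m,i)}(\VecC(M))$ — so that distinct gradings could not collapse to the same $\widetilde{Q}$-level structure. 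Once that bookkeeping is in place the argument is a direct assembly of the cited results.
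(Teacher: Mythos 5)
Your proposal is correct and follows essentially the same route as the paper: both arguments combine the uniqueness (up to monoidal natural isomorphism) of a monoidal functor into $\End_{\widetilde{Q}}(\VecC(M))$ for an algebra complete fusion category with the explicit graded action constructed in Theorem \ref{thm: graded action classification}, and conclude that the given action must coincide with that graded one because two distinct half braidings on the generating subfunctor $Q$ would yield two distinct functors. The bookkeeping concern you flag at the end (that uniqueness at the level of $\widetilde{Q}$ pins down the half braiding on $Q$ itself) is exactly the point the paper addresses, albeit briefly, in the paragraph preceding the theorem.
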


With this in mind, we state our full classification theorem for based actions of $\PSU(2)_{p-2}$ on $\BasedBim(\KQ)_{I_v}$ up to conjugacy
of based actions. 

\begin{thm} \label{thm:classification PSU}
Fix a quiver $Q$. Then there is a bijection between,
    
\begin{enumerate}
        \item Separable idempotent split based actions of $\PSU(2)_{p-2}$ up to conjugacy of based actions 
        \item Partitions of the vertices into $n$ sets $S_1,...,S_n$ of size $\frac{p-1}{2}$ such that for each $S_k$ we pick a bijection $\psi_k:S_k\rightarrow \Irr(\PSU(2)_{p-2}$ that maps each vertex $v_{l_k}$ to an isomorphism class of simple object $X \in \Irr(C)$ and the subquiver $Q_{ij}$ of all paths from $S_i$ to $S_j$ can be expressed as $|(Q_{ij})_{l_i,m_{j}}|=N_{Z_{ij},\psi_i(v_{l_i})}^{\psi_j(v_{m_j})}$ for some isomorphism class of objects $Z_{ij}\in C$.
    \end{enumerate} 
    \end{thm}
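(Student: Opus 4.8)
The plan is to derive Theorem~\ref{thm:classification PSU} by composing two results already established: the reduction of arbitrary separable idempotent split based actions of $\PSU(2)_{p-2}$ to graded ones (Theorem~\ref{thm: classification of actions of PSU}), and the classification of separable idempotent split graded actions of an algebra complete fusion category in terms of partition data (Theorem~\ref{thm: graded action classification}). First I would record the numerology: $\PSU(2)_{p-2}$ is an algebra complete fusion category, and its simple objects $\{X_{2j}:0\le j\le\frac{p-3}{2}\}$ number $\frac{p-1}{2}$, so the quantity $|\Irr(C)|$ occurring in Theorem~\ref{thm: graded action classification} equals the block size $\frac{p-1}{2}$ appearing in statement (2) of the present theorem.

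Next I would invoke Theorem~\ref{thm: classification of actions of PSU}: every separable idempotent split based action of $\PSU(2)_{p-2}$ on $\KQ$ is, up to conjugacy of based actions, a graded separable idempotent split action. Hence the set of conjugacy classes of separable idempotent split based actions of $\PSU(2)_{p-2}$ on $\KQ$ coincides with the set of conjugacy classes of such \emph{graded} actions: every based action reduces to a graded one, every graded action is in particular a based action, and in both statements ``conjugacy'' denotes the single ambient notion of conjugation by an invertible element of $\KQ$ from Lemma~\ref{lem: conjugacy of actions}. Applying Theorem~\ref{thm: graded action classification} with $C=\PSU(2)_{p-2}$ then puts this set in bijection with exactly the partition data of statement (2): a partition of the vertex set into $S_1,\dots,S_n$ of size $\frac{p-1}{2}$, bijections $\psi_k:S_k\to\Irr(\PSU(2)_{p-2})$, and the constraint $|(Q_{ij})_{l_i,m_j}|=N_{Z_{ij},\psi_i(v_{l_i})}^{\psi_j(v_{m_j})}$ on each subquiver $Q_{ij}$. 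Composing the two bijections yields the theorem.

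The step I expect to demand the most care is verifying that the two reductions glue cleanly, i.e.\ that passing through the ``graded'' intermediate set neither collapses distinct conjugacy classes of based actions nor manufactures new ones. The key points are that the equivalence relation is literally the same on all three sets (conjugation by an invertible $u\in\KQ$), and that Theorem~\ref{thm:PSU(2) actions} together with Corollary~\ref{cor:PSU(2) actions} pin down the adjacency matrix of $Q$ on the nose in the fixed vertex basis $I_v$ --- so a graded action has a canonical representative in its conjugacy class and the map to partition data is injective. Surjectivity onto the partition data is already supplied by the explicit graded construction underlying Theorem~\ref{thm: graded action classification}, so no additional construction is needed; one only has to note that a graded action built from a partition datum is a bona fide based action, which is immediate since $\grBasedBim(\KQ)_{I_v}$ is a full subcategory of $\BasedBim(\KQ)_{I_v}$.
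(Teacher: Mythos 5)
Your proposal is correct and follows essentially the same route as the paper: the paper's proof likewise combines Corollary \ref{cor:PSU(2) actions} (which forces the block structure on $Q$), the reduction of all separable idempotent split based actions to graded ones (Theorem \ref{thm: classification of actions of PSU}, established via the uniqueness of the half-braiding for algebra complete categories), and Theorem \ref{thm: graded action classification} applied to $C=\PSU(2)_{p-2}$. Your extra attention to the gluing of the two bijections matches the paper's remark that each partition datum yields exactly one separable idempotent split graded action up to conjugacy.
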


\begin{proof}
   By Corollary \protect\ref{cor:PSU(2) actions} it follows that there exists a based action of $\PSU(2)_{p-2}$ on $\BasedBim(\KQ)_{I_v}$ then it $Q$ must have the desired decomposition. For each partition decomposition, we get exactly one separable idempotent split graded action of $\PSU(2)_{p-2}$ on $\KQ$ up to conjuacy. Now by Theorem \protect\ref{thm: graded action classification} the result follows.
\end{proof}

Finally, we reference Theorem \protect\ref{thm:separable action and idempotent} to produce the following statement about separable based actions of $\PSU(2)_{p-2}$ on $\KQ$

\begin{thm}
    Fix a quiver $Q$. If there exists  separable based action of $\PSU(2)_{p-2}$ on $\KQ$ up to conjugacy then there exists a partition of the vertices into $n$ sets $S_1,...,S_n$ of size $\frac{p-1}{2}$ such that for each $S_k$ we pick a bijection $\psi_k:S_k\rightarrow \Irr(\PSU(2)_{p-2}$ that maps each vertex $v_{l_k}$ to an isomorphism class of simple object $X \in \Irr(C)$ and the subquiver $Q_{ij}$ of all paths from $S_i$ to $S_j$ can be expressed as $|(Q_{ij})_{l_i,m_{j}}|=N_{Z_{ij},\psi_i(v_{l_i})}^{\psi_j(v_{m_j})}$ for some isomorphism class of objects $Z_{ij}\in C$.
\end{thm}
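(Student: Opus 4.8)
The plan is to derive this statement as a direct consequence of the classification already obtained for separable \emph{idempotent split} based actions, by first running the reduction that replaces an arbitrary separable based action by an idempotent split one. Concretely, suppose $(\KQ,F,V)$ is a separable based action of $\PSU(2)_{p-2}$ on $\KQ$. By Theorem~\ref{thm:separable action and idempotent} this produces a separable idempotent split based action $(\KQ,F,U)$; the monoidal functor $F$ and the underlying bimodules $F(X)$ are untouched, and in particular the quiver $Q$ is unchanged, only the base spaces are refined. Hence it suffices to exhibit the claimed partition for the idempotent split action, and all equivalences involved will be of ``up to conjugacy'' type, as in Theorem~\ref{thm: classification of actions of PSU}.

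Next I would invoke Lemma~\ref{lem: conjugacy of actions} to conjugate the idempotent split action to a monoidal functor $F^u:\PSU(2)_{p-2}\to\BasedBim(\KQ)_{I_v}$ over the canonical vertex projections. Applying the monoidal equivalence $\BasedBim(\KQ)_{I_v}\simeq\End_{(\widetilde{Q},m,i)}(\VecC(M))$ of Theorem~\ref{thm:TE BimEnd} with $M\cong\Mod(\KQ_0)$, together with the classification Theorem~\ref{thm:Actions of cat of path alg is moncat+dualfun}, this action determines a semisimple $\PSU(2)_{p-2}$-module category structure on $M$ and a dual functor $(\widetilde{Q},\phi_-)$ in $\End_{\PSU(2)_{p-2}}(\VecC(M))$. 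Since $\PSU(2)_{p-2}$ is algebra complete, $M\cong\PSU(2)_{p-2}^{\oplus n}$ as module categories, which forces $|V|=n\cdot\tfrac{p-1}{2}$; the splitting $M\cong\bigoplus_{k=1}^n M_k$ into indecomposable module categories is precisely a partition $S_1,\dots,S_n$ of the vertex set into blocks of size $\tfrac{p-1}{2}$, and the identification $\chi_k:\Irr(M_k)\xrightarrow{\sim}\Irr(\PSU(2)_{p-2})$ furnished by Corollary~\ref{cor:algebra complete mon nat iso} gives the bijections $\psi_k$.

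Finally, Corollary~\ref{cor:PSU(2) actions}---which rests on the spectral analysis of $T_{X_2}$ carried out in Propositions~\ref{prop:no eigenvector -1}, \ref{prop:unique magnitudes}, and \ref{prop: S matrix neg/pos}---shows that with respect to this ordered basis the adjacency matrix of $Q$ decomposes into blocks $Q_{ij}$ with $|(Q_{ij})_{l_i,m_j}|=N_{Z_{ij},X_{2(l_i-1)}}^{X_{2(m_j-1)}}$ for some isomorphism class $Z_{ij}\in\PSU(2)_{p-2}$. Rewriting the fusion coefficients through the identifications $\psi_k$ turns this into $|(Q_{ij})_{l_i,m_j}|=N_{Z_{ij},\psi_i(v_{l_i})}^{\psi_j(v_{m_j})}$, which is exactly the asserted partition data. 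Since every step in the chain (separable $\to$ idempotent split, conjugation to $I_v$, the monoidal equivalence) carries conjugacy classes to conjugacy classes, the hypothesis ``up to conjugacy'' is respected throughout, and the statement follows; alternatively one may simply cite Theorem~\ref{thm:classification PSU} after the first reduction.

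The main obstacle is not an analytic one: the number-theoretic core of the argument has already been isolated in Corollary~\ref{cor:PSU(2) actions} via the cyclotomic-unit and transitive-Galois input. What remains to be spelled out carefully is the bookkeeping that each reduction step preserves the conjugacy relation and leaves $Q$ itself intact, together with the translation between the block-matrix description of $\widetilde{Q}$ and the partition-plus-bijections language of the statement; both are routine but deserve to be stated explicitly.
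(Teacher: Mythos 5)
Your proposal is correct and follows essentially the same route as the paper: the paper's proof is just the two-step citation of Theorem \ref{thm:separable action and idempotent} (separable $\to$ idempotent split) followed by Theorem \ref{thm:classification PSU}, which is exactly the reduction you describe and then unpack. The extra bookkeeping you supply (conjugation to $I_v$, the monoidal equivalence, and the partition/bijection translation) is the content already packaged inside Theorem \ref{thm:classification PSU}, so no new ideas are needed.
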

\begin{proof}
    Follows from Theorem \protect\ref{thm:separable action and idempotent} and Theorem \protect\ref{thm:classification PSU}
\end{proof}

\begin{cor}
 Every seperable based action of $\PSU(2)_{p-2}$ on $\KQ$ up to conjugacy is a graded action of $\PSU(2)_{p-2}$ on $\KQ$. 
\end{cor}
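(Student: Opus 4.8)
The plan is to string together the two main structural results that have already been established in the paper. The statement to be proved is: every separable based action of $\PSU(2)_{p-2}$ on $\KQ$ up to conjugacy is a graded action. The key observation is that all the hard work is done: Theorem \ref{thm: classification of actions of PSU} says every separable \emph{idempotent split} based action of $\PSU(2)_{p-2}$ on $\KQ$ up to conjugacy is a graded separable idempotent split action, and Theorem \ref{thm:separable action and idempotent} says every separable based action gives rise to a separable idempotent split based action with the \emph{same underlying monoidal functor} $F:C\to\Bim(\KQ)$ — only the base spaces $V_X$ are refined. So the proof is essentially a bookkeeping argument tracking the half-braiding through the idempotent refinement.

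First I would start from a separable based action $(\KQ,F,V)$ of $\PSU(2)_{p-2}$ on $\KQ$. By Theorem \ref{thm:separable action and idempotent} this produces a separable idempotent split based action $(\KQ,F,U)$ over the same monoidal functor $F$; by Lemma \ref{lem: conjugacy of actions} we may assume after conjugation that $U$ is the idempotent split structure $I_v$ over the vertex projections, so we have $F:\PSU(2)_{p-2}\to\BasedBim(\KQ)_{I_v}$. Then I would invoke Theorem \ref{thm: classification of actions of PSU} to conclude that $(\KQ,F,U)$ is (up to conjugacy) a graded separable idempotent split action, i.e.\ the half-braidings $\phi_X$ of each $F(X)$ respect the path-length grading on $\KQ$. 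The final point to make is that the half-braiding data on $F$ is \emph{unchanged} when we pass between the separable action and its idempotent split refinement (this is exactly the content of the parenthetical remark in the proof of the corollary ``All separable filtered actions\dots are in fact separable graded actions''): refining $V$ to $U$ only enlarges the base subspaces, it does not alter $F$, $J_{-,-}$, or the maps $\phi_X$. Hence the original $(\KQ,F,V)$ already had grading-respecting half-braidings, so it is a graded based action.

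Concretely the write-up is short:

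\begin{proof}
Let $(\KQ,F,V)$ be a separable based action of $\PSU(2)_{p-2}$ on $\KQ$. By Theorem \ref{thm:separable action and idempotent} there is a separable idempotent split based action $(\KQ,F,U)$ over the same monoidal functor $F$, and by Lemma \ref{lem: conjugacy of actions} we may conjugate so that $U=I_v$, giving a monoidal functor $F:\PSU(2)_{p-2}\to\BasedBim(\KQ)_{I_v}$. By Theorem \ref{thm: classification of actions of PSU}, up to conjugacy $(\KQ,F,U)$ is a graded separable idempotent split action; in particular the half-braidings of $F$ respect the path-length grading on $\KQ$. The passage from $(\KQ,F,V)$ to $(\KQ,F,U)$ refines the base subspaces but does not change the monoidal functor $F$ or its half-braiding data. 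Therefore the half-braidings of the original action $(\KQ,F,V)$ also respect the grading, so $(\KQ,F,V)$ is a graded action of $\PSU(2)_{p-2}$ on $\KQ$ up to conjugacy.
\end{proof}

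The main (and really the only) subtlety is the step asserting that the half-braiding is genuinely untouched by the idempotent refinement — one must be slightly careful that ``graded'' is a property of the maps $\phi_X$ and $J_{-,-}$ rather than of the chosen subspaces $V_X$, so that a property verified for the refinement $U$ transfers back to $V$. Since the excerpt already argues this in the analogous filtered-to-graded corollary, I expect no genuine obstacle here; everything else is a direct citation of Theorem \ref{thm:separable action and idempotent}, Lemma \ref{lem: conjugacy of actions}, and Theorem \ref{thm: classification of actions of PSU}.
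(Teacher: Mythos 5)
Your proposal is correct and follows essentially the same route as the paper: the paper's own proof of this corollary simply observes that passing to the idempotent split refinement does not alter the half-braiding, and then invokes the classification of separable idempotent split actions (Theorem \ref{thm: classification of actions of PSU}) to conclude the half-braiding is graded. Your version just makes the intermediate citations (Theorem \ref{thm:separable action and idempotent} and Lemma \ref{lem: conjugacy of actions}) explicit, which is a harmless elaboration rather than a different argument.
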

\begin{proof}
    The decomposing of the action into the idemponent split case does not affect the half braiding of the object. Thus, since the half braiding in the split case preserves the grading, the half braiding in the general separable case preserves the grading.
\end{proof}

\section*{Acknowledgements}
The author would like to thank Corey Jones for introducing this project idea. His thoughtful knowledge, advice, and words of encouragement throughout made this possible. This work was partially supported by NSF DMS 2247202.

\bibliographystyle{alpha}
\bibliography{main}

\end{document}